\documentclass[11pt]{amsart}
\usepackage{amsmath,amssymb,amsthm}
\usepackage[T1]{fontenc}
\usepackage[a4paper,margin=27mm]{geometry}
\usepackage[hidelinks]{hyperref}
\numberwithin{equation}{section}
\newtheorem{theorem}{Theorem}[section]
\newtheorem{lemma}[theorem]{Lemma}
\newtheorem{proposition}[theorem]{Proposition}
\newtheorem{corollary}[theorem]{Corollary}
\theoremstyle{definition}
\newtheorem{definition}[theorem]{Definition}
\theoremstyle{remark}
\newtheorem{remark}[theorem]{Remark}
\newcommand{\R}{\mathbb R}
\newcommand{\T}{{\mathbb R/\mathbb Z}}
\newcommand{\B}{\mathcal B}
\newcommand{\E}{\mathcal E}
\newcommand{\LL}{\mathcal L}
\newcommand{\CC}{\mathcal C}
\newcommand{\M}{\mathsf M}
\newcommand{\Id}{\operatorname{Id}}
\newcommand{\tr}{\operatorname{tr}}
\newcommand{\dist}{\operatorname{dist}}
\newcommand{\supp}{\operatorname{supp}}
\newcommand{\dd}{\,\mathrm d}
\newcommand{\TC}{\operatorname{TC}}
\newcommand{\Rop}{\operatorname{Rop}}
\title[Elastic knots: compactness, contact, and variational structure]{%
Elastic knots \\ compactness --- contact --- variational structure}
\author{Philipp Reiter}
\address[Philipp Reiter]{Chemnitz University of Technology,
Faculty of Mathematics, 09107~Chemnitz, Germany}
\email{reiter@math.tu-chemnitz.de}	

\author{Heiko von der Mosel}
\address[Heiko von der Mosel]{RWTH Aachen University,
Institute for Mathematics,
Templergraben~55,
52062~Aachen, Germany}
\email{heiko@instmath.rwth-aachen.de}
\subjclass[2020]{49Q10, 53A04, 57K10, 74B05}
\keywords{Elastic knots, bending energy, self-contact, strong compactness,
Sobolev recovery}
\date{October 19, 2026}
\begin{document}
\begin{abstract}
We study bending-minimizing sequences of closed curves in a fixed knot
class and the self-contact of their limits. A positive matrix-valued
measure records a possible loss of curvature energy. Ambient variations,
localized on arbitrary coincidence sets, force this measure to vanish
and give strong convergence in $H^2$. We then derive the elastic equation
on isolated strands and overlap bundles, describe the distribution that
remains at contact, and prove balance and rigidity statements under their
precise additional hypotheses. Equality in the bridge bound gives
constant curvature. Combining strong compactness with quantitative
competitors gives an alternative proof that elastic limits in classes
with equal bridge and braid indices are covered circles. An independent
recovery theorem identifies the strong $H^2$ closure with the regular
$H^2$ part of the $C^1$ closure and also implies strong compactness.
Finally, a whole-line contact model illustrates which estimates are
needed for finer shape selection.
\end{abstract}
\maketitle

\tableofcontents

\section*{Introduction}
A closed wire of fixed length resists bending, while its knot type
restricts how it can move. Minimizing bending among embeddings does not
produce a closed admissible set: a limit may touch itself. The central
problem is to pass the energy and its variational information to such a
limit. We prove strong compactness of bending-minimizing sequences,
then separate the free elastic equation from the reaction at contact.

The underlying bending functional appears in Euler's treatment of
elastic curves \cite[Additamentum I]{Euler1744}. Classical closed
elasticae and their stability were studied by Langer and Singer
\cite[Theorem 3.2]{LangerSinger}; self-repulsive and self-obstacle knot
models were developed by von der Mosel \cite{Mosel1998,Mosel1999}.
Gallotti and Pierre-Louis related thin-wire shapes to bridge and braid
indices \cite[Sections VI.C and VII]{GallottiPierreLouis}, while
Avvakumov and Sossinsky investigated computational normal forms
\cite[Abstract]{AvvakumovSossinsky}. Gerlach, Reiter and von der Mosel introduced the ropelength formulation
of elastic knots and identified the limits for $(2,b)$-torus knots
\cite[Theorems 1.1, 2.1--2.2]{GRM}. Symmetry constraints can produce
different limiting contacts \cite[Theorem 1.6]{Symmetric}.

Numerical work on self-avoiding curves \cite{BartelsReiterRiege,
BartelsReiterStability}, framed rods \cite{BartelsReiterRods}, and
multistable knots \cite{VidulisEtAl2023} concerns several equilibria and
their approximation. Here an elastic knot is a limit of global
regularized minimizers. For classes with equal bridge and braid indices \cite{DER}, the
covered-circle conclusion is already obtained by Reiter and von der
Mosel \cite[Sections 4--7]{RvM2026}, conditional on a closure curvature
bound, which Miura subsequently proved \cite[Theorem 1.3]{Miura2026}.
Our alternative proof uses the quantitative competitors from the former
work and replaces that closure inequality by strong compactness.

Theorem~\ref{thm:no-loss} gives strong compactness. Theorems
\ref{ct:thm:freearcs} and \ref{ct:thm:nowheredense} describe free arcs
and the nowhere-dense reaction support. Bridge saturation and elastic
BB limits are treated in Proposition~\ref{sa:prop:saturation} and
Corollary~\ref{sa:cor:bb}; Theorems~\ref{sa:thm:measure},
\ref{sa:thm:crookedness} and \ref{sa:thm:planar} give rigidity under
additional hypotheses. The contact model is separate from these results.
The independent recovery theorem, Theorem~\ref{rc:theorem}, also implies
Theorem~\ref{thm:no-loss}: recovery supplies the missing lower energy
bound at a weak limit. We keep the direct defect-measure proof because
it avoids the appendix's configuration-space construction and exposes
how ambient variations eliminate curvature loss.

\section{The setting and the analytic tools}\label{sec:framework}

\subsection{Curves, knot classes, and the compactness claim}
We use the parameter circle $\T=\R/\mathbb Z$, with Lebesgue measure
of total mass one. A function in $H^2(\T;\R^3)=W^{2,2}(\T;\R^3)$ has
its first two weak derivatives in $L^2$. A weak derivative is defined
component by component by
$\int f\varphi'=-\int f'\varphi$ for smooth periodic test functions
$\varphi$. We always choose the $C^1$ representative of an $H^2$
function; its existence is explained below. A curve is regular when
its derivative never vanishes, and is an embedding when it is also
injective on the circle.

The knot type of an embedded curve refers to its image, without an
orientation. Images have the same type when an ambient isotopy carries
one to the other. Here an ambient isotopy is a family of homeomorphisms
$\Phi_t$ of $\R^3$, $0\le t\le1$, with $\Phi_0=\Id$, such that
$(t,x)\mapsto\Phi_t(x)$ and $(t,x)\mapsto\Phi_t^{-1}(x)$ are jointly
continuous. A tame knot class is one containing a smooth regular
embedded representative. We fix a tame class $K$
with one component. Set
\[
 \begin{gathered}
 \CC(K)=\{v\in H^2(\T;\R^3):v(0)=0,\ |v'|=1,\
                  v\text{ is an embedding of type }K\},\\
 e_K=\inf_{v\in\CC(K)}\B(v).
 \end{gathered}
\]
The condition $v(0)=0$ removes translations. Unit speed on this circle
means length one, and for these curves
$\B(v)=\int_\T|v''|^2\,ds$. Section~\ref{sec:variation} defines the
same geometric energy for arbitrary regular parameters. Normalizing a
smooth representative shows that $e_K$ is finite. For the knot and
normalization conventions see also \cite[Section 2]{GRM} and
\cite[Section 2.3]{RvM2026}.

\begin{theorem}[No loss of bending energy]\label{thm:no-loss}
Suppose $v_j\in\CC(K)$ and $\B(v_j)\to e_K$. This sequence is relatively
compact in $H^2$. Moreover, if a subsequence converges weakly in $H^2$
to $u$, that entire subsequence converges strongly to $u$. Each limit
satisfies $u(0)=0$, $|u'|=1$, and $\B(u)=e_K$.
\end{theorem}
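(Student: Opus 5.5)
The plan is to separate the routine weak compactness from the real issue, which is ruling out loss of curvature energy at a limit that may touch itself.

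\emph{Weak compactness.} Since $v_j(0)=0$ and $|v_j'|=1$, and $\|v_j''\|_{L^2}^2=\B(v_j)$ is bounded, the sequence is bounded in $H^2(\T;\R^3)$. Hence every subsequence has a further subsequence converging weakly in $H^2$. By the compact embedding $H^2\hookrightarrow C^1$, it also converges in $C^1$. Consequently any weak limit $u$ satisfies $u(0)=0$ and $|u'|=1$, and lower semicontinuity gives $\B(u)\le e_K$. It then suffices to show that weak convergence of a subsequence forces $\|v_j''\|_{L^2}\to\|u''\|_{L^2}$. Strong convergence follows from this by the Hilbert-space identity. Relative compactness follows as well, since every subsequence has a strongly convergent sub-subsequence. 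Applying the argument to arbitrary subsequences of a weakly convergent one gives strong convergence of the entire subsequence.

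\emph{Defect measure.} Define the matrix-valued defect measure $\mu$ as the weak-$*$ limit, along a further subsequence, of
\[
(v_j''-u'')\otimes(v_j''-u'')\dd s .
\]
This $\mu$ is a positive semidefinite $3\times3$-matrix-valued Radon measure on $\T$. Its trace has total mass $\lim\B(v_j)-\B(u)=e_K-\B(u)$, and the goal is $\mu=0$. The difficulty is that $u$ may fail to be embedded, so it is not a competitor, and it cannot be compared with $e_K$ directly.

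\emph{Ambient variations.} Instead, vary the minimizing sequence itself. Take a smooth compactly supported vector field $X$ on $\R^3$ and its flow $\Phi_t$, and set $v_{j,t}=\Phi_t\circ v_j$, reparametrized by arclength and translated so that $v_{j,t}(0)=0$. Each $v_{j,t}$ is ambient isotopic to $v_j$, so it lies in $\CC(K)$. Because $\B(v_j)\to e_K$, we have $\B(v_{j,t})\ge e_K\ge\B(v_j)-\varepsilon_j$ with $\varepsilon_j\to0$. Expand $t\mapsto\B(v_{j,t})$ to first order. The derivative at $t=0$ is a quadratic expression in $v_j''$, with coefficients built from $DX(v_j)$ and $v_j'$, plus lower-order terms. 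The expression has the schematic form
\[
\int \Bigl(2\langle DX(v_j)v_j'',v_j''\rangle-3\langle DX(v_j)v_j',v_j'\rangle|v_j''|^2\Bigr)\dd s+\text{l.o.t.},
\]
with the exact constants to be fixed in Section~\ref{sec:variation}. Uniform second-order bounds in $t$, together with almost-minimality, show that this first variation tends to zero as $j\to\infty$, up to $o(1)$. Passing to the limit, using $C^1$ convergence of $DX(v_j)$ and $v_j'$, splits it into the first variation of $\B$ at $u$ plus the term
\[
\int\tr\bigl(A_X(s)\,\dd\mu\bigr),
\]
where $A_X=2\,\mathrm{sym}\,DX(u)-3\langle DX(u)u',u'\rangle\Id$. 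The symmetric part is what survives, because $\dd\mu$ is a symmetric matrix.

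\emph{Main obstacle.} The hard step is choosing $X$ so that $\int\tr(A_X\,\dd\mu)$ controls $\mu$, while the first variation of $\B$ at $u$ contributes nothing of the wrong sign. For example, $X(x)=\varphi(x)\,x$ localized near a point $p=u(s_0)$ gives $DX=\varphi\Id$ near $p$. Then $A_X=-\varphi\Id$ there, so the limiting identity yields $\int\varphi\,\dd(\tr\mu)=\text{(regular term in }u)$. The trouble is that near coincidence points $u(s_1)=u(s_2)$, the field $X$ acts on several strands at once. So the variation must be localized on the whole preimage $u^{-1}(p)$, which may be an arbitrary closed set, and dilations with $\varphi$ shrinking to $p$ must be used.

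First, I would show that the regular part, an integral of $|u''|^2$ against $DX(u)$, vanishes as the support of $\varphi$ shrinks, since $u''\in L^2$ charges no small sets uniformly. Next, I would use the fact that $\mu$ is the same at every $t$-independent scale, so the mass $\tr\mu(u^{-1}(B_r(p)))$ is bounded by $o(1)$ plus a nonpositive quantity. Summing over a cover of $u(\T)$ would then give $\tr\mu(\T)=0$. Hence $\B(u)=e_K$ and the convergence is strong. I expect that getting the sign right in the dilation computation, together with the uniformity of the $t$-expansion on coincidence sets, will be where most of the work lies.
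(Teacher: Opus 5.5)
\textbf{Verdict: there is a genuine gap.} Your compactness reduction is sound, and so is the tensor defect: your weak-star limit of $(v_j''-u'')\otimes(v_j''-u'')\,\dd s$ is exactly the paper's $\M=W-k\otimes k\,\dd s$. Your way of reaching the limiting balance is a valid alternative to the paper's fixed-$t$ limit $J(t)$: you differentiate at $v_j$, using a uniform quadratic remainder in $t$ and almost-minimality with $t\sim\sqrt{\varepsilon_j}$.

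\textbf{Where it fails.} The gap is the step that is supposed to force $\mu=0$. Take $X_r(x)=\chi(|x-p|/r)(x-p)$ and write $\rho=|x-p|$. Then
\[
 DX_r=\chi\Id+\frac{\chi'}{r\rho}(x-p)\otimes(x-p).
\]
So the defect integrand $2DX_r:A-3T\cdot DX_rT$ equals $-1$ only on $u^{-1}(B_r(p))$. On the transition annulus it is
\[
 -\chi+\frac{\chi'}{r\rho}\bigl(2(x-p)\cdot A(x-p)-3(T\cdot(x-p))^2\bigr).
\]
This is of order one and has no sign. On a strand through $p$, $x-p$ is nearly tangent and $AT=0$, so the nonnegative last term dominates on average. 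Hence at fixed $r$ you only get
\[
 \mu(u^{-1}(B_r(p)))\le C\,\mu(u^{-1}(B_{2r}(p)\setminus B_r(p)))+|\text{regular terms}|,
\]
with $C$ not small, so summing over a cover proves nothing. Letting $r\to0$ gives exactly $\mu(u^{-1}(\{p\}))=0$. In other words, point dilations exclude atoms of $u_\#\mu$ and nothing more; this is precisely the paper's atom step~\eqref{eq:nl-22}.

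\textbf{The missing idea.} You need test fields that expand only the directions normal to a strand along a whole arc, and whose traces vanish or tend to zero in $H^2$.
\begin{itemize}
\item First prove $AT=0$. This follows from $T_j\cdot k_j=0$ and $T_j\to T$ uniformly; your proposal does not derive it.
\item In a graph chart, the field $(0,y-f_a(x))$ vanishes on its own strand. Its derivative is $R_a=\Id-(1,f_a')\otimes e_x$, which satisfies $R_aT=0$ and $R_a:A=1$. The balance therefore gives $2\mu(\text{arc})=0$, which kills diffuse defect on an isolated strand at once.
\item At contact this field does not vanish on the other branches. The paper handles this on the compact sets $F_{a,J,m}$, where a group of branches agrees with $f_a$ to first order. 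The flat-cutoff lemma (Lemma~\ref{lem:flat-cutoff}) gives $\chi_\varepsilon(f_b-f_a)\to0$ in $H^2$, while $DX_\varepsilon(u)\to R_a\mathbf 1_{S_F}$ boundedly.
\item Only the countable set of transversal crossings is left to atom exclusion. Countable subadditivity then gives $\mu=0$.
\end{itemize}

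\textbf{Two further corrections.}
\begin{itemize}
\item \emph{Rescaling and the length term.} To land in $\CC(K)$ you must also rescale to length one. The comparison functional is therefore $\E=\LL\B$, and its length factor contributes $d\int_\T T\cdot DX(u)T\,\dd s$ with $d=e_K-\B(u)$. This term cannot be dropped: for a global dilation it exactly cancels the defect term $-d$, so that test yields $0=0$.
\item \emph{The Hessian term in the atom step.} Absolute continuity of $|u''|^2$ does not control it, because $|D^2X_r|\sim r^{-1}$ and this gives only $O(r^{-1/2}E_r^{1/2})$. You need $k\perp T$ to remove the tangential part, and the estimate $|P_{T(s)}^\perp(u(s)-p)|\le C|s-s_i|^{3/2}\|k\|_{L^2(U_r)}$ for the normal part.
\end{itemize}
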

Different subsequences may have different limits. A limit may also have
self-intersections. Thus its energy cannot be bounded below by $e_K$
merely by calling it a competitor: membership in $\CC(K)$ can fail.
The direct proof in Sections~\ref{sec:defect}--\ref{sec:compactness}
represents possible loss by a positive tensor measure and eliminates
it with ambient deformations. The independent recovery theorem in
Appendix~\ref{rc:section} gives a second proof by making the limiting
energy a limit of embedded competitor energies.

\subsection{Sobolev estimates in one dimension}
On a bounded interval $I$, an $H^1$ function has an absolutely continuous
representative satisfying
\begin{equation}
 w(y)-w(x)=\int_x^y w'(t)\,dt,\qquad
 |w(y)-w(x)|\le |y-x|^{1/2}\|w'\|_{L^2(I)}.
 \label{eq:nl-1}
\end{equation}
The integral changes sign when its endpoints are reversed. This is
the fundamental theorem for Sobolev functions in one dimension
\cite[Theorem 8.2]{Brezis}. To bound $\|w\|_\infty$, start at a point
where $|w|\le |I|^{-1/2}\|w\|_2$ and use \eqref{eq:nl-1}. Apply this
argument to both $f$ and $f'$ to obtain
\begin{equation}
 \|f\|_{C^1(\overline I)}\le C_I\|f\|_{H^2(I)}.
 \label{eq:nl-2}
\end{equation}
In particular, the representative of $f$ is $C^1$. Bounded sets in
$H^2(I)$ have uniformly bounded, equicontinuous functions and first
derivatives. Arzel\`a--Ascoli, applied to the pair $(f,f')$, gives a
subsequence converging uniformly in both entries. Passing to
$f(y)-f(x)=\int_x^y f'$ identifies the second entry as the derivative
of the first. This proves compactness of $H^2(I)\hookrightarrow C^1$;
the periodic version follows by the same argument
\cite[Theorems 4.25 and 8.8]{Brezis}.

We also need smooth approximation without extra boundary conditions.
At a left endpoint translated to zero, extend $f$ to negative $x$ by
$3f(-x)-2f(-2x)$. The extension agrees in value and first derivative
at zero, and its second derivative is in $L^2$. There is no delta
term at the joining point, as integration by parts on the two sides
shows using those matching values. The extension operator is bounded
in $H^2$ by change of variables. Extend similarly at the right endpoint,
apply a cutoff outside $I$, and convolve with smooth mollifiers.
Convolution of each weak derivative converges in $L^2$, so the resulting
smooth functions converge in $H^2$, and hence in $C^1$
\cite[Theorem 4.22]{Brezis}. On $\T$ use periodic convolution.

The space $H^2(I)$ is closed under multiplication, with a bounded
bilinear product map: in
$(fg)''=f''g+2f'g'+fg''$, the factors $f,g,f',g'$ are bounded by
\eqref{eq:nl-2}, and the remaining factors are in $L^2$.
Smooth approximation proves this product rule for weak derivatives.
The argument applies componentwise to vector and matrix products.

Here is the coordinate-change result we shall use. Let $x:J\to I$ be
a monotone $C^1$ bijection between bounded intervals, with $x\in H^2(J)$
and $0<c\le |x'|\le C$. Composition by this fixed map is bounded from
$H^2(I)$ to $H^2(J)$, and
\begin{equation}
 (h\circ x)'=(h'\circ x)x',\qquad
 (h\circ x)''=(h''\circ x)(x')^2+(h'\circ x)x''.
 \label{eq:nl-3}
\end{equation}
Indeed the two second-derivative terms have $L^2(J)$ norms at most
$C^2c^{-1/2}\|h''\|_{L^2(I)}$ and
$\|h'\|_\infty\|x''\|_{L^2(J)}$, respectively. Change of variables
bounds the zeroth and first derivatives too. Establish the formula for
smooth $h$ and pass by smooth approximation with these estimates.
The inverse $\sigma=x^{-1}$ is $C^1$ and $H^2$, with
\begin{equation}
 \sigma'=\frac{1}{x'\circ\sigma},\qquad
 \sigma''=-\frac{x''\circ\sigma}{(x'\circ\sigma)^3}.
 \label{eq:nl-4}
\end{equation}
For justification, $x'$ is absolutely continuous and bounded away
from zero. Its reciprocal is absolutely continuous, with derivative
$-x''/(x')^2$. Composing that function with the Lipschitz monotone
inverse preserves absolute continuity; change of variables proves
this by the integral characterization of absolute continuity.
The first formula then has the displayed $L^2$ derivative.
All measure substitutions use $|x'|$, so decreasing coordinates are
included.

\subsection{Weak limits and finite measures}
The Hilbert spaces $H^2$ and $L^2$ are reflexive, and every bounded
sequence has a weakly convergent subsequence
\cite[Theorem 3.18]{Brezis}. In a Hilbert space, weak convergence
$z_j\rightharpoonup z$ implies $\|z\|\le\liminf\|z_j\|$: when
$z\ne0$, test against $z/\|z\|$. If the norms also converge, then
\begin{equation}
 \|z_j-z\|^2=\|z_j\|^2+\|z\|^2-2\langle z_j,z\rangle\longrightarrow0.
 \label{eq:nl-5}
\end{equation}
This identity will turn equality of bending energies into strong
convergence.

A finite Radon measure on a compact metric space is a finite Borel
measure with inner and outer regularity. We use signed measures too;
$|\nu|$ denotes the total variation of a signed measure $\nu$.
The Riesz representation theorem represents bounded linear functionals
on continuous functions by finite signed Radon measures, and positive
functionals by positive measures \cite[Theorems 11.1 and 11.26]{Knapp}.
A sequence with uniformly bounded total variation on $\T$ has a
weak-star convergent subsequence: choose a countable uniformly dense
subset of $C(\T)$, diagonalize its integrals, and use the variation
bound to extend convergence to every continuous function. Riesz
representation supplies the limiting measure. ``Weak-star'' below
means convergence of these continuous test integrals.

We use the Radon--Nikodym theorem for a finite signed measure $\nu$
absolutely continuous with respect to a finite positive measure $\lambda$:
there is $a\in L^1(\lambda)$ with $d\nu=a\,d\lambda$
\cite[Theorem 9.36]{Axler}. We also use dominated convergence for finite
measures and Tonelli's theorem for nonnegative integrands
\cite[Theorems 3.31 and 5.28]{Axler}. In each application below we give
the dominating function. Finally, if $q\in L^1$ and $|E_j|\to0$, then
$\int_{E_j}|q|\to0$: truncate $|q|$ at a fixed large height, control
the bounded part by that height times $|E_j|$, and then control the
tail by integrability. This is absolute continuity of the integral.

Throughout, $d\xi$, $ds$, and $dx$ denote Lebesgue measure in the named
parameter; $d\ell$ denotes geometric arclength; and $d\mu$ will denote
a finite positive measure which may be singular with respect to $ds$.

\section{Geometric energy and variations in a fixed parameter}\label{sec:variation}

\subsection{Speed, normal projection, and normalization}
For this section $v:\T\to\R^3$ is any regular $H^2$ curve, and its
parameter is $\xi$. Define
\[
 r=|v_\xi|,\qquad \tau=\frac{v_\xi}{r},\qquad
 P=\Id-\tau\otimes\tau,\qquad n=Pv_{\xi\xi}.
\]
For vectors $a,b\in\R^3$, the tensor $a\otimes b$ is the matrix acting
by $(a\otimes b)z=a(b\cdot z)$; its $(\alpha,\beta)$ entry is
$a_\alpha b_\beta$. Thus $P$ is orthogonal projection onto the plane
perpendicular to $\tau$. For any nonzero $z$ we also use
$P_z^\perp=\Id-z\otimes z/|z|^2$, so $P=P_\tau^\perp=P_{v_\xi}^\perp$.
Regularity and compactness give $\min r>0$. The chain rule gives
$r_\xi=\tau\cdot v_{\xi\xi}$ and $\tau_\xi=n/r$ almost everywhere.
Consequently, with $d\ell=r\,d\xi$ and
$D_\ell=r^{-1}\partial_\xi$, the curvature vector is
\begin{equation}
 \kappa=D_\ell\tau=\frac{n}{r^2},\qquad
 v_{\xi\xi}=r_\xi\tau+r^2\kappa.
 \label{eq:nl-6}
\end{equation}
The first term records speed change; the second records bending.

For completeness, the chain rule here does not require classical
second derivatives. If $w\in H^1$ and its range lies in a compact
subset of the domain of a $C^1$ map $F$, approximate $w$ smoothly in
$H^1$ and uniformly. On a fixed compact neighborhood, $DF$ is bounded
and uniformly continuous. Hence
\[
 \|DF(w_j)w_j'-DF(w)w'\|_2
 \le \|DF(w_j)\|_\infty\|w_j'-w'\|_2
      +\|DF(w_j)-DF(w)\|_\infty\|w'\|_2\to0.
\]
Passing in the weak derivative identity proves $(F(w))'=DF(w)w'$.
Apply this with $w=v_\xi$ and the norm and normalization maps, whose
derivatives are bounded on an annulus separated from zero; compare
\cite[Corollary 8.11]{Brezis}.

The geometric functionals used in the sequel are
\begin{equation}
 \LL(v)=\int_\T r\,d\xi,\qquad
 \B(v)=\int_\T|\kappa|^2\,d\ell
       =\int_\T\frac{|Pv_{\xi\xi}|^2}{r^3}\,d\xi,
 \qquad \E(v)=\LL(v)\B(v).
 \label{eq:nl-7}
\end{equation}
Total curvature is the separate quantity
$\TC(v)=\int_\T|\kappa|\,d\ell
=\int_\T|Pv_{\xi\xi}|/r\,d\xi$.
Here a regular change of parameter is a monotone $C^1$ circle
diffeomorphism $\xi=\phi(\eta)$ whose lift is $H^2$ and whose
derivative is bounded away from zero. Under this change the second
derivative is $v_{\xi\xi}(\phi')^2+v_\xi\phi''$.
Projection kills the second term. The remaining bending density gains
$|\phi'|^4/|\phi'|^3=|\phi'|$, the factor for change of variables.
Thus $\LL$, $\B$, and $\TC$ are independent of the regular parameter.
A dilation by $\lambda>0$ multiplies $\LL$ by $\lambda$, $\kappa$ by
$\lambda^{-1}$, and $\B$ by $\lambda^{-1}$. In particular $\E$ is
invariant under dilation, translation, and reparametrization.

Given a regular embedded $w$ of type $K$, set
$L=\LL(w)$ and $z=(w-w(0))/L$. Its length is one. The increasing
arclength map $a_w(\xi)=\int_0^\xi|z_\eta|\,d\eta$ has $a_w(1)=1$,
a positive continuous derivative, and an $H^2$ lift. By
\eqref{eq:nl-3}--\eqref{eq:nl-4} its inverse is also $H^2$ and
$\widehat w=z\circ a_w^{-1}$ is $H^2$. It is anchored and unit
speed. Translation and positive dilation are ambient isotopies, and
reparametrization preserves the image. Therefore
\begin{equation}
 \widehat w\in\CC(K),\qquad \B(\widehat w)=\E(w)\ge e_K.
 \label{eq:nl-8}
\end{equation}
This procedure changes scale and then parameter. We will use its
energy identity without differentiating either normalization operation.

\subsection{Variations in an arbitrary parametrization}
Fix $h\in H^2(\T;\R^3)$ and let $v_t=v+th$, holding $\xi$ fixed.
The variation parameter is $t$; it is distinct from $\xi$.
Put $c=\min|v_\xi|$ and
$t_0=c/[2(1+\|h_\xi\|_\infty)]$. For $|t|\le t_0$, the speed is at
least $c/2$, so the curves remain regular. No embeddedness is needed
for this differentiation. Write $\eta=h_\xi$, $\zeta=h_{\xi\xi}$,
and use a dot for $d/dt$ at zero. Differentiating the defining
finite-dimensional maps gives
\[
 \dot r=\tau\cdot\eta,\quad \dot\tau=P\eta/r,\quad
 \dot P=-\dot\tau\otimes\tau-\tau\otimes\dot\tau,
 \quad \dot n=P\zeta-\dot\tau(\tau\cdot v_{\xi\xi})
                    -\tau(\dot\tau\cdot v_{\xi\xi}).
\]
Since $n\cdot\tau=0$ and $\tau\cdot v_{\xi\xi}=r_\xi$,
$n\cdot\dot n=n\cdot\zeta-(r_\xi/r)n\cdot\eta$.
Differentiating $|n|^2r^{-3}$ now gives
\begin{equation}
 \begin{split}
 D\B(v)[h]&=\int_\T\left(
 \frac{2n\cdot h_{\xi\xi}}{r^3}
 -\frac{2r_\xi n\cdot h_\xi}{r^4}
 -\frac{3|n|^2\tau\cdot h_\xi}{r^4}\right)d\xi,\\
 D\LL(v)[h]&=\int_\T\tau\cdot h_\xi\,d\xi,
 \end{split}\label{eq:nl-9}
\end{equation}
and the product rule yields
\begin{equation}
 D\E(v)[h]=\LL(v)D\B(v)[h]+\B(v)D\LL(v)[h].
 \label{eq:nl-10}
\end{equation}
In particular the projection varies, and its variation produces the
middle term of $D\B$. Dropping it before imposing unit speed would
change the answer.

Here is a justification of the differentiation under the integral.
For $|t|\le t_0$, the vectors $v_\xi+th_\xi$ stay in a compact annulus
away from zero. All coefficient derivatives of the integrand are
bounded there. Since $h_\xi$ is bounded, the absolute value of its
$t$ derivative is at most
$C(1+|v_{\xi\xi}|^2+|h_{\xi\xi}|^2)$, with $C$ depending on this
fixed variation. This is an $L^1(d\xi)$ bound independent of $t$.
Apply the fundamental theorem in $t$ to difference quotients and
then dominated convergence. The length calculation has the constant
majorant $\|h_\xi\|_\infty$.

The same estimates give Fr\'echet differentiability, with locally
bounded second derivative. We record the precise estimate because
later ambient trace approximations use it.
\begin{lemma}[Uniform quadratic remainder]\label{lem:taylor}
For each regular $v\in H^2$ there are an open $H^2$ ball $U$ centered
at $v$ and $C_v<\infty$ such that
\[
 |\E(w+h)-\E(w)-D\E(w)[h]|\le C_v\|h\|_{H^2}^2
\]
whenever the line segment from $w$ to $w+h$ lies in $U$.
\end{lemma}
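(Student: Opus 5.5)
We choose the ball and prove a bound on the second derivative, then apply Taylor's formula with integral remainder along the segment. Let $c=\min|v_\xi|>0$ and take $U$ to be the open $H^2$ ball of radius $\rho=c/(4C_\T)$ about $v$, where $C_\T$ is the periodic constant in \eqref{eq:nl-2}. For every $w\in U$ this gives $\|w_\xi-v_\xi\|_\infty<c/4$. Hence $|w_\xi|\in[3c/4,\ \|v_\xi\|_\infty+c/4]$, a fixed compact annulus $A$ away from zero. We also have $\|w_{\xi\xi}\|_2\le\|v\|_{H^2}+\rho=:M$. The ball is convex, so if the segment from $w$ to $w+h$ lies in $U$, then every point $w_s=w+sh$, $0\le s\le1$, satisfies these bounds. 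Moreover $\|h\|_{H^2}<2\rho$.

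Write $g(s)=\E(w+sh)$. The argument leading to \eqref{eq:nl-9} shows that $g$ is $C^1$ with $g'(s)=D\E(w_s)[h]$. We show next that $g$ is $C^2$ on $[0,1]$, and this justifies
\[
 g(1)-g(0)-g'(0)=\int_0^1(1-s)g''(s)\,ds .
\]
It therefore suffices to prove $|g''(s)|\le C_v\|h\|_{H^2}^2$ uniformly in $s$.

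\textbf{Bounding $g''$.} Write the bending integrand as $F(p,q)=|P_p^\perp q|^2|p|^{-3}$ with $p=w_\xi$ and $q=w_{\xi\xi}$. The function $F$ is smooth on $A\times\R^3$ and is quadratic in $q$. Consequently its second derivatives satisfy, on $A\times\R^3$,
\[
 |\partial_p^2F|\le C(1+|q|^2),\qquad |\partial_p\partial_qF|\le C(1+|q|),\qquad |\partial_q^2F|\le C,
\]
with $C$ depending only on $A$. Along the segment, $p$ moves in the direction $h_\xi$ and $q$ in the direction $h_{\xi\xi}$. The second $s$-derivative of $F$ is therefore bounded by
\[
 C\bigl[(1+|q_s|^2)|h_\xi|^2+(1+|q_s|)|h_\xi||h_{\xi\xi}|+|h_{\xi\xi}|^2\bigr].
\]
We have $\|h_\xi\|_\infty\le C_\T\|h\|_{H^2}$ and $\|q_s\|_2\le M$. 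Applying Cauchy--Schwarz to the mixed term, the integral of this bound is at most $C(1+M)^2\|h\|_{H^2}^2$. The same domination lets us differentiate $g'(s)$ under the integral a second time, exactly as in the justification of \eqref{eq:nl-9}. Continuity of $g''$ follows from dominated convergence, using the $L^2$ continuity of $s\mapsto q_s$.

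\textbf{Length and the product.} For the length, $\partial_p^2|p|$ is bounded on $A$. This gives $|D^2\LL|\le C\|h_\xi\|_\infty^2$, together with the bounds $|\LL|\le C$ and $|D\LL[h]|\le C\|h\|_{H^2}$. For $\E=\LL\B$ the product rule gives
\[
 g''=\LL''\B+2\LL'\B'+\LL\B''.
\]
Each factor is controlled uniformly on $U$: $\B\le CM^2$ and $|D\B(w_s)[h]|\le C(1+M)^2\|h\|_{H^2}$, the latter from \eqref{eq:nl-9}. Combining these yields $|g''(s)|\le C_v\|h\|_{H^2}^2$.

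The main obstacle is the \emph{uniformity}. Speed bounds and $L^2$ bounds on second derivatives must hold at every point of the segment, not merely at $v$. The choice of $U$ through the $C^1$ embedding \eqref{eq:nl-2} is what makes this work. The second differentiation under the integral also has to be justified, and this needs the majorant $C(1+|q_s|^2+|h_{\xi\xi}|^2)\bigl(1+\|h_\xi\|_\infty\bigr)^2$. This majorant is not independent of $s$ pointwise. Its integral is, however, uniformly bounded, and the pointwise form is handled by bounding $|q_s|\le|w_{\xi\xi}|+|h_{\xi\xi}|$, which gives a single $L^1$ majorant.
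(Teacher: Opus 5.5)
Your proposal is correct and follows essentially the same route as the paper. A ball of radius comparable to $c/C_S$ gives, along the whole segment, a uniform speed annulus and a uniform $L^2$ bound on $w''$. On that ball, the second derivatives of the integrand $w''\cdot G(w')w''$ (your $F(p,q)$) and of the length are bounded by $C\|h\|_{H^2}^2$, and the product rule together with Taylor's formula with integral remainder finishes the proof. The only cosmetic difference is that the paper caps the radius at $\min\{1,c/(4C_S)\}$ to obtain $\|w''\|_2\le\|v''\|_2+1$, whereas you absorb $\rho$ into your constant $M$; this works equally well.
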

\begin{proof}
Let $C_S$ be the constant in the periodic version of
\eqref{eq:nl-2}, and choose the radius
$\rho=\min\{1,c/(4C_S)\}$, where $c=\min|v'|$.
For $w\in U$ we have $|w'|\ge3c/4$,
$\|w'\|_\infty\le\|v'\|_\infty+C_S$, and
$\|w''\|_2\le\|v''\|_2+1$.
Write
\[
 G(z)=|z|^{-3}\left(\Id-\frac{z\otimes z}{|z|^2}\right),
 \qquad \B(w)=\int_\T w''\cdot G(w')w''\,d\xi.
\]
The norms of $G$, $DG$, and $D^2G$ have finite upper bounds $C_0,C_1,C_2$
on the fixed annulus containing all these first derivatives.
Twice differentiating along a line gives
\[
 D^2\B(w)[h,h]=\int_\T\bigl(
 2h''\cdot G(w')h''+4h''\cdot DG(w')[h']w''
 +w''\cdot D^2G(w')[h',h']w''\bigr)d\xi.
\]
Each term is integrable by Cauchy--Schwarz and bounded coefficients,
and their absolute sum is bounded by
\[
 2C_0\|h''\|_2^2+4C_1\|h''\|_2\|h'\|_\infty\|w''\|_2
                  +C_2\|h'\|_\infty^2\|w''\|_2^2
 \le C\|h\|_{H^2}^2.
\]
The differentiation is justified by the same $L^2$ product estimates;
continuity of the coefficients also makes these derivatives continuous
in $w$. For length,
$D^2\LL(w)[h,h]=\int|P_{w'}^\perp h'|^2/|w'|\,d\xi$.
The speed lower bound, \eqref{eq:nl-9}, and the uniform $H^2$ bounds
therefore bound $D^2(\LL\B)(w)[h,h]$ by $C\|h\|_{H^2}^2$ throughout
$U$. Apply twice the fundamental theorem of calculus to
$g(t)=\E(w+th)$:
$g(1)-g(0)-g'(0)=\int_0^1(1-t)g''(t)\,dt$.
This proves the claim. Repeated differentiation introduces only bounded
derivatives of $G$ multiplying bounded first derivatives of variations
and at most two $L^2$ second-derivative factors; the same estimates
therefore give derivatives of every order. This establishes the smooth
energy calculus used in \cite[Lemma 2.1]{RvM2026}.
\end{proof}

Now specialize the base curve to $u$ of length one and unit speed,
and write $s$ for its parameter, $T=u'$, and $k=u''$. Since
$r=1$ and $r_s=0$, \eqref{eq:nl-9}--\eqref{eq:nl-10} become
\begin{equation}
 D\E(u)[h]=2\int_\T k\cdot h''\,ds
 -3\int_\T|k|^2T\cdot h'\,ds
 +\B(u)\int_\T T\cdot h'\,ds.
 \label{eq:nl-11}
\end{equation}
This says nothing about the speed of $u+th$. Its change of speed is
already accounted for by the last two terms. In particular no condition
$T\cdot h'=0$ is imposed. The functional is continuous on $H^2$:
\begin{equation}
 |D\E(u)[h]|\le2\|k\|_2\|h''\|_2
             +4\B(u)\|h'\|_\infty\le C_u\|h\|_{H^2}.
 \label{eq:nl-12}
\end{equation}

\subsection{Curvature under ambient maps}
Let $\Phi$ be a smooth diffeomorphism of $\R^3$. At the curve $v$ put
\[
 b=D\Phi(v)\tau,\quad a=|b|,\quad
 P_b=\Id-\frac{b\otimes b}{|b|^2},\quad
 Q=P_bD\Phi(v),\quad c=P_bD^2\Phi(v)[\tau,\tau].
\]
Here $D\Phi$ is a matrix, $D^2\Phi[\tau,\tau]$ is the Hessian applied
bilinearly to the two vectors, $Q$ is a matrix, and $c$ is a vector.
The transformed speed is $ra$. Differentiating twice and using
\eqref{eq:nl-6} gives
\[
 (\Phi\circ v)_{\xi\xi}
 =r_\xi D\Phi(v)\tau
       +r^2\bigl(D\Phi(v)\kappa+D^2\Phi(v)[\tau,\tau]\bigr).
\]
Projection onto $b^\perp$ removes the first term. The curvature and
energy identities are therefore
\begin{equation}
 \widetilde\kappa=\frac{Q\kappa+c}{a^2},\quad
 \LL(\Phi\circ v)=\int_\T a\,d\ell,\quad
 \B(\Phi\circ v)=\int_\T a^{-3}|Q\kappa+c|^2\,d\ell.
 \label{eq:nl-13}
\end{equation}
They apply to arbitrary regular parameters. When the base is unit
speed, its ambient image still has speed $a$, which these formulas
retain.

\section{Measuring possible curvature loss}\label{sec:defect}

\subsection{A weak limit and a positive tensor measure}
Let $v_j$ be a sequence from Theorem~\ref{thm:no-loss}. Unit speed and
anchoring imply $|v_j(s)|\le1$, so
$\|v_j\|_{H^2}^2\le C+\B(v_j)$ is bounded. Take any weakly convergent
$H^2$ subsequence and call its limit $u$. On this entire subsequence,
compactness implies $v_j\to u$ in $C^1$. To see why a further extraction
is unnecessary, any subsequence has a further $C^1$ convergent
subsequence, and its limit must agree with its weak $H^2$ limit $u$.
A subsequence remaining a positive $C^1$ distance from $u$ would
contradict that fact. Put
\[
 T_j=v_j',\quad k_j=v_j'',\qquad T=u',\quad k=u''.
\]
Then $T_j\to T$ uniformly, $k_j\rightharpoonup k$ in $L^2$,
$u(0)=0$, and $|T_j|=|T|=1$. The weak product rule gives
$T\cdot k=\tfrac12(|T|^2)'=0$ almost everywhere; likewise
$T_j\cdot k_j=0$. In this unit-speed setting $ds$ is both parameter
measure and arclength measure for the base curves.

For a Borel set $E\subset\T$, define
$W_j(E)=\int_E k_j\otimes k_j\,ds$. This is a symmetric $3$ by $3$
matrix of finite signed measures. Each entry has total variation at
most $\|k_j\|_2^2$. Extract a weak-star limit simultaneously for its
nine entries along a further subsequence, and denote the limit by $W$.
All limits involving $W$ below use this extracted subsequence.
For continuous matrix fields $H$,
\begin{equation}
 \int_\T H:dW_j\longrightarrow\int_\T H:dW.
 \label{eq:nl-14}
\end{equation}
The contraction $H:C=\sum_{\alpha,\beta}H_{\alpha\beta}C_{\alpha\beta}$
is the Frobenius contraction; integration means summing the nine
entrywise integrals. In particular
$H:(z\otimes z)=\sum_{\alpha,\beta}H_{\alpha\beta}z_\alpha z_\beta
=z\cdot Hz$. Define the defect by
\begin{equation}
 \M=W-(k\otimes k)\,ds.
 \label{eq:nl-15}
\end{equation}
The ordinary part accounts for curvature already present in $u$.
We must show that the remaining matrix of measures vanishes.

For a constant vector $q\in\R^3$ and a nonnegative continuous function
$\varphi$, weak $L^2$ convergence gives
$\sqrt\varphi\,q\cdot k_j\rightharpoonup\sqrt\varphi\,q\cdot k$.
Indeed pairing against an $L^2$ test is equivalent to testing $k_j$
against the fixed $L^2$ vector $\sqrt\varphi\,q$ times that test.
Weak lower semicontinuity and \eqref{eq:nl-14} yield
\[
 \int\varphi(q\otimes q):d\M
 =\lim_j\int\varphi|q\cdot k_j|^2\,ds
                     -\int\varphi|q\cdot k|^2\,ds\ge0.
\]
Riesz representation implies that $q^T\M q$ is a positive measure.
Consequently $\M(E)$ is positive semidefinite for every Borel set $E$:
its quadratic form is nonnegative for every $q$.
Define its trace measure and mass by
\begin{equation}
 \mu=\tr\M,\qquad d=\mu(\T)=e_K-\B(u)\ge0.
 \label{eq:nl-16}
\end{equation}
The equality follows by using $H=\Id$ in \eqref{eq:nl-14} and the
minimizing property $\|k_j\|_2^2\to e_K$.

The positive trace controls every entry. Applied to
$e_\alpha+e_\beta$ and $e_\alpha-e_\beta$, positivity says that
$\tfrac12(\M_{\alpha\alpha}+\M_{\beta\beta})\pm\M_{\alpha\beta}$
are positive measures. Thus
$|\M_{\alpha\beta}|\le\tfrac12(\M_{\alpha\alpha}+\M_{\beta\beta})
\le\mu$ as measures: on each finite Borel partition, bound the
absolute value of each entry by the dominating positive measure and
sum; then take the supremum defining total variation.
Radon--Nikodym, applied entrywise, gives
\begin{equation}
 d\M=A\,d\mu,\qquad A=A^T,\quad A\ge0,\quad\tr A=1
                           \quad\mu\text{-almost everywhere}.
 \label{eq:nl-17}
\end{equation}
For the positivity assertion, each rational vector $q$ has
$q^TAq\ge0$ almost everywhere, since this is the density of a positive
measure. Take the intersection of the countably many resulting sets
of full measure and then approximate arbitrary $q$ by rational vectors.
Symmetry and trace follow from the corresponding measure identities.
The eigenvalues of $A$ are nonnegative with sum one, so its operator
norm and Frobenius norm are at most one. These bounds justify all
later contractions against $\mu$. If $\mu=0$, the entrywise domination
already proves $\M=0$.

\subsection{The defect has no tangential component}
Because $T_j\cdot k_j=0$,
\[
 \int|T\cdot k_j|^2\,ds
 \le\|T-T_j\|_\infty^2\|k_j\|_2^2\longrightarrow0.
\]
Use the continuous test matrix $T\otimes T$ in \eqref{eq:nl-14}.
The part $(k\otimes k)\,ds$ contributes zero since $T\cdot k=0$.
We obtain $\int T^TAT\,d\mu=0$. Its integrand is nonnegative, hence
zero almost everywhere. For a positive semidefinite matrix the
inequality $|z^TAw|^2\le(z^TAz)(w^TAw)$ follows by considering its
quadratic form on $z+tw$ for real $t$. Therefore $T^TAT=0$ forces
$AT=0$, and
\begin{equation}
 AT=0\quad\mu\text{-almost everywhere},\qquad \M T=0.
 \label{eq:nl-18}
\end{equation}
The second equality is an equality of vector measures: its $\alpha$
component is $\sum_\beta T_\beta\,d\M_{\alpha\beta}$.

\begin{remark}\label{rem:rank}
Normality gives rank at most two, rather than rank one. Positivity and
normality are precisely the properties needed below; no rank-one
representation or density relative to $ds$ is assumed.
\end{remark}

\subsection{Fixed ambient deformations}
Fix $X\in C_c^\infty(\R^3;\R^3)$ and set
$\Phi_t(x)=x+tX(x)$. For
$|t|<t_X=[2(1+\|DX\|_\infty)]^{-1}$,
\[
 |\Phi_t(x)-\Phi_t(y)|\ge(1-|t|\|DX\|_\infty)|x-y|.
\]
Thus $\Phi_t$ is injective. For any prescribed image $y$, the iteration
$x_{n+1}=y-tX(x_n)$ has successive differences decreasing by a fixed
factor below one, so it converges to the unique solution of
$\Phi_t(x)=y$. This proves surjectivity and a Lipschitz inverse.
The derivative $\Id+tDX$ is invertible, and the inverse function
theorem then gives a smooth inverse. The inverses depend continuously
on $t$ as well: their Lipschitz constant is at most two, and
$|\Phi_t^{-1}(y)-\Phi_s^{-1}(z)|\le2(|y-z|+|t-s|\|X\|_\infty)$.
All these facts also hold for $\Phi_{\lambda t}$, $0\le\lambda\le1$,
giving an ambient isotopy.
The images $\Phi_t\circ v_j$ therefore remain regular $H^2$ embeddings
of type $K$. Normalize them with \eqref{eq:nl-8}; this yields
\begin{equation}
 \E(\Phi_t\circ v_j)\ge e_K.
 \label{eq:nl-19}
\end{equation}

For the base $u$, define $b_t,a_t,Q_t,c_t$ by
\eqref{eq:nl-13}, and write $b_{t,j},a_{t,j},Q_{t,j},c_{t,j}$ for the
same quantities at $v_j,T_j$. At each fixed $|t|<t_X$ they converge
uniformly, and all the speeds $a_{t,j},a_t$ are at least $1/2$.
This uses uniform convergence of $v_j,T_j$ and uniform continuity of
$X$ and its first two derivatives on one compact neighborhood of
the images. In particular
$L_{t,j}=\int a_{t,j}\,ds\to L_t=\int a_t\,ds$.
Expanding the bending part of \eqref{eq:nl-13} gives a quadratic,
a linear, and a constant term in $k_j$:
\[
 \int a_{t,j}^{-3}\bigl(
 k_j\cdot Q_{t,j}^TQ_{t,j}k_j
   +2k_j\cdot Q_{t,j}^Tc_{t,j}+|c_{t,j}|^2\bigr)\,ds.
\]
Put $H_{t,j}=a_{t,j}^{-3}Q_{t,j}^TQ_{t,j}$ and
$H_t=a_t^{-3}Q_t^TQ_t$. Replacing $H_{t,j}$ by $H_t$ in the
quadratic term has error at most
$\|H_{t,j}-H_t\|_\infty\|k_j\|_2^2\to0$; then
\eqref{eq:nl-14} applies. The linear coefficient converges uniformly,
hence in $L^2$, so its pairing with weakly convergent $k_j$ converges.
The constant integrand converges uniformly. Multiplying by the
convergent length factor proves
\begin{equation}
 J(t):=\lim_j\E(\Phi_t\circ v_j)
 =L_t\left(\int_\T a_t^{-3}|Q_tk+c_t|^2\,ds
                   +\int_\T H_t:d\M\right).
 \label{eq:nl-20}
\end{equation}
This limit is taken at fixed $t$ along the further subsequence defining
$W$. We have not differentiated a weak limit.
Its first term is exactly $\E(\Phi_t\circ u)$.
By \eqref{eq:nl-19}, $J(t)\ge e_K$, whereas
$J(0)=\B(u)+d=e_K$.

\subsection{First variation of the limiting energy}
Write $G=DX(u)$, $\alpha=T\cdot GT$, and
$q=(\Id-T\otimes T)GT$. At zero,
\[
 a_0=1,\quad \dot a_0=\alpha,\quad Q_0=P_T:=\Id-T\otimes T,
 \quad \dot P_{b_t}|_0=-q\otimes T-T\otimes q,
 \quad \dot Q_0=\dot P_{b_t}|_0+P_TG.
\]
For $z\perp T$, $z\cdot\dot P_{b_t}z=0$, and hence
\[
 \left.\partial_t\bigl(a_t^{-3}|Q_tz|^2\bigr)\right|_0
                  =2z\cdot Gz-3\alpha|z|^2.
\]
Since $A$ is symmetric, positive, and annihilates $T$, diagonalize
$A$ on $T^\perp$. Applying this identity to an orthonormal eigenbasis
and multiplying by its nonnegative eigenvalues gives
$\dot H_0:A=2G:A-3\alpha\tr A=2G:A-3\alpha$.
This explains the contraction without assuming rank one.
Also $H_0:A=P_T:A=1$, and
$\dot L_0=\int\alpha\,ds$.

For $|t|\le t_X/2$, the coefficients of \eqref{eq:nl-20} and their
$t$ derivatives are uniformly bounded, with $a_t\ge1/2$.
The derivatives of the ordinary integrand are bounded by
$C(1+|k|^2)\in L^1(ds)$, and the defect integrands and derivatives by
$C\in L^1(\mu)$, using $|A|\le1$. The fundamental theorem in $t$
and dominated convergence therefore justify differentiation under both
integrals. Since $J$ has a two-sided minimum at zero, $J'(0)=0$.
Using \eqref{eq:nl-11} for the ordinary term, we obtain
\begin{equation}
 \begin{split}
 0={}&D\E(u)[X\circ u]+d\int_\T T\cdot DX(u)T\,ds\\
    &+2\int_\T DX(u):d\M
              -3\int_\T T\cdot DX(u)T\,d\mu.
 \end{split}\label{eq:nl-21}
\end{equation}
The term involving $d\,ds$ comes from varying the length factor in
$\E=\LL\B$ and remains necessary until the defect vanishes.
Only ambient fields have been tested; their values coincide at all
parameters with the same spatial image.

\subsection{Exclusion of atoms}
We next prove
\begin{equation}
 \mu(u^{-1}(\{p\}))=0\qquad(p\in\R^3).
 \label{eq:nl-22}
\end{equation}
There is nothing to prove for $p\notin u(\T)$. Each preimage of a point
in the image has a locally one-to-one neighborhood: projection onto
its tangent direction has derivative bounded away from zero there.
The fiber is closed and every one of its points is isolated. Its
compactness therefore makes it finite, say $u^{-1}(p)=\{s_1,\ldots,s_N\}$.

Choose $0<\delta<1/8$ so the arcs of radius $\delta$ about these
parameters are disjoint and
$|T(s)-T(s_i)|\le1/2$ on each arc. Uniform continuity and the finite
number of parameters permit these simultaneous choices. We use the
lift centered at $s_i$ on each arc. Projection onto $T(s_i)$ gives
$|u(s)-p|\ge |s-s_i|/2$ there. The compact set
$C_p=\T\setminus\bigcup_i(s_i-\delta/2,s_i+\delta/2)$
has image separated from $p$ by some distance $d_p>0$.
Fix $r_p=\min\{d_p/4,\delta/8\}$. For $0<r<r_p$,
\[
 u^{-1}(B_{2r}(p))\subset
 U_r:=\bigcup_i\{|s-s_i|\le4r\},\qquad |U_r|\le8Nr.
\]
Indeed the separation first puts the preimage inside the smaller
arcs, and then the projection estimate gives $|s-s_i|<4r$.
Absolute continuity of the integral gives
\begin{equation}
 E_r:=\int_{U_r}|k|^2\,ds\longrightarrow0.
 \label{eq:nl-23}
\end{equation}

Fix a smooth scalar cutoff $\chi$ equal to one on $[0,1]$ and zero
on $[2,\infty)$, with $0\le\chi\le1$, and put
$X_r(x)=\chi(|x-p|/r)(x-p)$. This is smooth, including at $p$, and
compactly supported; $DX_r$ is bounded independently of $r$ and equals
$\Id$ at $p$. For $h_r=X_r\circ u$ the weak chain rule gives
$h_r'=DX_r(u)T$ and
$h_r''=DX_r(u)k+D^2X_r(u)[T,T]$.
In \eqref{eq:nl-11}, the terms with $DX_r$ are bounded in absolute
value by $CE_r+C\B(u)|U_r|$. To handle the Hessian, write
$q=u(s)-p$, $\rho=|q|$, and $F(\rho)=\chi(\rho/r)$.
On $r\le\rho\le2r$ differentiation gives
\begin{equation}
 \begin{split}
 D^2X_r(u)[T,T]={}&2F'(\rho)\frac{q\cdot T}{\rho}T\\
 &+\left[F''(\rho)\frac{(q\cdot T)^2}{\rho^2}
       +F'(\rho)\left(\frac1\rho-
                          \frac{(q\cdot T)^2}{\rho^3}\right)\right]q.
 \end{split}\label{eq:nl-24}
\end{equation}
It is zero off this annulus. The bracket is bounded by $C/r^2$.
The first term is tangential and disappears when paired with $k$.

On the arc based at $s_i$, subtract $T(s)$ inside the integral for
$q$, then project normally to $T(s)$. For $s\ge s_i$ this gives
\begin{equation}
 |P_{T(s)}^\perp q|
 \le\int_{s_i}^s(a-s_i)|k(a)|\,da
 \le\frac{|s-s_i|^{3/2}}{\sqrt3}\|k\|_{L^2(U_r)}
 \le C r^{3/2}E_r^{1/2}.
 \label{eq:nl-25}
\end{equation}
The first bound follows by inserting
$T(t)-T(s)=-\int_t^s k(a)\,da$ and interchanging the triangular
integrals. Their absolute integral is at most
$|s-s_i|\int_{s_i}^s|k(a)|\,da<\infty$, so Fubini applies.
Reverse the interval to get the same estimate when $s<s_i$.
Equations \eqref{eq:nl-24}--\eqref{eq:nl-25} and Cauchy--Schwarz imply
\[
 \left|\int k\cdot D^2X_r(u)[T,T]\,ds\right|
 \le Cr^{-2}r^{3/2}E_r^{1/2}|U_r|^{1/2}E_r^{1/2}
 \le C_p E_r\to0.
\]
Thus
\begin{equation}
 D\E(u)[X_r\circ u]\to0,\qquad
 \int T\cdot DX_r(u)T\,ds\to0.
 \label{eq:nl-26}
\end{equation}
For each parameter off the fiber, $DX_r(u)$ is eventually zero; on
the fiber it equals $\Id$. Its uniform bound dominates all defect
integrands by a constant in $L^1(\mu)$. In \eqref{eq:nl-21}, the two
defect terms consequently tend to
$(2-3)\mu(u^{-1}(p))$. Together with \eqref{eq:nl-26} this proves
\eqref{eq:nl-22}. In particular the spatial pushforward
$u_\#\mu$, defined by $(u_\#\mu)(E)=\mu(u^{-1}(E))$, has no atoms.
Because every fiber is finite, spatial and parameter atomlessness are
equivalent. The spatial formulation is natural here: an ambient field
acts on all visits to a point together, and the exceptional crossings
below form a countable set in the image.

\section{Localizing on arbitrary sets of coincidence}\label{sec:localization}
The remaining difficulty is tangential coincidence. Two strands may
agree on a closed set with infinitely many complementary intervals.
We need a cutoff near such a set whose trace on the strands tends to
zero in $H^2$, even though its spatial derivative still expands their
normal directions. The next elementary lemma provides the necessary
cancellation of cutoff derivatives.

\subsection{A cutoff near a flat zero set}
\begin{lemma}\label{lem:flat-cutoff}
Let $I$ be a bounded open interval, let $F\Subset I$ be compact, and
let $g\in H^2(I;\R^m)$ satisfy $g=g'=0$ on $F$ in the continuous
representatives. There are smooth functions $0\le\chi_\varepsilon\le1$
which equal one on a neighborhood of $F$, are supported in
$F_{3\varepsilon}:=\{x:\dist(x,F)<3\varepsilon\}$, and satisfy
\[
 \|\chi_\varepsilon^{(q)}\|_\infty\le C_q\varepsilon^{-q}
 \ (q=1,2),\qquad
 \chi_\varepsilon g\to0\text{ in }H^2(I),\qquad
 \|\chi_\varepsilon'g\|_\infty\to0.
\]
They can be chosen to depend only on $F$ and $\varepsilon$, not on $g$;
the same cutoffs therefore work for finitely many such functions at once.
They also satisfy $\chi_\varepsilon\to\mathbf1_F$ pointwise.
\end{lemma}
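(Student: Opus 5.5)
Take the cutoff to be a mollified indicator of a neighborhood of $F$. Fix a standard even mollifier $\rho$ supported in $(-1,1)$ and set $\rho_\varepsilon=\varepsilon^{-1}\rho(\cdot/\varepsilon)$. Define
\[
 \chi_\varepsilon=\mathbf 1_{F_{2\varepsilon}}*\rho_\varepsilon ,
\]
for $\varepsilon$ so small that $F_{3\varepsilon}\Subset I$. The elementary properties follow directly from this choice.

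\begin{itemize}
\item $\chi_\varepsilon$ is smooth and $0\le\chi_\varepsilon\le1$.
\item It equals one on $F_\varepsilon$, which is a neighborhood of $F$.
\item It is supported in $F_{3\varepsilon}$.
\item Differentiating the mollifier gives $\|\chi_\varepsilon^{(q)}\|_\infty\le\|\rho^{(q)}\|_{L^1}\varepsilon^{-q}$.
\item The derivatives $\chi_\varepsilon',\chi_\varepsilon''$ vanish outside $F_{3\varepsilon}\setminus F_\varepsilon$.
\item If $x\notin F$, then $x\notin F_{3\varepsilon}$ for small $\varepsilon$, so $\chi_\varepsilon\to\mathbf 1_F$ pointwise.
\item The construction uses only $F$ and $\varepsilon$.
\end{itemize}

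The analytic core is a pair of pointwise Taylor bounds. For $x\in F_{3\varepsilon}\setminus F$, choose a nearest point $y\in F$ and let $J_x$ be the segment between $y$ and $x$. Its interior avoids $F$ and has length below $3\varepsilon$. Since $g(y)=g'(y)=0$, \eqref{eq:nl-1} and the integral form of Taylor's formula give
\[
 |g'(x)|^2\le 3\varepsilon\int_{J_x}|g''|^2,\qquad
 |g(x)|^2\le C\varepsilon^3\int_{J_x}|g''|^2 .
\]
The second bound at once gives $\|\chi_\varepsilon' g\|_\infty\le C\varepsilon^{1/2}\|g''\|_{L^2(I)}\to0$.

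For the $H^2$ convergence, expand $(\chi_\varepsilon g)''=\chi_\varepsilon''g+2\chi_\varepsilon'g'+\chi_\varepsilon g''$, and treat the lower-order terms similarly.
\begin{itemize}
\item The first two terms are bounded pointwise by $C\varepsilon^{-1}\int_{J_x}|g''|^2$ after squaring. Integrate in $x$ over $F_{3\varepsilon}\setminus F$ and apply Tonelli. A given $t$ lies in $J_x$ only when $|x-t|<3\varepsilon$, so the $x$-measure is at most $6\varepsilon$. Hence these terms have squared $L^2$ norm at most $C\int_{F_{3\varepsilon}\setminus F}|g''|^2$.
\item The set $F_{3\varepsilon}\setminus F$ decreases to the empty set, so its measure tends to zero. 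Absolute continuity of the integral then sends this bound to zero.
\item The same argument handles $\chi_\varepsilon g$ and $(\chi_\varepsilon g)'$, which are even smaller.
\item The last term satisfies $\|\chi_\varepsilon g''\|_2^2\le\int_{F_{3\varepsilon}}|g''|^2\to\int_F|g''|^2$.
\end{itemize}
Linearity in $g$ handles finitely many functions simultaneously.

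The main obstacle is the remaining claim $g''=0$ almost everywhere on $F$; the hypothesis gives only $g'=0$ on $F$. I would prove it at almost every $x\in F$ that is simultaneously a Lebesgue density point of $F$ and a point where $g'$ is differentiable with derivative $g''(x)$. Since $g'$ is absolutely continuous, almost every point qualifies. At such $x$, density produces points $y_n\in F\setminus\{x\}$ with $y_n\to x$. Then
\[
 g''(x)=\lim_n\frac{g'(y_n)-g'(x)}{y_n-x}=0 .
\]
This is the standard vanishing of Sobolev derivatives on level sets. Combined with the previous step, it gives $\chi_\varepsilon g\to0$ in $H^2(I)$.
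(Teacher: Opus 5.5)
Your proof is correct and follows essentially the paper's route. You use a mollified indicator of $F_{2\varepsilon}$, Taylor bounds for $g$ and $g'$ measured from the nearest point of $F$ and integrated with Tonelli, and $g''=0$ almost everywhere on $F$ via difference quotients of $g'$ along points of $F$. The differences are cosmetic. You count overlaps of the nearest-point segments where the paper splits $I\setminus F$ into disjoint strips; this also shows that the Taylor terms are controlled by $\int_{F_{3\varepsilon}\setminus F}|g''|^2$ alone. You use Lebesgue density points where the paper uses non-isolated points of $F$ together with countability of the isolated ones.
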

\begin{proof}
For empty $F$ take the zero function. Otherwise let
$\delta_F=\dist(F,\R\setminus I)>0$ and
$0<\varepsilon<\delta_F/4$. Convolve the indicator of
$F_{2\varepsilon}$ on $\R$ with a nonnegative smooth mollifier of
mass one supported in $(-\varepsilon/2,\varepsilon/2)$.
The convolution $\chi_\varepsilon$ equals one on $F_{3\varepsilon/2}$
and has support in $\overline{F_{5\varepsilon/2}}\subset F_{3\varepsilon}$.
The $L^1$ norms of the first two derivatives of the mollifier give the
stated derivative bounds. A fixed point outside the closed set $F$
is eventually outside the support, proving pointwise convergence.

We first show $g''=0$ almost everywhere on $F$. The absolutely
continuous function $g'$ is differentiable almost everywhere, with
derivative $g''$ by the fundamental theorem for Lebesgue integrals
\cite[Theorem 4.19]{Axler}. At a non-isolated point $x\in F$ where
this derivative exists, choose other points $x_n\in F$ tending to
$x$. Since $g'(x_n)=g'(x)=0$, these difference quotients are zero,
so $g''(x)=0$. The isolated points of $F$ are countable: each can be
isolated by a rational-endpoint interval, and such an interval
isolates at most one point. Hence they form a null set. This argument
applies to each component. Dominated convergence, with majorant
$|g''|^2$, now gives
\begin{equation}
 A_\varepsilon:=\int_{F_{3\varepsilon}}|g''|^2\,dx\longrightarrow0.
 \label{eq:nl-27}
\end{equation}
This remains valid if $F$ has positive measure.

The open set $I\setminus F$ has countably many interval components,
since each contains a rational number. Split a component with two
endpoints in $F$ at its midpoint. Its part within distance
$3\varepsilon$ of $F$ consists of two disjoint strips, each measured
from its endpoint in $F$ and of length at most $3\varepsilon$.
For each of the two possible outer components use the one endpoint
in $F$. Ignore midpoint and endpoint sets, which have measure zero.
On a strip with endpoint coordinate $t=0$, integration gives
$g'(t)=\int_0^t g''(a)\,da$ and
$g(t)=\int_0^t(t-a)g''(a)\,da$. Cauchy--Schwarz gives
\begin{equation}
 |g'(t)|^2\le t\int_0^t|g''(a)|^2\,da,\qquad
 |g(t)|^2\le\frac{t^3}{3}\int_0^t|g''(a)|^2\,da.
 \label{eq:nl-28}
\end{equation}
On a strip of length $L$, integrate these inequalities and use Tonelli
for their nonnegative integrands. The inner integrals are bounded by
$L^2/2$ and $L^4/12$, respectively. Summing over the disjoint strips
and using $g=g'=0$ on $F$ proves
\begin{equation}
 \varepsilon^{-2}\int_{F_{3\varepsilon}}|g'|^2\,dx
 +\varepsilon^{-4}\int_{F_{3\varepsilon}}|g|^2\,dx
                       \le C A_\varepsilon\to0.
 \label{eq:nl-29}
\end{equation}
There is no factor counting components, since the strips are disjoint.
The product rule
$(\chi_\varepsilon g)''=\chi_\varepsilon g''
+2\chi_\varepsilon'g'+\chi_\varepsilon''g$, together with
\eqref{eq:nl-27}--\eqref{eq:nl-29}, makes its $L^2$ norm tend to zero.
The first and zeroth derivative estimates follow from the same
bounds, proving strong $H^2$ convergence.
Finally every point of $F_{3\varepsilon}\setminus F$ has a nearest
endpoint in $F$ at distance at most $3\varepsilon$. The second
inequality in \eqref{eq:nl-28} gives
$|\chi_\varepsilon'g|\le C\varepsilon^{1/2}\|g''\|_{L^2(I)}$ there.
On $F$ and outside the support this product is zero. This proves its
uniform convergence to zero.
\end{proof}

\subsection{Graph charts near a coincidence set}
Fix $p\in u(\T)$ and its finite fiber $\{s_1,\ldots,s_N\}$.
Choose a unit vector $e$ with $e\cdot T(s_i)\ne0$ for every $i$.
There is such a vector because a finite union of planes cannot cover
$\R^3$. Use orthonormal coordinates $(x,y)\in\R\times\R^2$ with
first axis $e$. On pairwise disjoint parameter arcs about $s_i$,
choose a common $c>0$ with $|e\cdot T|\ge c$; the sign is fixed
on each arc, though it may differ between arcs. The function
$x(s)=e\cdot u(s)$ is strictly monotone there.

Choose a bounded open interval $I$ about $x_p=e\cdot p$ whose closure
lies inside every one of these coordinate images. Restrict each arc
to $U_i=x^{-1}(I)$ within that arc. It has the graph representation
$u(s)=(x(s),f_i(x(s)))$. The inverse coordinate $\sigma_i$ and
\eqref{eq:nl-3}--\eqref{eq:nl-4} show $f_i\in H^2(I;\R^2)$.
Explicitly, writing $y(s)$ for the other coordinates,
\[
 f_i'=(y'/x')\circ\sigma_i,\qquad
 f_i''=\left(\frac{y''}{(x')^2}
                  -\frac{y'x''}{(x')^3}\right)\circ\sigma_i.
\]
The denominator is bounded away from zero, $y',x'$ are bounded,
and $y'',x''$ lie in $L^2$. Change of variables with $|x'|$ verifies
the claimed $L^2$ second derivative.

No other parameters map to $p$. Therefore the compact complement
$\T\setminus\bigcup_iU_i$ has image at positive distance $d_V$ from
$p$. Choose
\[
 r_V=\frac14\min\{d_V,\dist(x_p,\partial I)\},\qquad
 V=B_{r_V}(p),\qquad V_0=B_{r_V/2}(p).
\]
Every parameter whose image enters $V$ belongs to a listed branch,
and the projection of $\overline V$ is compactly inside $I$.
The sets $V_0$ from these constructions cover the compact curve image;
choose a finite subcover. We use only these finitely many charts below.
Their branch numbers may differ.

\subsection{Compact subsets with a fixed group of coincident branches}
Within one chart, fix a branch $a$, a subset
$J\subset\{1,\ldots,N\}$ containing $a$, and $m\in\mathbb N$.
Define the following subset of the coordinate interval:
\begin{equation}
 \begin{split}
 F_{a,J,m}=\{x\in I:\ &\dist(x,\partial I)\ge m^{-1},\\
 &\dist((x,f_a(x)),\R^3\setminus V_0)\ge m^{-1},\\
 &f_b(x)=f_a(x),\ f_b'(x)=f_a'(x)\quad(b\in J),\\
 &|f_b(x)-f_a(x)|\ge m^{-1}\quad(b\notin J)\}.
 \end{split}\label{eq:nl-30}
\end{equation}
All defining functions are continuous, and the first condition confines
the set to a compact subinterval of $I$. Thus this is a compact set.
There are only countably many such sets over our finite chart cover.
We shall prove that every one carries zero defect mass on its graph.

Fix a nonempty $F=F_{a,J,m}$ and write
$K_F=\{(x,f_a(x)):x\in F\}$ and
$K_b=\{(x,f_b(x)):x\in F\}$ for $b\notin J$.
These compact spatial sets are disjoint: equality of spatial points
forces equality of first coordinates, contradicting the last condition
in \eqref{eq:nl-30}. Also $K_F\Subset V_0$.
Set
\[
 d_F=\min\{1,\dist(K_F,\R^3\setminus V),
                         \min_{b\notin J}\dist(K_F,K_b)\}>0,
 \qquad\eta=d_F/8,
\]
omitting the final minimum when there are no excluded branches.
Convolving the indicator of the $2\eta$ neighborhood of $K_F$ with
a nonnegative smooth mollifier of mass one and radius $\eta$ gives
$\psi\in C_c^\infty(V)$, with $0\le\psi\le1$, equal to one on the
$\eta$ neighborhood of $K_F$. Its support lies within distance
$3\eta$ of $K_F$, so it is zero on the $\eta$ neighborhood of every
$K_b$, since $d_F-\eta>3\eta$.

The cutoff must have those exact values also above nearby coordinates.
Here are simultaneous quantitative choices. Set
$d_I=\dist(F,\R\setminus I)$,
$I_F=\{x:\dist(x,F)\le d_I/2\}$,
$L_F=1+\max_b\|f_b'\|_{C(I_F)}$, and
\[
 \varepsilon_0=\min\{d_I/12,\eta/(6L_F)\}.
\]
For $0<\varepsilon<\varepsilon_0$ and $x\in F_{3\varepsilon}$,
choose $x_0\in F$ with $|x-x_0|<3\varepsilon$.
The segment between them lies in $I_F$, and
\[
 |(x,f_b(x))-(x_0,f_b(x_0))|
 \le L_F|x-x_0|<3L_F\varepsilon<\eta/2.
\]
Hence $\psi$ is identically one in a neighborhood of these graph
points for $b\in J$, and identically zero in a neighborhood for
$b\notin J$. In particular all derivatives of $\psi$ vanish at them.
All objects $F,\psi,d_I,L_F$ have been fixed before $\varepsilon$
is allowed to vary.

\subsection{An expanding field whose trace becomes small}
Take $\chi_\varepsilon$ from Lemma~\ref{lem:flat-cutoff} for $F$ and
restrict to $0<\varepsilon<\varepsilon_0$. In the chart define
\begin{equation}
 X_\varepsilon(x,y)=\chi_\varepsilon(x)\psi(x,y)(0,y-f_a(x)).
 \label{eq:nl-31}
\end{equation}
Extend by zero outside the chart. The support of $\psi$ has its
$x$ projection compactly in $I$, so this extension is $C^1$.
The field need not be smooth, because $f_a$ is only $H^2$.
We must justify its use in the smooth-field balance
\eqref{eq:nl-21}.

For each fixed $\varepsilon$, choose smooth $f_{a,n}\to f_a$ in
$H^2(I)$ and replace $f_a$ by $f_{a,n}$ in \eqref{eq:nl-31},
obtaining $X_{\varepsilon,n}\in C_c^\infty(\R^3;\R^3)$.
Then
\begin{equation}
 X_{\varepsilon,n}\circ u\to X_\varepsilon\circ u\text{ in }H^2(\T),
 \qquad
 DX_{\varepsilon,n}(u)\to DX_\varepsilon(u)\text{ uniformly}.
 \label{eq:nl-32}
\end{equation}
For the uniform assertion, only $f_{a,n}-f_a$ and its first derivative
occur in the spatial derivative of the difference; both converge
uniformly. For the trace assertion, every nonzero trace lies in a
captured branch $U_b$. There the difference is a product of fixed
$H^2$ functions (the cutoffs composed with $u$) and
$(f_a-f_{a,n})\circ x$. The bounded composition map
\eqref{eq:nl-3} and bounded $H^2$ product map show convergence in
$H^2(U_b)$. The support stays away from the endpoints of $U_b$,
because the $x$ projection of $\supp\psi$ is compactly in $I$.
Extension by zero therefore introduces no jumps in value or derivative.
Sum the estimates on the finitely many branches to obtain
\eqref{eq:nl-32} on the circle.

Insert $X_{\varepsilon,n}$ into \eqref{eq:nl-21} and let $n\to\infty$.
The first term converges by \eqref{eq:nl-12}; all other terms converge
by uniform convergence of the matrices and finiteness of $ds$ and
$\mu$, using $|A|\le1$. Thus the balance is valid for
$X_\varepsilon$ with exactly the displayed integrals. This argument
is for fixed $\varepsilon$; only after this limit do we send
$\varepsilon$ to zero.

For a branch $b\in J$, set $g_b=f_b-f_a$. It satisfies $g_b=g_b'=0$
on $F$. The constancy of $\psi$ gives the exact trace
$X_\varepsilon\circ u=\chi_\varepsilon(x(s))(0,g_b(x(s)))$
on this branch. For $b\notin J$ the trace is zero wherever
$\chi_\varepsilon$ is supported. Off the chart the trace is zero.
Lemma~\ref{lem:flat-cutoff}, followed by the fixed bounded composition
map, proves
\begin{equation}
 h_\varepsilon:=X_\varepsilon\circ u\longrightarrow0
                              \quad\text{in }H^2(\T).
 \label{eq:nl-33}
\end{equation}
In particular $D\E(u)[h_\varepsilon]\to0$.

\subsection{The derivative still sees the whole normal defect}
Let $e_x=(1,0,0)$ in the current orthonormal coordinates and define
\begin{equation}
 R_a(x)=\Id-(1,f_a'(x))\otimes e_x,
 \qquad R_a(v_x,v_y)=(0,v_y-f_a'(x)v_x).
 \label{eq:nl-34}
\end{equation}
Along a branch $b\in J$ over the support of $\chi_\varepsilon$,
differentiation of \eqref{eq:nl-31} gives
\begin{equation}
 DX_\varepsilon(u)=\chi_\varepsilon R_a
          +\chi_\varepsilon'(0,g_b)\otimes e_x.
 \label{eq:nl-35}
\end{equation}
There are no terms from differentiating $\psi$, since it is constant
on a neighborhood of these points. On the excluded branches all
terms are zero. The second matrix in \eqref{eq:nl-35} tends uniformly
to zero by the flat-zero lemma. The first is uniformly bounded.
Thus $DX_\varepsilon(u)$ is bounded independently of $\varepsilon$
and converges pointwise to $R_a(x)$ on
$S_F:=u^{-1}(K_F)$, and to zero elsewhere. Indeed all preimages of
$K_F$ lie in the captured branches, and \eqref{eq:nl-30} identifies
them precisely with the branches in $J$ above $F$.

At a parameter in $S_F$, the tangent is parallel, possibly with the
opposite orientation, to $w=(1,f_a'(x))$. Therefore $R_aT=0$.
Although $R_a$ is an oblique rather than orthogonal projection, it
has the needed normal contraction. By \eqref{eq:nl-18}, $Aw=0$
for $\mu$-almost every such parameter. Entrywise contraction gives
\begin{equation}
 R_a:A=\tr A-\sum_{\alpha,\beta}w_\alpha(e_x)_\beta A_{\alpha\beta}
       =1-e_x\cdot Aw=1
                        \quad\mu\text{-almost everywhere on }S_F.
 \label{eq:nl-36}
\end{equation}
Symmetry of $A$ is used in the middle identity.

Now pass to the limit in \eqref{eq:nl-21} for $X_\varepsilon$.
The first term tends to zero by \eqref{eq:nl-33} and
\eqref{eq:nl-12}. The scalar
$T\cdot DX_\varepsilon(u)T$ tends to zero everywhere: on $S_F$
use $R_aT=0$, and elsewhere use the zero matrix limit.
Its uniform bound is an integrable constant for both $ds$ and
$\mu$, so both scalar integrals vanish in the limit.
Finally $DX_\varepsilon(u):A$ tends almost everywhere with respect
to $\mu$ to $\mathbf1_{S_F}$, by \eqref{eq:nl-36}.
Its absolute value is bounded by a fixed constant because $|A|\le1$.
Dominated convergence yields
\begin{equation}
 0=2\mu(S_F),\qquad \mu(u^{-1}(K_F))=0.
 \label{eq:nl-37}
\end{equation}
Thus every compact set of the form \eqref{eq:nl-30} has zero defect
mass on its spatial graph.

\subsection{The exceptional points form a countable set}
Fix two graphs $f_a,f_b$ in a chart. A zero of $f_a-f_b$ at which
$f_a'-f_b'\ne0$ is isolated among the coincidence coordinates.
Indeed one scalar component of the derivative is nonzero and keeps
its sign on a neighborhood. That component is strictly monotone,
so it has at most one zero there. The isolated coordinates are
countable, by rational isolating intervals. Taking their spatial
images, then taking the finite union over branch pairs and charts,
gives a countable spatial set $E$. By the spatial atom result,
\begin{equation}
 \mu(u^{-1}(E))=0.
 \label{eq:nl-38}
\end{equation}
The spatial formulation applies directly to the countable image set $E$;
equivalently, its preimage is countable because all fibers are finite.

For any $p\in u(\T)\setminus E$, choose a chart with $p\in V_0$
and a branch $a$ representing it, at coordinate $x$.
Let $J$ consist of all branches whose values at $x$ equal $f_a(x)$.
Their first derivatives agree, because otherwise $p\in E$.
All excluded branches have a positive gap at $x$. Also
$\dist(x,\partial I)>0$ and
$\dist(p,\R^3\setminus V_0)>0$. Take $a_p>0$ to be the minimum
of these finitely many positive quantities, omitting the gap minimum
if no branch is excluded. Choose
$m=1+\lceil a_p^{-1}\rceil$, so $m^{-1}<a_p$.
Then $x\in F_{a,J,m}$.

This includes an isolated single strand ($J=\{a\}$) as well as any
number of tangentially coincident strands. Hence the countably many
compact spatial graphs $K_F$, together with $E$, cover $u(\T)$.
Countable subadditivity, \eqref{eq:nl-37}, and \eqref{eq:nl-38} imply
$\mu(\T)=0$. The entrywise domination by $\mu$ then gives
\begin{equation}
 \M=0.
 \label{eq:nl-39}
\end{equation}
No decomposition of the contact set into finitely many arcs has been
used. In particular the argument covers closed coincidence sets
with positive measure and arbitrarily many gaps.

\section{Strong limits and vanishing self-avoidance penalties}\label{sec:compactness}

\subsection{Proof of Theorem~\ref{thm:no-loss}}
On the further subsequence defining $W$, use $H=\Id$ in
\eqref{eq:nl-14} and $\M=0$ to obtain
\begin{equation}
 \B(u)=\|u''\|_2^2=\lim_j\|v_j''\|_2^2=e_K.
 \label{eq:nl-40}
\end{equation}
The identity depends only on the fixed weak limit $u$, and hence is
independent of the matrix-measure extraction. On the entire original weakly convergent subsequence,
$\|v_j''\|_2^2\to e_K=\|u''\|_2^2$ by its minimizing property.
Together with weak convergence, \eqref{eq:nl-5} gives
\[
 \|v_j''-u''\|_2^2
 =\|v_j''\|_2^2+\|u''\|_2^2
                         -2\int v_j''\cdot u''\,ds\to0.
\]
The functions and first derivatives already converge uniformly,
hence in $L^2$ on the circle of measure one. Thus $v_j\to u$ strongly
in $H^2$ on that entire subsequence. Boundedness in
Section~\ref{sec:defect} supplies a weakly convergent subsequence from
any minimizing sequence; the argument just given makes it strongly
convergent. Anchoring and unit speed pass by $C^1$ convergence.
This proves Theorem~\ref{thm:no-loss}.

\subsection{The ropelength regularization produces minimizing sequences}
We recall the geometric penalty to connect this theorem with elastic
knots. For a regular curve $v$ and distinct parameters $s,t$, let
\[
 r_{\rm tp}[v](s,t)=
 \frac{|v(t)-v(s)|^2}
 {2|P_{\tau(s)}^\perp(v(t)-v(s))|},\qquad
 \tau(s)=v'(s)/|v'(s)|.
\]
This is the tangent-point radius. Give it value $+\infty$ if the
chord is nonzero and its normal projection is zero, and value zero
if the chord is zero. Its infimum over distinct parameters is the
thickness $\Delta(v)=\operatorname{Thi}(v)$. Define
$\Rop(v)=\LL(v)/\Delta(v)$, with value $+\infty$ for zero thickness.
These quantities are independent of regular reparametrization, and
ropelength is invariant under dilation. We use two facts from thickness
theory: a smooth closed embedding has positive thickness, and a regular
embedded curve with positive thickness has a $C^{1,1}$ arclength
parametrization, meaning a continuously differentiable representative
with Lipschitz tangent \cite[Lemma 2]{GMSM2002}. These equivalent
thickness formulations and their curvature/self-distance relations are
developed in \cite{GMSM2002,LitherlandEtAl1999}; see also the summary
in \cite[Section 2.2]{RvM2026}.

For $\theta>0$, consider global minimizers in $\CC(K)$ of
\[
 E_\theta(v)=\B(v)+\theta\Rop(v).
\]
Their existence is \cite[Theorem 2.1]{GRM}; denote any choice by
$v_\theta$. A smooth competitor gives finite minimum, so
$\Rop(v_\theta)<\infty$ and the thickness is positive. The preceding
regularity statement applies in their prescribed unit-speed parameter,
giving $v_\theta\in C^{1,1}$. Weak $H^2$ subsequential limits as
$\theta\downarrow0$ are called elastic knots
\cite[Definition 2.3]{GRM}; see also \cite[Definition 3.1]{RvM2026}.

\begin{proposition}\label{prop:elastic-minimizing}
For every choice of global minimizers,
\[
 \B(v_\theta)\to e_K,\qquad
 \theta\Rop(v_\theta)\to0,\qquad E_\theta(v_\theta)\to e_K.
\]
Every elastic-limit subsequence converges strongly in $H^2$ to its
weak limit, whose bending energy equals $e_K$.
\end{proposition}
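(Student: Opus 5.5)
The argument is a standard squeeze: compare $v_\theta$ with a fixed smooth competitor that is almost optimal, and then invoke Theorem~\ref{thm:no-loss}.

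\textbf{Step 1: an almost optimal competitor.} Fix $\delta>0$ and choose $w\in\CC(K)$ with $\B(w)<e_K+\delta$. We may take $w$ smooth. Indeed, a minimizing sequence can be chosen among smooth embeddings: approximate any $H^2$ embedding in $H^2$ by periodic mollification. Mollification stays regular and, being $C^1$-close to an embedding, stays embedded and of the same type. The isotopy claim is the one delicate point; it is the standard $C^1$-stability of knot type and the one step I would expect to need a cited fact rather than a derivation. Renormalizing by \eqref{eq:nl-8} changes the energy only slightly, by continuity of $\E$ (Lemma~\ref{lem:taylor}).

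\textbf{Step 2: the squeeze.} A smooth closed embedding has positive thickness, so $\Rop(w)<\infty$. Minimality of $v_\theta$ together with $v_\theta\in\CC(K)$ gives
\[
 e_K\le\B(v_\theta)\le E_\theta(v_\theta)\le E_\theta(w)=\B(w)+\theta\Rop(w).
\]
Hence
\[
 \limsup_{\theta\downarrow0}E_\theta(v_\theta)\le e_K+\delta.
\]
Since $\delta$ is arbitrary, $E_\theta(v_\theta)\to e_K$ and $\B(v_\theta)\to e_K$. The difference is
\[
 0\le\theta\Rop(v_\theta)=E_\theta(v_\theta)-\B(v_\theta),
\]
so $\theta\Rop(v_\theta)\to0$ as well.

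\textbf{Step 3: strong convergence.} Let $\theta_j\downarrow0$ be any elastic-limit subsequence, with $v_{\theta_j}\rightharpoonup u$ weakly in $H^2$. By Step 2, $(v_{\theta_j})$ is a sequence in $\CC(K)$ with $\B(v_{\theta_j})\to e_K$. Theorem~\ref{thm:no-loss} then states that this entire subsequence converges strongly to its weak limit $u$, and that $\B(u)=e_K$. This gives the last assertion.
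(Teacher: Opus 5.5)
Your proposal is correct and follows the paper's proof essentially step for step: smooth competitors via periodic mollification and $C^1$ isotopy stability (which the paper also cites rather than derives), the squeeze against a fixed almost-optimal smooth $w\in\CC(K)$ of finite ropelength chosen before $\theta\downarrow0$, and then Theorem~\ref{thm:no-loss}. One small correction to Step~1: normalization via \eqref{eq:nl-8} preserves $\E$ exactly, so continuity of $\E$ is needed only for $\E(w_n)\to\E(v)=\B(v)$ along the mollified curves, not to control the renormalization.
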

\begin{proof}
We first check that smooth competitors compute the same bending
infimum. We use the following knot-theoretic stability result: a
regular $C^1$ embedded closed curve has a $C^1$ neighborhood consisting
of regular embeddings ambient isotopic to it \cite{Reiter2005};
see also \cite[Corollary 1.5]{Blatt2009} and
\cite[Lemma 2.3]{RvM2026}.
For $v\in\CC(K)$, periodic convolution yields smooth $w_n\to v$
strongly in $H^2$, and hence in $C^1$. Eventually they lie in that
neighborhood. Their speeds are bounded away from zero. In the
representation $\B(w)=\int w''\cdot G(w')w''$ used above,
$G(w_n')\to G(v')$ uniformly, and $w_n''\to v''$ in $L^2$.
Indeed replacing the coefficient costs at most
$\|G(w_n')-G(v')\|_\infty\|w_n''\|_2^2$, and replacing the two
second-derivative factors costs at most
$\|G(v')\|_\infty\|w_n''-v''\|_2(\|w_n''\|_2+\|v''\|_2)$.
Both errors tend to zero. Length also converges by uniform convergence
of speed, so $\E(w_n)\to\E(v)=\B(v)$. Normalize each smooth $w_n$
using \eqref{eq:nl-8}. This proves the assertion about the infimum.

Fix $\eta>0$ and choose a smooth $w_\eta\in\CC(K)$ with
$\B(w_\eta)<e_K+\eta$. Its ropelength
$R_\eta=\Rop(w_\eta)$ is finite. Minimality and positivity give
\[
 e_K\le\B(v_\theta)\le E_\theta(v_\theta)
                            \le e_K+\eta+\theta R_\eta.
\]
For $0<\theta<\eta/(1+R_\eta)$ the excess is below $2\eta$.
Since $w_\eta$ is fixed before $\theta$ tends to zero, these inequalities
prove both energy limits. Finally
$0\le\theta\Rop(v_\theta)
=E_\theta(v_\theta)-\B(v_\theta)
\le E_\theta(v_\theta)-e_K\to0$.
Theorem~\ref{thm:no-loss} proves the strong convergence assertion.
\end{proof}

\begin{remark}
All later conclusions based on no loss and ambient stationarity apply
to every strong limit of a bending-minimizing sequence. Elastic knots
are among these limits. The converse selection statement, that every
such limit is obtained from ropelength minimizers, has not been proved.
\end{remark}

\subsection{Different closures}
Let $S_K$ be the strong $H^2$ closure of $\CC(K)$, let
$W_K^{\rm seq}$ be the set of weak $H^2$ limits of sequences in
$\CC(K)$, and put $D_K=\overline{\CC(K)}^{C^1}\cap H^2$.
Their members are anchored and unit speed. For weak sequential
limits this follows from compact embedding, as at the start of
Section~\ref{sec:defect}. Strong $H^2$ convergence implies weak $H^2$
and $C^1$ convergence, so $S_K\subset W_K^{\rm seq}\subset D_K$.
The recovery theorem in Appendix~\ref{rc:section},
Theorem~\ref{rc:theorem}, proves the reverse inclusion by
constructing strongly convergent embedded approximants for every
$u\in D_K$. Consequently
\[
 S_K=W_K^{\rm seq}=D_K,\qquad
 \inf_{S_K}\B=\inf_{W_K^{\rm seq}}\B=\inf_{D_K}\B=e_K.
\]
For the infimum statement, continuity of $\B$ on strong $H^2$ limits
of unit-speed curves gives $\B(u)\ge e_K$ for $u\in S_K$, while
$\CC(K)\subset S_K$ gives the other inequality.
The recovery construction preserves the topology of its local
replacements even when the original $C^1$ approximants have unbounded
bending energy. It is independent of the no-loss proof above.
The later extension to arbitrary $C^1$ ambient fields with $H^2$ traces
uses this recovery result. The no-loss proof uses smooth ambient tests
and the specific $C^1$ fields \eqref{eq:nl-31}, whose use is justified
by the strong $H^2$ trace approximation \eqref{eq:nl-32}; it does not
use full closure recovery.

\begin{remark}
The notation $W_K^{\rm seq}$ concerns limits of sequences. We do not
identify it with closure in the unrestricted weak topology. A weakly
convergent sequence is bounded, whereas arbitrary weak neighborhoods
do not impose a uniform energy bound. The theorem about minimizing
sequences therefore supplies no assertion about general weak nets.
\end{remark}

\section{The free equation and the contact reaction}\label{ct:sec:structure}
A material variation assigns a vector to each parameter. An ambient
variation assigns a vector to each spatial point. Thus, when two parameters
have the same image, an ambient variation moves them together. This
restriction explains both the free equation away from contact and the
balance laws at contact.

For positive-thickness rod models, Schuricht and von der Mosel derive
contact-pair measures and balance laws \cite{SchurichtMosel2003,
SchurichtMosel2004}. Earlier contact regularity for a distance-obstacle model in
\cite[Theorems 3.2--3.3]{Mosel1999} includes free arcs and, under
its geometric assumption (G), $W^{3,1}$ regularity near isolated simple
touching parameters. Here the starting information is ambient
stationarity at arbitrary coincidence sets. The resulting full material
distribution need not be a measure; it is not identified with the
multipliers of the positive-thickness constraint.

In this section let $u:\T\to\R^3$ be a normalized strong limit supplied
by Theorem~\ref{thm:no-loss}. Write
\[
 T=u',\qquad k=u''\in L^2(\T;\R^3),\qquad
 \sigma=\B(u)=e_K,\qquad |T|=1.
\]
All derivatives in this section are with respect to the material
arclength parameter $s$ of the base curve. Specializing the previously
derived arbitrary-parametrization formula gives
\begin{equation}\label{ct:eq:variation}
 D\E(u)[h]=\int_\T
 \{2k\cdot h''+(\sigma-3|k|^2)T\cdot h'\}\,\dd s,
 \qquad h\in H^2(\T;\R^3).
\end{equation}
The line $u+\varepsilon h$ need not have unit speed. In particular,
\eqref{ct:eq:variation} already includes its changes in speed and length.

\subsection{Which ambient variations are available?}
\begin{proposition}[Ambient minimality]\label{ct:prop:ambient}
For every $C^2$ diffeomorphism $\Phi:\R^3\to\R^3$ ambient isotopic
to the identity in the sense of Section~\ref{sec:framework},
\[
 \E(\Phi\circ u)\ge\E(u).
\]
Consequently
\begin{equation}\label{ct:eq:ambient}
 D\E(u)[H\circ u]=0\qquad
       (H\in C_c^2(\R^3;\R^3)).
\end{equation}
\end{proposition}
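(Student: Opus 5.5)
The plan is to transfer the inequality $\E(\Phi\circ v_j)\ge e_K$ from the embedded approximants to the limit by strong $H^2$ continuity of composition with $\Phi$. By Theorem~\ref{thm:no-loss}, $u$ is the strong $H^2$ limit of some sequence $v_j\in\CC(K)$ with $\B(v_j)\to e_K$, and $\E(u)=\LL(u)\B(u)=1\cdot e_K=e_K$. Fix $\Phi$ as in the statement.

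\emph{Step 1: the composed approximants are admissible.} Since $\Phi$ is a diffeomorphism, $\Phi\circ v_j$ is injective, and its speed $|D\Phi(v_j)T_j|$ is bounded below. Indeed $D\Phi$ is invertible and continuous, so $|D\Phi(x)^{-1}|$ is bounded on the compact set $\overline{B_1(0)}$, which contains all the images. The image $\Phi(v_j(\T))$ is the time-one image of $v_j(\T)$ under the given ambient isotopy. By the definition in Section~\ref{sec:framework}, it is therefore of type $K$. The weak chain rule gives $(\Phi\circ v_j)''=D\Phi(v_j)k_j+D^2\Phi(v_j)[T_j,T_j]$, so $\Phi\circ v_j\in H^2$. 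Normalizing with \eqref{eq:nl-8} yields $\E(\Phi\circ v_j)\ge e_K$.

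\emph{Step 2: continuity.} I will show $\Phi\circ v_j\to\Phi\circ u$ strongly in $H^2$. Values and first derivatives converge uniformly because $v_j\to u$ in $C^1$ and $\Phi$ is $C^1$ with $D\Phi$ uniformly continuous on the compact set. For the second derivative, split the difference into three parts:
\[
 D\Phi(v_j)(k_j-k),\qquad \bigl(D\Phi(v_j)-D\Phi(u)\bigr)k,\qquad
 D^2\Phi(v_j)[T_j,T_j]-D^2\Phi(u)[T,T].
\]
The first part tends to zero in $L^2$ because $D\Phi$ is bounded and $k_j\to k$ in $L^2$. The second and third tend to zero uniformly, hence in $L^2$; here only uniform continuity of $D\Phi$ and $D^2\Phi$ on a compact neighborhood is needed. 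This is exactly where $C^2$ regularity of $\Phi$ suffices. The speeds stay uniformly bounded below. The continuity argument from the proof of Proposition~\ref{prop:elastic-minimizing}, with $\B(w)=\int w''\cdot G(w')w''$ and uniform convergence of lengths, then gives $\E(\Phi\circ v_j)\to\E(\Phi\circ u)$. Hence $\E(\Phi\circ u)\ge e_K=\E(u)$.

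\emph{Step 3: the first variation.} Let $H\in C_c^2(\R^3;\R^3)$ and $\Phi_t=\Id+tH$. For $|t|<t_H=[2(1+\|DH\|_\infty)]^{-1}$, the argument of Section~\ref{sec:defect} (bi-Lipschitz estimate, contraction iteration for surjectivity, inverse function theorem in the $C^2$ category) shows that $\Phi_t$ is a $C^2$ diffeomorphism. The family $\lambda\mapsto\Phi_{\lambda t}$, $0\le\lambda\le1$, is an ambient isotopy from the identity, with jointly continuous inverses by the stated estimate. Step 2 then applies and gives $g(t):=\E(u+tH\circ u)\ge g(0)$ for all small $|t|$, since $\Phi_t\circ u=u+t\,H\circ u$ exactly. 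Here $h=H\circ u\in H^2$ by the chain rule. By Lemma~\ref{lem:taylor}, $\E$ is Fr\'echet differentiable at $u$ along this line, so $g'(0)=D\E(u)[H\circ u]$. A two-sided interior minimum then forces \eqref{ct:eq:ambient}.

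The main obstacle is Step 2. One must make sure that only $C^2$, rather than smooth, regularity of $\Phi$ is used, and that the convergence of the second-derivative term relies on strong (not merely weak) $L^2$ convergence of $k_j$. This is precisely where Theorem~\ref{thm:no-loss} enters: with weak convergence alone, the energy could drop in the limit.
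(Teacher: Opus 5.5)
Your proposal is correct and follows essentially the same route as the paper. The paper likewise takes strong $H^2$ approximants $v_j$ from Theorem~\ref{thm:no-loss}, proves strong $H^2$ convergence $\Phi\circ v_j\to\Phi\circ u$ via the second-derivative chain rule, applies normalization \eqref{eq:nl-8} and continuity of $\E$, and finishes with two-sided minimality along the isotopy $\Id+tH$. One small slip: the term $\bigl(D\Phi(v_j)-D\Phi(u)\bigr)k$ does not tend to zero uniformly, since $k$ is only in $L^2$; it is bounded in $L^2$ by $\|D\Phi(v_j)-D\Phi(u)\|_\infty\|k\|_2\to0$, which is all you need.
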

\begin{proof}
Choose normalized embeddings $v_n$ of type $K$ with $v_n\to u$ in $H^2$
and $\B(v_n)\to e_K$. Their images and the image of $u$ lie in one compact
set. On that set $D\Phi$ and $D^2\Phi$ are bounded and uniformly continuous.
The identity
\[
 (\Phi\circ v_n)''=D^2\Phi(v_n)[v_n',v_n']+D\Phi(v_n)v_n''
\]
therefore proves convergence of the second derivatives in $L^2$: the first
summand converges uniformly, while for the second summand the error is
bounded by
\[
 \|D\Phi(v_n)-D\Phi(u)\|_\infty\|v_n''\|_2
       +\|D\Phi(u)\|_\infty\|v_n''-u''\|_2.
\]
The analogous formulas for orders zero and one give strong $H^2$
convergence. Invertibility of $D\Phi$ and compactness imply a positive
lower speed bound for $\Phi\circ u$. The curves $\Phi\circ v_n$ are
embeddings of type $K$. Normalize each by translation, dilation, and
arclength reparametrization as in Section~\ref{sec:variation}. Since this
preserves $\E$, their energies are at least $e_K$. Continuity of $\E$ on
regular $H^2$ curves gives the claimed inequality.

For the differential statement fix $H$, put $a=\|DH\|_\infty$, and take
$|\varepsilon|<[2(1+a)]^{-1}$. Then
\[
 |(x+\varepsilon H(x))-(y+\varepsilon H(y))|
           \ge(1-|\varepsilon|a)|x-y|.
\]
The map $\Phi_\varepsilon=\Id+\varepsilon H$ is onto as well: for each
$y$, $x\mapsto y-\varepsilon H(x)$ is a contraction of the complete space
$\R^3$, so its fixed point is the required preimage. The derivative
$\Id+\varepsilon DH$ is invertible, and the inverse function theorem
makes the inverse $C^2$. Replacing $\varepsilon$ by $t\varepsilon$,
$0\le t\le1$, gives an ambient isotopy. Moreover
\[
 (H\circ u)''=D^2H(u)[T,T]+DH(u)k\in L^2,
\]
so $h=H\circ u$ is an admissible $H^2$ direction. Minimality for positive
and negative $\varepsilon$ makes the derivative of
$\E(u+\varepsilon h)$ at zero equal to zero.
\end{proof}

\begin{proposition}[An additional $C^1$ statement]\label{ct:prop:C1ambient}
The recovery theorem in Appendix~\ref{rc:section} additionally gives
ambient minimality for a $C^1$ diffeomorphism $\Phi$ ambient isotopic
to the identity in the sense of Section~\ref{sec:framework}, provided
$\Phi\circ u$ is regular and $H^2$. It gives
$D\E(u)[H\circ u]=0$ for $H\in C_c^1$ provided $H\circ u\in H^2$.
\end{proposition}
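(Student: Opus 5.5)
The argument follows the proof of Proposition~\ref{ct:prop:ambient}, replacing smooth approximants of $u$ by approximants adapted to $\Phi$. The only step that used $C^2$ regularity of $\Phi$ was strong $H^2$ convergence of $\Phi\circ v_n$ to $\Phi\circ u$, via the identity $(\Phi\circ v_n)''=D^2\Phi(v_n)[v_n',v_n']+D\Phi(v_n)v_n''$. For a $C^1$ map this identity is unavailable, and $\Phi\circ v_n$ need not even lie in $H^2$. The plan is therefore to apply the recovery theorem, Theorem~\ref{rc:theorem}, directly to the curve $w=\Phi\circ u$ rather than to $u$.

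\emph{Step 1: normalize $w$.} By hypothesis $w$ is regular and $H^2$. Let $\widehat w$ be its normalization from \eqref{eq:nl-8}. This is legitimate because the translation, dilation, and arclength reparametrization steps need only a regular $H^2$ curve. Then $\E(\widehat w)=\E(w)$, and $\widehat w$ is anchored and unit speed.

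\emph{Step 2: show $\widehat w\in D_K$.} Choose $v_n\in\CC(K)$ with $v_n\to u$ in $C^1$; strong $H^2$ approximants exist by Theorem~\ref{thm:no-loss}. Since $D\Phi$ is uniformly continuous on a compact neighborhood of the images, $\Phi\circ v_n\to w$ in $C^1$. Each $\Phi\circ v_n$ is a $C^1$ regular embedding of type $K$, because $\Phi$ is ambient isotopic to the identity. These curves are only $C^1$, so I first invoke the $C^1$-neighborhood stability result cited in Proposition~\ref{prop:elastic-minimizing}: a smooth approximation of $\Phi\circ v_n$ in $C^1$ is a smooth embedding of the same type. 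Applying the $C^1$-continuous normalization of \eqref{eq:nl-8} then gives members of $\CC(K)$ converging to $\widehat w$ in $C^1$. To check $C^1$ continuity of the normalization, note that length and the arclength maps converge in $C^1$, and their inverses converge uniformly with derivatives. Hence $\widehat w\in\overline{\CC(K)}^{C^1}\cap H^2=D_K$.

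\emph{Step 3: conclude.} Theorem~\ref{rc:theorem} gives $\widehat w\in S_K$, and the infimum identity $\inf_{S_K}\B=e_K$ then yields $\E(\Phi\circ u)=\B(\widehat w)\ge e_K=\E(u)$. For the differential statement, let $H\in C^1_c$ with $h=H\circ u\in H^2$. The maps $\Phi_\varepsilon=\Id+\varepsilon H$ are $C^1$ diffeomorphisms ambient isotopic to the identity for small $|\varepsilon|$, by the contraction argument in the proof of Proposition~\ref{ct:prop:ambient}. The curve $\Phi_\varepsilon\circ u=u+\varepsilon h$ is $H^2$, and it is regular for small $\varepsilon$ since $h'$ is bounded. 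So $\varepsilon\mapsto\E(u+\varepsilon h)$ has a two-sided minimum at $0$, and its derivative, given by \eqref{eq:nl-11}, vanishes because $\E$ is differentiable along $H^2$ lines.

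The main obstacle is Step 2, namely making sure the input to Theorem~\ref{rc:theorem} is exactly a $C^1$ limit of embeddings of class $K$ with $H^2$ limit. The subtle points are that the approximating $\Phi\circ v_n$ are merely $C^1$, and that normalization must be continuous in $C^1$ and not just in $H^2$. If the recovery theorem is instead stated for $C^1$ embeddings directly, the smoothing via the stability lemma can be skipped.
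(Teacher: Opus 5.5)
Your proposal is correct and follows essentially the same route as the paper. You compose $C^1$-convergent embedded approximants with $\Phi$, smooth them within their $C^1$ isotopy-stability neighborhoods, and use $C^1$ continuity of the normalization \eqref{eq:nl-8} to place the normalized $\Phi\circ u$ in $D_K$; you then apply Theorem~\ref{rc:theorem} and differentiate the resulting two-sided inequality along $\Id+\varepsilon H$.
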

\begin{proof}
A $C^1$ sequence of same-class embeddings converging to $u$ remains a
$C^1$ sequence of same-class embeddings after composition with $\Phi$.
By $C^1$ isotopy stability \cite{Reiter2005,Blatt2009}, smooth each
composed embedding within its isotopy neighborhood, with errors tending
to zero. Thus the regular $H^2$ curve $\Phi\circ u$ belongs to the same
geometric $C^1$ closure. Normalization \eqref{eq:nl-8} is continuous
in $C^1$: positive speeds, lengths, arclength maps and their inverses
converge uniformly, and the normalized tangent formula gives convergence
of derivatives. Its normalized curve therefore lies in $D_K$.
Use Theorem~\ref{rc:theorem} to obtain
$\E(\Phi\circ u)\ge e_K$. For $H$ the maps $\Id+\varepsilon H$ are
$C^1$ diffeomorphisms and an isotopy by the preceding contraction
argument. Their traces are the regular $H^2$ curves
$u+\varepsilon(H\circ u)$, so differentiating their two-sided energy
inequality proves stationarity.
\end{proof}
\begin{remark}\label{ct:rem:ambienttrace}
The assumption $H\circ u\in H^2$ cannot be omitted for a general $C^1$
field. All the localization arguments below instead use smooth fields;
they do not depend on Proposition~\ref{ct:prop:C1ambient} or on recovery
in the full $C^1$ closure.
\end{remark}

\subsection{Contact parameters and local ambient tests}
The contact set in parameter space is
\begin{equation}\label{ct:eq:contactset}
 S_u=\{s\in\T:\text{there is }t\ne s\text{ with }u(t)=u(s)\}.
\end{equation}
A fiber is the set $u^{-1}(p)$ of all visits to a spatial point $p$.
Contact is a geometric coincidence; it does not yet say that the first
variation has a nonzero residual there.

\begin{lemma}[Separation of visits]\label{ct:lem:fibers}
A regular closed $C^1$ curve is injective on sufficiently short parameter
intervals, with one interval length valid everywhere. Its fibers have a
uniform finite cardinality bound, and its parameter contact set is closed.
\end{lemma}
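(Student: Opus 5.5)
The plan has three parts: a uniform local-injectivity length, a fiber count based on that length, and closedness of the contact set from compactness of the circle. Throughout, let $u:\T\to\R^3$ be regular and $C^1$, and put $c=\min|u'|>0$. By uniform continuity of $u'$ on the compact circle, choose $\delta\in(0,1/4)$ with $|u'(t)-u'(s)|\le c/2$ whenever $|t-s|\le\delta$.

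For the first claim, let $s<t$ be parameters with $t-s\le\delta$, measured along the lift, and write $e=u'(s)/|u'(s)|$. Projecting the chord onto $e$ gives
\[
 e\cdot(u(t)-u(s))=\int_s^t e\cdot u'(a)\,da\ge\int_s^t\bigl(|u'(s)|-c/2\bigr)\,da\ge\tfrac c2(t-s)>0 .
\]
Hence $u$ is injective on every arc of length $\delta$, and this single $\delta$ works at every base point. This is the same projection argument used for \eqref{eq:nl-22}, made uniform.

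For the second claim, suppose two distinct points of a fiber $u^{-1}(p)$ lay in a common closed arc of length $\delta$. Their images would then differ by the previous step, which is impossible. Now cover $\T$ by $M=\lceil 1/\delta\rceil$ closed arcs of length $\delta$. Each arc contains at most one point of any fiber, so every fiber has at most $M$ points, and this bound does not depend on $p$.

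For the third claim, let $s_n\in S_u$ with $s_n\to s$. Pick partners $t_n\ne s_n$ with $u(t_n)=u(s_n)$. By the first step, $\dist(t_n,s_n)>\delta$ on the circle. Pass to a subsequence with $t_n\to t$. Then $\dist(t,s)\ge\delta$, so $t\ne s$, and continuity gives $u(t)=u(s)$. Thus $s\in S_u$, and $S_u$ is closed.

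The only step requiring care is the first one. The separation constant must be independent of the base point, so the estimate has to come from uniform continuity of $u'$ rather than from a pointwise inverse-function argument. That separation is also exactly what keeps the contact partners $t_n$ from collapsing onto $s_n$ in the closedness argument.
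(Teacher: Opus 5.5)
Your proof is correct and follows the paper's argument essentially step for step. Uniform continuity of the tangent gives a uniform chord--arc lower bound on short arcs, a finite partition into such arcs bounds every fiber, and the forced separation $\dist_\T(s_n,t_n)>\delta$ of contact partners passes to the limit to give closedness; the only cosmetic difference is that you treat a general regular parametrization directly with the threshold $c/2$, whereas the paper writes out the unit-speed case first and then notes that the same argument gives $c_0|r|/2$.
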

\begin{proof}
First suppose the curve is $u$ and is unit speed. Uniform continuity of
$T$ permits $0<\eta<1/4$ such that
$|T(t)-T(s)|\le1/2$ whenever $\dist_\T(s,t)\le\eta$.
Use a real lift of the shorter interval from $s$ to $s+r$, $|r|\le\eta$.
Since $T(s)\cdot T(t)\ge1/2$ along it, integration gives
\[
 |u(s+r)-u(s)|\ge |r|/2\quad(0<|r|\le\eta).
\]
For a general regular curve $w$, use its unit tangent and
$c_0=\min|w'|>0$; the same argument gives $c_0|r|/2$.
Partition a fundamental parameter period into finitely many half-open
intervals shorter than $\eta$. Each has at most one preimage of a fixed
point, giving the fiber bound. Finally, if $s_n\in S_w$ tends to $s$,
choose partners $t_n\ne s_n$. The chord bound forces
$\dist_\T(s_n,t_n)>\eta$. A subsequence of $t_n$ converges to $t$;
continuity yields $w(t)=w(s)$ and $\dist_\T(s,t)\ge\eta$. Thus $s\in S_w$.
\end{proof}

We need to turn a test supported on an isolated material arc into a limit
of ambient tests. The next statement also handles several exact copies
of an arc. By an affine isometry of parameter intervals we mean, in
chosen real lifts, a map $s\mapsto\varepsilon s+b$ with
$\varepsilon\in\{-1,1\}$.
\begin{lemma}[Density of smooth ambient traces]\label{ct:lem:tracedensity}
Let $W$ be an interval where $u$ is injective. Suppose disjoint intervals
$W_j=\varphi_j(W)$, $1\le j\le m$, satisfy $\varphi_1=\Id$ and
\begin{equation}\label{ct:eq:bundle}
 u(\varphi_j(s))=u(s),\qquad
 u^{-1}(u(s))=\{\varphi_1(s),\ldots,\varphi_m(s)\}\quad(s\in W),
\end{equation}
where every $\varphi_j$ is an affine isometry. Given
$h\in C_c^\infty(W;\R^3)$, define $\widetilde h$ on $\T$ by
$\widetilde h(\varphi_j(s))=h(s)$ and zero elsewhere. There are
$H_n\in C_c^\infty(\R^3;\R^3)$ with
$H_n\circ u\to\widetilde h$ strongly in $H^2$.
\end{lemma}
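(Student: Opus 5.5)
We extend $h$ off the curve in the most naive way and then repair the resulting defect. Let $\supp h\subset[\alpha,\beta]\Subset W$. The spatial arc $\Gamma=u([\alpha,\beta])$ is compact. By \eqref{ct:eq:bundle}, the only parameters mapping into $\Gamma$ are those in $\bigcup_j\varphi_j([\alpha,\beta])$. On $W$ the curve is injective and $C^1$ regular, so $u|_{[\alpha,\beta]}$ is a $C^1$ embedding of an arc.

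\textbf{Step 1: an extension with the right trace.} Define on $\Gamma$ the function $h\circ(u|_W)^{-1}$. Its trace along every copy is exactly $\widetilde h$: on $W_j$ we have $u(\varphi_j(s))=u(s)$, and the other parameters do not reach $\Gamma$. Since $u$ is only $H^2$, we work in tubular-type coordinates. Pick $s_0\in[\alpha,\beta]$ and use a graph chart as in Section~\ref{sec:localization}, with first axis $e$ and $|e\cdot T|\ge c$ near $s_0$. On a short subarc, $u$ is the graph $x\mapsto(x,f(x))$ with $f\in H^2$. Write the coordinate inverse as $s=\sigma(x)$, and set
\[
 G(x,y)=\psi(x,y)\,h(\sigma(x)),
\]
where $\psi$ is a spatial cutoff equal to one near the arc and zero near the finitely many other (disjoint, compact) image pieces. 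The function $G$ is independent of $y$ near the graph, so every copy has trace $h\circ\sigma\circ x=h$, which lies in $H^2$ by \eqref{eq:nl-3}--\eqref{eq:nl-4}. We cover $[\alpha,\beta]$ by finitely many such charts and glue with a partition of unity in $s$, transported spatially as functions of $x$ in each chart. The result is $G\in C_c^1$ whose trace equals $\widetilde h$ exactly. It is only $C^1$, because $\sigma$ is merely $H^2$.

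\textbf{Step 2: smoothing.} Following the argument for \eqref{eq:nl-32}, replace $\sigma$ and the graph data by smooth approximants $\sigma_n\to\sigma$ in $H^2(I)$, and set $H_n(x,y)=\psi(x,y)\,h(\sigma_n(x))$. On each branch $W_j$ the trace is $h\circ\sigma_n\circ x\circ\varphi_j$, where $x(s)=e\cdot u(s)$ is $H^2$ with $|x'|\ge c$. The composition $h\circ\sigma_n\to h\circ\sigma$ converges in $H^2(I)$, by smoothness of $h$ and the chain-rule estimate. The fixed composition map \eqref{eq:nl-3} then gives $H^2$ convergence on each branch. The traces vanish near the endpoints of the branch intervals, so extension by zero causes no jumps.

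\textbf{Expected obstacle.} The delicate point is making the spatial cutoffs compatible with the exact-trace requirement. The cutoff $\psi$ must be identically one on a spatial neighborhood of all copies, which coincide exactly, and identically zero near every other visit of $u$. This requires separating $\Gamma$ from $u(\T\setminus\bigcup_j\varphi_j(W))$. Here we use that $\supp h$ is compact inside $W$ and that the fiber description \eqref{ct:eq:bundle} holds on all of $W$. Together these show that $u\bigl(\T\setminus\bigcup_j\varphi_j((\alpha',\beta'))\bigr)$ is a compact set disjoint from $\Gamma$ for slightly larger $(\alpha',\beta')\Subset W$, and hence at positive distance from it. Once that distance is fixed, the quantitative choices mirror those made for \eqref{eq:nl-31}.
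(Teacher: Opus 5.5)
Your proposal is essentially the paper's proof: a partition of unity, a height coordinate $e\cdot u$ with $H^2$ inverse, a smooth function of height converging in $H^2$ to $h$ composed with that inverse, a spatial cutoff from the exact-fiber condition, and the chain-rule estimate for composition with $e\cdot u$. The only variant is that you smooth $\sigma$ and compose with $h$, whereas the paper mollifies $h\circ r$ directly; either works, and your intermediate $C^1$ field $G$ is unnecessary.

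One point needs correcting. The cutoff must be chosen chart by chart, not as a single cutoff equal to one near all of $\Gamma$, as your last paragraph suggests. Each chart's field depends only on height, so its cutoff must equal one near the image of that chart's small segment. It must vanish near $u\bigl(\T\setminus\bigcup_j\varphi_j(U_1)\bigr)$, where $U_1$ is that chart's own slightly enlarged parameter interval. This excludes other parts of the bundle arc that cross the same height slab, exactly as the paper's sets $A=u(r(J))$ and $C$ do. Your compactness-plus-fiber argument, applied with $U_1$ in place of $(\alpha',\beta')$, gives this local separation.
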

\begin{proof}
Choose $\delta>0$ so small that tangents at circular distance at most
$2\delta$ differ by at most $1/2$. For the compact set $F=\supp h\Subset W$
choose
\[
 0<\rho<\min\{\delta/4,\dist_\T(F,\T\setminus W)/8\}.
\]
A finite smooth partition of unity near $F$ reduces $h$ to summands
supported in radius-$\rho$ intervals centered at points $s_*$ of $F$.
Such a partition is obtained from bumps supported in those intervals,
positive on radius-$\rho/2$ intervals covering $F$, by dividing by their
sum near $F$. It suffices to construct the traces for one summand.
Let
\[
 U_0=(s_*-3\rho/2,s_*+3\rho/2),\quad
 U_1=(s_*-2\rho,s_*+2\rho),\quad e=T(s_*),\quad q=e\cdot u.
\]
Their closures lie in $W$, the summand is supported in $U_0$, and
$q'\ge1/2$ on $U_1$. Thus $q$ is a $C^1$ bi-Lipschitz coordinate there.
Its inverse $r:q(U_1)\to U_1$ satisfies, almost everywhere,
\[
 r'=(q'\circ r)^{-1},\qquad r''=-\frac{q''\circ r}{(q'\circ r)^3},
 \qquad
 \int_{q(U_1)}|r''|^2\,\dd x
       =\int_{U_1}\frac{|q''|^2}{|q'|^5}\,\dd s\le32\|q''\|_2^2.
\]
Indeed $q'$ is absolutely continuous and separated from zero; its
reciprocal is absolutely continuous, and composition with the
bi-Lipschitz $r$ preserves that property and the one-dimensional
change-of-variable formula. These facts prove the displayed weak
second-derivative identity.

The vector function $f=h\circ r$, extended by zero to $\R$, belongs to
$H^2(\R)$ and has compact support in $q(U_0)$. Its derivatives are
$f'=h'(r)r'$ and $f''=h''(r)(r')^2+h'(r)r''$.
Choose a compact interval $J\Subset q(U_0)$ containing its support in
its interior. Convolution with smooth mollifiers of sufficiently small
radius gives $f_n\in C_c^\infty(\operatorname{int}J;\R^3)$ with
$f_n\to f$ in $H^2$ and $f_n'\to f'$ uniformly. The last assertion is
the one-dimensional embedding $H^1\subset C^0$ applied to $f_n'-f'$;
see \cite[Theorem 8.2 and Theorem 4.22]{Brezis}.

The compact sets
\[
 A=u(r(J)),\qquad
 C=u\left(\T\setminus\bigcup_{j=1}^m\varphi_j(U_1)\right)
\]
are disjoint by the exact-fiber condition. Choose a smooth compactly
supported function $\chi$ equal to one near $A$ and zero near $C$.
Here $r$ is the inverse on $U_1$ only, so $A$ contains precisely the
selected bundle segment, not other strands crossing the same height
slab. If $d=\dist(A,C)>0$, mollifying the indicator of the $d/4$
neighborhood of $A$ at radius less than $d/8$ gives such a cutoff.
Set $H_n(x)=\chi(x)f_n(e\cdot x)$. Outside the branch intervals,
$\chi$ vanishes near the image. On a branch $s=\varphi_j(t)$ with
$t\in U_1$, one has $u(s)=u(t)$; whenever $f_n(q(t))\ne0$,
$q(t)\in J$ and hence $u(t)\in A$. Thus the cutoff is identically
one there, and the trace on that branch is exactly $f_n(q(t))$.
The same argument applies to $f$.

For $g=f_n-f$, the chain rule gives
\[
 (g\circ q)''=g''(q)(q')^2+g'(q)q''.
\]
If $C_q=\|q'\|_\infty$, change of variables bounds the squared $L^2$
norm of the first term by $2C_q^4\|g''\|_2^2$. The second has $L^2$
norm at most $\|g'\|_\infty\|q''\|_2$. Both tend to zero; the same
substitution controls orders zero and one. Isometric pullback gives
these estimates on every copy. Adding the finitely many localized
summands proves the assertion.
\end{proof}

\subsection{A distribution supported at contact}
A vector distribution acts linearly on smooth vector test functions,
with each component continuous in the usual test-function topology.
For an integrable vector function $g$ its derivative is defined by
$\langle Dg,h\rangle=-\int g\cdot h'$. This defines derivatives even
when no classical derivative exists. The support of a distribution is
the complement of the union of all open sets on which it annihilates
every compactly supported smooth test.
\begin{definition}\label{ct:def:reaction}
The full material reaction is the parameter distribution
\begin{equation}\label{ct:eq:reaction}
 \mathfrak f_u=2D^2k-\sigma k+3D(|k|^2T),\qquad
 R_u=\supp\mathfrak f_u\subset\T.
\end{equation}
\end{definition}
Integration by parts in the definition gives
$\langle\mathfrak f_u,h\rangle=D\E(u)[h]$. In particular the signs in
\eqref{ct:eq:reaction} follow from $DT=k$ and
$\langle D(|k|^2T),h\rangle=-\int|k|^2T\cdot h'$.
With $\|h'\|_\infty\le C_S\|h\|_{H^2}$,
\begin{equation}\label{ct:eq:reactionbound}
 |\langle\mathfrak f_u,h\rangle|
 \le\{2\|k\|_2+|\sigma|+3C_S\|k\|_2^2\}\|h\|_{H^2}.
\end{equation}
Here the $\sigma$ term uses $\|T\|_2=1$. Thus $\mathfrak f_u\in H^{-2}$,
the continuous dual of $H^2$. On the unit parameter circle,
$\|h\|_{H^2}\le\sqrt3\max_{j\le2}\|h^{(j)}\|_\infty$, so it also has
distributional order at most two. This bound does not make it a measure.

\begin{theorem}[Free arcs, including their endpoint traces]\label{ct:thm:freearcs}
One has $R_u\subset S_u$. On a connected open interval $I$ where
$\mathfrak f_u=0$, there is a vector $J_I\in\R^3$ with
\begin{equation}\label{ct:eq:firstintegral}
 2k'+(3|k|^2-\sigma)T=J_I,\qquad |k|^2=\sigma+J_I\cdot T.
\end{equation}
The curve is real analytic on $I$, and on that whole interval
\begin{equation}\label{ct:eq:endpointbounds}
 |k|\le\sqrt{\sigma+|J_I|},\qquad
 |k'|\le\tfrac12\{|J_I|+3(\sigma+|J_I|)+\sigma\}.
\end{equation}
At either endpoint of $I$ in a real parameter lift, $(u,T,k)$ has
a one-sided limit. The free ODE solution extends analytically through
that limiting state, and all one-sided derivatives have finite traces.
This continuation need not agree with the original curve beyond $I$.
\end{theorem}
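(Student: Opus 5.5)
The inclusion $R_u\subset S_u$ comes first. Fix $s_0\notin S_u$. By Lemma~\ref{ct:lem:fibers}, $S_u$ is closed, so a small open interval $W\ni s_0$ misses $S_u$. After shrinking, $u$ is injective on $W$ and every point of $u(W)$ has exactly one preimage. This is precisely the bundle situation \eqref{ct:eq:bundle} with $m=1$. For $h\in C_c^\infty(W;\R^3)$, Lemma~\ref{ct:lem:tracedensity} gives smooth $H_n$ with $H_n\circ u\to h$ in $H^2$. Then \eqref{ct:eq:ambient} and the $H^{-2}$ bound \eqref{ct:eq:reactionbound} give $\langle\mathfrak f_u,h\rangle=\lim D\E(u)[H_n\circ u]=0$. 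Hence $\mathfrak f_u$ vanishes on $W$, and so $R_u\subset S_u$.

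Next, the first integral on a connected open interval $I$ with $\mathfrak f_u=0$. Write $g=2k+\int^s(3|k|^2-\sigma)T$, using a primitive on a lift of $I$. This $g$ is in $L^2_{loc}$, and the equation $\mathfrak f_u=0$ reads $D^2 g=0$, after checking that $-\sigma k=-\sigma DT$ combines correctly. Hence $g$ is affine, $g=J_Is+c$. It follows that $k\in H^1_{loc}$ and $2k'+(3|k|^2-\sigma)T=J_I$ almost everywhere. Bootstrapping this identity gives $k\in C^\infty$. Dot it with $T$ and use $T\cdot k=0$, so that $T\cdot k'=-|k|^2$: this yields $-2|k|^2+3|k|^2-\sigma=J_I\cdot T$, which is the second identity in \eqref{ct:eq:firstintegral}.

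Analyticity and the bounds are then routine. Rewrite the system as the first-order polynomial ODE
\[
u'=T,\qquad T'=k,\qquad k'=\tfrac12\{J_I-(3|k|^2-\sigma)T\}.
\]
Cauchy--Kovalevskaya, or simply analytic dependence for ODEs with polynomial right-hand side, gives real analyticity. Since $|T|=1$, the identity $|k|^2=\sigma+J_I\cdot T$ gives $|k|^2\le\sigma+|J_I|$. Inserting this into the ODE gives the $k'$ bound in \eqref{ct:eq:endpointbounds}.

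The endpoint traces are the step needing some care, though it is not hard. On a bounded lift of $I$, the right-hand side of the ODE is bounded by \eqref{ct:eq:endpointbounds}, so $u,T,k$ are Lipschitz on $I$ and have one-sided limits at each endpoint. The limiting state $(u_*,T_*,k_*)$ satisfies $|T_*|=1$, $T_*\cdot k_*=0$ and the first integral. The polynomial ODE has a local analytic solution through this state, which I take to be the free continuation. By uniqueness it coincides with the curve on $I$ near the endpoint. Hence all derivatives, which are polynomial in $(T,k)$, have finite one-sided traces. I would then add a remark that the continuation need not coincide with $u$ beyond $I$, because $\mathfrak f_u$ may be nonzero there.

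The main subtlety is that $\mathfrak f_u=0$ on $I$ is local. All test functions must therefore be supported in $I$, and the affine conclusion for $g$ holds only on each lift of $I$ separately, with no periodicity assumed. If $I=\T$, then affineness together with periodicity forces $J_I\cdot$ (the linear coefficient) to be handled consistently, and this case has to be checked as well.
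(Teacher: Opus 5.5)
Your proposal is correct and follows essentially the paper's proof. The steps match: trace density with $m=1$ plus the $H^{-2}$ bound give $R_u\subset S_u$; the identity $\mathfrak f_u=D\{2Dk+(3|k|^2-\sigma)T\}$ gives the first integral, and your extra primitive $g$ is harmless; the nine-dimensional polynomial ODE gives analyticity and the bounds; and Lipschitz endpoint limits with local uniqueness give the traces.

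The case $I=\T$ you flag at the end needs no separate check. The relation $Dg=J_I$ holds on every interval of the lift, and $2k'+(3|k|^2-\sigma)T=J_I$ is compatible with periodicity; only the auxiliary primitive $g$ fails to be periodic.
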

\begin{proof}
Take a smooth test with compact support in $\T\setminus S_u$. Cover its
support by finitely many short injective intervals whose closures are
still disjoint from $S_u$, and split the test using a smooth partition.
Each interval has only one visit to each of its image points, so
Lemma~\ref{ct:lem:tracedensity} with $m=1$ applies to each summand.
Equation~\eqref{ct:eq:ambient} and the bound
\eqref{ct:eq:reactionbound} pass zero first variation to its $H^2$
limit. Hence the original test is annihilated, proving $R_u\subset S_u$.

On $I$, equation~\eqref{ct:eq:reaction} says that
$D\{2Dk+(3|k|^2-\sigma)T\}=0$. A distribution with zero derivative
on an interval is constant: any compactly supported smooth test of
integral zero has a compactly supported smooth primitive, on which the
derivative identity can be tested. Subtracting a fixed test times the
integral proves the assertion for all tests. Apply this componentwise
to obtain the first identity in \eqref{ct:eq:firstintegral}.
It first gives $Dk\in L^1_{\rm loc}$. Integrating that density shows
that $k$ has a locally absolutely continuous representative. Then $k$
is continuous and the same equation gives $k\in C^1$. Repeated use of
the equation proves smoothness.

Now $T\cdot k=0$ follows by differentiating $|T|^2=1$, and
$T\cdot k'=-|k|^2$ follows by differentiating $T\cdot k=0$.
The scalar product of the first integral with $T$ is therefore
$-2|k|^2+3|k|^2-\sigma=J_I\cdot T$. This proves the second
identity and both bounds.

For analyticity consider the finite-dimensional initial-value problem
\begin{equation}\label{ct:eq:odesystem}
 u'=T,\qquad T'=k,\qquad
 k'=\tfrac12\{J_I-(3|k|^2-\sigma)T\}.
\end{equation}
Its vector field is polynomial in $(u,T,k)\in\R^9$; in particular it
is locally Lipschitz and real analytic. The analytic ODE theorem used
here states that an analytic finite-dimensional vector field has a
unique local analytic solution through every state, and every classical
solution through that state agrees with it. One can obtain the needed
vector version from the complex Picard proof in
\cite[Section 4.1]{Teschl}: extend the polynomial to $\mathbb C^9$,
replacing $|k|^2$ by $\sum k_i^2$; on a complex state ball of radius $R$,
let $M\ge1+\sup|P|$ and $A\ge1+\sup\|DP\|$. On the time disk of radius
$r=\min\{R/(4M),1/(4A)\}$, the integral operator
$y\mapsto y_0+\int_0^zP(y(\zeta))\,\dd\zeta$ preserves the ball of
holomorphic maps with $\|y-y_0\|_\infty\le R/2$ and contracts by at
most $1/4$. Uniform limits on smaller disks are holomorphic by the
Cauchy formula \cite[Theorems B.10--B.11]{Knapp}; termwise integration
of the local power series \cite[Theorem B.21]{Knapp} gives the
holomorphic primitive used by the operator. Geometric convergence of
the iterates gives the solution. Complex conjugation and uniqueness
make it real on real time. Real uniqueness follows from the integral
Lipschitz estimate, iterated to give $A^nt^n/n!$ times a fixed bound.
These observations check the version of the analytic theorem needed
for the nine-dimensional system.

Finally $u$ is bounded, $|T|=1$, and \eqref{ct:eq:endpointbounds} bounds
both $k$ and $k'$. Thus $u,T,k$ are Lipschitz on $I$ and have limits
at each endpoint of a real lift of $I$. Solve \eqref{ct:eq:odesystem} through that
limiting state. Local uniqueness identifies the new solution with the
old one on the one-sided overlap, proving extension and all the traces.
\end{proof}
\begin{remark}\label{ct:rem:componentbounds}
The bounds depend on the free component through $J_I$. With infinitely
many components this argument supplies no common bound. It therefore
does not prove global $C^{1,1}$ regularity or finiteness of the reaction
support. Two free solutions meeting at an endpoint may have different
curvature traces.
\end{remark}

\section{Isolated reactions}\label{ct:sec:balance}
The set $R_u$ lives in the material circle. An isolated point of $R_u$
has two free intervals next to it, so their endpoint data describe its
entire local reaction. Balancing a spatial contact needs this description
at every visit to that spatial point.

\begin{proposition}[Force and derivative of a point mass]\label{ct:prop:localreaction}
Let $s_0$ be isolated in $R_u$. Locally $u\in C^{1,1}$, and $u$ is
analytic on either side of $s_0$. Write $[g]=g(s_0^+)-g(s_0^-)$ and
$T_0=T(s_0)$. In a neighborhood of $s_0$,
\begin{equation}\label{ct:eq:localreaction}
 \mathfrak f_u=a\delta_{s_0}+b\delta'_{s_0},\qquad
 a=[J]=2[k']+3[|k|^2]T_0,\qquad b=2[k].
\end{equation}
The coefficients satisfy
\begin{equation}\label{ct:eq:localnormal}
 b\cdot T_0=0,\qquad a\cdot T_0=[|k|^2].
\end{equation}
In particular $b=0$ implies $a\perp T_0$. At a point outside $R_u$
the same local description holds with $a=b=0$.
\end{proposition}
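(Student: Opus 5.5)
Since $s_0$ is isolated in $R_u$, there is $\rho>0$ with $(s_0-\rho,s_0)$ and $(s_0,s_0+\rho)$ free intervals in the sense of Theorem~\ref{ct:thm:freearcs}. On each we obtain constants $J_\pm$ with $2k'+(3|k|^2-\sigma)T=J_\pm$. The theorem also gives analyticity on each side, the bounds \eqref{ct:eq:endpointbounds}, and one-sided traces of $(u,T,k,k')$ at $s_0$. Hence $k$ is bounded on the punctured neighbourhood. Since $k=u''\in L^2$ is a single $L^2$ function, this makes $T$ Lipschitz there, so $u\in C^{1,1}$ locally. Continuity of $T$ gives $T(s_0^\pm)=T_0$. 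The values $[k]$, $[k']$, $[|k|^2]$, and $[J]=J_+-J_-$ are well defined from these traces.

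Next I compute $\mathfrak f_u$ on a test $h\in C_c^\infty((s_0-\rho,s_0+\rho);\R^3)$ using \eqref{ct:eq:reaction}:
\[
\langle\mathfrak f_u,h\rangle=\int 2k\cdot h''-\sigma k\cdot h-3|k|^2T\cdot h'\,\dd s .
\]
I split the integral at $s_0$ and integrate by parts on each side, where everything is smooth up to the endpoint. The term $2k\cdot h''$ produces the boundary contribution $-2[k]\cdot h'(s_0)+2[k']\cdot h(s_0)$ plus the interior term $2k''\cdot h$. The term $-3|k|^2T\cdot h'$ produces $3[|k|^2]T_0\cdot h(s_0)$ plus $3(|k|^2T)'\cdot h$. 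The term $-\sigma k\cdot h$ I integrate against $h=$ derivative structure: writing $-\sigma k=-\sigma T'$, it produces no jump because $T$ is continuous. The interior density on each side is $\{2k'+(3|k|^2-\sigma)T\}'=J_\pm'=0$.

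The resulting identity is
\[
\langle\mathfrak f_u,h\rangle=\bigl(2[k']+3[|k|^2]T_0\bigr)\cdot h(s_0)-2[k]\cdot h'(s_0).
\]
With $\langle\delta'_{s_0},h\rangle=-h'(s_0)$, this is \eqref{ct:eq:localreaction} with $b=2[k]$ and $a=2[k']+3[|k|^2]T_0$. Subtracting the first integrals at $s_0^\pm$, and using continuity of $T$ (so that $[\sigma T]=0$), gives $[J]=2[k']+3[|k|^2]T_0$, so $a=[J]$. This sign and boundary bookkeeping is the only delicate point, and I would double-check it against the definition of $D$ on $L^1$ functions.

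For \eqref{ct:eq:localnormal}, differentiating $|T|^2=1$ on each side gives $T\cdot k=0$ on each side. Passing to traces gives $T_0\cdot k(s_0^\pm)=0$, hence $b\cdot T_0=0$. Similarly, each side satisfies $T\cdot k'=-|k|^2$, so $T_0\cdot[k']=-[|k|^2]$. Therefore
\[
a\cdot T_0=-2[|k|^2]+3[|k|^2]=[|k|^2].
\]
If $b=0$ then $k$ is continuous, so $[|k|^2]=0$ and $a\perp T_0$.

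At a point outside $R_u$, the whole neighbourhood is one free interval. Theorem~\ref{ct:thm:freearcs} then gives smoothness across the point and a single constant $J$, so all jumps vanish and $a=b=0$.
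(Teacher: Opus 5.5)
Your proof is correct and follows essentially the paper's route. The paper writes your side-by-side integration by parts as the distributional jump formula $Dg=\{g'\}+[g]\delta_{s_0}$, applied to $k$, $k'$ and $|k|^2T$, and the ordinary parts cancel by the free first integrals; the only difference is that you obtain $a\cdot T_0=[|k|^2]$ from the traces of $T\cdot k'=-|k|^2$, whereas the paper uses the equivalent traces $J^\pm\cdot T_0=|k^\pm|^2-\sigma$ (and the $-\sigma k$ term needs no integration by parts at all, being a bounded density).
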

\begin{proof}
Choose a real interval about $s_0$ containing no other point of $R_u$.
Theorem~\ref{ct:thm:freearcs} bounds $k,k'$ on its two halves and gives
all one-sided traces. The continuous tangent has equal traces from both
sides, and the separate bounds for $T'=k$ make $T$ Lipschitz across the
join. Thus $u$ is $C^{1,1}$ there.

For a function $g$ smooth on each side, integration by parts against
$h\in C_c^\infty$ gives $Dg=\{g'\}+[g]\delta_{s_0}$; braces mean the
ordinary piecewise function. Applying this to $k$ and then to its
piecewise derivative yields
\[
 D^2k=\{k''\}+[k']\delta_{s_0}+[k]\delta'_{s_0}.
\]
Likewise, continuity of $T$ gives
$D(|k|^2T)=\{(|k|^2T)'\}+[|k|^2]T_0\delta_{s_0}$.
Insert these identities in \eqref{ct:eq:reaction}. The ordinary parts
vanish on both free halves, giving the stated coefficients. Taking
one-sided traces of \eqref{ct:eq:firstintegral} also gives $a=[J]$.
No second derivative of a point mass occurs: the calculation differentiates
$k$ only twice and $k$ has bounded one-sided traces.
Finally $k^\pm\cdot T_0=0$ proves $b\cdot T_0=0$, while
$J^\pm\cdot T_0=|k^\pm|^2-\sigma$ proves the second identity.
If $b=0$, the curvature traces agree and their squared norms agree.
\end{proof}
Here $\langle\delta_{s_0},h\rangle=h(s_0)$ componentwise, whereas
$\langle b\delta'_{s_0},h\rangle=-b\cdot h'(s_0)$.
The second term is often called a dipole: it couples to a derivative
of the material variation. Both point evaluations are continuous on
$H^2$ in one dimension, so the local formula also holds on local $H^2$
tests by smooth approximation.

\begin{proposition}[Balance at a complete fiber]\label{ct:prop:fiberbalance}
Let $u^{-1}(p)=\{s_1,\ldots,s_m\}$. Suppose each $s_i$ either is
isolated in $R_u$ or belongs to its complement. Denote its coefficients
by $a_i,b_i$ and its tangent by $T_i=T(s_i)$. Then
\begin{equation}\label{ct:eq:fiberbalance}
 \sum_i a_i=0,\qquad \sum_i b_i\otimes T_i=0.
\end{equation}
The tensor $v\otimes w$ is the matrix with entries $v_rw_\ell$, acting
on $z$ as $v(w\cdot z)$.
\end{proposition}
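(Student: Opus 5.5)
The plan is to test the ambient stationarity \eqref{ct:eq:ambient} with fields $H$ supported in a small ball around $p$, and to evaluate $D\E(u)[H\circ u]=\langle\mathfrak f_u,H\circ u\rangle$ using the local point-mass formula of Proposition~\ref{ct:prop:localreaction} at each visit.

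First I localize. Choose disjoint parameter intervals $I_i\ni s_i$ containing no point of $R_u$ other than possibly $s_i$. Then apply the fiber separation argument from the atom exclusion step: the compact set $\T\setminus\bigcup_i I_i$ has image at positive distance from $p$, so there is $r>0$ with $u^{-1}(B_r(p))\subset\bigcup_i I_i$. For $H\in C_c^\infty(B_r(p);\R^3)$ the trace $h=H\circ u$ is supported in $\bigcup_i I_i$. On each $I_i$, Proposition~\ref{ct:prop:localreaction} gives $\mathfrak f_u=a_i\delta_{s_i}+b_i\delta'_{s_i}$, also on local $H^2$ tests, and at a non-reaction visit $a_i=b_i=0$. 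Splitting $h$ by a partition of unity subordinate to the $I_i$, which is legitimate because $\mathfrak f_u\in H^{-2}$ and $h\in H^2$, I obtain
\[
 0=D\E(u)[H\circ u]=\sum_i\bigl(a_i\cdot H(p)-b_i\cdot DH(p)T_i\bigr),
\]
using $h(s_i)=H(p)$ and $h'(s_i)=DH(p)T_i$.

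Next I choose the test fields. The values $H(p)$ and $DH(p)$ can be prescribed independently by smooth compactly supported fields. Take $H(x)=\chi(x)\,(c+G(x-p))$ with a cutoff $\chi$ equal to one near $p$, a constant vector $c$, and a constant matrix $G$. This gives $c\cdot\sum_ia_i-\sum_i b_i\cdot GT_i=0$ for all $c$ and $G$. With $G=0$ the first identity in \eqref{ct:eq:fiberbalance} follows. With $c=0$ I get $G:\sum_i b_i\otimes T_i=0$ for every $G$, which is the tensor identity.

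The only delicate point is the justification that the local distributional formula may be paired with $h$. The formula holds only on $I_i$, and $h$ is merely $H^2$ rather than smooth. I would handle this exactly as the remark after Proposition~\ref{ct:prop:localreaction} indicates: approximate $h|_{I_i}$, multiplied by the partition function, by smooth tests compactly supported in $I_i$, converging in $H^2$. Both sides are then continuous in $H^2$, the left by \eqref{ct:eq:reactionbound} and the right because point evaluations of $h$ and $h'$ are $H^2$-continuous.
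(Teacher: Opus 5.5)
Your proposal is correct and follows essentially the same route as the paper: the same localization by disjoint intervals and positive image separation from $p$, the same test field $H=\chi\{c+G(x-p)\}$, and the same read-off of $\sum_i a_i$ from the constant part and $\sum_i b_i\otimes T_i$ from the matrix part. Your extra care in pairing the local point-mass formula with the $H^2$ trace is exactly the smooth-approximation remark the paper records after Proposition~\ref{ct:prop:localreaction}.
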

\begin{proof}
Choose disjoint parameter intervals $U_i$ about the $s_i$, each carrying
the local formula and no other reaction point. The image of the compact
complement of $\bigcup_iU_i$ omits $p$. Its distance $d$ from $p$ is
positive. Take a smooth function $\chi$ supported in $B_{d/2}(p)$ and
identically one on $B_{d/4}(p)$. For an arbitrary vector $v$ and matrix
$A\in\R^{3\times3}$ use
\[
 H(x)=\chi(x)\{v+A(x-p)\}.
\]
Its trace is supported in $\bigcup_iU_i$, and at $s_i$ it has value
$v$ and derivative $AT_i$. Stationarity gives
\[
 0=\sum_i a_i\cdot v-\sum_i b_i\cdot AT_i.
\]
For clarity, the matrix term is
$\sum_{r,\ell}A_{r\ell}\sum_i(b_i)_r(T_i)_\ell$.
Set $A=0$ and vary each coordinate of $v$ to obtain the first identity.
Set $v=0$ and choose $A$ successively with one entry equal to one and
all others zero to obtain every entry of the second identity.
\end{proof}
\begin{remark}\label{ct:rem:wholefiber}
Isolation at just one visit does not give this finite balance law. If
another visit to $p$ is an accumulation point of $R_u$, a spatial cutoff
near $p$ also samples that strand's reaction. The proposition requires
control at the whole fiber. It requires isolation in $R_u$, rather than
in the possibly larger set $S_u$.
\end{remark}

\subsection{Tangent directions}
\begin{corollary}\label{ct:cor:independent}
Under the hypotheses of Proposition~\ref{ct:prop:fiberbalance}, linearly
independent $T_1,\ldots,T_m$ force $b_i=0$ for every $i$. In this case
\[
 a_i=2[k']_i,\qquad a_i\cdot T_i=0,\qquad \sum_i a_i=0.
\]
This applies to a nonparallel double fiber and to a triple whose three
tangents are linearly independent.
\end{corollary}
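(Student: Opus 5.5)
The corollary follows by combining the two balance identities \eqref{ct:eq:fiberbalance} with the local structure \eqref{ct:eq:localreaction}--\eqref{ct:eq:localnormal} from Proposition~\ref{ct:prop:localreaction}. The plan has three steps.

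First, use linear independence to kill the dipoles. Since $T_1,\ldots,T_m$ are linearly independent, we may choose vectors $w_1,\ldots,w_m\in\R^3$ dual to them, with $T_i\cdot w_\ell=\delta_{i\ell}$. One way to get them: extend the $T_i$ to a basis of $\R^3$ and take the dual basis. Applying the matrix identity $\sum_i b_i\otimes T_i=0$ to the vector $w_\ell$ gives
\[
 0=\sum_i b_i(T_i\cdot w_\ell)=b_\ell,
\]
so every $b_\ell$ vanishes. Visits outside $R_u$ already have $a_i=b_i=0$ by Proposition~\ref{ct:prop:localreaction}, so they need no separate treatment.

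Second, simplify the monopoles. With $b_i=2[k]_i=0$, the curvature traces agree on both sides of $s_i$, so $[|k|^2]_i=0$. The formula $a_i=2[k']_i+3[|k|^2]_iT_i$ from \eqref{ct:eq:localreaction} then reduces to $a_i=2[k']_i$. The second identity in \eqref{ct:eq:localnormal} gives $a_i\cdot T_i=[|k|^2]_i=0$; this is exactly the remark "$b=0$ implies $a\perp T_0$" in that proposition. The relation $\sum_ia_i=0$ is the first identity in \eqref{ct:eq:fiberbalance} without change.

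Third, check the two named cases. For a nonparallel double fiber, two tangents that are not parallel (in particular not antiparallel) are linearly independent. For a triple fiber, independence of the three tangents is the stated hypothesis. In both cases the hypotheses of Proposition~\ref{ct:prop:fiberbalance} are assumed.

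No step is a real obstacle. The only point needing a moment's care is that the dual vectors exist only because linear independence forces $m\le3$. If $m>3$, independence is impossible and the statement is vacuous.
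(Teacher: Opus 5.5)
Your proof is correct and follows the paper's argument: you produce dual vectors to the independent tangents and apply $\sum_i b_i\otimes T_i=0$ to each of them to get $b_\ell=0$, and then $[k]_i=0$ and $[|k|^2]_i=0$ reduce the local formulas to the stated identities. The paper obtains the dual vectors from the Gram system within $\operatorname{span}\{T_1,\ldots,T_m\}$ rather than from a dual basis of an extended basis, which is an immaterial difference.
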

\begin{proof}
For each $i$ solve the invertible Gram system to find
$w_i\in\operatorname{span}\{T_1,\ldots,T_m\}$ with
$w_i\cdot T_j=\delta_{ij}$. Applying $\sum_jb_j\otimes T_j$ to $w_i$
gives $b_i=0$. Thus $[k]_i=0$, hence $[|k|^2]_i=0$; the formulas in
Proposition~\ref{ct:prop:localreaction} give all remaining assertions.
\end{proof}
\begin{corollary}\label{ct:cor:finiteindependent}
If $S_u$ is finite and each multiple fiber is a nonparallel double or
an independent triple, then $u\in W^{3,\infty}\cap C^{2,1}$. Its full
reaction is a finite sum of point forces.
\end{corollary}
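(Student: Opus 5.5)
Since $S_u$ is finite and closed, and $R_u\subset S_u$ by Theorem~\ref{ct:thm:freearcs}, the reaction support $R_u$ is finite, so every point of $R_u$ is isolated. The complement $\T\setminus R_u$ is then a finite union of open free arcs. On each arc Theorem~\ref{ct:thm:freearcs} gives real analyticity, the first integral \eqref{ct:eq:firstintegral} with a constant $J_I$, and one-sided traces of $(u,T,k,k')$ at the endpoints. Since there are only finitely many arcs, the bounds \eqref{ct:eq:endpointbounds} hold with the maximum of the finitely many $|J_I|$. This gives global bounds on $k$ and $k'$ off the finite set $R_u$, and so already global $C^{1,1}$ regularity.

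Next I identify the jumps. Every $s_0\in R_u$ lies in $S_u$, so it belongs to a multiple fiber $u^{-1}(p)$. By hypothesis this fiber is a nonparallel double or an independent triple. All its visits lie in $S_u$ and are therefore isolated in $R_u$ or outside it. Hence Proposition~\ref{ct:prop:localreaction} gives the local form $a_i\delta_{s_i}+b_i\delta'_{s_i}$ at each visit, and Proposition~\ref{ct:prop:fiberbalance} applies to the complete fiber. Corollary~\ref{ct:cor:independent} then forces $b_i=0$, that is $[k]_i=0$, at every visit. Consequently $k$ is continuous across each reaction point. Since $k$ is piecewise analytic with bounded derivative on the finitely many free arcs, $k$ is globally Lipschitz. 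Therefore $u\in C^{2,1}$, and equivalently $k'\in L^\infty$ as a genuine weak derivative with no singular part, so $u\in W^{3,\infty}$.

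Finally, I read off the reaction. Globally, $\mathfrak f_u$ is supported on the finite set $R_u$, and near each point it equals $a\delta_{s_0}$, since $b=0$. A distribution is determined by its restrictions to an open cover, so a finite smooth partition of unity gives $\mathfrak f_u=\sum_{s\in R_u}a_s\delta_s$. This is a finite sum of point forces with $a_s=2[k']_s\perp T(s)$ and fiberwise balance $\sum a_i=0$. The main point to check is that continuity of $k$ across each jump together with piecewise bounded $k'$ really gives a weak derivative of $k$ in $L^\infty$. This follows from the piecewise integration-by-parts formula $Dk=\{k'\}+[k]\delta$, already used in the proof of Proposition~\ref{ct:prop:localreaction}, with $[k]=0$. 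The remaining step is routine bookkeeping: checking that the finite-$S_u$ hypothesis makes the whole-fiber requirement of Remark~\ref{ct:rem:wholefiber} automatic.
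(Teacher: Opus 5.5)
Your proposal is correct and follows essentially the same route as the paper. Both arguments use finitely many free arcs with a common maximum of the bounds \eqref{ct:eq:endpointbounds}, Corollary~\ref{ct:cor:independent} to remove the curvature jumps so the Lipschitz pieces of $k$ glue globally, and a sum of the dipole-free local formulas. The one point to add is the degenerate case $R_u=\emptyset$: there the complement is the whole circle rather than a union of arcs, and the paper treats it separately by noting that the free equation then holds globally.
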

\begin{proof}
There are finitely many free intervals, and their endpoint estimates
have a finite common maximum. The previous corollary identifies the
curvature traces across every cut. Thus the separate Lipschitz
representatives of $k$ join to a globally Lipschitz function: subdivide
any parameter interval at the finitely many cuts and add the Lipschitz
estimates. Its weak derivative is bounded, with no jump terms. Hence
$u\in W^{3,\infty}$ and its continuous second derivative is Lipschitz.
The local reaction formula has no dipoles and summing it over the cuts
proves the final statement. If $S_u$ is empty the free equation holds
on the whole circle and gives the same conclusion directly.
\end{proof}

At a nonparallel double, put
$n_{12}=(T_1\times T_2)/|T_1\times T_2|$. The force balance and
normality imply
\begin{equation}\label{ct:eq:doubleforce}
 a_1=\alpha_{12}n_{12},\qquad a_2=-\alpha_{12}n_{12},
 \qquad \alpha_{12}\in\R.
\end{equation}
Indeed a vector perpendicular to both tangents lies on the line spanned
by their cross product. An independent triple has a related unique
representation.
\begin{proposition}\label{ct:prop:tripleforces}
For an independent triple, let
$n_{ij}=(T_i\times T_j)/|T_i\times T_j|$, $i<j$. There are unique real
numbers $\alpha_{12},\alpha_{13},\alpha_{23}$ such that
\[
 \begin{split}
 a_1&=\alpha_{12}n_{12}+\alpha_{13}n_{13},\\
 a_2&=-\alpha_{12}n_{12}+\alpha_{23}n_{23},\\
 a_3&=-\alpha_{13}n_{13}-\alpha_{23}n_{23}.
 \end{split}
\]
\end{proposition}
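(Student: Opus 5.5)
The proof is linear algebra on top of Corollary~\ref{ct:cor:independent}. That corollary already gives $a_i\cdot T_i=0$ for each $i$ and $\sum_i a_i=0$. Since $T_1,T_2,T_3$ are linearly independent, every pair $T_i,T_j$ is nonparallel, so the unit normals $n_{12},n_{13},n_{23}$ are well defined.

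\emph{Step 1: a basis.} First I claim that $n_{12},n_{13},n_{23}$ form a basis of $\R^3$. Their unnormalized versions $T_1\times T_2$, $T_1\times T_3$, $T_2\times T_3$ are, up to sign and order, the cofactor vectors of the invertible matrix with columns $T_1,T_2,T_3$. Concretely, with $D=\det(T_1,T_2,T_3)\ne0$, the dual basis is
\[
 T_1^*=\frac{T_2\times T_3}{D},\quad T_2^*=-\frac{T_1\times T_3}{D},\quad T_3^*=\frac{T_1\times T_2}{D}.
\]
A dual basis is a basis, and rescaling its members by nonzero numbers gives the claim. The same duality gives the relations we need: $n_{ij}\perp T_i$ and $n_{ij}\perp T_j$, and $n_{ij}\cdot T_k\ne0$ for $k\notin\{i,j\}$.

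\emph{Step 2: each force in a plane.} Since $a_1\perp T_1$, the force $a_1$ lies in the plane $T_1^\perp$. This plane contains $n_{12}$ and $n_{13}$, and these two vectors are independent by Step~1, so they span it. Hence $a_1=\alpha_{12}n_{12}+\alpha_{13}n_{13}$ for unique reals. In the same way, $a_2=\beta_{12}n_{12}+\alpha_{23}n_{23}$ in the plane $T_2^\perp$, and $a_3=\gamma_{13}n_{13}+\gamma_{23}n_{23}$ in the plane $T_3^\perp$.

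\emph{Step 3: matching coefficients.} Substitute these expressions into $\sum_i a_i=0$:
\[
 (\alpha_{12}+\beta_{12})n_{12}+(\alpha_{13}+\gamma_{13})n_{13}+(\alpha_{23}+\gamma_{23})n_{23}=0.
\]
Independence of the three normals forces $\beta_{12}=-\alpha_{12}$, $\gamma_{13}=-\alpha_{13}$ and $\gamma_{23}=-\alpha_{23}$. This is exactly the displayed representation.

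\emph{Step 4: uniqueness.} Suppose two triples of coefficients give the same forces. Subtracting the two expressions for $a_1$ shows that the differences of $\alpha_{12}$ and $\alpha_{13}$ vanish, because $n_{12},n_{13}$ are independent. The expression for $a_2$ then gives uniqueness of $\alpha_{23}$.

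The only point needing care is Step~1, the independence of the three normals, and the dual-basis (cofactor) identity settles it. Everything else is immediate from Corollary~\ref{ct:cor:independent}.
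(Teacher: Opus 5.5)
Your proof is correct, but it takes a different route from the paper.

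The paper never shows that $n_{12},n_{13},n_{23}$ form a basis. It counts dimensions instead:
\begin{itemize}
\item the six linear constraints $a_i\cdot T_i=0$ and $\sum_i a_i=0$ on $(\R^3)^3$ are independent, since a linear relation would give $v+\lambda_iT_i=0$ for all $i$;
\item so their solution space is three-dimensional;
\item the displayed formula is a linear map from $\R^3$ into that space, injective because $n_{12},n_{13}$ are independent and the second line then forces $\alpha_{23}=0$;
\item surjectivity and uniqueness follow from equal dimensions.
\end{itemize}

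You instead prove the stronger fact that all three normals are rescaled dual-basis vectors. After that, existence is a direct decomposition of each $a_i$ in its plane $T_i^\perp$, followed by coefficient matching in $\sum_i a_i=0$.

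The paper's argument needs only the independence of the single pair $n_{12},n_{13}$ plus a rank count of the constraints. Yours is constructive and yields explicit coefficients. For instance, pairing the first line with $T_3$ gives $\alpha_{12}=a_1\cdot T_3/(n_{12}\cdot T_3)$. That formula is where your remark $n_{ij}\cdot T_k\ne0$ becomes useful; in your proof as written the remark is not needed.
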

\begin{proof}
The constraints $a_i\cdot T_i=0$ and $\sum_i a_i=0$ are six independent
scalar equations in nine unknown coordinates. To check independence,
a linear relation between the six constraints would give
$v+\lambda_iT_i=0$ for all $i$, with the same $v$. Independence of the
three $T_i$ forces $v=0$ and then every $\lambda_i=0$. Their solution
space therefore has dimension three. The displayed formula defines a
linear map from $\R^3$ into this space. It is injective: if its first
component vanishes, independence of $n_{12},n_{13}$ gives
$\alpha_{12}=\alpha_{13}=0$. Their independence follows because
$T_1\times(cT_2+dT_3)=0$ would make $cT_2+dT_3$ parallel to $T_1$.
The second component then gives $\alpha_{23}=0$. Equal finite
dimensions now give surjectivity and uniqueness.
\end{proof}
There is no sign conclusion for these coefficients. Ambient stationarity
alone provides an equality for two-sided spatial variations; a sign
restriction would require additional information about admissible
one-sided resolutions in the knot class.

For dependent directions, group the visits by their unoriented tangent
lines. Choose a unit representative $v_\ell$ for each line and write
$T_i=\varepsilon_iv_\ell$ on its group $G_\ell$, where
$\varepsilon_i=\pm1$. Define
$d_\ell=\sum_{i\in G_\ell}\varepsilon_i b_i$. The precise tensor law is
\begin{equation}\label{ct:eq:groupbalance}
 \sum_\ell d_\ell\otimes v_\ell=0,\qquad d_\ell\perp v_\ell.
\end{equation}
If the line representatives are independent, the same dual-vector test
forces every $d_\ell=0$. More generally this holds for a particular
$\ell$ whenever $v_\ell$ is outside the span of the other representatives:
apply the tensor to the nonzero projection of $v_\ell$ onto that span's
orthogonal complement. Thus a singleton in such a direction has zero
dipole, while a parallel group may retain dipoles with cancelling signed
sum. For a double with $T_2=\varepsilon T_1$ this is
$b_1+\varepsilon b_2=0$. Reversing the material orientation of a branch
reverses both its jump $[k]$ and its tangent, preserving $b\otimes T$.

In matrix notation, let $V,B\in\R^{3\times m}$ have columns $T_i,b_i$.
Then the law is $BV^{\mathsf T}=0$; the transpose of each row of $B$
lies in $\ker V$, together with $b_i\cdot T_i=0$. Pairwise nonparallel
tangents do not suffice for individual cancellation. For example
\[
 T_1=e_1,\quad T_2=e_2,\quad T_3=(e_1+e_2)/\sqrt2,\qquad
 b_1=b_2=e_3,\quad b_3=-\sqrt2e_3
\]
have normal nonzero dipoles and zero tensor sum. This is an algebraic
example, not an asserted minimizing curve. It identifies why a coplanar
triple does not fall under the independent-triple argument.

\begin{remark}\label{ct:rem:defectdistinction}
The reaction tensors $b_i\otimes T_i$ differ from the curvature-energy
defect in the proof of Theorem~\ref{thm:no-loss}. The former have rank
at most one and express a fiber balance. The latter was positive
semidefinite with $AT=0$, so could have rank two, and was proved to
vanish. Vanishing of that defect does not imply vanishing of the
contact reaction.
\end{remark}

\section{Overlapping arcs and the size of reaction support}
\label{ct:sec:nowheredense}
Contact intervals require a different localization from isolated contacts.
On an exact overlap an ambient test has the same value on every copy,
and all copies contribute the same first variation. They therefore
reinforce the free equation.

\begin{proposition}\label{ct:prop:bundlefree}
For a bundle satisfying \eqref{ct:eq:bundle}, the reaction vanishes on
every $W_j$. Each copy is an analytic free elastica with multiplier
$\sigma$.
\end{proposition}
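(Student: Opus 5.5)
The plan is to test stationarity \eqref{ct:eq:ambient} with the traces produced by Lemma~\ref{ct:lem:tracedensity}, and then compute the pairing copy by copy using the affine isometries.

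\textbf{Reduction to one copy.} Fix $h\in C_c^\infty(W;\R^3)$ and its lift $\widetilde h$. Lemma~\ref{ct:lem:tracedensity} gives $H_n\circ u\to\widetilde h$ in $H^2$. Then \eqref{ct:eq:ambient} together with the $H^{-2}$ bound \eqref{ct:eq:reactionbound} yields $\langle\mathfrak f_u,\widetilde h\rangle=0$.

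\textbf{Transport of the pairing.} Next compute $\langle\mathfrak f_u,\widetilde h\rangle$ as a sum over the copies $W_j$. On $W_j$ put $s=\varphi_j(t)=\varepsilon_jt+b_j$. Since $u\circ\varphi_j=u$ on $W$, differentiation gives
\[
T\circ\varphi_j=\varepsilon_jT,\qquad k\circ\varphi_j=k,
\]
the second because $\varepsilon_j^2=1$. Moreover $\widetilde h'\circ\varphi_j=\varepsilon_jh'$ and $\widetilde h''\circ\varphi_j=h''$. Substituting into \eqref{ct:eq:variation} on each $W_j$, the signs cancel in $T\cdot\widetilde h'$ and in $k\cdot\widetilde h''$. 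Each copy therefore contributes exactly
\[
\int_W\{2k\cdot h''+(\sigma-3|k|^2)T\cdot h'\}\,ds=\langle\mathfrak f_u,h\rangle_W .
\]
Hence $0=m\,\langle\mathfrak f_u,h\rangle$, and so $\langle\mathfrak f_u,h\rangle=0$ for every test supported in $W$. This shows that $\mathfrak f_u=0$ on $W$.

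\textbf{Extension to the other copies.} Transport the conclusion to each $W_j$: a test $g\in C_c^\infty(W_j)$ equals $h\circ\varphi_j^{-1}$ with $h=g\circ\varphi_j$. The same change of variables, applied to $W_j$ alone, gives $\langle\mathfrak f_u,g\rangle=\langle\mathfrak f_u,h\rangle_W=0$. (Equivalently, the invariance of $\mathfrak f_u$ under the isometries makes its restrictions to the copies pullbacks of one another.)

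\textbf{Regularity.} The reaction therefore vanishes on each open interval $W_j$. Theorem~\ref{ct:thm:freearcs} then gives the first integral \eqref{ct:eq:firstintegral} with multiplier $\sigma$, together with real analyticity on each $W_j$. The theorem's hypothesis is only that $\mathfrak f_u=0$ on the interval, not disjointness from $S_u$, so it applies here.

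\textbf{Main obstacle.} The only delicate point is the sign bookkeeping for orientation-reversing copies. One must check that $k$ does not flip sign while $T$ and $\widetilde h'$ both do, so that the $m$ contributions add rather than possibly cancel. Cancellation would destroy the argument, since we divide by $m$.
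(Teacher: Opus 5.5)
Your proposal is correct and follows the paper's proof step for step: trace density with the $H^{-2}$ bound gives $\langle\mathfrak f_u,\widetilde h\rangle=0$, the identities $T\circ\varphi_j=\varepsilon_jT$ and $k\circ\varphi_j=k$ turn this into $m$ equal contributions, you divide by $m$, pull tests back to the partner intervals, and conclude with Theorem~\ref{ct:thm:freearcs}. One wording nit: in the term $k\cdot\widetilde h''$ no sign appears at all, so it is unchanged rather than having two signs cancel; this does not affect the argument.
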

\begin{proof}
For $h\in C_c^\infty(W;\R^3)$ form the repeated test $\widetilde h$.
Lemma~\ref{ct:lem:tracedensity}, ambient stationarity, and continuity of
the reaction on $H^2$ give
$\langle\mathfrak f_u,\widetilde h\rangle=0$.
Write $\varphi_j(s)=\varepsilon_js+b_j$. The weak chain rule for
$u\circ\varphi_j=u$ gives
\[
 T(\varphi_j(s))=\varepsilon_jT(s),\quad
 k(\varphi_j(s))=k(s),\quad
 \widetilde h'(\varphi_j(s))=\varepsilon_jh'(s),\quad
 \widetilde h''(\varphi_j(s))=h''(s).
\]
In \eqref{ct:eq:variation}, the two signs in $T\cdot h'$ cancel and
the second-derivative term is unchanged. Change of variables has absolute
Jacobian one, so
\[
 0=m\int_W\{2k\cdot h''+(\sigma-3|k|^2)T\cdot h'\}\,\dd s.
\]
Division by $m$ proves vanishing on $W$; pulling back any test on a
partner interval gives the same conclusion there.
Theorem~\ref{ct:thm:freearcs} gives the remaining assertions.
\end{proof}

\begin{lemma}\label{ct:lem:selection}
Suppose finitely many relatively closed sets $Z_1,\ldots,Z_N$ cover an
open interval $V$. Some nonempty open $W\Subset V$ has this property:
each $Z_j$ either contains $W$ or is disjoint from it, and at least one
contains it.
\end{lemma}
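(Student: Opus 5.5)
The plan is an induction on the number $N$ of sets, using the Baire property of intervals only in its most elementary form. The mechanism is this: a finite union of closed sets covering an open interval cannot consist entirely of closed sets with empty interior. Some $Z_j$ therefore has interior in $V$, and one can shrink into that interior.

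For $N=1$ we have $Z_1=V$, so any nonempty open $W\Subset V$ works. For the inductive step, look first at $Z_N$.

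\emph{Case A: $Z_N$ has nonempty interior relative to $V$.} Choose a nonempty open interval $W_0\Subset V$ with $W_0\subset Z_N$. Then $Z_N$ contains $W_0$, and the only remaining task is to make the other sets either contain or miss the final interval. Apply the same procedure to the sets $Z_1\cap W_0,\ldots,Z_{N-1}\cap W_0$ together with $W_0$ itself. The cleaner formulation is to prove a slightly stronger statement by induction on $N$: for any finite family of relatively closed sets in $V$ whose union is $V$ (some of them possibly redundant), there is a nonempty open $W\Subset V$ on which each set is either full or disjoint. To handle a single set $Z$ inside an interval $W_0$, split into two subcases. If $Z$ has interior in $W_0$, pass to a subinterval inside $Z$. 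Otherwise $W_0\setminus Z$ is a nonempty relatively open set, so pass to a subinterval of $W_0$ disjoint from $Z$. Each property, containment or disjointness, is inherited by every further subinterval. Processing $Z_1,\ldots,Z_N$ one at a time thus gives a nested chain of nonempty open intervals, each compactly contained in the previous one. The last interval $W$ has the required dichotomy for every $j$.

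\emph{Case B.} It remains to ensure that at least one $Z_j$ contains $W$. This is where the covering hypothesis enters, and it is the only nontrivial step. Suppose that on the final $W$ every $Z_j$ is disjoint from $W$. Then $W\subset V\setminus\bigcup_jZ_j=\emptyset$, a contradiction, since $W$ is nonempty. So the covering gives the ``at least one'' clause for free once the dichotomy holds.

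I expect the one point needing care is the subcase in the sequential shrinking. That step must pick a subinterval with compact closure inside the current open interval and inside the open set $W_0\setminus Z$, or inside $\operatorname{int}Z$. This is immediate, because nonempty open subsets of $\R$ contain compact nondegenerate intervals whose interiors we take. Relative closedness of $Z_j$ in $V$ is used exactly to make $W_0\setminus Z_j$ open. No Baire category argument beyond this finite sequential shrinking is needed.
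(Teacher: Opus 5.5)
Your proof is correct and is essentially the paper's argument: you process the sets one at a time, shrinking to a compactly contained subinterval lying either inside $Z_j$ or inside the open set $W_0\setminus Z_j$, and then use the cover to rule out the case that every set misses the final interval. The Case~A/induction framing and the Baire remark are superfluous, since the sequential shrinking by itself proves the lemma.
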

\begin{proof}
Keep a nonempty open interval and process the sets successively. If the
next closed set contains an open subinterval of the current interval,
retain one with compact closure inside it. Otherwise its open complement
is dense in the current interval; retain an interval with compact closure
inside that complement. All earlier decisions survive restriction.
After finitely many steps every set has one of the stated relationships
to the retained interval. The cover excludes the possibility that all
are disjoint from it.
\end{proof}

\begin{theorem}\label{ct:thm:nowheredense}
The closed set $R_u$ has empty interior, and hence is nowhere dense.
More precisely, every open interval in $S_u$ contains a nonempty open
interval belonging to an exact overlap bundle of fixed multiplicity
$m\ge2$, and the reaction vanishes on every copy in that bundle.
\end{theorem}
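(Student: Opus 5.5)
Since $R_u\subset S_u$ by Theorem~\ref{ct:thm:freearcs}, it suffices to prove the refined statement. Any open interval of $R_u$ is then an open interval of $S_u$. It contains a bundle interval on which the reaction vanishes, which contradicts that subinterval lying in the support of $\mathfrak f_u$. So fix an open interval $V\subset S_u$, shrunk first so that its length is below the injectivity length $\eta$ of Lemma~\ref{ct:lem:fibers}. The aim is to find $W\Subset V$ and affine isometries $\varphi_j$ satisfying \eqref{ct:eq:bundle} with $m\ge2$, and then to invoke Proposition~\ref{ct:prop:bundlefree}.

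The plan is to build finitely many closed sets and apply Lemma~\ref{ct:lem:selection} repeatedly. Cover $\T$ by finitely many short closed parameter intervals $P_1,\dots,P_M$, each of length below $\eta$, so that $u$ is injective on each. For each $\ell$, let
\[
 Z_\ell=\{s\in\overline V:\ \exists\,t\in P_\ell,\ t\ne s,\ u(t)=u(s)\}.
\]
These sets are relatively closed by compactness, as in Lemma~\ref{ct:lem:fibers}, and they cover $V$ because $V\subset S_u$. Lemma~\ref{ct:lem:selection} then gives a subinterval $W_1$ with, for each $\ell$, either $W_1\subset Z_\ell$ or $W_1\cap Z_\ell=\emptyset$, and with at least one $Z_\ell\supset W_1$. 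For each $\ell$ with $W_1\subset Z_\ell$ we obtain a map $\psi_\ell:W_1\to P_\ell$, $u(\psi_\ell(s))=u(s)$, well defined by injectivity on $P_\ell$. Possible multiple partners inside one $P_\ell$ are excluded by injectivity there. Partners in different $P_\ell$ may coincide; that only causes duplicates, which we identify. After shrinking $W_1$ away from $P_\ell$ itself if needed, the partner is distinct from $s$.

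The main obstacle is to show that each $\psi_\ell$ is an affine isometry on a further subinterval. First, $\psi_\ell$ is continuous: uniqueness of the partner in the compact set $P_\ell$ gives a closed graph. On the real-analytic free region it would be easier, but we cannot yet assume freeness. Instead, use $u\circ\psi_\ell=u$ with $u$ a $C^1$ unit-speed curve. Since $\psi_\ell$ is continuous and $u$ is locally a $C^1$ embedding near $\psi_\ell(s)$, we can write $\psi_\ell=(u|_{P_\ell})^{-1}\circ u$ locally. This map is $C^1$ by the graph charts of Section~\ref{sec:localization}: project onto a direction transverse to both tangents. Differentiating gives $T(\psi_\ell(s))\psi_\ell'(s)=T(s)$, so $|\psi_\ell'|=1$. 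Continuity of $\psi_\ell'$ then fixes the sign on the connected interval, so $\psi_\ell$ is affine with slope $\pm1$. It preserves arclength as a parametrization of the same image arc.

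Having finitely many such maps, shrink $W_1$ once more so that their images $\psi_\ell(W)$ are pairwise disjoint or equal, and disjoint from $W$. Equal images give the same map by injectivity, so we discard duplicates. The exact-fiber condition $u^{-1}(u(s))=\{s,\psi_\ell(s)\}$ then holds. Any further preimage of $u(s)$ lies in some $P_\ell$, hence $s\in Z_\ell$. By the dichotomy from Lemma~\ref{ct:lem:selection}, $W\subset Z_\ell$, so that preimage is one of the listed partners. The multiplicity $m\ge2$ because at least one $Z_\ell$ contains $W$. Proposition~\ref{ct:prop:bundlefree} now gives vanishing of the reaction on all copies, which completes the argument.
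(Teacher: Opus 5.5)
Your proposal is correct and follows essentially the paper's route. Both arguments apply Lemma~\ref{ct:lem:selection} to closed partner sets, show that the continuous partner maps are affine isometries, check the exact-fiber condition, and conclude with Proposition~\ref{ct:prop:bundlefree}.

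The differences are minor. You index the partner sets by a global cover of $\T$ by short intervals, so coinciding partner maps can occur. Your affine structure lets you discard these after shrinking (and your shrinking away from $P_\ell$ is in fact automatic, since $Z_\ell\cap P_\ell=\emptyset$). The paper instead first localizes around one fiber using disjoint intervals $V_i'$, which avoids duplicates. You also obtain $|\psi_\ell'|=1$ from the chain rule through a transverse coordinate, where the paper uses equality of arc lengths.
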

\begin{proof}
Let $I\subset S_u$ be open, choose $s_0\in I$, and list the finite fiber
$u^{-1}(u(s_0))=\{s_1=s_0,s_2,\ldots,s_k\}$. Choose disjoint closed
intervals $V_i'$ centered at the $s_i$, with interiors $V_i$, all shorter
than the uniform injectivity scale of Lemma~\ref{ct:lem:fibers}; also
require $V_1'\subset I$. For example their radii may be taken smaller
than one quarter of the minimum pairwise parameter separation and
injectivity scale, with the first radius additionally smaller than
$\dist_\T(s_0,\T\setminus I)/2$.
The compact set $u(\T\setminus\bigcup_iV_i)$ omits $u(s_0)$, so let
$d>0$ be its distance from that point. Choose an interval
$V\Subset V_1$ about $s_0$ with $u(\overline V)\subset B_{d/2}(u(s_0))$.
Then every preimage of every $u(s)$, $s\in V$, belongs to one of the $V_i$.

For $i\ge2$ set $Z_i=\{s\in V:u(s)\in u(V_i')\}$. These are relatively
closed, since $u(V_i')$ is compact. They cover $V$: a second visit exists
because $V\subset S_u$, and injectivity on $V_1'$ excludes its being in
that first interval. Lemma~\ref{ct:lem:selection} provides
$W\Subset V$ where precisely a fixed nonempty set $A$ of these indices
has a partner at every parameter, and the remaining indices have none.

For $i\in A$, let $\varphi_i(s)$ be the unique point in $V_i'$ with
$u(\varphi_i(s))=u(s)$. It actually lies in $V_i$, by the choice of $V$.
It is continuous: for any $s_n\to s$, a convergent subsequence of the
partners has image $u(s)$, hence must be the unique partner of $s$;
compactness then forces convergence of the whole sequence. It is
injective, since $u$ is injective on $W$, and a continuous injective
function between real intervals is strictly monotone. For $s<t$,
$\varphi_i([s,t])$ is the interval between its endpoint values. The
curve traverses the same embedded arc on the two intervals. Unit speed
makes its length both $t-s$ and
$|\varphi_i(t)-\varphi_i(s)|$. The fixed monotonicity sign therefore
gives $\varphi_i(s)=\varepsilon_i s+b_i$. Its image is an open interval
in $V_i$. These images, together with $W$, are disjoint, and the
construction excluded every additional preimage. This proves
\eqref{ct:eq:bundle} with $m=1+|A|$. The preceding proposition makes
all these intervals free.

If $R_u$ had interior, choose an open interval inside it. It is contained
in $S_u$ by Theorem~\ref{ct:thm:freearcs}, so the construction supplies
an open subinterval where the reaction vanishes. Such a subinterval is
disjoint from its support, a contradiction. Distributional support is
closed by definition; a closed set with empty interior is nowhere dense.
\end{proof}
\begin{remark}\label{ct:rem:supportscope}
For a multiply traversed round circle, $S_u=\T$ but $R_u$ is empty.
The theorem allows contact intervals and excludes intervals filled with
reaction support. A closed nowhere-dense set may still have positive
Lebesgue measure and infinitely many accumulation points. Neither
finiteness of $S_u$ or $R_u$, nor a measure representation of the reaction,
has been proved here.
\end{remark}

\section{When the bridge inequality is saturated}\label{sa:sec:saturation}
We now add a numerical hypothesis. The bridge index
$a=\operatorname{bridge}(K)$ is the minimum number of local maxima of
a Morse height function, minimizing over smooth representatives of the
knot class and height directions. A Morse height function has nonzero
second derivative at every critical point, so on the compact parameter
circle these points are finite. The braid index
$\operatorname{braid}(K)$ is the least strand number of a closed braid
representing $K$. For such a braid, angular projection about an axis
disjoint from the curve has nonzero derivative; after orientation it
is a covering of the angular circle, and its degree is the strand
number. These conventions, including the name BB when the two indices
agree, are those of \cite[Section 1]{DER}.
Assume for this section that
\begin{equation}\label{sa:eq:sharp}
 e_K=(2\pi a)^2.
\end{equation}
This is an additional assumption about the embedded bending infimum.

\begin{proposition}\label{sa:prop:saturation}
A normalized strong limit of a bending-minimizing sequence under
\eqref{sa:eq:sharp} satisfies
\[
 \TC(u)=2\pi a,\qquad |u''|=2\pi a\quad\hbox{a.e.},\qquad u\in C^{1,1}.
\]
The conclusion applies to elastic limits and to normalized minimizers in
the regular $H^2$ part of the $C^1$ closure.
\end{proposition}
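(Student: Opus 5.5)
The plan is to squeeze the bending energy between Cauchy--Schwarz and a lower bound on total curvature. By Theorem~\ref{thm:no-loss}, a normalized strong limit $u$ has unit speed, length one, and $\B(u)=e_K=(2\pi a)^2$. Cauchy--Schwarz on the unit circle gives
\[
 \TC(u)=\int_\T|u''|\,ds\le\Bigl(\int_\T|u''|^2\,ds\Bigr)^{1/2}=2\pi a .
\]
So it suffices to prove the reverse inequality $\TC(u)\ge2\pi a$. Then equality holds in Cauchy--Schwarz, which forces $|u''|$ to equal its mean $2\pi a$ almost everywhere. In particular $u''\in L^\infty$, so $u'$ is Lipschitz and $u\in C^{1,1}$.

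The lower bound $\TC\ge2\pi a$ is the main obstacle, because $u$ need not be embedded. I would prove it as a Milnor-type bound that passes to $C^1$ limits. For each embedded $v\in\CC(K)$ and each unit direction $e$, the height function $e\cdot v$ has at least $a$ local maxima whenever it is Morse. More robustly, the tangent indicatrix crosses the great circle $e^\perp$ at least $2a$ times in a suitable sense. Averaging over $e\in S^2$ by the Crofton formula gives Milnor's inequality $\TC(v)\ge2\pi a$.

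To pass to $u$, take $v_j\to u$ in $H^2$. Then $v_j''\to u''$ in $L^2$, hence in $L^1$, so
\[
 \TC(v_j)=\int_\T|v_j''|\,ds\longrightarrow\int_\T|u''|\,ds=\TC(u).
\]
The limit therefore inherits $\TC(u)\ge2\pi a$. The one point needing care is that $\TC(v)\ge2\pi a$ holds for $H^2$ embeddings and not only smooth ones. For this I would use the density argument in the proof of Proposition~\ref{prop:elastic-minimizing}: smooth $w_n\to v$ in $H^2$ lie in the same knot class by isotopy stability, and $\TC$ is continuous under strong $H^2$ convergence with speeds bounded below. The bound for smooth representatives is Milnor's theorem, in the form with bridge index as stated in \cite{DER}.

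For the final sentence of the statement, elastic limits are strong limits of bending-minimizing sequences by Proposition~\ref{prop:elastic-minimizing}, so the argument above applies to them. A normalized minimizer $u$ in the regular $H^2$ part of the $C^1$ closure lies in $D_K$. By Theorem~\ref{rc:theorem} it is a strong $H^2$ limit of elements of $\CC(K)$, and it has $\B(u)=e_K$. The same continuity of $\TC$ then gives $\TC(u)\ge2\pi a$. The Cauchy--Schwarz equality step goes through verbatim, giving the same three conclusions.
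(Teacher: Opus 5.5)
Your proposal is correct and follows essentially the same route as the paper. Both arguments squeeze $\|u''\|_2=2\pi a$ by Cauchy--Schwarz against the bridge bound $\TC\ge2\pi a$. That bound is proved for smooth embeddings, extended to $H^2$ embeddings by smoothing and isotopy stability, and passed to the strong limit by $L^1$ convergence of the second derivatives, with Theorem~\ref{rc:theorem} handling closure minimizers. The only difference is that the paper derives the smooth bound from Milnor's integral identity together with a Sard argument showing that almost every height function is Morse, whereas you cite Milnor's inequality directly.
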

\begin{proof}
For a smooth embedded representative $v$ of $K$, let
$\mu(v,\omega)$ count the local maxima of the height $v\cdot\omega$.
To justify the almost-everywhere Morse assertion, consider
\[
 \mathcal N_v=\{(s,\omega)\in\T\times S^2:v'(s)\cdot\omega=0\}.
\]
Since $v'$ never vanishes, variation of $\omega$ in its tangent plane
shows that $\mathcal N_v$ is a smooth two-dimensional surface.
Its tangent vectors $(\alpha,\xi)$ satisfy
\[
 (v''(s)\cdot\omega)\alpha+v'(s)\cdot\xi=0,
 \qquad \xi\in T_\omega S^2.
\]
The projection $(s,\omega)\mapsto\omega$ has surjective differential
exactly when $v''(s)\cdot\omega\ne0$. Its critical values are therefore
precisely the directions admitting a degenerate height-critical point.
Sard's theorem \cite{Sard1942} makes these values a spherical null set.
For every other direction all critical points are nondegenerate and,
by compactness, finite: the height is Morse. This argument requires
no positive-curvature assumption. The definition of bridge index now
gives $\mu(v,\omega)\ge a$ for almost every $\omega$. Milnor's integral identity
\cite[Theorem 3.1]{Milnor1950}, recalled in
\eqref{sa:eq:milnor}, therefore gives
\[
 2\TC(v)=\int_{S^2}\mu(v,\omega)\,\dd A(\omega)
 \ge a\,\operatorname{area}(S^2)=4\pi a.
\]
The same bound holds for regular $H^2$ embeddings
\cite[Lemma 2.3]{RvM2026}. Indeed, smooth periodic approximations
$w_n\to v$ in $H^2$ converge in $C^1$, retain positive speed and,
by isotopy stability, eventually represent $K$. Put
$\tau_n=w_n'/|w_n'|$ and
$A_n=(\operatorname{Id}-\tau_n\otimes\tau_n)/|w_n'|$,
with $A$ defined from $v$ in the same way. The arbitrary-parameter
curvature formula is $\TC(w_n)=\int_\T|A_nw_n''|$.
Since $A_n\to A$ uniformly,
\[
 |\TC(w_n)-\TC(v)|
 \le\|A_n\|_\infty\|w_n''-v''\|_1
       +\|A_n-A\|_\infty\|v''\|_1\longrightarrow0.
\]
Thus $\TC(v)\ge2\pi a$ also in this regularity. Now let
$v_j\to u$ strongly in $H^2$ be the minimizing sequence.
Since these base curves are unit speed,
\[
 |\TC(v_j)-\TC(u)|\le\|v_j''-u''\|_1\le\|v_j''-u''\|_2\to0.
\]
Theorem~\ref{thm:no-loss} and the unit measure of $\T$ now give
\[
 2\pi a\le\int_\T|u''|\,\dd s
 \le\left(\int_\T|u''|^2\,\dd s\right)^{1/2}=2\pi a.
\]
All inequalities are equalities. The equality case of Cauchy--Schwarz
for $|u''|$ and the constant function $1$ makes $|u''|$ constant almost
everywhere, and its integral gives the value. Integrating its bounded
weak derivative makes $u'$ Lipschitz. Elastic limits are covered by
Theorem~\ref{thm:no-loss}. For a minimizer in the full regular $C^1$
closure, Theorem~\ref{rc:theorem} supplies the strong embedded
recovery sequence and equality of its energy with $e_K$, after which
the same calculation applies.
\end{proof}

Write $c=2\pi a$. At this unit-length constant-curvature base curve,
\eqref{ct:eq:variation} becomes
\begin{equation}\label{sa:eq:variation}
 D\E(u)[h]=2\int_\T k\cdot h''\,\dd s
       -2c^2\int_\T T\cdot h'\,\dd s,
 \qquad f:=D\E(u)=2(D^2k+c^2k\,\dd s).
\end{equation}
The last expression means equality of parameter distributions. The
symbol $k\,\dd s$ records the distribution with Lebesgue density $k$.
Again only the base curve is unit speed.

\begin{corollary}\label{sa:cor:circular}
Every free interval of a minimizer satisfying \eqref{sa:eq:sharp} is an
arc of a circle of radius $1/c$. Thus the open dense free locus consists
of circular strands with one common radius.
\end{corollary}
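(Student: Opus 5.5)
On a free interval $I$, i.e.\ a connected open interval on which $\mathfrak f_u=0$, I would combine Theorem~\ref{ct:thm:freearcs} with Proposition~\ref{sa:prop:saturation}. The theorem makes $u$ real analytic on $I$ and gives the first integral
\[
 2k'+(3|k|^2-\sigma)T=J_I,\qquad |k|^2=\sigma+J_I\cdot T.
\]
The proposition gives $|k|=c$ almost everywhere, and since $k$ is continuous on $I$ this holds everywhere there. Under \eqref{sa:eq:sharp} we also have $\sigma=e_K=c^2$.

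Substituting into the first integral, the scalar identity reduces to $J_I\cdot T=0$ on $I$. The vector equation becomes
\[
 2k'+2c^2T=J_I .
\]
Differentiating $J_I\cdot T=0$ gives $J_I\cdot k=0$, and differentiating again gives $J_I\cdot k'=0$. Dotting the vector equation with $J_I$ then yields
\[
 |J_I|^2=2J_I\cdot k'+2c^2J_I\cdot T=0,
\]
so $J_I=0$. Hence $k'=-c^2T$ on $I$, that is, $u'''+c^2u'=0$.

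Integrating once gives $k+c^2u=c^2p$ for a constant $p\in\R^3$. Then $u-p=-k/c^2$, so $|u-p|=1/c$ along $I$. Moreover $(u-p)'=T$ and $T'=-c^2(u-p)$, so the solution lies in the fixed plane through $p$ spanned by $u-p$ and $T$ at one point of $I$. It therefore traces an arc of the circle of radius $1/c$ centred at $p$, with unit speed. The radius $1/c=1/(2\pi a)$ is the same for every free interval.

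For the last sentence, Theorem~\ref{ct:thm:nowheredense} shows that $R_u$ is closed and nowhere dense. Its complement, the free locus, is therefore open and dense, and it is a countable union of free intervals, to each of which the argument above applies.

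The only delicate point is the upgrade from the almost-everywhere identity $|k|=c$ to a pointwise one, which is immediate from continuity of $k$ on free intervals. The other step needing care is $J_I=0$: the free equation alone would allow a nonzero constant force, and it is the saturation identity $\sigma=c^2$ together with $J_I\cdot T=0$ that removes it.
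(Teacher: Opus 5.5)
Your proof is correct and is essentially the paper's argument. Your constant $J_I$ is exactly twice the paper's conserved vector $A=k'+c^2T$, which the paper obtains from $k''+c^2k=0$ rather than from the general first integral. The same orthogonality-and-differentiation step forces it to vanish, and then $(u+k/c^2)'=0$ gives the circle of radius $1/c$.
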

\begin{proof}
On a free interval, $k$ is analytic and $k''+c^2k=0$ by
\eqref{sa:eq:variation}. Hence $A=k'+c^2T$ is constant.
Differentiating $|k|^2=c^2$ and $T\cdot k=0$ gives
$k\cdot k'=0$ and $T\cdot k'=-c^2$, so $A\cdot k=A\cdot T=0$.
Differentiate the first scalar equality and substitute
$k'=A-c^2T$: it yields $0=A\cdot k'=|A|^2$. Therefore
$k'=-c^2T$. The vectors $T,k$ evolve within their initial two-dimensional
span, and $(u+k/c^2)'=0$. Thus the curve lies in an affine plane at
distance $1/c$ from a fixed center and traverses that circle at unit
speed. The nowhere-dense support theorem gives density of these
intervals. It does not identify their planes or centers with one another.
\end{proof}
If the length multiplier is defined by
$D\B+\lambda_{\rm len}D\LL=0$, the present multiplier is
$\lambda_{\rm len}=\B(u)=c^2$. Accordingly the constant-curvature
free equation is $2k''+(3c^2-\lambda_{\rm len})k=0$; retaining this
length term is essential for the radius just obtained.

For $a=2$, the tangential-circle classification
\cite[Corollary 3.4]{GRM} already shows why saturation need not force
a covered circle. In the trefoil class, the planar tangent figure-eight
of two circles of radius $1/(4\pi)$ belongs to the $C^1$ closure
\cite[Remark 6.4]{GRM}. Its energy is $(4\pi)^2$, so it is a closure
minimizer; Theorem~\ref{rc:theorem} gives strong $H^2$ approximants.
Thus the selection by ropelength in the next result is additional
information beyond saturation.

\subsection{BB elastic limits}
\begin{corollary}[Alternative proof of the BB classification]\label{sa:cor:bb}
For a one-component tame BB knot class of bridge index $a$, every
elastic limit is an $a$-covered circle, up to Euclidean motions and
regular reparametrization.
\end{corollary}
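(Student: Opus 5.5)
The plan is to combine three earlier ingredients: strong compactness (Theorem~\ref{thm:no-loss} and Proposition~\ref{prop:elastic-minimizing}), saturation of the bridge bound (Proposition~\ref{sa:prop:saturation}), and the quantitative competitors of \cite[Sections 4--7]{RvM2026}. Those competitors replace the closure curvature bound used there.

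\emph{Step 1: saturation.} For a BB class of bridge index $a$, the braid index is also $a$. A closed $a$-braid representative wound closely around an $a$-times covered round circle of length one has bending energy arbitrarily close to $(2\pi a)^2$. Such representatives are the competitors of \cite{RvM2026}, and they show $e_K\le(2\pi a)^2$. The Milnor bound $\TC\ge2\pi a$ and Cauchy--Schwarz, as in the proof of Proposition~\ref{sa:prop:saturation}, give the reverse inequality. Hence \eqref{sa:eq:sharp} holds. Then every elastic limit $u$ converges strongly and satisfies $|u''|=2\pi a$ a.e. and $u\in C^{1,1}$. By Corollary~\ref{sa:cor:circular}, its free locus, which is open and dense by Theorem~\ref{ct:thm:nowheredense}, consists of circular arcs of one common radius $1/(2\pi a)$.

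\emph{Step 2: ropelength selection.} The figure-eight example shows that saturation alone is not enough, so the regularization must enter. I would use the quantitative competitors from \cite{RvM2026}. For each $\theta$ these are embedded $a$-braids near the covered circle, with bending excess $O(\varepsilon^2)$ and ropelength $O(\varepsilon^{-1})$ when the strand separation is $\varepsilon$. Optimizing in $\varepsilon$ gives
\[
 E_\theta(v_\theta)\le(2\pi a)^2+C\theta^{2/3}.
\]
The minimizers then satisfy both $\B(v_\theta)-(2\pi a)^2\le C\theta^{2/3}$ and $\theta\Rop(v_\theta)\le C\theta^{2/3}$.

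The BB argument of \cite{RvM2026} derives the covered-circle conclusion from these estimates. It uses braid structure: closeness of the tangent indicatrix to a great circle forces the angular projection about a suitable axis to be a covering of degree at least the braid index. Where \cite{RvM2026} invoked a closure inequality $\B(u)\le\liminf\B(v_\theta)$, in the stronger form that the limit carries the full curvature, I would substitute strong $H^2$ convergence. Strong convergence gives $\TC(v_\theta)\to\TC(u)$ and $L^2$ convergence of curvature. So any statement that the \emph{limit} tangent indicatrix lies on a single great circle, with constant speed and total length $2\pi a$, transfers directly from the quantitative estimates.

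\emph{Step 3: conclusion.} Once the tangent indicatrix of $u$ traverses a single great circle $a$ times at constant speed $2\pi a$, $u$ is planar. By the argument of Corollary~\ref{sa:cor:circular} applied globally ($k'=-c^2T$ a.e., now using the $C^{1,1}$ regularity so that no jumps are possible), $u$ is an $a$-covered circle of radius $1/(2\pi a)$. The result is up to motions and reparametrization because of the normalization.

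\emph{Main obstacle.} The hard step is Step~2: showing that the quantitative smallness $\theta\Rop(v_\theta)\to0$ at rate $\theta^{2/3}$, combined with the braid index, rules out limits built from several circles of radius $1/(2\pi a)$ in different planes, such as tangent figure-eights. My first attempt would follow \cite{RvM2026}: bound the total curvature excess of $v_\theta$ from the energy excess via Cauchy--Schwarz, then use a Fenchel--Milnor type rigidity to place the indicatrix near a great circle, then a degree argument. The only new input is that strong compactness makes the limiting curvature equal the limit of curvatures, so no curvature concentrates away.
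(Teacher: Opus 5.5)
\textbf{Verdict: there is a genuine gap.} Your Step~1 and the $\theta^{2/3}$ bounds in Step~2 agree with the paper. The decisive step is left open, however, and the mechanism you sketch for it cannot work.

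\textbf{Why the indicatrix route fails.} Any argument that uses only the bending or total-curvature excess of the approximants, even with strong $H^2$ convergence, is defeated by the trefoil class ($a=2$, a BB class). The tangent figure-eight of two circles of radius $1/(4\pi)$ lies in $D_K$ with $\B=(4\pi)^2=e_K$. By Theorem~\ref{rc:theorem} it is therefore a strong $H^2$ limit of embedded trefoils whose bending and total-curvature excesses tend to zero.

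It also satisfies the target you set in Step~2. Its indicatrix lies on one great circle, with constant speed $4\pi$ and total length $4\pi=2\pi a$. It is $C^{1,1}$, and $k'=-c^2T$ holds a.e.\ on its pieces. Yet it is not a covered circle, because $k$ jumps at the tangency. So Step~3 is wrong as stated: $C^{1,1}$ regularity does not exclude curvature jumps, and Corollary~\ref{sa:cor:circular} cannot be globalized that way.

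The rate $C\theta^{2/3}$ does not help by itself, since $\theta$ is only an index until it is tied to a geometric quantity. Using it directly would require a lower bound on the bending excess of thick curves near multi-circle configurations. That is the kind of profile estimate only assumed in Proposition~\ref{sa:prop:profile-optimization}. Your description of the \cite{RvM2026} mechanism (indicatrix near a great circle, then a degree argument) is not the mechanism that works.

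\textbf{The missing idea.} The paper turns the ropelength bound into \emph{full material} stationarity of $u$.
\begin{itemize}
\item From $\theta\Rop(\gamma_\theta)\le C\theta^{2/3}$ and unit length, the thickness satisfies $\Delta_\theta\ge c_0\theta^{1/3}$.
\item Fix a smooth $h$. The perturbation lemma \cite[Lemma 6.1]{RvM2026} makes $\gamma_\theta+t\varepsilon h$, $|t|\le1$, same-class embeddings with ropelength at most $C_h/\Delta_\theta$ whenever $|\varepsilon|\le a_h\Delta_\theta$.
\item Take $\varepsilon_\theta=\sqrt\theta$, which is eventually below $a_h\Delta_\theta$. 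Compare $E_\theta$ at $\gamma_\theta$ with its value at the normalized competitors $\gamma_\theta\pm\varepsilon_\theta h$, and use Lemma~\ref{lem:taylor}. This gives $0\le\pm\varepsilon_\theta D\E(\gamma_\theta)[h]+C_h\varepsilon_\theta^2+C_h\theta/\Delta_\theta$, hence $|D\E(\gamma_\theta)[h]|\le C_h\sqrt\theta+C_hc_0^{-1}\theta^{1/6}\to0$.
\item Strong $H^2$ convergence then gives $D\E(u)[h]=0$. Without it, the term $\int|k|^2T\cdot h'$ could retain a defect. This is the precise point where no-loss replaces the closure inequality.
\end{itemize}

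Since $h$ is an arbitrary material variation, not an ambient trace, the reaction $\mathfrak f_u$ vanishes on all of $\T$. The circular free equation then holds globally with no jumps, and $|k|=2\pi a$ together with unit length gives the $a$-covered circle. The figure-eight is excluded exactly because its reaction $2\sum_s[k]_s\delta_s'$ is nonzero. No indicatrix rigidity or degree argument is needed; the braid index enters only through the $a$-strand competitors.
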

\begin{proof}
We use two quantitative geometric inputs from \cite{RvM2026}.
For a fixed smooth $a$-strand braid representative of $K$,
\cite[Proposition 4.2]{RvM2026} supplies constants
$\rho_0,c_*,C_*>0$ and smooth normalized same-class embeddings
$v_\rho$ for every $0<\rho\le\rho_0$, with
\[
 |\B(v_\rho)-(2\pi a)^2|\le C_*\rho^2,
 \quad \Delta(v_\rho)\ge c_*\rho,
 \quad \Rop(v_\rho)\le C_*/\rho.
\]
The constants may depend on the chosen representative, but not on
$\rho$. We use this quantitative result as an external input.
The embedded bridge bound then gives $e_K=(2\pi a)^2$. If
$\gamma_\theta$ minimizes $\B+\theta\Rop$, choose
$\rho=\theta^{1/3}\le\rho_0$ for sufficiently small $\theta$. Minimality gives
\[
 0\le\B(\gamma_\theta)-(2\pi a)^2\le C\theta^{2/3},\qquad
 \theta\Rop(\gamma_\theta)\le C\theta^{2/3}.
\]
Since its length is one, its thickness
$\Delta_\theta=1/\Rop(\gamma_\theta)$ is at least
$c_0\theta^{1/3}$. This is also \cite[Proposition 5.1]{RvM2026}.
Strong convergence and constant curvature of a chosen elastic-limit
subsequence now follow from no-loss and the preceding proposition.

The second input, \cite[Lemma 6.1]{RvM2026}, states that for each fixed
$h\in C^2(\T;\R^3)$ there are constants $a_h,C_h>0$, independent of
the reference curve and its positive thickness $\Delta$, such that
$\gamma+t\varepsilon h$ is a regular embedding of the same knot type
for $|t|\le1$, $|\varepsilon|\le a_h\Delta$, and has ropelength at
most $C_h/\Delta$, provided $\gamma$ is a unit-speed embedded
$C^{1,1}$ curve of length one. The minimizers here meet these hypotheses:
their positive thickness gives $C^{1,1}$ regularity, as recorded in
Section~\ref{sec:compactness}.

Here is the mechanism behind the uniformity in $\Delta$ in that lemma.
Thickness bounds the curvature of $\gamma$ by $1/\Delta$.
Unit length and the embedded bridge bound give
$2\pi\le\TC(\gamma)\le1/\Delta$, hence $\Delta\le1/(2\pi)$.
For $\eta_t=\gamma+t\varepsilon h$, taking $a_h$ small in terms of
$\|h\|_{C^2}$ keeps the speed between $1/2$ and $3/2$ and bounds
both $\|\eta_t''\|_\infty$ and its curvature by $C_h/\Delta$.
To control self-approach, split pairs at circular parameter distance
$\alpha_h\Delta$, with a sufficiently small fixed $\alpha_h>0$.
For shorter pairs, Taylor's formula and the speed lower bound make
the chord's scalar product with its starting tangent nonzero. Such
pairs neither collide nor form a doubly critical chord, meaning one
perpendicular to both endpoint tangents. For longer pairs, minimize chord length on the compact set
$\{(s,r):\dist_\T(s,r)\ge\alpha_h\Delta\}$. At its boundary the
local thick chord--arc estimate applies, since $\alpha_h\le\pi$;
an interior minimum is a doubly critical chord of length at least
$2\Delta$. Consequently all longer pairs satisfy
\[
 |\gamma(s)-\gamma(r)|\ge2\Delta\sin(\alpha_h/2).
\]
Shrinking $a_h$ again makes the perturbation of this chord at most
$\Delta\sin(\alpha_h/2)$. Thus these chords retain length at least
$\Delta\sin(\alpha_h/2)$. The characterization of thickness as the
minimum of curvature radius and half the infimum of doubly critical
chord lengths now yields
$\Delta(\eta_t)\ge c_h\Delta$. Its length is at most
$3/2$, giving the stated ropelength bound. All estimates are uniform
for $|t|\le1$. Hence this entire path consists of regular embeddings;
local constancy of knot type along a $C^1$ path keeps it in $K$.
These are the short- and long-chord arguments of the cited lemma;
the fixed $C^2$ norm of $h$ is essential.

Fix smooth $h$ and let $\varepsilon_\theta=\sqrt\theta$.
Let $r_{\rm T}$ be a radius of a Taylor ball about the strong limit $u$
in Lemma~\ref{lem:taylor}, and let $\theta_0$ make the preceding
thickness bound valid. Restrict to
\[
 0<\theta<\min\left\{\theta_0,(a_hc_0/2)^6,
       \left(\frac{r_{\rm T}}{2(1+\|h\|_{H^2})}\right)^2\right\},
 \qquad \|\gamma_\theta-u\|_{H^2}<r_{\rm T}/2.
\]
These conditions hold eventually on the chosen subsequence. They give
$\varepsilon_\theta/\Delta_\theta\le c_0^{-1}\theta^{1/6}<a_h/2$
and keep both variation segments in the Taylor ball. Normalize the two
competitors $\gamma_\theta\pm\varepsilon_\theta h$; translation,
dilation, and arclength reparametrization preserve $\E$ and ropelength.
Their bending after normalization is their original $\E$.
Minimality and the uniform Taylor estimate imply, for both signs,
\[
 0\le\pm\varepsilon_\theta D\E(\gamma_\theta)[h]
          +C_h\varepsilon_\theta^2
          +C_h\theta/\Delta_\theta.
\]
Divide by $\varepsilon_\theta$ to obtain
\[
 |D\E(\gamma_\theta)[h]|
 \le C_h\sqrt\theta+C_hc_0^{-1}\theta^{1/6}\longrightarrow0.
\]
Continuity of $D\E$ in strong $H^2$ gives full material stationarity of
$u$. This spells out the two-sign argument of
\cite[Proposition 6.4]{RvM2026}. Thus the free circular equation holds
globally, so $u$ is a covered circle; $|k|=2\pi a$ and length one fix
the number of coverings as $a$. This is the linear classification also
proved in \cite[Lemma 7.1]{RvM2026}.
\end{proof}
The main theorem of \cite{RvM2026} assumes a bridge inequality on the
$C^1$ boundary. The quantitative inputs just used do not require that
assumption. In the present argument no-loss supplies the energy equality
for the particular limits under consideration; the bridge inequality
is applied to the embedded approximants.

\section{Rigidity when the full reaction is a measure}\label{sa:sec:measure}
The word ``full'' matters in the next statement. Its hypothesis concerns
$f$ on the parameter circle. Ambient stationarity only makes the spatial
resultant vanish, and does not by itself give a measure on that circle.
The theorem is an independent implication under its stated assumptions;
bridge saturation is needed only when applying it to elastic limits.

\begin{theorem}\label{sa:thm:measure}
Let $u:\T\to\R^3$ be closed, unit speed, and $H^2$, with
$|u''|=c$ almost everywhere. Suppose
\[
 D\E(u)[X\circ u]=0\quad(X\in C_c^\infty(\R^3;\R^3)),
\]
and suppose $f=D\E(u)$ is a finite vector Radon measure on $\T$.
Then $f=0$, $c=2\pi m$ for an integer $m\ge1$, and $u$ traverses a
circle of radius $1/c$ exactly $m$ times.
\end{theorem}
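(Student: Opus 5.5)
The plan is to convert the measure hypothesis into regularity of $k$ first, extract as much pointwise information about $f$ as ambient stationarity and the constant-curvature structure allow, and only then show $f=0$. Once $f=0$ the geometric conclusion is routine. Since $f=2(D^2k+c^2k\,\dd s)$ by \eqref{sa:eq:variation}, the measure hypothesis makes $D^2k$ a finite measure. Hence $k'$ has a BV representative, $k$ is Lipschitz, and $u\in W^{3,\infty}$. In particular $\langle f,h\rangle=\int h\cdot\dd f$ for continuous $h$: $f$ pairs only with values, so no dipole terms $b\,\delta'$ as in Proposition~\ref{ct:prop:localreaction} can occur. Ambient stationarity then reads $\int X(u)\cdot\dd f=0$ for all smooth $X$. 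This says exactly that the spatial pushforward $u_\#f$ vanishes as a vector measure on $\R^3$.

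The second step is to show that $f$ is normal, i.e.\ $T\cdot f=0$ as a measure. Use the material tangential test $h=\varphi T$ with $\varphi$ smooth; by the first step $T$ is $C^{1,1}$, so $h\in H^2$. In \eqref{sa:eq:variation} one has $k\cdot(\varphi T)''=2c^2\varphi'+\varphi\,k\cdot k'$. Here $k\cdot k'=0$ because $|k|=c$ and $k$ is Lipschitz. Also $T\cdot(\varphi T)'=\varphi'$. Both integrals therefore vanish, giving $\int\varphi\,T\cdot\dd f=0$. This is the reparametrization invariance of the constant-curvature problem. By Theorem~\ref{ct:thm:freearcs} and Corollary~\ref{sa:cor:circular}, $f$ vanishes on the open dense free set, and each free arc is a circular arc of radius $1/c$. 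By Proposition~\ref{ct:prop:bundlefree} it also vanishes on overlap bundles. So $\supp f\subset R_u$, which is nowhere dense by Theorem~\ref{ct:thm:nowheredense}.

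The main obstacle is passing from $u_\#f=0$ to $f=0$. The cancellation is spatial, while $f$ lives on the material circle. I would first try to combine three facts at each fiber. First, $u_\#f=0$ forces the masses of $f$ over all visits to a spatial point, and over any spatial Borel set, to cancel. Second, the normality $T\cdot f=0$ from the previous step holds at every visit. Third, because $k$ is Lipschitz and $|k|=c$, the curve has uniformly bounded curvature, so near any point of $R_u$ the strands are graphs with uniformly controlled $C^{1,1}$ norms. Concretely, I would test $\int X(u)\cdot\dd f=0$ with fields $X=\chi\,\nabla\phi$ localized near a fiber, as in Proposition~\ref{ct:prop:fiberbalance}. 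I would pair these with the material identity $k\cdot f=2(c^4-|k'|^2)\dd s+k\cdot f^{s}$. For this identity, one uses $k\cdot k''=-|k'|^2$ on the absolutely continuous part, and $f$ vanishes on the dense free set. The aim is to show that the singular part $f^s$, which carries the jumps of $k'$, must vanish, because the jumps at parallel coincident strands cancel only if each vanishes. If this direct route fails, I would instead use the global test $X(x)=x$ and the affine tests $X(x)=v+Ax$. These give $\int\dd f=0$ and $\int u\otimes\dd f=0$. I would compare them with the circular free arcs, all of radius $1/c$ and each on its own center-plane, to force the centers and planes to agree.

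Once $f=0$, the free equation holds on all of $\T$. Corollary~\ref{sa:cor:circular}, applied on the single free interval $\T$ (cut at one point and continued analytically, using $k\in C^1$), shows that $u+k/c^2$ is constant and $u$ lies on one circle of radius $1/c$, traversed at unit speed. A closed unit-speed curve of length one on a circle of circumference $2\pi/c$ covers it an integer number $m\ge1$ of times, so $1=2\pi m/c$. This gives $c=2\pi m$ and the stated $m$-fold traversal.
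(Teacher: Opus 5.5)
Your regularity step and the final identification of the circle are fine. The normality $T\cdot f=0$ is also correct, though it is not needed. The gap is the step you yourself flag as the main obstacle: you never carry out the passage from $u_\#f=0$ to $f=0$, and neither suggested route works as stated.

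\textbf{Why the fiberwise route fails.} Fiberwise cancellation cannot force individual atoms to vanish. Take a tangential contact of two visits with equal tangent line and equal $k$. Normality together with $k(s)\cdot f(\{s\})=0$ only confines both atoms to the common binormal line, and atoms $\alpha B$ and $-\alpha B$ are compatible with $u_\#f=0$. Moreover, $f$ may a priori have a Cantor part, and an absolutely continuous part on $R_u$, which can have positive Lebesgue measure. So killing the atoms, or even all of $f^s$, would not finish the proof.

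\textbf{Why the affine fallback fails.} The affine tests $v+Ax$ give only twelve scalar conditions, and you offer no mechanism by which they locate or exclude the measure.

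\textbf{A spurious term.} Your identity $k\cdot f=2(c^4-|k'|^2)\dd s+k\cdot f^s$ carries an extra term. Since $k\cdot k'=0$ identically, the product rule $D(k\cdot k')=|k'|^2\dd s+k\cdot Dk'$ (Lipschitz times BV) gives $k\cdot Dk'=-|k'|^2\dd s$ exactly. Using $T\cdot k'=-c^2$, this yields
\[
 k\cdot f=-2\,|k'+c^2T|^2\,\dd s\le0
\]
as measures, with no singular part at all.

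\textbf{The missing idea.} That sign identity is one half of what you need. The other half is to test ambient stationarity against $k$ itself, which requires realizing $k$ as the trace of a spatial field:
\begin{enumerate}
\item Show that two visits to one spatial point have different curvature vectors only at countably many points. In a pair graph chart, the difference $g$ of the two $C^2$ graphs has an isolated zero when $g'\ne0$. It also has an isolated zero when $g'=0$ but $g''\ne0$, and this is exactly what different curvature vectors along a common tangent line mean.
\item Since fibers are finite, it follows that $k=K\circ u$ off a countable parameter set, for a bounded Borel field $K$ on $\R^3$ (set $K=0$ on the exceptional points).
\item Because $u_\#f$ annihilates all continuous fields, it is the zero measure. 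Hence $\int K(u)\cdot\dd f=0$.
\item On the countable exceptional set, the discrepancy from $\int k\cdot\dd f$ is a sum of terms $k(s)\cdot f(\{s\})$. These vanish because $k\cdot f$ has no atoms.
\end{enumerate}
Thus $\int k\cdot\dd f=0$, and the sign identity forces $k'=-c^2T$ almost everywhere. This yields the circle and $f=0$ simultaneously, without any fiberwise analysis of the atoms of $f$.
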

\begin{proof}
A finite vector Radon measure is a tuple of finite signed Radon measures;
its total variation $|f|$ controls integration of bounded Borel vector
fields by $|\int g\cdot\dd f|\le\int|g|\,\dd|f|$. Pushforwards and
all measure identities below are understood componentwise. A function
is BV, or of bounded variation, if its distributional derivative is a
finite measure.

\emph{Step 1: the measure hypothesis improves regularity.}
If $c=0$, $u'$ is a constant unit vector and cannot be periodic, so
$c>0$. Set $T=u'$ and $k=u''$. Equation~\eqref{sa:eq:variation} shows
that $D^2k=f/2-c^2k\,\dd s$ is a finite measure $\mu$.
Its mass is zero, because a distributional derivative paired with the
constant test $1$ is zero. A periodic distribution whose derivative is
$\mu$ has a bounded BV representative, up to an additive constant.
Here is the one-dimensional argument. Choose a cut of the circle that
is not an atom of $|\mu|$, which is possible since a finite measure has
at most countably many atoms. On the resulting interval $(0,1)$ define
$Q(s)=\mu((0,s])$. Its variation is at most $|\mu|(\T)$, and it is
bounded by that number. Fubini's theorem against a smooth periodic test
proves $DQ=\mu$: its use is justified by the bound
$\|\varphi'\|_\infty|\mu|(\T)$ for the absolute double integral.
There is no endpoint jump because the cut is not an atom and the total
mass is zero. Subtracting $Q$ from the distribution in question leaves
zero derivative, hence a constant. Apply this to $Dk$. Thus $Dk$ has a
bounded BV density, denoted $k'$. Integrating it gives a Lipschitz
representative of $k$, and $u\in C^{2,1}$. In particular $T,k$ are
continuous, $k'$ is bounded, and $Dk'$ is a finite measure.

\emph{Step 2: a scalar sign identity.}
Differentiating the absolutely continuous identities $|k|^2=c^2$ and
$T\cdot k=0$ gives
\[
 k\cdot k'=0,\qquad T\cdot k'=-c^2\quad\hbox{a.e.}
\]
The product rule with a Lipschitz continuous factor and a BV factor is
\[
 D(k\cdot k')=|k'|^2\,\dd s+k\cdot Dk'.
\]
It can be checked directly here. Replace $k$ by smooth periodic
mollifications $k_\epsilon$, which converge uniformly and have
$k_\epsilon'\to k'$ in $L^1$. In the smooth-factor product rule,
$k_\epsilon'\cdot k'$ converges in $L^1$ because $k'$ is bounded,
and $k_\epsilon\cdot Dk'$ converges as measures because
$\|k_\epsilon-k\|_\infty|Dk'|(\T)\to0$. The left sides converge
as distributions. This proves the formula. Its left side is zero,
so multiplying $f=2(Dk'+c^2k\,\dd s)$ by the continuous $k$ yields
\begin{equation}\label{sa:eq:negative-pairing}
 k\cdot f=-2(|k'|^2-c^4)\,\dd s
          =-2|k'+c^2T|^2\,\dd s\le0.
\end{equation}
For the second equality expand the square and use $T\cdot k'=-c^2$
and $|T|=1$. In particular its atomic part vanishes:
\begin{equation}\label{sa:eq:atom}
 k(s)\cdot f(\{s\})=0\quad(s\in\T).
\end{equation}

\emph{Step 3: different curvature vectors can meet at only countably
many spatial points.}
Lemma~\ref{ct:lem:fibers} supplies a separation $\eta>0$ for distinct
visits. Thus the contact-pair set
\[
 Z=\{(s,t)\in\T^2:u(s)=u(t),\ \dist_\T(s,t)\ge\eta\}
\]
is compact. At each pair choose a spatial axis nonperpendicular to both
tangents. On small disjoint parameter intervals, the two $C^{2,1}$
branches are $C^2$ graphs over this axis, written
$(x,\alpha(x))$ and $(x,\beta(x))$ with $\alpha,\beta$ taking values
in $\R^2$. Finitely many pair charts cover $Z$. In one chart put
$g=\alpha-\beta$. If a coincidence has different tangent lines,
$g'(x_0)\ne0$; a nonzero scalar component of this derivative makes
$x_0$ an isolated zero. If the tangent lines agree, the common graph
slope is $p=\alpha'(x_0)=\beta'(x_0)$. The physical curvature vector
of the first graph is
\[
 \frac{P_{(1,p)}^\perp(0,\alpha''(x_0))}{1+|p|^2}.
\]
This follows by differentiating the unit tangent with respect to
arclength; it is unchanged if the material orientation is reversed.
The map $q\mapsto P_{(1,p)}^\perp(0,q)$ is injective, because a vector
$(0,q)$ parallel to $(1,p)$ must be zero. Different curvature vectors
therefore give $g''(x_0)\ne0$. Since $g(x_0)=g'(x_0)=0$, Taylor's
formula gives $g(x_0+h)=g''(x_0)h^2/2+o(h^2)$, and a nonzero scalar
component again isolates the zero.

The isolated zeros of a function on an interval form a countable set:
assign to each a rational interval containing no other zero. Our finite
chart cover thus shows that
\[
 D=\{p\in u(\T):\text{two visits to }p\text{ have different }k\}
\]
is countable. Fibers are uniformly finite, so $u^{-1}(D)$ is countable
as well.

\emph{Step 4: pair the spatial cancellation with curvature.}
Cover $\T$ by finitely many closed injective arcs $I_1,\ldots,I_N$.
Each inverse $u(I_j)\to I_j$ is continuous. On the Borel set
$u(I_j)\setminus\bigcup_{i<j}u(I_i)$ choose the curvature from this
inverse branch. Set the resulting field equal to zero on $D$ and off
the image, calling it $K:\R^3\to\R^3$. This is a bounded Borel field,
and
\[
 K(u(s))=k(s)\quad\hbox{when }u(s)\notin D.
\]

Define the pushforward by $(u_\#f)(A)=f(u^{-1}(A))$ for Borel sets.
Its integral satisfies $\int X\cdot\dd(u_\#f)=\int X(u)\cdot\dd f$.
The distributional and measure pairings agree on $X\circ u\in H^2$:
use smooth approximation in $H^2$, which also converges uniformly.
Ambient stationarity therefore makes $u_\#f$ annihilate every smooth
compactly supported field. Convolution and a cutoff approximate any
continuous test uniformly on the compact support of this measure.
Consequently it annihilates all continuous tests, and uniqueness in
Radon measure representation gives $u_\#f=0$
\cite[Theorem 11.26]{Knapp}. In particular
\[
 0=\int K\cdot\dd(u_\#f)=\int_\T K(u(s))\cdot\dd f(s).
\]
The difference from $\int k\cdot\dd f$ is supported on the countable
set $u^{-1}(D)$. A finite measure restricted to a countable set is the
sum of its atoms; the sum here is absolutely convergent, bounded by
$c|f|(\T)$. At those points $K=0$, and each remaining contribution
$k(s)\cdot f(\{s\})$ vanishes by \eqref{sa:eq:atom}. Thus
$\int k\cdot\dd f=0$.

Integrating \eqref{sa:eq:negative-pairing} now gives
$\int|k'+c^2T|^2=0$, so $k'=-c^2T$ almost everywhere. The system
$T'=k$, $k'=-c^2T$ has the solution
\[
 T(s)=A\cos(cs)+B\sin(cs),\qquad
 |A|=|B|=1,\quad A\cdot B=0,
\]
where the scalar relations follow from $|T(0)|=1$, $|k(0)|=c$, and
$T(0)\cdot k(0)=0$. Integration gives a circle of radius $1/c$.
Periodicity in these two independent directions forces $c=2\pi m$
with integer $m\ge1$. Its equation $k''=-c^2k$ in
\eqref{sa:eq:variation} also proves $f=0$.
\end{proof}

\begin{lemma}\label{sa:lem:braid-neighborhood}
If normalized embeddings of a fixed knot class converge in $C^1$ to
an $m$-covered circle, that class has braid index at most $m$.
\end{lemma}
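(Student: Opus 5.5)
The plan is to show that an embedding $C^1$-close to an $m$-covered circle is, after a rigid motion, a closed braid with $m$ strands about the circle's axis. Fix a limiting $m$-covered circle $u$, and after a Euclidean motion assume it is $u(s)=\rho(\cos 2\pi m s,\sin 2\pi m s,0)$ with $\rho=1/(2\pi m)$. Take the vertical axis $Z=\{x_1=x_2=0\}$ and, for the embeddings $v_j\to u$ in $C^1$, the angular coordinate $\vartheta_j(s)=\arg(v_{j,1}(s)+iv_{j,2}(s))$. This angle is defined once $v_j$ avoids $Z$, which holds for large $j$ because $|v_j-u|_\infty\to0$ while $u$ stays at distance $\rho$ from $Z$.

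\textbf{Step 1: monotone angular projection.} Compute
\[
 \vartheta_j'=\frac{v_{j,1}v_{j,2}'-v_{j,2}v_{j,1}'}{v_{j,1}^2+v_{j,2}^2}.
\]
For $u$ this equals $2\pi m$. The numerator and denominator of $\vartheta_j'$ converge uniformly to those of $u$, and the denominator stays near $\rho^2>0$. Hence $\vartheta_j'\ge\pi m>0$ for large $j$. The angular map $\T\to\R/2\pi\mathbb Z$ is then a local diffeomorphism, hence a covering, and its degree is the winding number $\frac1{2\pi}\int_\T\vartheta_j'\,ds$. This integer converges to $m$, so it equals $m$ eventually.

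\textbf{Step 2: from a covering projection to a braid.} A closed embedded $C^1$ curve disjoint from an axis whose angular projection is a covering of degree $m$ meets each half-plane $\{\vartheta=\text{const}\}$ transversally in exactly $m$ points. This is exactly the characterization of a closed $m$-braid recalled in Section~\ref{sa:sec:saturation}. The braid-index definition there refers to smooth representatives. To meet it, smooth $v_j$ by periodic convolution within its $C^1$ isotopy neighborhood, invoking \cite{Reiter2005} as in Proposition~\ref{prop:elastic-minimizing}. The smoothed curve is $C^1$-close to $v_j$, so the Step 1 estimate still holds and it is a smooth closed $m$-braid of type $K$. Therefore $\operatorname{braid}(K)\le m$.

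\textbf{Main obstacle.} The delicate point is matching the notion of closed braid with the cited convention. A positive-derivative angular covering must be accepted as a closed braid. If the convention instead demands that the curve lie in a solid torus fibered by meridian disks, I would show that the map $(r,\vartheta,z)$ parametrizes $v_j$ as $m$ graph strands over the angle. I would then use the radial isotopy $r\mapsto$ const, which is harmless away from the axis, to place the curve in standard braid form; this is justified because each strand is a $C^1$ graph over $\vartheta$. Everything else is uniform $C^1$ convergence.
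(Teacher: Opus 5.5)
Your proposal is correct and follows essentially the paper's argument: angular projection about the axis through the circle's center, positivity of the angular derivative from $C^1$ closeness, hence an $m$-sheeted covering of the angular circle and a closed $m$-braid. The differences are minor. You get the degree from $\frac{1}{2\pi}\int\vartheta_j'\,ds\to m$ rather than the paper's straight-line homotopy with planar radius bounded below, and you add an explicit smoothing step for the smooth-representative convention. Your solid-torus contingency is unnecessary, since $C^0$ closeness already places $v_j$ in a thin solid torus transverse to the meridian disks, and collapsing $r$ to a constant could in fact create collisions.
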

\begin{proof}
Use cylindrical coordinates about the line normal to the circle through
its center. Its radius is $R=1/(2\pi m)$. Orient the angular coordinate
so that the planar position $y$ and tangent $T$ of the reference curve
satisfy $|y|=R$ and $\det(y,T)=R$. For a nearby embedding $v$, denote
its planar position and parameter derivative by $\widetilde y,
\widetilde T$. If
\[
 \|v-u\|_{C^1}<\varepsilon_b,
 \qquad\varepsilon_b=R/[4(R+2)],
\]
then $|\widetilde y|\ge3R/4$ and
\[
 |\det(\widetilde y,\widetilde T)-R|
 \le\varepsilon_b(1+\varepsilon_b+R)\le R/4.
\]
Here expand the determinant difference as
$\det(\widetilde y-y,\widetilde T)+\det(y,\widetilde T-T)$
and use $|\widetilde T|\le1+\varepsilon_b$.
The angular derivative
$\det(\widetilde y,\widetilde T)/|\widetilde y|^2$ is positive.
The pointwise straight homotopy from $u$ to $v$ has planar radius at
least $R-\varepsilon_b>0$, so its angular map has the same degree $m$.
A circle map with everywhere positive derivative is locally invertible;
compactness makes it a covering, and its degree counts its $m$ sheets.
Thus $v$ is a closed $m$-strand braid about this axis. The definition
of braid index proves the conclusion.
\end{proof}

\begin{corollary}\label{sa:cor:nonmeasure}
Suppose \eqref{sa:eq:sharp} holds and $\operatorname{braid}(K)>a$.
The full reaction of every normalized bending-minimizing closure curve
and every elastic limit is not a finite vector Radon measure.
\end{corollary}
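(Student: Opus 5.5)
The plan is to argue by contradiction, combining Proposition~\ref{sa:prop:saturation}, Theorem~\ref{sa:thm:measure} and Lemma~\ref{sa:lem:braid-neighborhood}. Let $u$ be either a normalized minimizer of $\B$ in the regular $H^2$ part of the $C^1$ closure (equivalently, by Theorem~\ref{rc:theorem}, a normalized strong limit of a bending-minimizing sequence), or an elastic limit. By Proposition~\ref{sa:prop:saturation}, such a $u$ is unit speed and $H^2$, with $|u''|=c=2\pi a$ almost everywhere. For elastic limits, Proposition~\ref{prop:elastic-minimizing} first places $u$ in this class.

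Ambient stationarity then holds. For strong limits of minimizing sequences it is Proposition~\ref{ct:prop:ambient}, which gives $D\E(u)[X\circ u]=0$ for all $X\in C_c^2$, in particular for smooth compactly supported fields. For a closure minimizer, first apply Theorem~\ref{rc:theorem} to obtain a strong embedded recovery sequence with $\B\to e_K$; the proof of Proposition~\ref{ct:prop:ambient} then applies verbatim. Thus all hypotheses of Theorem~\ref{sa:thm:measure} hold except the measure hypothesis on $f=D\E(u)=\mathfrak f_u$.

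Now suppose $\mathfrak f_u$ were a finite vector Radon measure. Theorem~\ref{sa:thm:measure} would give $c=2\pi m$ and show that $u$ is an $m$-covered circle of radius $1/c$. Since $c=2\pi a$, this forces $m=a$. The curve $u$ is a strong $H^2$ limit of normalized embeddings of type $K$, hence also a $C^1$ limit. Lemma~\ref{sa:lem:braid-neighborhood} therefore gives $\operatorname{braid}(K)\le a$, contradicting the hypothesis $\operatorname{braid}(K)>a$.

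The only step requiring care is verifying that each kind of curve in the statement is indeed a $C^1$ (in fact strong $H^2$) limit of normalized same-class embeddings with energy tending to $e_K$. For closure minimizers this rests on the recovery theorem; for elastic limits it rests on Theorem~\ref{thm:no-loss}. Once that is settled, the remainder is a direct chaining of the cited results. A minor point is that Lemma~\ref{sa:lem:braid-neighborhood} needs the approximants normalized and converging to the circle as stated. The normalized embeddings from the recovery sequence, or the $v_j$ themselves, satisfy this, and any Euclidean motion placing the circle in the lemma's coordinates preserves knot type.
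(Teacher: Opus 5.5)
Your proof is correct and follows the paper's own argument. Saturation and ambient stationarity put $u$ under the hypotheses of Theorem~\ref{sa:thm:measure}, so a finite reaction would make $u$ an $a$-covered circle, and the $C^1$-convergent embedded approximants (from recovery for closure minimizers, from the original minimizers for elastic limits) then contradict $\operatorname{braid}(K)>a$ via Lemma~\ref{sa:lem:braid-neighborhood}; your explicit check that closure minimizers are strong limits of bending-minimizing sequences, so that Proposition~\ref{ct:prop:ambient} applies, only spells out what the paper's short proof leaves implicit.
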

\begin{proof}
Saturation gives $|k|=2\pi a$ and ambient stationarity. A finite full
reaction would give an $a$-covered circle by
Theorem~\ref{sa:thm:measure}. Strong recovery supplies $C^1$-convergent
embeddings of type $K$; for elastic limits the original approximants
already do so. Lemma~\ref{sa:lem:braid-neighborhood} would imply
$\operatorname{braid}(K)\le a$, a contradiction.
\end{proof}
This conclusion permits dipoles or other distributional singularities;
it neither identifies their form nor determines their support. A
measure representing only an ambient resultant does not meet the full
parameter-measure hypothesis of the theorem.

\section{Directional crookedness and a regularity obstruction}
\label{sa:sec:crookedness}
For a regular closed curve, its height in direction $v\in S^2$ is
$s\mapsto u(s)\cdot v$. Write $\mu(u,v)$ for its number of local maxima
in one material period. At a Morse direction all critical points are
simple, maxima and minima alternate, and the number of critical points
is $2\mu(u,v)$. We use Milnor's integral identity
\begin{equation}\label{sa:eq:milnor}
 \int_{S^2}\mu(u,v)\,\dd A(v)=2\TC(u),
\end{equation}
where $\dd A$ is spherical area measure \cite[Theorem 3.1]{Milnor1950}.
For a $C^2$ curve parametrized by arclength, $\TC(u)=\int|u''|$
\cite[Theorem 2.2]{Milnor1950}. The next theorem has no stationarity
hypothesis.

\begin{theorem}\label{sa:thm:crookedness}
Let $u:\R/L\mathbb Z\to\R^3$ be closed, unit speed, and $W^{3,1}$.
Suppose its continuous curvature $\kappa=|u''|$ is positive everywhere
and $\mu(u,v)=a$ for almost every $v\in S^2$, with an integer $a$.
Then $u$ lies in a plane and has signed curvature of one sign there.
If $\kappa=c$ is constant, it is an $a$-covered circle and $cL=2\pi a$.
\end{theorem}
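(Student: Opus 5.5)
The plan is to pass from the curve to its tantrix $T=u'\colon\R/L\mathbb Z\to S^2$ and use the fact that height-critical points are exactly intersections of the tantrix with great circles. Since $u\in W^{3,1}$, $T\in W^{2,1}\subset C^1$ and $T'=k$ is continuous and nowhere zero, by $\kappa>0$. Thus $T$ is a regular $C^1$ spherical curve. The principal normal $N=k/\kappa$ and the binormal $b=T\times N$ are continuous and in $W^{1,1}$, so they are absolutely continuous and differentiable almost everywhere. For a direction $v$, the critical points of $s\mapsto u\cdot v$ are the zeros of $T\cdot v$. Such a zero is degenerate precisely when $k\cdot v=0$ as well, i.e.\ when $v=\pm b(s)$. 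The set of degenerate directions is therefore the image $\pm b(\R/L\mathbb Z)$. This is a rectifiable curve, hence a null set in $S^2$, and off it the height is Morse with $2\mu(u,v)$ critical points.

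The key step is to show $b'=0$ almost everywhere. Suppose the absolutely continuous $b$ were not constant. Choose $s_0$ at which $b$ is differentiable with $b'(s_0)\neq0$, and at which $k$ is differentiable with $k'(s_0)\cdot b(s_0)\neq0$; generically this is the torsion term, since $b'=-\tau N$ and $k'\cdot b=\kappa\tau$. Such $s_0$ exist on a set of positive measure. Put $v_0=b(s_0)$. Then $g_{v}(s)=T(s)\cdot v$ has at $s_0$, for $v=v_0$, a zero with $g_{v_0}'(s_0)=k\cdot v_0=0$ and nonvanishing second-order behaviour. I would show that it is a fold: perturbing $v$ to one side of the great circle $T(s_0)^\perp$ near $v_0$ produces two nearby transversal zeros, and perturbing to the other side produces none. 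Because $s_0$ can be chosen so that $v_0$ is not a degenerate direction for any other parameter, which removes a null set of $s_0$ using the finite fibers of $\pm b$ over a.e.\ point, the remaining zeros of $g_v$ persist transversally for $v$ near $v_0$. Two open sets of directions of positive measure would then have critical counts differing by $2$, contradicting $\mu(u,v)=a$ almost everywhere. Hence $b$ is constant, so $u\cdot b$ is constant and $u$ lies in a plane.

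In that plane, the continuous frame $(T,N,b)$ with $b$ fixed and $\kappa>0$ gives a signed curvature $\det(T,k)\cdot b=\kappa$ of constant sign, so the tangent angle is strictly monotone. A great-circle count then shows that the tantrix winds $a$ times. Indeed, for a generic in-plane direction the number of zeros of $T\cdot v$ is twice the winding number, and for an out-of-plane generic $v$ the same count applies to the projected direction. Milnor's identity \eqref{sa:eq:milnor} gives total curvature $2\pi a$ consistently. If $\kappa=c$, the planar curve has $k'=-c^2T$, so it is a circle of radius $1/c$, as in Corollary~\ref{sa:cor:circular}. The total turning is $cL=2\pi a$, so the circle is covered $a$ times.

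I expect the main obstacle to be the fold argument at low regularity. Here $k'$ is only $L^1$, so the local quadratic expansion of $g_v$ near $s_0$ must be run using only pointwise differentiability of $k$ at $s_0$, i.e.\ a Lebesgue-point choice. A uniform estimate in $v$ is also needed for the persistence of the two created zeros. If this proves delicate, I would instead argue via Crofton: the a.e.\ constancy of the intersection count of the tantrix with great circles forces the locally constant count function to have no jump across the rectifiable curve $\pm b$. Via a degree or coarea computation, this gives $\int|\tau|\kappa\,ds=0$.
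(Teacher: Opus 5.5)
\emph{Verdict: genuine gap.} Your strategy matches the paper's: a fold of $s\mapsto T(s)\cdot v_0$ at a binormal direction $v_0$, pushing $v$ off the great circle $T(s_0)^\perp$ to either side, and comparing Morse counts on two open sets of directions. Your final planar identification is also fine.

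\textbf{The gap.} You claim that $s_0$ can be chosen so that $v_0=b(s_0)$ is a degenerate direction for no other parameter. Finite fibers of $\pm b$ do not make the set of parameters with a second visit null. If two arcs of the curve are translates of each other (two congruent helical pieces, say), $b$ takes identical values on two parameter intervals of positive length; similarly, $b$ can overlap $-b$ on such sets. Your selection step does not use $\mu=a$, so it cannot exclude this.

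At such a $v_0$ the function $T\cdot v_0$ has several simultaneous degenerate zeros. Unless you show they all open to the same side of the great circle, pushing $v$ off $v_0$ could create two zeros at one fold and destroy two at another, leaving the count unchanged. Some of these other visits may also be parameters where $b$ is not differentiable or $\tau=0$, so they need not be nondegenerate folds at all. Your Crofton fallback has the same defect: a vanishing net jump of the count across a multiply covered arc of $\pm b$ says nothing about the individual jumps unless they have a common sign.

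\textbf{The missing idea} is this coherence of folds, and it is where the low regularity really enters. The expansion of $T\cdot v_0$ at $s_0$ only needs pointwise differentiability of $k$; the Lebesgue-point choice is needed elsewhere. The paper proceeds in four steps.
\begin{enumerate}
\item At parameters where $N$ is differentiable and $\tau$ has a Lebesgue value $\tau_0\ne0$, it proves a second-order Peano expansion of $q_\epsilon=\epsilon B$ in its tangent coordinate $\zeta$: $P^\perp_{N_0}(q_\epsilon-q_0)=\tfrac12A\zeta^2+o(\zeta^2)$, with $A=-q_0+K$ and $K=\epsilon\kappa_0T_0/\tau_0\ne0$.
\item It shows that the image of all bad parameters (outside the good set, or with $\tau=0$) has zero $\mathcal H^1$ measure.
\item It shows that incidence pairs of $\pm B$ with different tangent lines or different coefficients $A$ are isolated, so their common images form a countable set.
\item Since a nonconstant $B$ has image of positive $\mathcal H^1$ measure, it chooses $v_0$ in the image, not merely a good $s_0$, so that every visit to $v_0$ is good and all visits share one vector $K$.
\end{enumerate}
The fold coefficients then satisfy $(T_i\cdot K)a_i=\kappa_i^2>0$ at every visit. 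So the single perturbation $v_0+\delta K$ removes all $r$ folds for $\delta>0$ and creates at least $2r$ new zeros for $\delta<0$. This gives the contradiction with $\mu=a$ almost everywhere that your argument needs.
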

\begin{proof}
We prove that a nonconstant binormal would create two open sets of
directions with different numbers of maxima. The argument uses only
absolute continuity of the frame, not a twice differentiable binormal.

\emph{Step 1: the frame and its good parameters.}
The one-dimensional Sobolev theorem makes $u''$ absolutely continuous
\cite[Theorem 8.2]{Brezis}. Set
\[
 T=u',\qquad N=u''/\kappa,\qquad B=T\times N.
\]
Since the continuous positive $\kappa$ has a positive minimum, the
chain and product rules for absolutely continuous functions show that
$\kappa,N,B\in W^{1,1}$, while $T\in C^1$. Differentiating their scalar
products gives
$N'\cdot N=0$, $N'\cdot T=-\kappa$, and
$B'\cdot B=B'\cdot T=0$. With $\tau=N'\cdot B\in L^1$, these give
\[
 T'=\kappa N,\qquad N'=-\kappa T+\tau B,\qquad B'=-\tau N
 \quad\hbox{a.e.}
\]
Let $G$ be the set where $N$ is differentiable, these identities hold,
and $\tau$ has its Lebesgue value:
\[
 (2r)^{-1}\int_{s-r}^{s+r}|\tau(t)-\tau(s)|\,\dd t\to0.
\]
Absolute continuity gives differentiability almost everywhere; the
Lebesgue differentiation theorem gives the last property almost
everywhere \cite[Theorems 4.19 and 4.10]{Axler}. Applying these results
componentwise and intersecting finitely many full-measure sets shows
that $G$ has full measure in the parameter circle.

\emph{Step 2: a second-order normal expansion at good visits.}
For $\epsilon=\pm1$ put $q_\epsilon=\epsilon B$. Fix
$s_0\in G$ with $\tau_0=\tau(s_0)\ne0$, use the local real parameter
$t=s-s_0$, and write a subscript $0$ for values at $s_0$.
Differentiability gives
$N(s_0+t)=N_0+N'_0t+o(|t|)$, with $N'_0\perp N_0$.
The Lebesgue-point property gives, for both signs of $t$,
\[
 \int_0^t\tau(s_0+r)r\,\dd r=\tau_0t^2/2+o(t^2),\qquad
 \int_{\min(0,t)}^{\max(0,t)}|\tau(s_0+r)||r|\,\dd r=O(t^2).
\]
Indeed the first error is at most
$|t|\int_{-|t|}^{|t|}|\tau(s_0+r)-\tau_0|\,\dd r=o(t^2)$;
the second estimate follows by adding the constant $|\tau_0|$.
Integrate $q_\epsilon'=-\epsilon\tau N$ and project perpendicular to
$N_0$. The term $N_0$ disappears and the differentiability remainder
is bounded by $\sup_{|r|\le|t|}o(|r|)/|r|$ times the second integral.
Hence
\[
 P_{N_0}^\perp(q_\epsilon(s_0+t)-q_0)
           =-\epsilon\tau_0N'_0t^2/2+o(t^2).
\]
The tangent coordinate
$\zeta=N_0\cdot(q_\epsilon(s_0+t)-q_0)$ satisfies
$\zeta=-\epsilon\tau_0t+o(|t|)$. In particular it is comparable in
absolute value to $t$, without any claim that it is monotone. Eliminating
$t$ gives the normal Peano expansion
\begin{equation}\label{sa:eq:peano}
 P_{N_0}^\perp(q_\epsilon-q_0)
       =\tfrac12 A\zeta^2+o(\zeta^2),\qquad
 A=-\frac{\epsilon N'_0}{\tau_0}=-q_0+K,
 \qquad K=\frac{\epsilon\kappa_0}{\tau_0}T_0\ne0.
\end{equation}
Changing the sign of the chosen tangent axis changes $\zeta$ to
$-\zeta$ and leaves the vector $A$ unchanged.

\emph{Step 3: exceptional images have zero length.}
For an absolutely continuous curve $q$ and a measurable parameter set
$E$, one has
\[
 \mathcal H^1(q(E))\le\int_E|q'|\,\dd s,
\]
where $\mathcal H^1$ is one-dimensional Hausdorff measure, normalized
so that a segment has its ordinary length. To verify the estimate,
absolute continuity gives
$\operatorname{diam}q(J)\le\int_J|q'|$ for an interval $J$.
Partitioning $J$ into intervals of mesh tending to zero produces
Hausdorff covers of vanishing diameter, by uniform continuity of $q$,
and total diameter at most $\int_J|q'|$. For general $E$, outer
regularity of the finite measure $|q'|\,\dd s$ gives an open $O\supset E$
with integral at most $\int_E|q'|+\eta$. Decompose $O$ into countably
many intervals, apply the preceding bound, and let $\eta\downarrow0$.
A cut of the parameter circle adds only finitely many image points.
Apply this to both $q_\epsilon$. The images of parameters outside $G$
or with $\tau=0$ have zero $\mathcal H^1$ measure; denote their union
by $C$.

\emph{Step 4: unequal incidence data are exceptional too.}
Regard $q$ as the map on the disjoint union of two parameter circles
whose restrictions are $q_+,q_-$. Its incidence set
$Z=\{(s,t):q(s)=q(t)\}$ is closed in the product of this compact space
with itself. At good visits with $\tau\ne0$, $q'$ is nonzero.
Two visits at the same point with different tangent lines determine
an isolated incidence pair. In fact let their derivatives be $v_1,v_2$.
Their independence gives
\[
 \gamma=\min_{h^2+t^2=1}|v_1h-v_2t|>0.
\]
Choose one radius so small that each first-order remainder is at most
$\gamma|r|/(2\sqrt2)$ within it. For increments $h,t$ in that radius,
\[
 |q(s_1+h)-q(s_2+t)|
 \ge\gamma\sqrt{h^2+t^2}
          -\frac\gamma{2\sqrt2}(|h|+|t|)>0
\]
unless $h=t=0$.

If the tangent lines agree, choose a common unit axis $e$ on that line.
Its tangent coordinates satisfy
$|\zeta_i(r)|\ge|e\cdot q'(s_i)|\,|r|/2$ at sufficiently small radius.
Suppose their normal coefficients $A_1,A_2$ in
\eqref{sa:eq:peano} differ. Shrink that radius so that each normal
remainder has norm at most $|A_1-A_2||\zeta_i|^2/8$.
At a common image the coordinates are equal to one $\zeta$; subtraction
of the normal expansions gives a difference of norm at least
$|A_1-A_2||\zeta|^2/4$. It cannot vanish for $\zeta\ne0$.
For $\zeta=0$ the preceding tangent-coordinate bound forces both
increments to be zero. This isolates the pair in the full incidence
set, including nearby bad parameters. A second-countable space has only
countably many isolated points: choose a basic open neighborhood meeting
the set only at that point. Thus the common images of all pairs with
unequal tangent lines or unequal normal coefficients form a countable
set $J$.

\emph{Step 5: choose a direction where all degeneracies agree.}
Suppose $B$ is nonconstant. Its connected image has positive
$\mathcal H^1$ measure: projection on the line joining two different
image points covers the intervening interval and cannot increase
Hausdorff length. Choose $v_0\in B(\R/L\mathbb Z)\setminus(C\cup J)$.
Every visit of either $q_\epsilon$ to $v_0$ is good with nonzero
$\tau$, and all such visits have the same tangent line and coefficient
$A$. Thus they have the same nonzero vector $K=A+v_0$ in
\eqref{sa:eq:peano}; in particular $K\perp v_0$.
Each visit is isolated in the fiber, because its nonzero derivative gives
$|q(s+t)-q(s)|\ge|q'(s)||t|/2$ for small nonzero $t$.
The fiber is compact and all its points are isolated, so it is finite.

Set $F(s,v)=T(s)\cdot v$. Then
$F_s(s,v)=\kappa(s)N(s)\cdot v$ is continuous. Degenerate zeros of
$F(\cdot,v_0)$ are precisely those finitely many visits
$q_{\epsilon_i}(s_i)=v_0$. At such a visit,
$N'_i\cdot v_0=\epsilon_i\tau_i$ and $N_i\cdot v_0=0$.
Integrating $F_s=\kappa N\cdot v_0$, using continuity of $\kappa$ and
the first-order expansion of $N$, gives
\begin{equation}\label{sa:eq:fold}
 F(s_i+t,v_0)=a_it^2/2+o(t^2),\qquad
 a_i=\epsilon_i\kappa_i\tau_i\ne0,\qquad
 (T_i\cdot K)a_i=\kappa_i^2>0.
\end{equation}
These zeros are isolated. Every other zero is simple and is isolated by
nonvanishing of $F_s$. The compact zero set is therefore finite. There
is at least one degenerate zero because $v_0$ belongs to the binormal
image.

\emph{Step 6: compare directions on opposite sides of these folds.}
Let $r\ge1$ be the number of degenerate zeros and $n_0$ the number of
simple zeros just found. Let $d_*$ be their minimum pairwise circular
separation, using $d_*=L$ if there is only one zero. For each fold choose
an interval $I_i=[s_i-r_i,s_i+r_i]$, with
$r_i<\min\{L/8,d_*/4\}$, so small that, with
$\sigma_i=\operatorname{sign}a_i$ and $K_i=T_i\cdot K$,
\[
 \sigma_iF(s,v_0)\ge d_i(s-s_i)^2,
 \qquad \sigma_iT(s)\cdot K\ge b_i>0\quad(s\in I_i),
 \qquad d_i=|a_i|/4,\quad b_i=|K_i|/2.
\]
The Peano expansion and continuity give these inequalities. Put
$m_i=\min_{\partial I_i}\sigma_iF(s,v_0)>0$.
Around each simple zero $z_j$ choose
$J_j=[z_j-\rho_j,z_j+\rho_j]$, with
$\rho_j<\min\{L/8,d_*/4\}$, so that
\[
 |F_s(s,v_0)-F_s(z_j,v_0)|\le|F_s(z_j,v_0)|/2
 \quad(s\in J_j).
\]
All the intervals are disjoint. The compact complement $C_0$ of their
interiors has no zero; set
$\eta_0=\min_{C_0}|F(s,v_0)|>0$ and
$m_j^0=\min_{\partial J_j}|F(s,v_0)|>0$.
With $\kappa_{\max}=\max\kappa$, choose
\[
 \varepsilon_0=\min\left\{1,\eta_0/2,
       \min_j m_j^0/2,
       \min_j\frac{|F_s(z_j,v_0)|}{4\kappa_{\max}}\right\}>0,
\]
omitting minima over empty lists. If $|v-v_0|<\varepsilon_0$,
$C_0$ has no zero and each $J_j$ has exactly one simple zero. Indeed
$|F(s,v)-F(s,v_0)|\le|v-v_0|$ preserves endpoint signs, and
$|F_s(s,v)-F_s(s,v_0)|\le\kappa_{\max}|v-v_0|$ preserves the strict
sign of $F_s$ on $J_j$.

Let $M_K=\max|T\cdot K|$ and take
\[
 \delta_*=\min\left\{\frac1{2(1+|K|)},
       \frac{\varepsilon_0}{4(1+|K|)},
       \min_i\frac{m_i}{2(1+M_K)}\right\}>0.
\]
For $|\delta|\le\delta_*$ the unit directions
$v_\delta=(v_0+\delta K)/|v_0+\delta K|$ satisfy
$|v_\delta-v_0|\le2|\delta||K|$, and
\[
 F(s,v_\delta)=
 \frac{F(s,v_0)+\delta T(s)\cdot K}{|v_0+\delta K|}.
\]
For $\delta>0$, its numerator multiplied by $\sigma_i$ is at least
$d_i(s-s_i)^2+\delta b_i>0$ throughout $I_i$, so every fold disappears.
For $\delta<0$, the center has sign $-\sigma_i$ and both endpoints
retain sign $\sigma_i$, since $|\delta|M_K<m_i/2$. There are then at
least two zeros, one on each side of the center.

In particular $v_+=v_{\delta_*/2}$ is Morse with exactly $n_0$ zeros.
For $v_-=v_{-\delta_*/2}$ the finitely many center and endpoint values
on the $I_i$ have a positive minimum absolute value $m_-$. Choose
$0<\varepsilon_-<\min\{\varepsilon_0/2,m_-/2\}$. Any direction in the
spherical ball $|v-v_-|<\varepsilon_-$ preserves these signs and stays
within $\varepsilon_0$ of $v_0$. There is a Morse direction in this
ball. To justify this, the absolutely continuous binormal images have
finite length by Step 3. A finite-length set has zero spherical area:
covers of maximum diameter $\eta$ and bounded total diameter have
sum of squared diameters tending to zero as $\eta\downarrow0$; local
smooth sphere charts compare this with spherical area. Thus
$B(\R/L\mathbb Z)\cup-B(\R/L\mathbb Z)$ has area zero and cannot fill
a nonempty spherical ball. Outside these images a simultaneous zero of
$F$ and $F_s$ is impossible, since $v$ would have to be $\pm B$.
At the chosen Morse direction there are at least $n_0+2r$ zeros.

Finally the number of zeros is locally constant near every Morse
direction: isolate its finitely many zeros in disjoint intervals on
which $F_s$ has strict sign, and use a positive lower bound for $|F|$
on the compact complement. These estimates persist for nearby directions
exactly as above. Each such open set has positive area, so the hypothesis
$\mu=a$ almost everywhere forces every Morse direction to have precisely
$2a$ zeros. The two counts $n_0$ and at least $n_0+2r$ contradict this.
Therefore $B$ is constant.

\emph{Step 7: identify the planar curve.}
Since $(u\cdot B)'=T\cdot B=0$, $u$ lies in a plane normal to $B$.
In the orientation with normal $B$, its signed curvature is the positive
$\kappa$. Lift its tangent angle to a function $\alpha$ on a real
period; the planar identity $T'=\kappa N$ gives $\alpha'=\kappa>0$.
Periodicity of $T$ makes $\alpha(L)-\alpha(0)=2\pi m$ for an integer
$m\ge1$. Every direction with nonzero planar component has exactly
$m$ maxima, because its height derivative is a cosine of this strictly
increasing angle. Thus $m=a$. If $\kappa=c$, the frame satisfies
$N'=-cT$ and $(u+N/c)'=0$, so it is a circle covered $a$ times and
$cL=2\pi a$.
\end{proof}

\begin{corollary}\label{sa:cor:regularity}
Assume \eqref{sa:eq:sharp} and $\operatorname{braid}(K)>a$.
No bending-minimizing closure curve or elastic limit is $W^{3,1}$;
in particular none is $H^3$ or $C^{2,1}$.
\end{corollary}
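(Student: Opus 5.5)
Argue by contradiction: assume a bending-minimizing closure curve or elastic limit $u$ is $W^{3,1}$, and show it must be an $a$-covered circle, which forces $\operatorname{braid}(K)\le a$. First, Proposition~\ref{sa:prop:saturation} gives $|u''|=c=2\pi a$ almost everywhere. For closure minimizers we use Theorem~\ref{rc:theorem} to obtain the strong embedded recovery sequence; for elastic limits Theorem~\ref{thm:no-loss} suffices. Under $W^{3,1}$, $u''$ is absolutely continuous, so $\kappa=|u''|\equiv c>0$ holds everywhere with a continuous representative.

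The key step is to verify the hypothesis $\mu(u,v)=a$ for almost every $v\in S^2$ in Theorem~\ref{sa:thm:crookedness}.

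\emph{Lower bound.} Fix a Morse direction $v$ for $u$; by the Sard argument in the proof of Proposition~\ref{sa:prop:saturation}, almost every direction is Morse. Let $v_j\to u$ be the embedded approximants of type $K$, converging in $C^1$. Morse critical points of the height $u\cdot v$ persist under $C^1$-small perturbation of the tangent. Specifically, on small intervals around each simple zero of $T\cdot v$ there is a sign change, and away from them $|T\cdot v|$ is bounded below. Hence for large $j$ the function $v_j'\cdot v$ has exactly $2\mu(u,v)$ sign changes. The maxima of $v_j\cdot v$ are local maxima, and there are at least as many of them as sign changes from plus to minus. Local maxima of $v_j$ may be degenerate, so rather than invoking bridge directly, I would apply the bridge bound after a small rotation of $v$ making $v_j\cdot v$ Morse. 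That rotation is still close to $v$, so the count $\mu(u,v)$ is unchanged. Smooth $v_j$ first via isotopy stability, which is allowed because the counts are $C^1$-stable. This gives $\mu(u,v)\ge a$ almost everywhere.

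\emph{Upper bound and conclusion.} Milnor's identity \eqref{sa:eq:milnor} gives $\int\mu\,dA=2\TC(u)=2c=4\pi a=a\operatorname{area}(S^2)$. Together with $\mu\ge a$ almost everywhere, this forces $\mu=a$ almost everywhere. Milnor's identity is stated for $C^2$ curves, and $W^{3,1}$ gives $u\in C^2$, so it applies. Theorem~\ref{sa:thm:crookedness} with $L=1$ and $\kappa=c$ then shows $u$ is an $a$-covered circle. The embedded approximants converge in $C^1$ to it, so Lemma~\ref{sa:lem:braid-neighborhood} yields $\operatorname{braid}(K)\le a$, contradicting the hypothesis.

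Finally, $H^3$ and $C^{2,1}$ curves on the unit circle are both $W^{3,1}$, since $C^{2,1}$ means $u''$ is Lipschitz and hence $u'''\in L^\infty$. I expect the main obstacle to be the lower bound $\mu(u,v)\ge a$: transferring the bridge bound from embeddings to the limit requires care that Morse counts are stable under $C^1$ convergence and that the approximants' heights can be made Morse without changing the counts.
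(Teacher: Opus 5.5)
\textbf{There is a genuine gap, in your lower bound.} It sits at the claim that for large $j$ the function $v_j'\cdot v$ has \emph{exactly} $2\mu(u,v)$ sign changes.

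Convergence $v_j\to u$ in $C^1$ gives only uniform convergence of $v_j'\cdot v$ to $T\cdot v$. It gives no control of $v_j''\cdot v$, so inside the small intervals around the simple zeros of $T\cdot v$ the approximating height derivative may change sign several times. Strong $H^2$ convergence does not prevent this either. Near a maximum $s_0$ of $u\cdot v$, add $\epsilon^{\alpha}\phi((s-s_0)/\epsilon)\,v$, with $3/2<\alpha<2$ and $\phi$ a fixed oscillating bump. This changes the curve by $O(\epsilon^{\alpha-3/2})$ in $H^2$. Its derivative, of size $\epsilon^{\alpha-1}$, dominates $(u\cdot v)'=O(\epsilon)$ on the support and produces extra maxima; by isotopy stability such perturbed embeddings stay in $K$.

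What is true is only lower semicontinuity, $\mu(v_j,v)\ge\mu(u,v)$, which is all your phrase ``at least as many of them as sign changes'' delivers. Combined with the bridge bound, this gives two lower bounds for the approximants and no information on $\mu(u,v)$. Your chain $a\le\mu(v_j,v')=\mu(u,v)$ needs equality in the middle, which is the false upper semicontinuity. The same false claim underlies ``the counts are $C^1$-stable'' in your smoothing step. A symptom of the gap is that your argument uses only $C^1$ convergence of the approximants, never convergence of their total curvature.

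\textbf{The missing ingredient is convergence of total curvature, and the paper uses it with the inequality reversed.} It takes smooth embedded $\gamma_j$ of type $K$ with $\gamma_j\to u$ strongly in $H^2$. These come from Theorem~\ref{thm:no-loss} for elastic limits, or Theorem~\ref{rc:theorem} for closure minimizers, followed by smoothing and normalization. Hence $\TC(\gamma_j)\to\TC(u)=2\pi a$. Milnor's identity then gives $\int_{S^2}(\mu(\gamma_j,v)-a)\dd A(v)=2\TC(\gamma_j)-4\pi a\to0$, with an integrand that is almost everywhere nonnegative by the bridge bound. So along a subsequence $\mu(\gamma_j,v)\to a$ for almost every $v$. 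Lower semicontinuity at Morse directions of $u$ yields $\mu(u,v)\le a$ almost everywhere. Since the Milnor average of $\mu(u,\cdot)$ equals $a$, equality holds almost everywhere.

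If you prefer to keep your lower bound, it follows from lower semicontinuity and Fatou's lemma:
$\int_{S^2}\mu(u,v)\dd A(v)\le\int_{S^2}\liminf_j\mu(\gamma_j,v)\dd A(v)\le\liminf_j2\TC(\gamma_j)=\int_{S^2}\mu(u,v)\dd A(v)$.
This forces $\mu(u,v)=\liminf_j\mu(\gamma_j,v)\ge a$ almost everywhere, but it again rests on $\TC(\gamma_j)\to\TC(u)$.

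The rest of your plan matches the paper: saturation, Milnor's identity for the $C^2$ curve $u$, Theorem~\ref{sa:thm:crookedness}, and Lemma~\ref{sa:lem:braid-neighborhood}. One minor point concerns almost-everywhere Morse directions of $u$. The Sard argument you cite is written for smooth curves. Either invoke the $C^1$ Sard theorem in equal dimensions, or, as the paper does, use the binormal-image argument from the crookedness proof.
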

\begin{proof}
Suppose one such $u$ were $W^{3,1}$. Saturation makes its continuous
curvature the positive constant $2\pi a$. Choose smooth normalized
embeddings $\gamma_j$ of type $K$ converging strongly to $u$ in $H^2$.
For closure minimizers use recovery, then smooth each embedded recovery
curve with a sufficiently small error and normalize; $C^1$ stability
retains the knot type. A diagonal choice retains strong convergence.
For almost every direction the embedded bridge bound gives
$\mu(\gamma_j,v)\ge a$. Milnor's identity gives
\[
 \int_{S^2}(\mu(\gamma_j,v)-a)\,\dd A(v)
       =2\TC(\gamma_j)-4\pi a\to0.
\]
Choose a subsequence whose nonnegative $L^1$ norms are summable;
Tonelli's theorem makes the corresponding functions tend to zero
almost everywhere. Thus $\mu(\gamma_j,v)\to a$ almost everywhere.

The binormal-image argument in the preceding proof shows that almost
every direction for $u$ is Morse. At any such direction its finitely
many strict maxima persist as at least that many maxima of the
approximating heights: choose disjoint small intervals with endpoint
heights strictly smaller than the central height; uniform convergence
retains that inequality, forcing an interior maximum in each interval.
Consequently $\mu(u,v)\le\liminf_j\mu(\gamma_j,v)=a$ almost everywhere.
Its average is $a$ by \eqref{sa:eq:milnor} and saturation, so
$\mu(u,v)=a$ almost everywhere. The theorem gives an $a$-covered circle.
The approximating embeddings and Lemma~\ref{sa:lem:braid-neighborhood}
contradict the strict braid inequality. Finally on a finite parameter
interval, $L^2\subset L^1$ makes $H^3\subset W^{3,1}$, and a Lipschitz
second derivative gives $C^{2,1}\subset W^{3,1}$.
\end{proof}

Under these same knot-class hypotheses, Corollary~\ref{sa:cor:regularity}
also implies Corollary~\ref{sa:cor:nonmeasure}: Step~1 of
Theorem~\ref{sa:thm:measure} shows that a finite full reaction implies
$u\in C^{2,1}\subset W^{3,1}$. The measure theorem itself is independent of these
knot-class assumptions.

No class with both \eqref{sa:eq:sharp} and
$\operatorname{braid}(K)>a$ is exhibited here. For $a=2$ these
hypotheses are impossible: \cite[Theorem 6.3]{GRM} identifies the
nontrivial classes with $e_K=(4\pi)^2$ as $(2,b)$-torus classes,
whose braid index is two. For $a\ge3$ these remain conditional
obstructions; the existence of a class satisfying their hypotheses
is not settled here.

\section{A structure theorem with at most three planar visits}
\label{sa:sec:planar}
In the plane a unit-speed curve with constant nonzero curvature magnitude
still has a measurable choice between two curvature signs. The
two-circle case at $c=4\pi$ is described in
\cite[Corollary 3.4]{GRM}. The proof below controls those signs by
ambient variations at arbitrary constant $c$. Finiteness of
the circle geometry is a conclusion, rather than an assumption about the
switching set.

\begin{theorem}\label{sa:thm:planar}
Let $u:\T\to\R^2$ be closed, unit speed, and $H^2$, with
$|u''|=c>0$ almost everywhere. Suppose it is planar ambient stationary,
meaning $D\E(u)[X\circ u]=0$ for every
$X\in C_c^\infty(\R^2;\R^2)$, and suppose
$\#u^{-1}(p)\le3$ for every $p\in\R^2$. Then its image is a finite
union of complete circles of radius $1/c$. Each distinct circle has a
positive integer geometric multiplicity $m_j$, and
\[
 c=2\pi\sum_jm_j.
\]
There are finitely many material switches between circles. Between
switches the curve is an analytic circle arc, and its full reaction is
\begin{equation}\label{sa:eq:planar-reaction}
 D\E(u)=2\sum_{s\in S}[k]_s\delta_s',
\end{equation}
where $S$ is the finite set of material switching parameters. In
particular there are no point-force terms. Distinct circles can be
joined by a material switch only at a tangency.
\end{theorem}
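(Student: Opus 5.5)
The plan is to run the planar analogue of the contact theory of Sections~\ref{ct:sec:structure}--\ref{ct:sec:nowheredense} first, and then use $|k|=c$ to make the reaction algebraically explicit.

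\emph{Free arcs and circles.} Lemma~\ref{ct:lem:fibers}, Lemma~\ref{ct:lem:tracedensity}, Theorem~\ref{ct:thm:freearcs}, Proposition~\ref{ct:prop:bundlefree} and Theorem~\ref{ct:thm:nowheredense} only use ambient fields built from cutoffs and a coordinate along a tangent. They transfer verbatim to $X\in C_c^\infty(\R^2;\R^2)$. Hence $R_u\subset S_u$, $R_u$ is closed and nowhere dense, and the free equation holds on each component of $\T\setminus R_u$. Since $|k|=c$, \eqref{ct:eq:variation} reduces to \eqref{sa:eq:variation}. The proof of Corollary~\ref{sa:cor:circular} then shows that every free interval is an arc of a circle of radius $1/c$.

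In the plane, write $k=c\,\epsilon N$, with $N$ the rotated tangent and $\epsilon=\pm1$ measurable. On a free arc $\epsilon$ is constant, and in either case $k'=-c^2T$. Two consecutive free arcs meeting at a point share position and tangent, since $u\in C^1$. Their circles are therefore equal, or tangent with centers on opposite sides of the common tangent line.

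\emph{Isolated switches.} I next treat an isolated point $s_0\in R_u$ using Proposition~\ref{ct:prop:localreaction}. Because $k'=-c^2T$ on both sides, $[k']=0$. Because $|k|$ is constant, $[|k|^2]=0$. Hence $a=0$ and the local reaction is $2[k]\delta'_{s_0}$, with no point force. By $R_u\subset S_u$ the point $u(s_0)$ has a second visit. By the tensor law \eqref{ct:eq:groupbalance}, with $a_i=0$ and at most three visits, a nonzero dipole must be cancelled inside a parallel group. So at least one other visit is tangent at $u(s_0)$, and the signed dipoles of that group sum to zero. A transversal visit carries zero dipole, since its line is outside the span of the others, and so does not switch there. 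Once finiteness of $R_u$ is known, this gives \eqref{sa:eq:planar-reaction}. It also shows that a switch between distinct circles occurs only at a tangency.

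\emph{Main obstacle: excluding accumulation of $R_u$.} Suppose $s_n\in R_u$ accumulate at $s_*$, and let $p=u(s_*)$. I would take a graph chart over the tangent line at $p$ containing all of the at most three visits, as in Section~\ref{sec:localization}. Locally each branch is a graph $f_b$ whose second derivative is $\pm c(1+|f_b'|^2)^{3/2}$ off a nowhere-dense closed set. Every switch of branch $a$ at $x$ requires another branch $b$ with $f_b=f_a$ and $f_b'=f_a'$ at $x$. In the two-branch case the cancellation $b_1+\varepsilon b_2=0$ forces $b$ to switch oppositely, so the two branches exchange circles. Relabelling the branches across such an exchange removes the switch. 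The relabelled graphs are then single arcs of radius $1/c$, and they meet tangentially or coincide on intervals.

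At this stage I must still rule out infinitely many non-isolated exchanges. Here I would use the flat-zero argument of Lemma~\ref{lem:flat-cutoff} on $g=f_a-f_b$, whose second derivative takes values in $\{0,\pm2c'\}$ with $c'>0$ locally bounded below. Where $g\ne0$, two distinct radius-$1/c$ circles tangent at one point separate at once, quadratically. So between consecutive zeros of $g$ there is a definite gap, unless $g$ vanishes identically on an interval, and then the branches form an overlap bundle with no reaction. Together with the three-visit bound this makes zeros with a switch isolated. I expect the three-branch configurations to be the delicate part; there the coplanar example after \eqref{ct:eq:groupbalance} shows that the tensor law alone permits nonzero dipoles, and I would instead try the same ambient cutoffs localized on each pair.

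\emph{Counting.} Once $R_u$ is finite, the curve is a finite chain of circle arcs. Each circle that is entered is eventually completed, since arcs continue along it until a tangency where another visit exists, giving the multiplicities $m_j$. Length one and radius $1/c$ give $1=\sum_j 2\pi m_j/c$, that is, $c=2\pi\sum_jm_j$.
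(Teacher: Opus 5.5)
Your proposal has a genuine gap at the step you call the main obstacle, and that step carries the substance of the theorem. The balance laws you rely on, Proposition~\ref{ct:prop:fiberbalance} and \eqref{ct:eq:groupbalance}, require every visit of the fiber to be isolated in $R_u$ or outside it (Remark~\ref{ct:rem:wholefiber}). Nothing excludes a priori that the measurable sign $\epsilon$ changes on a closed nowhere-dense set of positive measure (Remark~\ref{ct:rem:supportscope}). At such points there are no one-sided free arcs, no jump $[k]$ and no balance law. So ``every switch of branch $a$ requires a tangent partner that switches oppositely'' is available only at contacts already known to be isolated. Using it to exclude accumulation is circular. Even your isolated-switch paragraph silently assumes that the other visits to $u(s_0)$ are isolated as well.

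The flat-zero paragraph does not close this. ``Consecutive zeros of $g$'' presupposes a discrete zero set, which is what must be proved. The claim $g''\in\{0,\pm2c'\}$ holds only where $f_a'=f_b'$. Lemma~\ref{lem:flat-cutoff} produces cutoffs, not a lower bound on the spacing of zeros. The three-branch case is left open.

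There is also a smaller error. For three distinct planar tangent lines, each lies in the span of the other two, so your span criterion gives nothing. Zero dipoles follow instead by writing $b_i=\beta_iJT_i$. The tensor law then reads $J\sum_i\beta_iT_i\otimes T_i=0$, and at most three distinct planar tangent projectors are linearly independent. The coplanar triple after \eqref{ct:eq:groupbalance} has dipoles normal to the plane of its tangents, so it cannot occur for a planar curve.

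The paper's proof avoids isolation entirely. In a graph chart it tests stationarity with $X(x,y)=\chi(x)q(y)v$ and obtains $D^2S_q-DB_q+C_q=0$ with locally bounded $B_q,C_q$. Hence the moments $S_q=\sum_i\sigma_iF(p_i)q(f_i)$, with $F(p)=(-p,1)/(1+p^2)$, are locally Lipschitz for arbitrary measurable signs. With a generic axis, $S_0$ fixes the sign sum of each equal-tangent group. Then $S_0$ and $S_1$ show that any remaining ambiguity exchanges two branches with equal value and slope, which only permutes their curvature centers $C_i^\sigma$. The coefficients of $\prod_i(z-C_i^\sigma)$ are therefore Lipschitz with zero derivative almost everywhere, hence constant. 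Finitely many circles, their finite intersection set and finite fibers then give finitely many switches, constant multiplicities, the tangency statement and \eqref{sa:eq:planar-reaction}.

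Your route might alternatively be completed by relabelling across locally finite contact events. One would then use that two distinct circles of radius $1/c$ tangent at one point meet nowhere else, applied to the gaps of the derived contact set and extended to the three-branch configurations. No such argument appears in your proposal.
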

\begin{proof}
We use the planar version of \eqref{sa:eq:variation}. If the curve is regarded as lying in $\R^3$,
the same tests are obtained by extension with a spatial cutoff.
Since $|u''|=c$ almost everywhere, $u\in C^{1,1}$.

\emph{Step 1: put every local visit in one graph chart.}
Fix a point $p$ of the image and list its $N\le3$ visits. Choose disjoint
injective parameter neighborhoods of them. The compact remainder of
the parameter circle has image separated from $p$. Choose a spatial
neighborhood small enough to miss this remainder. An axis $a$ having
nonzero scalar product with each visiting tangent makes each branch a
graph there, after restricting its parameter neighborhood. In oriented
coordinates with first axis $a$ and second axis $Ja$, where $J$ is the
quarter-turn, write the graphs on a common interval $I$ as
$(x,f_i(x))$. Take their domains initially larger than $I$, so that
$\overline I$ lies inside each domain. Shrink $I$ so all graph pieces
over it lie in the chosen spatial neighborhood. A smooth spatial cutoff
can then be one near all these compact pieces and supported in a
slightly larger chart containing only these branches. Multiplying test
fields by this cutoff leaves their traces and derivatives unchanged
where the tests in $x$ are supported.

Let $p_i=f_i'$. These are Lipschitz, and constant curvature magnitude
gives measurable signs $\sigma_i(x)\in\{-1,1\}$ such that
\begin{equation}\label{sa:eq:graph-curvature}
 p_i'=\sigma_i c(1+p_i^2)^{3/2}\quad\hbox{a.e. on }I.
\end{equation}
The signs refer to increasing graph coordinate, independent of material
orientation. No continuity or bounded variation of them is assumed.

\emph{Step 2: derive two regular ambient stress moments.}
Define smooth vector functions of a real slope by
\[
 F(p)=\frac{(-p,1)}{1+p^2},\qquad
 V(p)=\frac{(1-p^2,2p)}{\sqrt{1+p^2}}.
\]
For a smooth scalar $q$, a scalar test $\chi\in C_c^\infty(I)$, and
$v\in\R^2$, use $X(x,y)=\chi(x)q(y)v$ with the chart cutoff from
Step 1. Write $Q_i=q(f_i)$ and $w_i=\sqrt{1+p_i^2}$.
On branch $i$, direct arclength differentiation gives
\[
 \begin{gathered}
 x_s=\pm w_i^{-1},\qquad x_{ss}=-p_ip_i'/w_i^4,\qquad
 k=\sigma_ic(-p_i,1)/w_i,\\
 T x_s=(1,p_i)/w_i^2,\qquad\dd s=w_i\,\dd x.
 \end{gathered}
\]
The measure change uses the positive Jacobian; reversing a branch
changes $x_s,T$ together and changes none of these other quantities.
For $h=X\circ u=(\chi Q_i)v$,
\[
 \begin{split}
 h'&=(\chi'Q_i+\chi Q_i')x_s v,\\
 h''&=(\chi''Q_i+2\chi'Q_i'+\chi Q_i'')x_s^2v
            +(\chi'Q_i+\chi Q_i')x_{ss}v,\\
 Q_i'&=q'(f_i)p_i,\qquad
 Q_i''=q''(f_i)p_i^2+q'(f_i)p_i'.
 \end{split}
\]
Insert these expressions in \eqref{sa:eq:variation} and divide by
$2c$. The coefficient of the first bracket in $h''$ becomes
$\sigma_iF(p_i)$. The other coefficient, combining the $x_{ss}$ term
with the first-derivative term of the variation, is
\[
 -\frac{\sigma_ip_ip_i'(-p_i,1)}{w_i^4}
                  -\frac{c(1,p_i)}{w_i}
 =-\frac c{w_i}\{p_i(-p_i,1)+(1,p_i)\}=-cV(p_i),
\]
where \eqref{sa:eq:graph-curvature} was used without differentiating
$\sigma_i$. Thus stationarity for every $v$ says
\[
 0=\int_I(S_q\chi''+B_q\chi'+C_q\chi)\,\dd x,
\]
where
\[
 \begin{split}
 S_q&=\sum_i\sigma_iF(p_i)Q_i,\\
 B_q&=\sum_i\{2\sigma_iF(p_i)Q_i'-cV(p_i)Q_i\},\\
 C_q&=\sum_i\{\sigma_iF(p_i)Q_i''-cV(p_i)Q_i'\}.
 \end{split}
\]
All quantities are locally bounded: $p_i,p_i'$ are bounded and the
values of $q,q',q''$ stay in a compact range. Distributionally
$D^2S_q-DB_q+C_q=0$, so $D(DS_q-B_q)=-C_q$.
Integrating the bounded right side shows that $DS_q-B_q$ is a
Lipschitz function plus a constant, by the zero-derivative distribution
argument. Hence $DS_q\in L^\infty_{\rm loc}$ and $S_q$ has a locally
Lipschitz representative. In particular this holds for the two moments
\begin{equation}\label{sa:eq:two-moments}
 S_0=\sum_i\sigma_iF(p_i),\qquad
 S_1=\sum_i\sigma_if_iF(p_i).
\end{equation}
For $q(y)=1$ and $q(y)=y$, multiply $q$ by a scalar cutoff equal to one
near the compact graph-value range if needed to make the field compactly
supported.

\emph{Step 3: the first moment fixes the group sign sums.}
The graph axis can be chosen more carefully at the outset. At the
central point $p$ put $P_i=T_i\otimes T_i$, the projector onto the
unoriented tangent line of visit $i$. Distinct planar rank-one
trace-one projectors, in number at most three, are linearly independent.
Indeed a projector onto angle $\theta$ has traceless coordinates
$\tfrac12(\cos2\theta,\sin2\theta)$. Three distinct such points on a
circle are not collinear, and taking the trace of a linear relation
makes its coefficients sum to zero. This proves the claim, with the
one- and two-projector cases immediate from the same argument.
Besides excluding the finitely many axes perpendicular to a tangent,
choose $a$ to exclude the kernels
\[
 \left(\sum_i\varepsilon_iP_i\right)a=0
 \quad\text{for all nonzero matrices }\sum_i\varepsilon_iP_i,
 \qquad\varepsilon_i\in\{-1,0,1\}.
\]
Each is a proper linear subspace, and finitely many such subspaces
cannot cover the unit circle, so this choice exists.

Let $x_0$ be the central graph coordinate. For each fixed sign pattern
$\sigma\in\{-1,1\}^N$ define the continuous candidate
$A_\sigma(x)=\sum_i\sigma_iF(p_i(x))$. In physical coordinates
$F(p_i(x_0))=JP_i a$. By the axis choice and independence of the
distinct projectors,
\[
 A_\sigma(x_0)=A_\eta(x_0)
 \quad\Longleftrightarrow\quad
 \sum_{i\in G}\sigma_i=\sum_{i\in G}\eta_i
 \text{ for every equal-tangent group }G\text{ at }x_0.
\]
To check the use of the kernel condition, divide the matrix difference
by two; its coefficients then belong to $\{-1,0,1\}$. Independence
after collecting equal projectors identifies exactly the group sums.

The continuous $S_0(x_0)$ equals some candidate value: approach $x_0$
through points of the full-measure set where its defining formula holds,
and extract a subsequence with one constant sign pattern. Thus it fixes
a list of group sums. Exclude patterns with different group sums on a
whole smaller neighborhood as follows. If any such patterns exist, put
$d=\min|A_\sigma(x_0)-S_0(x_0)|>0$ over them. Choose
$0<r<\dist(x_0,\partial I)/4$ so small that for $|x-x_0|\le r$
\[
 |S_0(x)-S_0(x_0)|+
           \max_\sigma|A_\sigma(x)-A_\sigma(x_0)|<d/2.
\]
Then none of those patterns can equal $S_0(x)$, by the triangle
inequality. If there are none to exclude, choose any smaller interval.
From now on use this smaller interval. Almost everywhere the actual
pattern belongs to the surviving finite list. Since $N\le3$, either
that list has one pattern, or two surviving patterns differ by exchanging
one plus sign and one minus sign within one equal-tangent group. In a
group of two there is one of each; in a group of three there is one
minority sign. There cannot be two different nonsingleton groups when
$N\le3$.

\emph{Step 4: both moments turn exchanges into permutations of centers.}
For a fixed pattern let the candidate second moment be
$\sum_i\sigma_if_iF(p_i)$. If two surviving patterns realize the same
pair $(S_0(x),S_1(x))$, their exchanged labels $i,j$ satisfy
\[
 F(p_i(x))=F(p_j(x)),\qquad
 f_i(x)F(p_i(x))=f_j(x)F(p_j(x)).
\]
The second coordinate of $F$ is positive and its first-to-second ratio
is $-p$. Hence $F$ is injective, giving $p_i=p_j$ and then $f_i=f_j$.
This is equality of graph value and slope at the point; it uses no
isolation of contact.

For each surviving fixed pattern define its candidate centers
\begin{equation}\label{sa:eq:centers}
 C_i^\sigma(x)=(x,f_i(x))+
       \frac{\sigma_i(-p_i(x),1)}{c\sqrt{1+p_i(x)^2}}.
\end{equation}
These are locally Lipschitz functions. Identify the plane with $\mathbb C$
and let $P_\sigma(x)$ be the vector of coefficients, except the leading
coefficient one, of the monic polynomial
$\prod_{i=1}^N(z-C_i^\sigma(x))$. Repeated roots retain multiplicity.
When two patterns realize the same moment pair, Step 4 says their
exchanged graph jets agree. Their two centers are therefore just
interchanged: at the common point and slope the two possibilities are
$z_0+n/c$ and $z_0-n/c$, with
$n=(-p,1)/\sqrt{1+p^2}$. Thus their coefficient vectors agree.

At every $x$ at least one surviving pattern realizes both continuous
moments, by the full-measure subsequence argument used for $S_0$.
Define $P(x)$ using any such pattern. It is well-defined by the exchange
argument. It is continuous: from any convergent sequence of $x$ values
extract a constant-pattern subsequence. Continuity of that pattern's
moments and coefficients gives the asserted limit; the finite list then
makes this true for the whole sequence.

The continuous finite selection $P$ is locally Lipschitz. Here is the
selection fact needed to justify this. If a continuous map $g$ on an
interval always equals one of finitely many $L$-Lipschitz maps $g_j$,
then near any point $x$, candidates with $g_j(x)\ne g(x)$ are excluded
by continuity. Every remaining candidate agrees with $g$ at $x$, so
$|g(x+h)-g(x)|\le L|h|$ for small $h$. For fixed $a$, continue the
estimate $|g(t)-g(a)|\le L(t-a)$ to the right: it holds initially;
at the supremum of its interval of validity continuity retains it and
the local estimate at that supremum extends it. Thus it holds to the
endpoint. Apply it from any initial $a$, in both directions, to obtain
the full Lipschitz bound. On a compact subinterval our finitely many
coefficient candidates have a common Lipschitz constant, so this fact
applies to $P$.

\emph{Step 5: the center multiset is constant.}
Partition a full-measure subset of the interval according to the actual
sign pattern. On the set with pattern $\sigma$, differentiating the
fixed candidate \eqref{sa:eq:centers} is legitimate almost everywhere.
Using \eqref{sa:eq:graph-curvature} gives its two coordinates explicitly:
\[
 (C_i^\sigma)'_x=1-
       \frac{\sigma_ip_i'}{c(1+p_i^2)^{3/2}}=0,\qquad
 (C_i^\sigma)'_y=p_i-
       \frac{\sigma_ip_ip_i'}{c(1+p_i^2)^{3/2}}=0.
\]
The product rule for the polynomial coefficients therefore gives
$P_\sigma'=0$ almost everywhere on that set. Since $P=P_\sigma$
there, their weak derivatives agree almost everywhere there. The locality
fact used in this last step has a short proof: for the absolutely
continuous difference, at every nonisolated zero where a classical
derivative exists, difference quotients along other zeros force that
derivative to be zero. Isolated zeros are countable, and absolutely
continuous functions are differentiable almost everywhere
\cite[Theorem 4.19]{Axler}. Hence the derivative vanishes almost
everywhere on its zero set. The finitely many active sets cover almost
all parameters, so $P'=0$ almost everywhere. Absolute continuity now
makes $P$ constant.

Its fixed polynomial determines a finite multiset of centers. At almost
every $x$ each branch lies on the radius-$1/c$ circle about its active
center, and the multiset counts each center the same number of times.
Continuity and closedness of the finite union of circles extend this
inclusion to every $x$. More precisely, the selected pattern at every
$x$ realizes the same polynomial, so each root center has its fixed
positive number of branch occurrences at every cross-section.

\emph{Step 6: from local centers to complete global circles.}
Compactness of the image gives a finite cover by such charts. Their
finite sets of centers give finitely many circles globally; merge equal
circles while retaining their branch counts. Distinct circles meet in
at most two points, so their mutual intersection set $P_*$ is finite.
Every fiber is finite by Lemma~\ref{ct:lem:fibers}, and consequently
$u^{-1}(P_*)$ is finite. On a complementary material interval the curve
stays on one circle. Its continuous unit tangent has one fixed orientation
along that circle, so it is an analytic unit-speed circle arc.

A circle that occurs in this construction is covered completely. To see
this, its intersection with the compact image is closed. Suppose an
accumulating covered arc approached a boundary point of its covered
subset. Away from circle intersections the local graph stays on that
circle, so the boundary point must lie in $P_*$. In its chart, the
approaching arc forces this same center to occur among the fixed roots:
a different circle cannot contain infinitely many distinct points of
that arc. Shrink the chart so that this circle has a unique local graph through
the point. Its fixed positive root count supplies that graph on both
sides of the central cross-section, regardless of which material label
realizes it. Thus the covered arc extends through the point.
Thus there is no boundary point. The covered subset is nonempty, open,
and closed in the connected circle, so it is the whole circle.

At an ordinary point of a circle, the number of its local branches is
the number of its material visits and is a positive integer. The same
fixed root count carries this integer across every intersection chart.
It is therefore a constant $m_j$ along the entire distinct circle $j$.
Splitting the finitely many material arcs and adding their lengths,
or equivalently changing variables on their injective circle pieces,
gives
\[
 1=\LL(u)=\sum_jm_j\frac{2\pi}{c}.
\]
Continuity of $T$ forces equal tangent lines whenever material arcs on
distinct circles join. Such an intersection of distinct equal-radius
circles is a tangency.

\emph{Step 7: compute the entire distributional reaction.}
All actual changes of circle occur among the finite set $u^{-1}(P_*)$;
let $S$ be their set. On each intervening arc $k'=-c^2T$. The one-sided
traces of this ordinary derivative agree at every cut since $T$ is
continuous. Thus $[k']=0$, although $[k]$ may be nonzero. Integration by
parts on the finitely many pieces gives
\[
 D^2k=-c^2k\,\dd s+\sum_{s\in S}[k]_s\delta_s'.
\]
There is no $\delta_s$ contribution because its coefficient would be
$[k']_s$. Equation~\eqref{sa:eq:variation} proves the reaction formula.
\end{proof}

\begin{proposition}[Converse balance for finite circle concatenations]
\label{sa:prop:circle-balance}
Let $u:\T\to\R^2$ be a closed, unit-speed $C^1$ concatenation of
finitely many circle arcs of radius $1/c$, $c>0$. Let $S$ be its set
of actual switches between distinct circles and put $k=u''$.
Then $u$ is ambient stationary if and only if
\[
 \sum_{\substack{s\in S\\u(s)=p}}[k]_s\otimes T(s)=0
 \qquad\text{for every }p\in u(S).
\]
\end{proposition}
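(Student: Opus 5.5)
The plan is to compute the full material reaction $D\E(u)$ of such a concatenation explicitly, and then pair it with ambient traces.

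\emph{Computing the reaction.} Since $u$ is $C^1$ and consists of finitely many circle arcs of radius $1/c$, we have $|k|=c$ on every piece. The pieces meet at a finite set of cuts, and that set contains $S$. On each arc $k'=-c^2T$, and $T$ is continuous, so $[k']=0$ at every cut. At a cut that does not change circle, the two arcs lie on the same circle and are traversed in the same orientation, because $T$ is continuous; hence $[k]=0$ there as well. Integration by parts on the finitely many pieces, exactly as in Step~7 of Theorem~\ref{sa:thm:planar}, gives
\[
 D^2k=-c^2k\,\dd s+\sum_{s\in S}[k]_s\delta_s' .
\]
Together with \eqref{sa:eq:variation} this yields
\[
 D\E(u)=2\sum_{s\in S}[k]_s\delta'_s ,
\]
and for $h\in H^2$ this means $D\E(u)[h]=-2\sum_{s\in S}[k]_s\cdot h'(s)$. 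The formula extends from smooth tests to $H^2$ tests because point evaluation of $h'$ is continuous on $H^2$.

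\emph{Pairing with ambient traces.} Let $X\in C_c^\infty(\R^2;\R^2)$ and put $h=X\circ u$. Then $h'(s)=DX(u(s))T(s)$. Grouping the switches by their spatial image $p$ gives
\[
 D\E(u)[X\circ u]=-2\sum_{p\in u(S)}DX(p):\Bigl(\sum_{u(s)=p}[k]_s\otimes T(s)\Bigr),
\]
where $:$ is the Frobenius contraction, so that $a\cdot Gb=G:(a\otimes b)$. If every grouped tensor vanishes, each summand is zero and $u$ is ambient stationary. This proves one direction.

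For the converse, suppose $u$ is ambient stationary. The set $u(S)$ is finite, so the points $p\in u(S)$ are separated from one another. Fix one such $p$ and a matrix $A$. As in Proposition~\ref{ct:prop:fiberbalance}, take $X(x)=\chi(x)A(x-p)$, where $\chi$ equals one near $p$ and vanishes near the other points of $u(S)$. Then $DX(p)=A$, and $DX$ vanishes at all other switch images. The displayed identity reduces to
\[
 0=A:\sum_{u(s)=p}[k]_s\otimes T(s).
\]
Letting $A$ run over the matrix units gives every entry, so the tensor vanishes.

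The main obstacle is the exact computation of the reaction. It requires checking that non-switching cuts contribute no jump, which uses that continuity of $T$ fixes both the orientation and the circle. It also requires recording that $[k']=0$ at the switches themselves, so that no $\delta$ term appears. The rest is linear algebra.
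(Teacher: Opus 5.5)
Your proposal is correct and follows the paper's proof. You compute the reaction as $2\sum_{s\in S}[k]_s\delta_s'$ using $[k']=0$ at every cut, pair it with $X\circ u$ to get the grouped Frobenius contraction against $DX(p)$, and then use localized linear fields $\chi(x)A(x-p)$ to prescribe $DX(p)$ independently at the finitely many switch images. Your extra check that same-circle cuts carry no curvature jump is fine, and it does not even need the orientation argument: a unit-speed circle with center $C$ has $k=-c^2(u-C)$ in either direction of traversal.
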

\begin{proof}
The concatenation is $H^2$, has $|k|=c$ almost everywhere, and satisfies
$k'=-c^2T$ on every piece. Continuity of $T$ gives equal ordinary
$k'$ traces at every switch. The reaction is therefore
$2\sum_{s\in S}[k]_s\delta_s'$, and
\[
 D\E(u)[X\circ u]
 =-2\sum_{p\in u(S)}DX(p):
       \sum_{\substack{s\in S\\u(s)=p}}[k]_s\otimes T(s).
\]
Smooth compactly supported fields can prescribe their derivative
matrices independently at these finitely many spatial points.
This proves both implications.
\end{proof}

For example, traverse two externally tangent circles once each, with
opposite planar orientations so that the material tangent agrees at
their tangency. The two visits have equal tangents and opposite
curvature jumps, so the balance holds: this is the tangent figure-eight,
with $c=4\pi$ at unit length. A three-circle example uses centers
$(-2R,0),(0,0),(2R,0)$, with opposite orientations on neighboring
circles. Traverse the left circle, one middle semicircle, the right
circle, then the other middle semicircle. At each tangency the two
switch tensors cancel. Taking $R=1/(6\pi)$ gives a unit-length
stationary curve. Neither example alone proves selection by
ropelength minimizers of a prescribed knot class.

Planarity and the bound of three visits are explicit hypotheses. They
have not been derived from bridge saturation for a general spatial
elastic knot. The theorem also permits a finite dipole reaction, which
is outside the finite-measure hypothesis of
Theorem~\ref{sa:thm:measure} whenever a genuine switch has nonzero
curvature jump.

\section{A contact model and a conditional energy expansion}
\label{sa:sec:selection}
The preceding conclusions determine the limiting bending energy and
some of its geometry. Selecting one geometry among several with the same
energy requires a more precise estimate. We next prove estimates for a
specified local functional. Its relation to the full knot problem is an
additional mathematical question. Inner braid and outer loop asymptotics
for open elastic knots under tension were developed in
\cite{AudolyEtAl2007,ClauvelinEtAl2009}. In particular,
\cite[Sections 7.4--7.7.1, equation (85)]{ClauvelinEtAl2009} uses
mean/difference strands, a disk obstacle, winding, and endpoint slope
terms. The whole-line class below has its own affine-tail freedoms;
its variational estimates do not establish matching to the outer knot.

\subsection{How a switched tangency produces the functional}
Fix a curvature $c>0$ and write $R=1/c$. In coordinates $(y,z_1,z_2)$
centered at an external tangency of two coplanar circles of radius $R$,
their graphs are
\[
 z_\pm(y)=\pm\bigl(R-\sqrt{R^2-y^2}\bigr)e_1
         =\pm\tfrac12cy^2e_1+O(y^4),
 \qquad e_1=(1,0)\in\R^2.
\]
The parameter $y$ is a coordinate along the tangent line, not arclength
on the varied graphs. Keeping the material labels gives relative graph
$cy^2e_1+O(y^4)$. Exchanging the labels as $y$ passes through zero gives
$cy|y|e_1+O(y^4)$ instead. We call the latter continuation switched.
Its two curvature jumps are $2ce_1$ and $-2ce_1$.

For a small separation scale $\delta>0$, put
\[
 y=\sqrt\delta X,\qquad g(y)=\delta H(X).
\]
Here $g:\R\to\R^2$ is a relative graph and $H$ its rescaled profile.
The distance constraint in this model is $|H(X)|\ge2$. For motions
$h_1=g/2$ and $h_2=-g/2$, the dipole convention of
Section~\ref{ct:sec:balance} gives the contribution
\[
 -2\bigl(2ce_1\cdot h_1'(0)-2ce_1\cdot h_2'(0)\bigr)
       =-4cg_1'(0)=-4c\sqrt\delta H_1'(0).
\]
Primes on $g$ denote $y$ derivatives, and primes on $H$ denote $X$
derivatives. Since $g''(y)=H''(X)$ and
$|g''/2|^2+|-g''/2|^2=|g''|^2/2$, the relative acceleration cost also
has factor $\sqrt\delta$. This calculation motivates the following
functional; it does not estimate the omitted outer strands or identify
the model distance constraint with geometric thickness.

For an odd integer $q$, define $\mathcal A_q(c)$ as the set of maps
$H\in H^2_{\mathrm{loc}}(\R;\R^2)$ such that
\[
 |H|\ge2,\qquad
 H''-2c\operatorname{sign}(X)e_1\in L^2(\R;\R^2),
\]
whose continuous angle has increment $q\pi$ between the two ends of the
line. Values at individual points use the $C^1$ representative. Set
\begin{equation}\label{sa:eq:cell}
 J_c(H)=\frac12\int_\R
       |H''-2c\operatorname{sign}(X)e_1|^2\dd X-4cH_1'(0),
 \qquad \sigma_q(c)=\inf_{H\in\mathcal A_q(c)}J_c(H).
\end{equation}
To justify the angle condition, integrate the $L^2$ defect twice.
Cauchy--Schwarz applied to the kernel $X-t$ gives a remainder bounded
by $\|H''-2c\operatorname{sign}(\cdot)e_1\|_2|X|^{3/2}/\sqrt3$.
Consequently
\[
 H_1(X)=cX|X|+O(|X|^{3/2}+|X|+1),\qquad
 H_2(X)=O(|X|^{3/2}+|X|+1).
\]
Thus $H/|H|$ tends to $-e_1$ on the left and $e_1$ on the right.
Because $H$ never vanishes, choose an angle at zero and define
\[
 \varphi(X)=\varphi(0)+\int_0^X
     \frac{H_1H_2'-H_2H_1'}{|H|^2}\dd t.
\]
Differentiating $H/|H|$ shows that it and
$(\cos\varphi,\sin\varphi)$ satisfy the same linear rotation equation
with the same initial value, so they agree. Each tail lies in a fixed
open halfplane. There its angle is a continuous principal angle plus
one fixed integer multiple of $2\pi$, and therefore has a finite limit.
The difference of the limits is an odd multiple of $\pi$. In particular,
the definition allows arbitrary affine terms on both tails.

This freedom matters for the interpretation of $J_c$. Put
$F=H''-2c\operatorname{sign}(X)e_1$ and
$J_c^{A,B}(H)=\tfrac12\int_{-A}^B|F|^2\dd X-4cH_1'(0)$.
For every $A,B>0$, expansion of the square and integration separately
on the two half intervals give the exact identity
\begin{equation}\label{sa:eq:cell-boundary}
 \frac12\int_{-A}^B(|H''|^2-4c^2)\dd X
 =J_c^{A,B}(H)
 +2c\{H_1'(B)+H_1'(-A)-2c(A+B)\}.
\end{equation}
The admissibility conditions alone need not give finite asymptotic
slope offsets or an integrable raw excess. If additionally
$F_1\in L^1(\R)$, the limits
$\alpha_\pm=\lim_{X\to\pm\infty}(H_1'(X)-2c|X|)$ exist, and
\eqref{sa:eq:cell-boundary} tends to
\[
 \frac12\int_\R(|H''|^2-4c^2)\dd X
       =J_c(H)+2c(\alpha_++\alpha_-).
\]
Thus $J_c$ equals that raw excess only when the boundary sum vanishes.
For instance, $H=(cX|X|+\alpha X,2)$ has zero raw excess but
$J_c(H)=-4c\alpha$. Minimizing over the full class gives a lower bound
for the infimum of $J_c$ over any specified smaller clamped class, but no
comparison with a physical knot constant follows without a geometric
matching argument.

\begin{proposition}[Coercivity supplied by three crossings]
\label{sa:prop:cell}
If $q$ is odd and $|q|\ge3$, then $\sigma_q(c)$ is finite, positive,
and attained. After translations of the independent variable, every
minimizing sequence has a subsequence converging strongly in
$H^2_{\mathrm{loc}}(\R;\R^2)$ to a minimizer. Moreover,
\[
 \sigma_q(c)=c^{3/2}\sigma_q(1)=\sigma_{-q}(c),\qquad
 \sigma_q(c)\ge\frac{16}{9}\,864^{1/4}c^{3/2}.
\]
\end{proposition}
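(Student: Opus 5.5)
The plan is to prove the scaling and symmetry identities first, then a coercive lower bound (which gives finiteness, positivity and the explicit constant together), and finally existence by the direct method in $H^2_{\mathrm{loc}}$.

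\emph{Scaling and reflection.} For $G\in\mathcal A_q(1)$ put $H(X)=G(\sqrt c\,X)$. The constraint $|H|\ge2$ and the angle increment are unchanged. Moreover $H''-2c\operatorname{sign}(X)e_1=c\,(G''-2\operatorname{sign}e_1)(\sqrt cX)$, so the quadratic term of $J_c(H)$ equals $c^{3/2}$ times that of $J_1(G)$. Also $4cH_1'(0)=4c^{3/2}G_1'(0)$. This bijection of admissible classes gives $J_c(H)=c^{3/2}J_1(G)$ and hence $\sigma_q(c)=c^{3/2}\sigma_q(1)$. The reflection $(H_1,H_2)\mapsto(H_1,-H_2)$ preserves $e_1$, the constraint and $J_c$, and reverses the angle increment, so $\sigma_q=\sigma_{-q}$. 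It therefore suffices to treat $c=1$ and $q\ge3$.

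\emph{Coercivity.} This is the heart of the matter and the step I expect to be hardest. Write $F=H''-2\operatorname{sign}(X)e_1$ and $\alpha=H_1'(0)$. The only term that can drive $J_1$ to $-\infty$ is $-4\alpha$, so I aim for an inequality of the form
\[
 \tfrac12\|F\|_2^2\ \ge\ 4\alpha+\beta\|F\|_2^{4/3}-C,
\]
or more precisely a sharp two-term bound $J_1\ge\tfrac14\|F\|_2^2+\Lambda$ with explicit $\Lambda>0$. The ingredients I would use are as follows.
\begin{itemize}
\item Integrating $F$ twice from $0$ with the kernel bound used for the angle condition expresses $H(X)-H(0)-XH'(0)-X|X|e_1$ as a remainder bounded by $\|F\|_2|X|^{3/2}/\sqrt3$.
\item An increment of at least $3\pi$ forces the curve to wind fully around the disk $\{|H|<2\}$. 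There must therefore be parameters on both sides of a turn where the first component of $H$ (or of $H'$) has the sign opposite to the baseline $X|X|+\alpha X$, which moves rightward at speed $\alpha$ at $0$.
\item Along this turn, over a window of length $t$, the baseline and the remainder must differ by at least a fixed multiple of the diameter $4$, while for large positive $\alpha$ the baseline leaves the disk after a length of order $1/\alpha$.
\end{itemize}
Comparing these gives $\|F\|_2\gtrsim t^{-3/2}$ together with $\alpha\lesssim t^{-1}$. Optimizing the resulting one-variable expression of the type $a t^{-3}-b t^{-1}$ (with $\alpha$ eliminated) should produce a constant of the form $\tfrac{16}{9}\,864^{1/4}$; the exponents $1/4$ and $3/2$ are consistent with this balance. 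I would first try to make the winding argument one-dimensional by projecting onto $e_1$ and using only the sign changes of $H_1$ forced by three half-turns, since that is where the factor $q\ge3$ enters: for $|q|=1$ no return is forced and the slope term is unbounded below.

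\emph{Existence.} Take a minimizing sequence $H_n$. Coercivity bounds $\|F_n\|_2$ and $|H_n'(0)|$. To bound $|H_n(0)|$, I would use the translation freedom in the statement, choosing the origin so that the forced winding occurs in a fixed window; the remainder estimate then controls $H_n$ and $H_n'$ on compacts. Weak $H^2_{\mathrm{loc}}$ compactness and lower semicontinuity of $\|F\|_2^2$ handle the quadratic term, and $C^1_{\mathrm{loc}}$ convergence passes the constraint and $H_1'(0)$ to the limit.

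The delicate point is that the increment $q\pi$ must not escape to infinity. Here the uniform bound $\|F_n\|_2|X|^{3/2}$ against the quadratic growth $X|X|e_1$ shows that, for $|X|\ge R$ with $R$ independent of $n$, the directions $H_n/|H_n|$ lie in fixed half-planes near $\pm e_1$. Hence the tail angle changes are uniformly small, and the increment is carried to the limit. Strong $H^2_{\mathrm{loc}}$ convergence then follows from convergence of the energies. I would argue this as in \eqref{eq:nl-5}: the liminf of each local piece of $\|F_n\|_2^2$ is at least the limiting local value, and equality of the total limits forces no loss on compacts.
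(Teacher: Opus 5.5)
The gap is in the coercivity step. Your scaling and reflection argument is the paper's. Your treatment of the tails of the phase and of strong $H^2_{\mathrm{loc}}$ convergence is also sound: weak convergence of $F_n=H_n''-2c\operatorname{sign}(X)e_1$, together with convergence of $\|F_n\|_2^2=2J_c(H_n)+8cH_{n,1}'(0)$, forces strong convergence. Coercivity, however, is where positivity, the explicit constant, and every uniform bound in your existence step come from, and your sketch does not deliver them.

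Your heuristic ends in a one-variable expression $at^{-3}-bt^{-1}$ with $a,b>0$. Its minimum over $t>0$ (at $t^2=3a/b$) is negative. So it yields at best a negative lower bound, never $\sigma_q(c)>0$. It also cannot produce the fourth root in $\tfrac{16}{9}\,864^{1/4}$, which comes from balancing a term \emph{linear} in $t$ against $t^{-3}$.

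Your planned reduction to sign changes of $H_1$ alone also fails. That $H_1$ vanishes at three points gives only $J_c\ge\tfrac43c^2T_*$, where $T_*$ is the length of the crossing window. This bound tends to zero as the crossings coalesce. The obstacle $|H|\ge2$ has to enter through the second component.

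The missing idea is to use both components at three consecutive axis crossings.
\begin{itemize}
\item Since the increment is at least $3\pi$, the angle reaches the levels $3\pi/2,5\pi/2,7\pi/2$ at ordered parameters. Translate the middle one to $0$ and call the others $-t_-,t_+$. There $H_1=0$, $H_2(0)\ge2$, and $H_2(-t_-),H_2(t_+)\le-2$.
\item Put $a=H_1'(0)$, $S_*=t_-^{-1}+t_+^{-1}$, $T_*=t_-+t_+$, $E_1=\|F_1\|_2^2$ and $E_2=\|H_2''\|_2^2$, so $J_c=\tfrac12(E_1+E_2)-4ca$. Twice integrating $H_1$ on $[0,t_+]$ and on $[-t_-,0]$ gives $E_1\ge3S_*a^2+12ca+3c^2T_*$.
\item This more than absorbs the slope term: completing the square and using $S_*T_*\ge4$ gives $\tfrac12E_1-4ca\ge\tfrac43c^2T_*$.
\item Testing $H_2''$ against the piecewise affine hat function on $[-t_-,t_+]$ gives $E_2\ge3(H_2(0)+2)^2S_*^2/T_*\ge768/T_*^3$.
\end{itemize}
Hence $J_c\ge\tfrac43c^2T_*+384\,T_*^{-3}$. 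Its minimum, at $T_*^4=864/c^2$, is exactly $\tfrac{16}{9}\,864^{1/4}c^{3/2}$. On a sublevel set, the same inequalities bound $t_\pm$ above and below, and bound $a$, $H_2(0)$, $H_2'(0)$ and $\|F\|_2$. Those bounds are what your compactness and tail-phase arguments need at the translated origin.

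There are two smaller omissions.
\begin{itemize}
\item Translating a minimizing sequence needs $J_c(H(\cdot+d))=J_c(H)$. This is not automatic, because the reference jump and the point evaluation both sit at $0$. On $(0,d)$ one has $|H''+2ce_1|^2-|H''-2ce_1|^2=8cH_1''$. So the squared defect integral changes by $8c(H_1'(d)-H_1'(0))$, and half of this change is cancelled by the change of the linear term.
\item Finiteness of $\sigma_q(c)$ requires exhibiting some profile in $\mathcal A_q(c)$. A lower bound alone gives only $\sigma_q(c)>-\infty$.
\end{itemize}
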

\begin{proof}
We first check invariance and coercivity, then prove compactness.
Reflecting the second coordinate preserves the functional and changes
the sign of $q$, so assume $q\ge3$.

\emph{Translation invariance.}
Let $H_d(X)=H(X+d)$ and initially take $d>0$. Changing variable
$Y=X+d$ moves the reference jump from zero to $d$. Only $0<Y<d$
contributes to the difference of the squared defects. On this interval
\[
 |H''+2ce_1|^2-|H''-2ce_1|^2=8cH_1''.
\]
Integration therefore gives
\[
 \int|H_d''-2c\operatorname{sign}(X)e_1|^2
 -\int|H''-2c\operatorname{sign}(X)e_1|^2
       =8c\bigl(H_1'(d)-H_1'(0)\bigr).
\]
The difference is finite since $H''$ is locally integrable. Oriented
integration proves the same formula for $d<0$. The linear term in
$J_c$ cancels half of this difference, hence $J_c(H_d)=J_c(H)$.
Translation also preserves the obstacle and angle increment.

Choose the initial tail angle to be $\pi$. Since the terminal angle
is at least $4\pi$, successive intermediate levels
$3\pi/2,5\pi/2,7\pi/2$ occur in that order at three parameter values.
They can be chosen in order by taking first hitting times after the
previous level. No monotonicity of the angle is required. Translate
the middle value to zero and denote the other two by $-t_-$ and $t_+$,
where $t_-,t_+>0$. Then
\[
 H_1(-t_-)=H_1(0)=H_1(t_+)=0,\qquad
 z:=H_2(0)\ge2,\quad H_2(-t_-),H_2(t_+)\le-2.
\]
Write $a=H_1'(0)$, $b=H_2'(0)$,
$T_*=t_-+t_+$, $S_*=t_-^{-1}+t_+^{-1}$, and
\[
 E_1=\int_\R|H_1''-2c\operatorname{sign}(X)|^2\dd X,
 \qquad E_2=\int_\R|H_2''|^2\dd X.
\]
These are nonnegative real numbers with
$J_c=(E_1+E_2)/2-4ca$.

\emph{Coercive inequality.}
Twice integrating the first coordinate on $[0,t_+]$ yields
\[
 0=at_++ct_+^2+
       \int_0^{t_+}(t_+-X)(H_1''-2c)\dd X.
\]
The kernel has squared integral $t_+^3/3$, so the defect integral
on this interval is at least $3(a+ct_+)^2/t_+$. Applying this same
calculation to $-H_1(-X)$ on $[0,t_-]$ and adding gives
\[
 E_1\ge3S_*a^2+12ca+3c^2T_*.
\]
For the second coordinate, let $K$ be the continuous function affine
on each of $[-t_-,0]$ and $[0,t_+]$, with values zero at the outer
endpoints and one at zero. Integrating on the two intervals separately,
\[
 \int_{-t_-}^{t_+}KH_2''\dd X
 =\frac{H_2(t_+)-z}{t_+}
  -\frac{z-H_2(-t_-)}{t_-}\le-(z+2)S_*.
\]
All integrations by parts are valid for the absolutely continuous
function $H_2'$. Since $\int K^2=T_*/3$ and $S_*T_*\ge4$,
\[
 E_2\ge\frac{3(z+2)^2S_*^2}{T_*}\ge\frac{768}{T_*^3}.
\]
The full energy, including the nonnegative defect outside the three
crossings, consequently satisfies
\begin{align*}
 J_c(H)&\ge\frac32S_*a^2+2ca+\frac32c^2T_*+\frac12E_2\\
 &=\frac32S_*\left(a+\frac{2c}{3S_*}\right)^2
     +\frac32c^2T_*-\frac{2c^2}{3S_*}+\frac12E_2\\
 &\ge\frac32S_*\left(a+\frac{2c}{3S_*}\right)^2
     +\frac43c^2T_*+\frac12E_2.                    \tag{*}
\end{align*}
In the last step $1/S_*\le T_*/4$ was used. It follows that
$J_c(H)\ge(4/3)c^2T_*+384/T_*^3$. Its right-hand side has its
minimum at $T_*^4=864/c^2$, where its value is the asserted lower bound.

There are admissible finite-energy profiles. To construct one, put
$L_0=\max\{1,\sqrt{3/c}\}$. Choose a smooth step $\eta$, zero to the
left of $-L_0$ and one to the right of $L_0$, and a smooth cutoff
$\chi$, zero for $|X|\le L_0$ and one for $|X|\ge2L_0$, taking values
in $[0,1]$. Define
\[
 r(X)=3+\chi(X)(cX^2-3),\quad
 \psi(X)=\pi+q\pi\eta(X),\quad
 H(X)=r(X)(\cos\psi(X),\sin\psi(X)).
\]
In the transition of $\chi$, $cX^2\ge3$, hence $r\ge3$. On the two
outer tails $H$ equals $-cX^2e_1$ and $cX^2e_1$, respectively.
The defect is bounded and compactly supported, hence belongs to $L^2$;
the angle increment is $q\pi$. Although $H$ is smooth, its defect
has jump $-4ce_1$ at zero because of the reference sign function.
This proves finiteness and supplies a common energy level $M>0$
for a minimizing sequence.

\emph{Uniform bounds after translation.}
All constants in this paragraph are chosen from the fixed numbers $M,c$.
For $J_c(H)\le M$, the inequalities already proved give
\[
 T_{\min}:=(384/M)^{1/3}\le T_*\le T_{\max}:=\frac{3M}{4c^2},
 \qquad E_2\le2M,
\]
\[
 \frac4{T_{\max}}\le S_*\le
 S_{\max}:=\sqrt{\frac{MT_{\max}}{24}},\qquad
 1/S_{\max}\le t_\pm\le T_{\max}.
\]
Indeed $E_2\ge48S_*^2/T_*$ yields the upper bound for $S_*$.
The completed square in (*) and $S_*\ge4/T_{\max}$ then give
\[
 |a|\le A:=\sqrt{MT_{\max}/6}+cT_{\max}/6.
\]
The triangular-kernel estimate gives
$(z+2)^2\le MT_{\max}^3/24$, so
$z\le Z:=\sqrt{MT_{\max}^3/24}$.
Twice integrating $H_2''$ toward each endpoint gives, respectively,
\[
 b\le-\frac{z+2}{t_+}+\sqrt{E_2t_+/3},\qquad
 b\ge\frac{z+2}{t_-}-\sqrt{E_2t_-/3}.
\]
Thus $|b|\le B:=\sqrt{2MT_{\max}/3}$. Finally the exact identity
$E_1+E_2=2J_c+8ca$ gives $E_1+E_2\le E:=2M+8cA$.
If these constants give incompatible inequalities, the sublevel is empty;
our sequence lies in a nonempty sublevel.

On every bounded interval the defect bound controls $H''$ in $L^2$;
the bounds for $H(0)=(0,z)$ and $H'(0)=(a,b)$ then control $H$ and
$H'$ by integration. Local Sobolev compactness and a diagonal extraction
produce weak $H^2_{\mathrm{loc}}$ and strong $C^1_{\mathrm{loc}}$
convergence. The limit obeys $|H|\ge2$.

It remains to retain the full phase. Twice integrating from zero gives
\[
 |H_1(X)-cX|X||\le A|X|+\sqrt{E/3}|X|^{3/2},
 \quad |H_2(X)|\le Z+B|X|+\sqrt{E/3}|X|^{3/2}.
\]
For the single choice
\[
 R_0=\max\{1,4A/c,16E/(3c^2)\},
\]
the first coordinate has the sign of $X$ and magnitude at least
$cX^2/2$ whenever $|X|\ge R_0$. These inequalities hold for every
sequence member and its local limit, which we now denote by $H_j$ and
$H$. The displayed bounds make their tail directions converge uniformly
to $-e_1$ on the left and $e_1$ on the right. The tails remain in
the corresponding open halfplanes, so completing their directions
from $\pm R_0$ to these limits adds no full turn.

For large $j$, $\|H_j-H\|_{C^0([-R_0,R_0])}<1$. Set
$H_t=H+t(H_j-H)$ on this interval, $0\le t\le1$; then $|H_t|\ge1$.
Its left endpoint stays in the left halfplane and its right endpoint
in the right halfplane. Choose the left argument $\lambda(t)$
continuously in $(\pi/2,3\pi/2)$. The angle formula gives the lift
\[
 \phi_t(X)=\lambda(t)+\int_{-R_0}^{X}
 \frac{H_{t,1}H_{t,2}'-H_{t,2}H_{t,1}'}{|H_t|^2}\,\dd Y.
\]
It depends continuously on $t$. Let $\beta(t)\in(-\pi/2,\pi/2)$
be the right endpoint's argument. Then
$\phi_t(R_0)-\beta(t)=2\pi n(t)$ for an integer $n(t)$.
Continuity makes $n(t)$ constant. With the halfplane completions,
the full increment is $2\pi n(t)-\pi$; it is therefore the same
for $H_j$ and $H$, namely $q\pi$. The uniform tail control prevents
turns from escaping to infinity. This homotopy is used in the
punctured plane only: it need not satisfy $|H_t|\ge2$. The limit's
obstacle $|H|\ge2$ was already obtained separately by local uniform
convergence.
Weak lower semicontinuity on $[-n,n]$, followed by monotone convergence
as $n\to\infty$, bounds its full defect by the liminf of the sequence
defects. Strong local $C^1$ convergence also gives convergence of
$H_1'(0)$. Therefore the limit attains $\sigma_q(c)$.

For a minimizing sequence this convergence improves to strong
$H^2_{\mathrm{loc}}$ convergence. For the translated profiles put
$F_j=H_j''-2c\operatorname{sign}(X)e_1$ and
$F=H''-2c\operatorname{sign}(X)e_1$. Their uniform global $L^2$ bound
and local weak convergence give $F_j\rightharpoonup F$ in $L^2(\R)$:
approximate an arbitrary $L^2$ test by compactly supported tests and
use the uniform bound on the remaining tails. Since $H$ attains the
minimum and $H_{j,1}'(0)\to H_1'(0)$,
\[
 \|F_j\|_2^2=2J_c(H_j)+8cH_{j,1}'(0)
 \longrightarrow2J_c(H)+8cH_1'(0)=\|F\|_2^2.
\]
Weak convergence and convergence of norms imply $F_j\to F$ strongly
in $L^2(\R)$. Thus $H_j''\to H''$ in $L^2$ on every bounded interval;
together with the established local $C^1$ convergence this proves
the asserted strong $H^2_{\mathrm{loc}}$ compactness.

For scaling, if $G\in\mathcal A_q(1)$, let $H(X)=G(\sqrt c X)$.
The inverse transformation shows that this is a bijection onto
$\mathcal A_q(c)$. The substitution $Y=\sqrt c X$ gives
\[
 \frac12\int|H''-2c\operatorname{sign}(X)e_1|^2\dd X
 =\frac{c^{3/2}}2\int|G''-2\operatorname{sign}(Y)e_1|^2\dd Y,
 \qquad -4cH_1'(0)=-4c^{3/2}G_1'(0).
\]
Taking infima in this bijection proves the scaling identity. The
reflection considered at the outset proves equality for opposite phases.
\end{proof}

\subsection{Half-turn and retained classes}
The half-turn classes behave differently. For $A>0$ set
\[
 h(t)=\begin{cases}(t+t^3)/2,&|t|\le1,\\t|t|,&|t|\ge1,\end{cases}
 \qquad H_A(X)=(cA^2h(X/A),2).
\]
The values and first derivatives of the two formulas for $h$ agree at
$\pm1$. Hence $H_A\in H^2_{\mathrm{loc}}$, and its modulus is at least
two. Its angle travels through the upper halfplane from $-e_1$ to $e_1$,
with increment $-\pi$. The only nonzero defect is on $[-A,A]$;
using $h''(t)=3t$ there,
\[
 \int|H_A''-2c\operatorname{sign}(X)e_1|^2\dd X
 =2c^2A\int_0^1(3t-2)^2\dd t=2c^2A.
\]
As $H_{A,1}'(0)=cA/2$, we get $J_c(H_A)=-c^2A$. Letting $A\to\infty$
proves $\sigma_{-1}(c)=-\infty$, and reflection proves the same for $q=1$.

For a retained tangency the reference is $2ce_1$ on both tails and
there is no switched dipole term. Its functional
$\frac12\int|H''-2ce_1|^2$ vanishes at $H=(cX^2+2,0)$, whose phase
is zero. It also vanishes at
\[
 H=(cX^2-2,2\sqrt c X),\qquad |H|^2=c^2X^4+4\ge4.
\]
The latter passes through the negative first axis at zero and has one
full turn; its reflection has the opposite turn. A prescribed nonzero
winding alone therefore does not guarantee a positive cost.

\subsection{What a hard-thickness profile would imply}
To formulate a separate scalar implication, fix a nonempty family
$\mathcal F$ of unit-length embedded curves. Write $\Delta(\gamma)$ for
its geometric thickness, and suppose the energies under consideration
are finite. Define extended real infima
\[
 e=\inf_{\gamma\in\mathcal F}\B(\gamma),\quad
 \mathfrak h(\delta)=\inf_{\substack{\gamma\in\mathcal F\\
                          \Delta(\gamma)\ge\delta}}\B(\gamma),\quad
 M(\theta)=\inf_{\gamma\in\mathcal F}
                   \bigl\{\B(\gamma)+\theta/\Delta(\gamma)\bigr\}.
\]
The infimum of an empty set is $+\infty$ and $\theta/0=+\infty$.

\begin{proposition}[Optimization under an assumed profile expansion]
\label{sa:prop:profile-optimization}
Assume $e\in\R$ and
\[
 \mathfrak h(\delta)=e+S\sqrt\delta+o(\sqrt\delta)
       \quad(\delta\downarrow0),\qquad S>0.
\]
Then
\[
 M(\theta)=e+3\,2^{-2/3}S^{2/3}\theta^{1/3}
                  +o(\theta^{1/3}).
\]
If exact minimizers $\gamma_\theta$ exist for sufficiently small
$\theta>0$, their thickness satisfies
$\Delta(\gamma_\theta)\sim(2/S)^{2/3}\theta^{2/3}$.
\end{proposition}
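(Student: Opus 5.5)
The plan is to reduce $M(\theta)$ to a one-dimensional optimization over the thickness level. Slicing the family by thickness gives the exact identity
\[
 M(\theta)=\inf_{\delta>0}\bigl\{\mathfrak h(\delta)+\theta/\delta\bigr\},
\]
up to the usual care with infima. For the inequality $\le$, a curve with $\Delta(\gamma)\ge\delta$ has $\B(\gamma)+\theta/\Delta(\gamma)\le\B(\gamma)+\theta/\delta$; now take the infimum over such curves. For $\ge$, a curve $\gamma$ with $\Delta(\gamma)=\delta'>0$ satisfies $\B(\gamma)\ge\mathfrak h(\delta')$, so its penalized energy is at least $\mathfrak h(\delta')+\theta/\delta'$. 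Curves with zero thickness contribute $+\infty$. All of this is monotone bookkeeping, and $\mathfrak h$ is nondecreasing in $\delta$.

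The model computation comes next. The function $g(\delta)=S\sqrt\delta+\theta/\delta$ is minimized at $\delta_*=(2\theta/S)^{2/3}$, where $\sqrt{\delta_*}=(2\theta/S)^{1/3}$. Its minimum value is
\[
 S(2\theta/S)^{1/3}+\theta(S/2\theta)^{2/3}=3\,2^{-2/3}S^{2/3}\theta^{1/3}.
\]

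For the upper bound, evaluate at $\delta=\delta_*$ and apply the expansion; this gives $e+3\,2^{-2/3}S^{2/3}\theta^{1/3}+o(\theta^{1/3})$.

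The lower bound is the main obstacle, since the expansion is only asymptotic as $\delta\downarrow0$ and the infimum might be achieved at $\delta$ that is not small. Fix $\epsilon>0$ and choose $\delta_0$ such that $\mathfrak h(\delta)\ge e+(S-\epsilon)\sqrt\delta$ for $\delta\le\delta_0$. For $\delta\ge\delta_0$, use $\mathfrak h(\delta)\ge e+c_0$ with $c_0>0$. This follows from monotonicity: $\mathfrak h(\delta)\ge\mathfrak h(\delta_0)\ge e+(S-\epsilon)\sqrt{\delta_0}$. So for small $\theta$ those $\delta$ contribute at least $e+c_0\gg e+O(\theta^{1/3})$. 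On $\delta\le\delta_0$, minimize $(S-\epsilon)\sqrt\delta+\theta/\delta$ over all $\delta>0$, which gives $3\,2^{-2/3}(S-\epsilon)^{2/3}\theta^{1/3}$. Letting $\epsilon\to0$ yields the claim.

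For the thickness of exact minimizers, write $\delta_\theta=\Delta(\gamma_\theta)$. Since $\B(\gamma_\theta)\ge\mathfrak h(\delta_\theta)$, we have $M(\theta)\ge\mathfrak h(\delta_\theta)+\theta/\delta_\theta$. The previous bounds then give $\theta/\delta_\theta\le M(\theta)-e=O(\theta^{1/3})$, so $\delta_\theta\gtrsim\theta^{2/3}$. They also give $\mathfrak h(\delta_\theta)-e=O(\theta^{1/3})$, which forces $\delta_\theta\to0$ and then, via the expansion, $\sqrt{\delta_\theta}=O(\theta^{1/3})$.

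Set $\delta_\theta=\lambda_\theta\delta_*$. Then
\[
 \mathfrak h(\delta_\theta)+\theta/\delta_\theta-e
 =\theta^{1/3}S^{2/3}2^{-2/3}\bigl(2\sqrt{\lambda_\theta}+1/\lambda_\theta\bigr)+o(\theta^{1/3}),
\]
and this must be at most $3\,2^{-2/3}S^{2/3}\theta^{1/3}+o(\theta^{1/3})$. The function $\phi(\lambda)=2\sqrt\lambda+1/\lambda$ has its unique strict minimum $3$ at $\lambda=1$, and $\lambda_\theta$ stays in a compact subset of $(0,\infty)$ by the two-sided bounds. Hence $\phi(\lambda_\theta)\to3$ forces $\lambda_\theta\to1$, which is $\Delta(\gamma_\theta)\sim(2/S)^{2/3}\theta^{2/3}$.
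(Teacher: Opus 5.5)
Your proof is correct and follows the paper's route. Both arguments use the exact scalar identity $M(\theta)=\inf_{\delta>0}\{\mathfrak h(\delta)+\theta/\delta\}$, monotonicity of $\mathfrak h$ to discard thickness levels that are not small, the model minimization at $\delta_*=(2\theta/S)^{2/3}$, and the strict unique minimum of the rescaled function to get the thickness of exact minimizers. The only variation is in the energy lower bound: you replace $S$ by $S-\epsilon$ and minimize over all $\delta>0$, whereas the paper first confines near-minimizing $\delta$ to a compact range of $t=\delta/\theta^{2/3}$ and uses a uniform remainder estimate there.
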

\begin{proof}
First, the two minimizations satisfy the exact identity
\[
 M(\theta)=\inf_{\delta>0}
                 \bigl\{\mathfrak h(\delta)+\theta/\delta\bigr\}.
\]
For any finite-energy curve use $\delta=\Delta(\gamma)$ to obtain
one inequality. For the other, fix $\delta$ with finite
$\mathfrak h(\delta)$ and take curves with bending tending to that
infimum. They have thickness at least $\delta$, so their regularized
energies have limsup at most $\mathfrak h(\delta)+\theta/\delta$.
Taking the infimum over $\delta$ proves equality.

Choose $\delta_0=2^{-n_0}$, where $n_0\ge0$ is the least integer for
which
\[
 \tfrac12S\sqrt\delta\le\mathfrak h(\delta)-e
       \le\tfrac32S\sqrt\delta\quad(0<\delta<\delta_0).
\]
The assumed little-oh remainder ensures existence. Since increasing
the thickness threshold shrinks the admissible set, $\mathfrak h$ is
nondecreasing. In particular
\[
 \mathfrak h(\delta_0)-e\ge\mathfrak h(\delta_0/2)-e
                  \ge(S/2)\sqrt{\delta_0/2}>0.
\]
Fix
\[
 C=3S/2+1,\qquad
 g_0=\min\{1,\inf_{\delta\ge\delta_0}(\mathfrak h(\delta)-e)\}>0.
\]
Testing $\delta=\theta^{2/3}$ shows
$M(\theta)-e\le C\theta^{1/3}$ when
$\theta<\delta_0^{3/2}$. For
\[
 0<\theta<\min\{\delta_0^{3/2},[g_0/(2(C+1))]^3\},
\]
any scalar competitor within $o(\theta^{1/3})$ of the infimum has
$\delta<\delta_0$, because its energy excess is eventually below $g_0$.
The two nonnegative terms in its excess then imply, with
$t=\delta/\theta^{2/3}$,
\[
 (C+1)^{-1}\le t\le[2(C+1)/S]^2.
\]
For an exact curve minimizer these inequalities hold with $C$ in place
of $C+1$: its bending is at least $\mathfrak h(\Delta)$ and its
penalty is $\theta/\Delta$.

On this fixed compact range of $t$ the rescaled remainder is uniformly
small. Indeed if $R(\delta)=\mathfrak h(\delta)-e-S\sqrt\delta$, then
for $0<t_-\le t\le t_+<\infty$,
\[
 \sup_t\frac{|R(\theta^{2/3}t)|}{\theta^{1/3}}
 \le\sqrt{t_+}\sup_{0<\delta\le\theta^{2/3}t_+}
                   \frac{|R(\delta)|}{\sqrt\delta}\longrightarrow0.
\]
The remaining function is $f(t)=S\sqrt t+t^{-1}$, because
$S\sqrt\delta+\theta/\delta=\theta^{1/3}f(t)$.
Its derivative has the sign of $(S/2)t^{3/2}-1$, so its unique minimum
is at
\[
 t_*=(2/S)^{2/3},\qquad f(t_*)=3\,2^{-2/3}S^{2/3}.
\]
Testing this fixed $t_*$ gives the upper asymptotic bound. Confined
approximate scalar minimizers, the uniform remainder, and
$f(t)\ge f(t_*)$ give the lower bound.

If $d_\theta=\Delta(\gamma_\theta)$ for an exact minimizer, then
\[
 M(\theta)\ge\mathfrak h(d_\theta)+\theta/d_\theta
              \ge\inf_{\delta>0}
                   \{\mathfrak h(\delta)+\theta/\delta\}=M(\theta).
\]
There is equality throughout. The uniform expansion and the energy
asymptotic imply $f(d_\theta/\theta^{2/3})\to f(t_*)$.
Compactness of this scalar sequence and uniqueness of the minimum
force $d_\theta/\theta^{2/3}\to t_*$, as claimed.
\end{proof}

\begin{remark}[What is conditional]
The profile expansion in the last proposition is a hypothesis about the
full admissible family. Strong compactness gives no rate of approach to
a limiting geometry. The local functional \eqref{sa:eq:cell} has been
analyzed with its stated obstacle, phase and affine tail freedoms;
identifying its constants with a geometric hard-thickness profile requires
both a lower bound and a matching recovery construction, including the
energy of the outer strands. None of those identifications is used in
the earlier compactness or rigidity proofs.
\end{remark}

\appendix
\renewcommand{\sectionname}{}

\section{Recovering embeddings from the full regular \texorpdfstring{$C^1$}{C1} closure}\label{rc:section}
Recall that $\CC(K)$ consists of anchored, unit-speed $H^2$ embeddings
of the knot class $K$. In this appendix set
\[
 D_K=H^2\cap\overline{\CC(K)}^{C^1},\qquad
 S_K=\overline{\CC(K)}^{H^2},
\]
and let $W_K^{\rm seq}$ consist of weak $H^2$ limits of sequences in
$\CC(K)$. All three definitions concern the parametrized curves on
$\T$. In particular every element of $D_K$ has speed one.
\begin{theorem}\label{rc:theorem}
For every $u\in D_K$ there are $w_j\in\CC(K)$ with
$w_j\to u$ strongly in $H^2$. Therefore
\[
 D_K=S_K=W_K^{\rm seq},\qquad
 \inf_{D_K}\B=\inf_{S_K}\B=e_K.
\]
\end{theorem}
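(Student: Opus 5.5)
The inclusions $S_K\subset W_K^{\rm seq}\subset D_K$ were already noted in Section~\ref{sec:compactness}. The identity $\inf_{S_K}\B=e_K$ follows from continuity of $\B$ on strong limits. So everything reduces to the recovery statement. Fix $u\in D_K$ and $C^1$-convergent embeddings $v_n\in\CC(K)$ with $v_n\to u$; their bending energies may be unbounded. The aim is, for each $\epsilon>0$, an embedding $w\in\CC(K)$ with $\|w-u\|_{H^2}<\epsilon$. I would proceed in three steps.

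\emph{Step 1: reduction to finitely many charts.} Using Lemma~\ref{ct:lem:fibers} and the graph charts of Section~\ref{sec:localization}, cover $u(\T)$ by finitely many spatial balls $V_0$. In each ball $u$ consists of $N\le N_{\max}$ branches written as $H^2$ graphs $(x,f_i(x))$ over a common interval. Outside a neighborhood of the closed contact set $S_u$, the curve $u$ is injective with a positive self-distance. There any $C^1$-small perturbation stays embedded, so no topology needs to be controlled. Choose a smooth $\tilde u$ with $\|\tilde u-u\|_{H^2}<\epsilon/3$; it has graph functions $\tilde f_i$. By $C^1$ convergence, for large $n$ the embedding $v_n$ has graph functions $f_i^n$ in the same charts. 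Its pairwise differences $g^n_{ij}=f^n_i-f^n_j$ never vanish, and they are $C^1$-close to $g_{ij}=f_i-f_j$.

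\emph{Step 2: local replacement with the topology of $v_n$.} This is the configuration-space construction. In each chart I would keep the mean strand of $\tilde u$ and replace each relative difference by
\[
 \tilde g_{ij}+\lambda\,\chi\,\widehat{g}^{\,n}_{ij},
\]
where $\widehat g^{\,n}_{ij}$ is a smoothing of $g^n_{ij}$ and $\chi$ is a cutoff equal to one near the coincidence set. This cutoff would be built as in Lemma~\ref{lem:flat-cutoff}, so that $\chi\tilde g_{ij}$ is small in $H^2$. The amplitude $\lambda$ is chosen after $\widehat g^{\,n}_{ij}$ is fixed, so that $\lambda\|\widehat g^{\,n}_{ij}\|_{H^2}<\epsilon/(3N_{\max}^2)$. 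The first task is to arrange that the dominant term is the nonvanishing $\lambda\widehat g^{\,n}_{ij}$. On the coincidence region one therefore also scales $\tilde g_{ij}$ by $\chi$, i.e.\ replaces it by $(1-\chi)\tilde g_{ij}$ plus a flat remainder; the flat-zero estimates make this cost $o(1)$ in $H^2$. The second task is to check that the result is isotopic to $v_n$. I would use a rescaling homotopy within the chart, $\Psi_t$, which multiplies all relative differences by a positive factor and interpolates to $v_n$ itself. This requires that no relative difference vanishes along the homotopy. The relative differences are then positive multiples of one nonvanishing field plus a uniformly smaller error, so they stay nonzero. Gluing the finitely many charts uses a partition of unity subordinate to the cover. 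Away from contact the self-distance margin absorbs the gluing errors.

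\emph{Step 3: normalization.} The resulting curve is regular and embedded. Its isotopy class is that of $v_n$, by the stability result of \cite{Reiter2005} applied along the homotopy: knot type is locally constant along a $C^1$ path of embeddings. It is within $\epsilon$ of $u$ in $H^2$. Normalizing by \eqref{eq:nl-8}, which is continuous in $H^2$ near a unit-speed curve, gives $w\in\CC(K)$.

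The main obstacle is Step~2, namely the guarantee that the relative-difference fields never vanish, both for the final curve and along the connecting homotopy. The hard case is where three or more strands are nearly coincident, and their pairwise differences are not independent because they must be realized by actual graphs. The first thing I would try is to prescribe all strand offsets simultaneously as $\lambda\widehat f^{\,n}_i-\lambda\bar f^n$, where $\bar f^n$ is the mean strand, and not pair by pair. Consistency is then automatic. What remains is a uniform lower bound $\min_{i\ne j}|\widehat g^{\,n}_{ij}|\ge c_n>0$ on the chart, which holds because $v_n$ is embedded; the smallness of the perturbation is then measured against $\lambda c_n$.
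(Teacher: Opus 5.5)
\textbf{There is a genuine gap in Step~2.} Your reduction is fine: the inclusions, the infima, and normalization at the end. At a \emph{full} flat contact your device, which compresses the source's relative configuration by an amplitude $\lambda$ chosen after its $H^2$ norm is fixed, is the same mechanism as Lemma~\ref{rc:flat}. But Step~2 works only where every strand under the cutoff agrees with every other to first order, because Lemma~\ref{lem:flat-cutoff} needs $g=g'=0$ on the core. This fails in three concrete situations.
\begin{enumerate}
\item \emph{Transversal zeros.} Let $x_0$ be a zero of $g_{ij}$ with $a=g_{ij}'(x_0)\ne0$. Replacing $g_{ij}$ by $(1-\chi)g_{ij}$ with a cutoff of radius $r$ costs of order $|a|r^{-1/2}$ in $H^2$, so it is not $o(1)$. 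If you do not compress there, $\tilde g_{ij}$ dominates $\lambda\widehat g^{\,n}_{ij}$. The side on which the strands pass is then set by the smoothing error of $\tilde u$, not by $v_n$.
\item \emph{Three or more strands.} Your mean-strand fix needs one cutoff for all labels, and such a cutoff is cheap only on full flat contacts. Take $f_1,f_2$ tangent at $x_0$ and $f_3$ crossing them transversally there. In $v_n$ strand~3 may thread between strands~1 and~2, whose separation $|g^n_{12}|$ can be far below $\|v_n-u\|_{C^1}$. You can compress $f_3-\bar f$ as well, which costs of order $|f_3'(x_0)-f_1'(x_0)|r^{-1/2}$. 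Or you can leave it, and then the $\lambda$-compressed pair and the uncompressed strand~3 meet in an uncontrolled way. Either way, neither embeddedness nor the knot type of $v_n$ is guaranteed. Your straight-line or rescaling homotopy to $v_n$ can then pass through collisions.
\item \emph{Partial coincidences and gluing.} Different subsets of strands can coincide on interleaved or accumulating closed sets, and then the pairwise cutoffs conflict in the same way. Separately, a partition of unity cannot glue two $\lambda$-scale chart modifications, defined over different axial coordinates, where a contact set crosses a chart overlap.
\end{enumerate}

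\textbf{The missing idea is a blow-up at every full node that rescales all strands about the node at once, combined with induction on the number of strands.} This is what Lemma~\ref{rc:local} does:
\begin{enumerate}
\item It rescales the source at the minimal enclosing radius $R_j$ of its configuration. This gives a $C^1_{\rm loc}$ affine limit $A_i(z)=a_iz+b_i$ with no full coincidence.
\item It treats $A$ by the induction hypothesis in fewer strands, grouping labels by equal values in disjoint transverse disks.
\item It re-inserts the result at a second scale $\varepsilon_j=\sqrt{d_j}+j^{-1}$, chosen after the $H^2$ error $d_j$ is known, so the central error $d_j/\sqrt{\varepsilon_j}$ tends to zero.
\item It joins to the affine jets with the Hermite collars \eqref{rc:hermite}. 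It restores the lost tail classes through the ordered-chamber deformation and Lemma~\ref{rc:flat} on affine plateaus.
\end{enumerate}
Topology is tracked by relative classes in $\operatorname{Conf}_N(\R^2)$, not by closeness to $v_n$. Each local change is turned into an ambient isotopy by the flow construction. The charts are processed one after another, with strong convergence carried forward through the controlled sets $O_i$ and the coordinate-transfer estimate. Without the node analysis and the induction over strand groups, Step~2 does not produce an embedding in the class of $v_n$ that is $H^2$-close to $u$.
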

The initial embedded approximants are assumed to converge only in $C^1$.
Their second derivatives can be unbounded. We must construct new
approximants, rather than extract a bounded subsequence from the old ones.
For curves with finitely many isolated self-intersections, Wacker
constructs same-class approximants with convergent total curvature
\cite[Theorem 3.1]{Wacker2021}. The theorem here gives strong $H^2$
recovery without that restriction. Its proof uses elementary Sobolev
estimates, configurations of points in a plane, and an explicit ambient
isotopy. It does not use
Theorem~\ref{thm:no-loss}.

Here is the local language. For a positive integer $N$ define
\[
 \operatorname{Conf}_N(\R^2)
 =\{(z_1,\ldots,z_N)\in(\R^2)^N:z_i\ne z_l\ (i\ne l)\}.
\]
A path in this open set describes the disjoint graph strands
$x\mapsto(x,z_i(x))$. The labels are fixed. Their order need not agree
with the orientation of the original closed curve. A \emph{relative
class} means a homotopy class with both endpoint configurations fixed.
The homotopy parameter and the axial coordinate $x$ are different
variables. We write $p*q$ for concatenation and $\bar p$ for reversal.
Thus $p*\bar p$ contracts while its endpoints stay fixed, by shortening
its outward and returning journeys simultaneously. These elementary
path cancellations will keep track of endpoint information.

An endpoint collar of $I=[a,b]$ means an interval $[a,a+\delta]$ or
$[b-\delta,b]$, with $\delta>0$. A compact core is a compact set strictly
inside the open region under consideration. The notation
$H^2_{\rm loc}(O)$ means strong $H^2$ convergence on every closed
subinterval compactly contained in the open set $O$. It also gives
convergence on finite unions of such intervals. These definitions are
used separately for each component of a configuration path.

\subsection{Three analytic tools and their domains of use}
First suppose $g\in H^2(I;\R^d)$ and $g(x_0)=g'(x_0)=0$. The fundamental
theorem for $H^1$ functions, applied twice, gives
\[
 g'(x)=\int_{x_0}^x g''(t)\dd t,\qquad
 g(x)=\int_{x_0}^x(x-t)g''(t)\dd t.
\]
For $|x-x_0|\le r$, Cauchy--Schwarz consequently gives
\[
 |g'(x)|\le r^{1/2}\|g''\|_{L^2(I_r)},\qquad
 |g(x)|\le r^{3/2}\|g''\|_{L^2(I_r)}/\sqrt3,
 \quad I_r=(x_0-r,x_0+r).
\]
Let $\chi_r(x)=\chi((x-x_0)/r)$, where the fixed smooth $\chi$ equals
one on $[-1/2,1/2]$ and vanishes outside $(-1,1)$. For $0<r\le1$,
\begin{equation}\label{rc:pointcutoff}
 \|\chi_rg\|_{H^2(I)}\le C\|g''\|_{L^2(I_r)}\longrightarrow0.
\end{equation}
Indeed, the product's second derivative is
$\chi_rg''+2\chi_r'g'+\chi_r''g$. The last two terms have $L^2$ norms
bounded by $Cr^{-1}(2r)^{1/2}r^{1/2}\|g''\|_2$ and
$Cr^{-2}(2r)^{1/2}r^{3/2}\|g''\|_2$; the first has the same bound
without these scaling factors. The zero and first derivative terms
have smaller powers of $r$. Finally the integral of $|g''|^2$ over
$I_r$ tends to zero. Subtracting an affine first jet before applying
this estimate allows us to replace a graph by that jet near $x_0$.

Second, we need joins that match values and derivatives. Fix smooth
$\eta:[0,1]\to[0,1]$ equal to one near $0$ and zero on $[1/4,1]$.
For $t=(x-a)/r$ set
\[
 P(x)=b_0\eta(t)+rc_0t\eta(t)
       +b_1\eta(1-t)+rc_1(t-1)\eta(1-t).
\]
It has values $b_0,b_1$ and derivatives $c_0,c_1$ at $a,a+r$,
and is affine with the specified first jet near either endpoint.
Writing $b=\max|b_i|$ and $c=\max|c_i|$, differentiation and the
substitution $x=a+rt$ give
\begin{equation}\label{rc:hermite}
 \|P''\|_2\le C(br^{-3/2}+cr^{-1/2}),\qquad
 \|P\|_\infty\le C(b+rc),\qquad
 \|P'\|_\infty\le C(b/r+c).
\end{equation}
For example a position term contributes $r^{-2}$ upon differentiation
twice and $r^{1/2}$ to its $L^2$ norm. A derivative term contains the
additional factor $r$. Matching first jets is sufficient for a
piecewise $H^2$ function to be $H^2$: integration by parts on the
pieces has no boundary jump in either the function or its first
weak derivative. A correction with nonzero derivative data is affine,
not constant, in its endpoint collar.

Third, strong convergence transfers between graph coordinates on
strictly smaller domains. We record the argument because it does not
apply to a sequence known only to converge in $C^1$. Suppose scalar
$H^2$ functions $q_j\to q$ strongly, all increasing, satisfy
$c\le q_j'\le C$. On compact subsets of their common image their
inverses $r_j=q_j^{-1}$ obey
\[
 r_j'=\frac1{q_j'\circ r_j},\qquad
 r_j''=-\frac{q_j''\circ r_j}{(q_j'\circ r_j)^3}.
\]
They converge in $C^1$. To justify convergence of the last numerator,
choose nested intervals $K\Subset V\Subset W\Subset O$ in the old
coordinate, with strong convergence on $O$. For a new compact interval
$K'$ with $r(K')\subset K$, uniform convergence gives $r_j(K')\subset V$
for all sufficiently large $j$. For $g_j\to g$ in $L^2(W)$,
\[
 \|(g_j-g)\circ r_j\|_{L^2(K')}\le C^{1/2}\|g_j-g\|_{L^2(W)}.
\]
This follows by changing variables using $q_j'\le C$. Approximate
$g$ in $L^2(W)$ by a continuous function $g_\varepsilon$. The two
composition errors involving $g-g_\varepsilon$ are bounded by
$2C^{1/2}\|g-g_\varepsilon\|_2$; the remaining error tends to zero
by uniform continuity of $g_\varepsilon$ on a compact neighborhood of
$\overline V$. First let $j\to\infty$, then $\varepsilon\to0$.
This proves $g_j\circ r_j\to g\circ r$ in $L^2(K')$ and therefore
strong $H^2$ convergence of the inverses. Decreasing maps reduce to
this case by reversing the coordinate.

For a rotation of graph axes write
$\xi_j(x)=Ax+Bf_j(x)$ and
$\widetilde f_j=(Cx+Df_j)\circ\xi_j^{-1}$, where $A,B,C,D$ are
fixed linear maps of the indicated dimensions. On an overlap with
$|\xi_j'|\ge c$, the inverse formulas and the chain rule prove strong
$H^2$ convergence of $\widetilde f_j$ on $K'$. All coefficients involving
first derivatives converge uniformly; every second derivative term is
an $L^2$ composition just treated. The nested domains ensure that no
estimate evaluates an old graph beyond its controlled open set.

\subsection{A flat contact can store any relative class}
\begin{lemma}\label{rc:flat}
Let $b\in H^2(I;(\R^2)^N)$ be collision-free in endpoint collars.
Suppose that at one interior point $x_0$ all $b_i(x_0)$ agree and
all $b_i'(x_0)$ agree. For any relative class with the same endpoint
configurations and any $\varepsilon>0$ there is a collision-free
$H^2$ path in that class, equal to $b$ in smaller endpoint collars,
whose $H^2$ distance from $b$ is less than $\varepsilon$.
Those smaller collars may be prescribed in advance, provided they are
compactly contained, relative to $I$, in the given collision-free collars.
If $b$ lies with positive margin in an open disk $D_*$, the path may
be kept in that disk. If a representative of the desired class also
lies there, the relative homotopy may be kept there too.
\end{lemma}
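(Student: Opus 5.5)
The plan is to concentrate the strands near $x_0$ into their common affine first jet, and then to insert, in a tiny window where they are all but coincident, a small explicit braid that converts the original relative class into the prescribed one. Write $\ell(x)=b_1(x_0)+b_1'(x_0)(x-x_0)$ for the common jet and put $g_i=b_i-\ell$, so that $g_i(x_0)=g_i'(x_0)=0$. By \eqref{rc:pointcutoff}, the path $b^{(r)}=b-\chi_r g$ (componentwise) satisfies $\|b^{(r)}-b\|_{H^2}\le C\|g''\|_{L^2(I_r)}\to0$. It equals $b$ outside $I_r$, and all its strands coincide with $\ell$ on $[x_0-r/2,x_0+r/2]$. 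This path has collisions, and that is intended: the collision window is exactly where the braid will go.

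Next I separate the strands in the window and braid them. Fix, once and for all, a smooth collision-free path $\beta:[0,1]\to\operatorname{Conf}_N(\R^2)$ that is constant near both ends and equal to a fixed configuration $(\zeta_1,\ldots,\zeta_N)$ there. Its braid $\beta(1)\beta(0)^{-1}$ is to be chosen in the pure braid group, so that combined with the unchanged outer pieces it realizes the prescribed relative class. The relative classes with fixed endpoints are a torsor over $\pi_1(\operatorname{Conf}_N)$, the pure braid group, so such a $\beta$ exists. On $I_{r/2}$ I define
\[
 z_i(x)=\ell(x)+\lambda\,\phi\bigl((x-x_0)/r\bigr)\,\beta_i\bigl(\psi((x-x_0)/r)\bigr),
\]
where $\phi$ is a bump supported in $(-1/2,1/2)$ and equal to one on $[-1/4,1/4]$, and $\psi$ is a smooth reparametrization running from $0$ to $1$ inside $[-1/4,1/4]$. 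Where $\phi\ne1$ the factor $\beta$ is constant, equal to $\zeta$. Hence the strands there are $\ell+\lambda\phi\zeta_i$, which are distinct whenever $\phi>0$. They fully collide only where $\phi=0$, which is the old collision region. To remove the remaining collisions of $b^{(r)}$ itself on $I_r\setminus I_{r/2}$, I would instead take the cutoff $\chi_r g$ and separate the strands by adding $\lambda\widetilde\phi\zeta_i$, where $\widetilde\phi$ equals one on $I_{r}$ minus the collars and the perturbation is smaller than the gap wherever the $b_i$ are already separated. A simpler route is to redo the construction with the pieces $(1-\chi_r)g_i+\lambda\zeta_i\chi_r$: for $\lambda$ small relative to the minimal separation of $b$ on the support of $\chi_r'$, the strands stay separated. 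The $H^2$ cost of the added term is $O(\lambda r^{-3/2})$ by the scaling in \eqref{rc:hermite}. I therefore choose $r$ first to make the cutoff error small, and then $\lambda\ll r^{3/2}$.

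The main obstacle is genuine collision-freeness on the transition annulus. There the original $g_i$ may still nearly collide, since $b$ has collisions arbitrarily close to $x_0$ in general, and the perturbation $\lambda\zeta_i$ has to dominate the differences $(1-\chi_r)(g_i-g_l)$. I would handle this by making the blend $z_i=\ell+(1-\chi_r)g_i+\chi_r\lambda\zeta_i$ and choosing the cutoff window so that $1-\chi_r$ vanishes wherever the separation of the $b_i$ is below $2\lambda|\zeta|$. That forces a second cutoff adapted to the collision set, which is exactly the flat-cutoff device of Lemma~\ref{lem:flat-cutoff} applied to the closed set where the $g_i-g_l$ are small. Failing that, I would shrink $\lambda$ along a sequence and use a generic choice of $\zeta$ together with Sard-type transversality to show that finitely many crossings persist stably.

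For the relative class, the outer pieces are homotopic rel endpoints to the original strands through the straight-line shrinkage, and the window contributes $\beta$. The endpoint collars are untouched because $r$ is taken smaller than their distance from $x_0$. The disk statements follow because all modifications lie within $O(r+\lambda)$ of $b$, and the homotopy can be realized inside $D_*$ by scaling $\beta$ down.
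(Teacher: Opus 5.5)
Your flattening step is sound (subtracting the affine jet rather than $b_1$ changes nothing). The separation step, however, has a genuine gap. The hypothesis makes $b$ collision-free only in the endpoint collars, so $b$ may collide anywhere in the interior of $I$, far from $x_0$. In the later applications it is a blend or a provisional tail that collides on large sets. Your path equals $b$ outside $I_r$, so those collisions survive, and on $I_r\setminus I_{r/2}$ you leave the problem open.

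None of your repairs works:
\begin{itemize}
\item Lemma~\ref{lem:flat-cutoff} requires $g=g'=0$ on the set, and this fails on a near-collision set.
\item Making $1-\chi_r$ vanish wherever the strands are within $2\lambda|\zeta|$ replaces $b$ by essentially $\ell$ near every collision. That is not $H^2$-close to $b$ away from $x_0$.
\item ``Finitely many crossings persist stably'' is the wrong target: the output must have no collisions at all.
\end{itemize}

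The missing idea is a codimension count that removes every collision at once. Take a smooth $0\le q\le1$ that vanishes near the prescribed smaller collars, equals one on the flattened middle interval, and is positive on the whole compact collision set of the flattened path $\widehat b$. Add $q(x)D_i$ to strand $i$, with a small constant $D\in\R^{2N}$. A collision at a point with $q>0$ forces $D_i-D_l=-(\widehat b_i(x)-\widehat b_l(x))/q(x)$. The right side is a $C^1$ curve in $\R^2$, so its image is a planar null set. Hence almost every small $D$ gives a collision-free path at $H^2$ cost $O(|D|)$, and that path is the constant configuration $D$ on the middle interval. Your blend $\ell+(1-\chi_r)g_i+\chi_r\lambda\zeta_i$ with generic $\lambda\zeta$ is this argument restricted to $I_r$. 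What is missing is that the perturbation must also cover collisions outside $I_r$, and that genericity, not smallness relative to a separation, is what removes them.

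The class bookkeeping must also be done in the right order. Before the whole path is collision-free it defines no relative class. So comparing ``the outer pieces'' with ``the original strands'' by straight-line shrinkage means nothing, and $\beta$ cannot be fixed once and for all: the class of the separated outer pieces $p_-,p_+$ depends on the chosen separation. The correct order is:
\begin{itemize}
\item separate first;
\item compute the based loop class $\bar p_-*p*\bar p_+$ at $D$, where $p$ represents the target class, and transport it by scaling to a unit configuration;
\item choose a smooth representative $\beta$;
\item only then choose its amplitude $\alpha\le|D|$, after $\|\beta\|_{H^2}$ is known.
\end{itemize}
You use one parameter $\lambda$ for both separation and braid, which couples these choices. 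The outer class may jump as $\lambda$ varies, and with it the required loop and its norm, so the cost $\lambda\|\beta\|_{H^2}$ is not controlled.

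Finally, scaling $\beta$ down keeps the path in $D_*$, but it does not confine a relative homotopy to a given representative lying in $D_*$. For that, apply componentwise an injective radial map $\R^2\to D_*$ that is the identity on a smaller disk containing both paths.
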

\begin{proof}
We first work in the whole plane. Allocate four errors of size less
than $\varepsilon/4$. Apply \eqref{rc:pointcutoff} to $b_i-b_1$.
Fix the prescribed smaller collars. Choose a radius whose doubled
interval misses the given collision-free endpoint collars and for which the sum of the flattening errors is less than
$\varepsilon/4$. Replace the relative graphs by
$(1-\chi_r)(b_i-b_1)$. The resulting path $\widehat b$ agrees with
$b$ outside that interval, and all its components equal a common
$H^2$ graph $a(x)$ on its middle interval.

Choose a smooth scalar $0\le q\le1$ which is zero on neighborhoods
of the prescribed smaller collars,
equals one on the middle interval, and is positive at every collision
of $\widehat b$. Such a function exists because its collision set is
a compact set disjoint from the smaller collars. Add $q(x)D_i$ to
component $i$, with constant vectors $D_i\in\R^2$. We may choose
$D=(D_1,\ldots,D_N)$ arbitrarily small so that this path has no collisions.
Here is the measure argument that proves this assertion. For a pair
$i\ne l$, a collision where $q>0$ would require
\[
 D_i-D_l=-\frac{\widehat b_i(x)-\widehat b_l(x)}{q(x)}.
\]
On each compact subinterval of $\{q>0\}$ the right side is $C^1$ and
has planar Lebesgue measure zero as an image: subdividing an interval
into lengths at most $h$ covers its image by $O(h^{-1})$ disks of
radius $Mh$, where $M$ bounds the derivative, and their total area
is $O(h)$. A countable compact exhaustion gives the same conclusion
on $\{q>0\}$. The inverse image of this null set under the surjective
linear map $D\mapsto D_i-D_l$ is null by linear coordinates and
Tonelli's theorem. The finite union over pairs is null in $\R^{2N}$.
Outside it the path is collision-free, including where $q=0$, where
no collision of $\widehat b$ occurs. The $D_i$ are distinct because
the relative positions on the middle interval are precisely $D_i-D_l$.

Fix a subinterval of length $r_1>0$ inside that middle interval.
There are constants $C_q,C_{r_1}$, fixed before selecting $D$, such that
\[
 \|qD\|_{H^2}\le C_q|D|,
 \qquad C_{r_1}=C(r_1^{1/2}+r_1^{-1/2}+r_1^{-3/2})
\]
controls rescaling of a fixed smooth profile onto that subinterval.
Choose a generic $D$ satisfying
$0<|D|<\varepsilon/[4(1+C_q+C_{r_1})]$. This is our second error
budget and reserves room for ramps of size $D$.

Subtract the common translation $b_1(x)$ from every component of every
path. This identifies relative classes bijectively, since common
translation cannot create collisions. On the chosen interval the
constructed path is now the constant configuration $D$. Let $p$ be
a representative of the desired translated class. Split the constructed
path at this constant portion as $p_-*p_+$, with $p_-$ ending and $p_+$
starting at $D$. The loop that must be inserted there has class
\[
 \lambda=\bar p_-*p*\bar p_+.
\]
Indeed $p_-*\lambda*p_+$ contracts to $p$ by the two path cancellations.
This formula specifies the basepoint; a free loop class would not suffice.

Put $d=|D|$ and $A=D/d$. Let $s$ be the positive scaling path from
$D$ to $A$. At $A$ choose a smooth based loop $\beta$, stationary near
its endpoints, in the class $\bar s*\lambda*s$. A continuous loop
in the open configuration space has a positive minimum separation.
Polygonal approximation, then smoothing, with total uniform error less
than a quarter of that separation gives the asserted representative;
linear interpolation realizes its relative homotopy. Reparametrizing
the loop by a smooth increasing function constant near the endpoints
makes it stationary there.

For any $0<\alpha\le d$, insert in the reserved interval the three paths
\[
 D\longrightarrow\alpha A,\qquad \alpha\beta,
 \qquad \alpha A\longrightarrow D,
\]
with positive scaling on the outer thirds. Use fixed smooth time changes
stationary near their joining points. These paths stay collision-free.
Their combined based class is $\lambda$: the homotopy that rescales
$\beta$ by positive factors transports its basepoint along the same
scaling paths, and the resulting forward and backward scaling segments
cancel. Adding the common graph back preserves that identity.
The $H^2$ change in the reserved interval is bounded by
\[
 C_{r_1}|D|+C_{r_1}\alpha\bigl(1+\|\beta\|_{H^2([0,1])}\bigr).
\]
The first term includes the ramps and removal of the constant $D$;
the second follows by differentiating the rescaled loop twice.
Choose, only now,
\[
 \alpha=\frac12\min\left\{d,
 \frac{\varepsilon}{4C_{r_1}(1+\|\beta\|_{H^2})}\right\}.
\]
All four error budgets are respected. This order is essential: the loop
may be arbitrarily complicated, but its amplitude is chosen after its
finite $H^2$ norm is known. The joins are $H^2$ since their first jets
match; the outer collars were never changed.

For the disk assertion let $m>0$ be the minimum distance of $b$ from
$\partial D_*$. Run the construction with error smaller than both
$\varepsilon$ and $m/(4C_I)$, where
$\|g\|_{C^1(I)}\le C_I\|g\|_{H^2(I)}$. The new path lies in $D_*$
with margin at least $3m/4$. If both paths to be compared lie in the
disk, their compact images lie in a smaller concentric disk. There is
an injective radial map $\Psi:\R^2\to D_*$ which equals the identity
on that smaller disk: choose its radial function strictly increasing,
equal to the radius on the smaller disk, and tending to the radius
of $D_*$ at infinity. Apply $\Psi$ to every component of any relative
homotopy. Injectivity preserves distinctness, while the identity region
fixes both paths and their endpoints. This confines the homotopy.
\end{proof}

\subsection{Local replacement, including previously controlled strands}
\begin{lemma}\label{rc:local}
Let $I=[a,b]$, let $f\in H^2(I;(\R^2)^N)$, and suppose
collision-free $h_j\in H^2(I;(\R^2)^N)$ converge to $f$ in $C^1$.
Prescribe the following data:
\begin{enumerate}
\item open controlled sets $O_i\subset(a,b)$, one for each label,
on which $h_{j,i}\to f_i$ strongly in $H^2_{\rm loc}(O_i)$;
\item finitely many compact cores $J_1,\ldots,J_m\Subset(a,b)$,
and closed outer endpoint collars $C_-,C_+$ disjoint from their union;
\item a sequence of positive tolerances $\tau_n$;
\item optionally, an open transverse disk $D_*$ containing all $f_i(I)$
with positive distance from its boundary.
\end{enumerate}
There are indices $j_n\uparrow\infty$ and collision-free replacements
$k_n\in H^2(I;(\R^2)^N)$ such that:
\begin{enumerate}
\item $k_n=h_{j_n}$ on fixed neighborhoods of $C_-\cup C_+$,
and the two paths have the same homotopy class relative endpoints;
\item $\|k_n-f\|_{C^1(I)}\le\min\{\tau_n,1/n\}$, and there is an
open neighborhood $U\Subset(a,b)\setminus(C_-\cup C_+)$ of all the
cores on which $k_n\to f$ strongly in $H^2$;
\item $k_{n,i}\to f_i$ strongly in $H^2_{\rm loc}(O_i)$ for every label;
\item when $D_*$ is prescribed, the replacements and the relative
homotopies lie in $\operatorname{Conf}_N(D_*)$.
\end{enumerate}
\end{lemma}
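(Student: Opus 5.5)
We will build $k_n$ by splicing, on each of finitely many small windows covering the cores, the limit $f$ (suitably smoothed toward a flat contact) into $h_{j_n}$. The relative homotopy class is then corrected with Lemma~\ref{rc:flat}. Fix an open $U$ with $\bigcup_lJ_l\Subset U\Subset(a,b)\setminus(C_-\cup C_+)$ and a slightly larger open $U'$ with $\overline U\subset U'$ and $\overline{U'}$ still disjoint from $C_-\cup C_+$.

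\emph{Step 1: the target path and a flat point.} Take a smooth cutoff $\zeta$ equal to one on $\overline U$ and supported in $U'$. We want a target which equals $f$ on $U$, is collision-free, and has a point where all labels share first jets. Since $f$ itself may have collisions, it is not collision-free. We therefore instead use $g_n=h_{j}+\zeta(f_\varepsilon-h_{j})$, where $f_\varepsilon$ is $f$ perturbed as in Lemma~\ref{rc:flat}. The perturbation consists of small generic translations $q(x)D$ that are supported where $f$ collides and are $H^2$-small, together with an added flattening at one chosen point $x_0\in U$. At $x_0$ all components are first replaced by a common jet, via \eqref{rc:pointcutoff} applied to $f_i-f_1$. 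On the transition region $U'\setminus U$ we require collision-freeness: $h_j$ is collision-free, and $\zeta(f_\varepsilon-h_j)$ is $C^1$-small, but the margin of $h_j$ may degenerate. We therefore choose the transition region first so that it avoids the (compact) collision set of $f$. Wherever $f$ is collision-free, it has positive separation $\gamma$ there, and for $j$ large $h_j$ is uniformly within $\gamma/4$, so the convex combination stays collision-free. The collision set of $f$ is a closed set; to guarantee a transition avoiding it, enlarge the cores to a finite union of intervals whose endpoints lie outside the collision set of $f$. Where this is impossible (collisions accumulating at every candidate endpoint), we instead take $f_\varepsilon$ generic on the whole of $I$ away from the fixed outer collars and transition inside those collars, where $f$ is collision-free because $h_j$ and $f$ agree to first order. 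I expect this choice of transition region to be the main obstacle.

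\emph{Step 2: estimates.} On $U$, $g_n=f_\varepsilon$, which is within any prescribed $H^2$ distance of $f$. On $U'\setminus U$ the second derivatives involve $\zeta''(f_\varepsilon-h_j)$ and $\zeta'(f_\varepsilon'-h_j')$, which tend to zero uniformly because of $C^1$ convergence, while $\zeta f''$ and $(1-\zeta)h_j''$ carry the $H^2$ content. Hence on $O_i\cap(U'\setminus U)$, the strong local convergence of $h_{j,i}$ gives conclusion (3). Outside $U'$, $g_n=h_j$. Conclusion (2) in $C^1$ is immediate from $\|f_\varepsilon-f\|_{C^1}$ and $\|h_j-f\|_{C^1}$, after choosing $j_n$ large and $\varepsilon$ small relative to $\tau_n$ and $1/n$.

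\emph{Step 3: correct the class and confine to the disk.} The path $g_n$ agrees with $h_{j_n}$ near $C_\pm$ but may lie in a different relative class. At the flat point $x_0$, Lemma~\ref{rc:flat} produces a collision-free path in the class of $h_{j_n}$. This path is within $\min\{\tau_n,1/n\}/2$ of $g_n$ in $H^2$ and unchanged on the prescribed smaller collars. For (4), both $h_{j_n}$ and $g_n$ lie in $D_*$ with positive margin for large $n$ by $C^1$ convergence, so the disk clause of Lemma~\ref{rc:flat} keeps the path and the homotopy in $\operatorname{Conf}_N(D_*)$. A diagonal choice of $j_n$ and the error budgets yields $k_n$ with all stated properties.
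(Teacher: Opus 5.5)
There is a genuine gap at the flat point. Estimate \eqref{rc:pointcutoff} applied to $f_i-f_1$ at $x_0$ requires $f_i(x_0)=f_1(x_0)$ and $f_i'(x_0)=f_1'(x_0)$ for every $i$. In other words, $x_0$ must already be a full flat contact of $f$. In general $f$ has none: two strands may cross transversally, or only two of three strands may overlap, or strands may meet only at isolated points. Imposing a common first jet at $x_0$ is then an $O(1)$ change in $C^1$, incompatible with (2). Without a flat point Lemma~\ref{rc:flat} is unavailable, and nothing else in your plan controls the relative class. A generic perturbation $f+qD$ selects some class, but the class of $h_{j_n}$ near a contact of $f$ is not determined by $f$. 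At a transverse crossing it records the side of passage. At a node where all strands meet with several distinct slopes, strands with equal slope may braid arbitrarily at scales shrinking with $j$.

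What you describe is exactly the easy case of the paper's proof: when $f$ has a full flat contact in the active interval, blend $(1-\chi)h_j+\chi f$ and apply Lemma~\ref{rc:flat} on the whole active interval. In that case your transition-region worry disappears. The lemma needs collision-freeness only in endpoint collars and makes interior collisions generic itself. Note also that for $N\ge2$ a path with a full flat point is never collision-free, so your Step~1 target cannot exist as stated. Your fallback is false in any case, since the hypotheses allow $f$ to collide inside $C_\pm$.

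The missing idea is induction on $N$; this is why the controlled sets $O_i$ and the disk clause appear in the statement as invariants.

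\begin{itemize}
\item Without a full flat contact, the full-coincidence set $Z$ is discrete, so only finitely many nodes meet a neighborhood of the cores.
\item Off $Z$, the labels split into groups of fewer strands with equal values, placed in disjoint transverse disks. The induction hypothesis is applied group by group, with a diagonal selection over compact exhaustions.
\item At each node, the paper blows up at the minimal enclosing radius $R_j$. This gives an affine limit without full coincidence, where the smaller-multiplicity construction applies.
\item The result is then rescaled to a scale $\varepsilon_j$ chosen after the $H^2$ error $d_j$, and joined to affine plateaus on which each equal-slope group has a flat contact.
\item The ordered-chamber deformation shows that the class still missing is a tuple of within-group classes. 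Lemma~\ref{rc:flat} then stores these classes on those plateaus.
\end{itemize}
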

\begin{proof}
We prove the controlled-set, collar, tolerance and disk assertions
simultaneously by induction on $N$. Write $J=\bigcup_\nu J_\nu$,
and relabel extracted sequences by $j$ during the construction.
Use $\widehat\tau_n=\min\{\tau_n,1/n\}$ in every $C^1$ budget below.
Choose an active interval whose
interior contains that union and whose closure avoids the prescribed
outer collars. Every change below is supported inside it. Estimates
on any controlled set outside this interval are retained by leaving
the functions unchanged.

For $N=1$, take a smooth $0\le\chi\le1$ supported inside the active
interval and equal to one near $J$, and use
$b_j=(1-\chi)h_j+\chi f$. On $K\Subset O_1$ the product rule gives
\[
 \|b_j-f\|_{H^2(K)}
 \le C_\chi\|h_j-f\|_{H^2(K)}\longrightarrow0.
\]
On a neighborhood of $J$ the difference is zero. The $C^1$ error tends
to zero globally. The straight homotopy and the path lie in $D_*$
when that disk is prescribed, because it is convex. There are no
collision restrictions with one label. Extract source indices so that
the resulting global $C^1$ error is at most $\widehat\tau_n$.

Now suppose the assertion holds for every smaller multiplicity.
If at some point of the active interior all values and first derivatives
of $f$ agree, choose $\chi=1$ also near that point. The same blend $b_j$
has a full flat contact and is collision-free near the outer endpoints.
Apply Lemma~\ref{rc:flat} on the closed active interval, in the
relative class of $h_j$ restricted there, with $H^2$ error tending
to zero; extend the result by $h_j$ outside that interval. The blend
equals $h_j$ near its endpoints, so first jets match and fixed
neighborhoods of the prescribed outer collars remain unchanged.
The $H^2$ error preserves every local controlled estimate and strong
convergence near $J$. Sobolev
embedding retains $C^1$ convergence. For large $j$, convexity places
$b_j$ with a fixed positive margin in a prescribed disk, so the disk
clause of Lemma~\ref{rc:flat} applies. The additional errors can be
chosen after extracting source indices so that the total $C^1$ error
is at most $\widehat\tau_n$. This completes this case.

We may therefore assume that there is no full flat contact in the
active interval. Put
\[
 Z=\{x:f_1(x)=\cdots=f_N(x)\}.
\]
The set $Z$ is closed there and has no accumulation point in its
interior. Indeed, if $x_n\in Z$ tend to $x$, then every
$g_i=f_i-f_1$ satisfies $g_i(x_n)=g_i(x)=0$, and the difference quotient
gives $g_i'(x)=0$. This would be a full flat contact. Choose nested
closed intervals $I_0\Subset\operatorname{int}I_1$ inside the active
interval, with $J\subset\operatorname{int}I_0$ and endpoints outside
$Z$. There are only finitely many nodes in $Z\cap I_1$.

We first improve the sequence away from these nodes. At a point
$x\notin Z$, partition the labels according to equality of $f_i(x)$.
Each group has fewer than $N$ labels. Distinct group values have a
positive minimum distance $d$. Choose pairwise disjoint disks about
those values with radius less than $d/4$; in the prescribed disk case
also choose the radii less than one quarter of the boundary distances.
On a sufficiently small axial interval $U$ the limiting graphs stay
in the middle halves of these disks. Uniform convergence puts the
source graphs there too for all large indices. Apply the induction
hypothesis to each group on $U$, with smaller compact cores. All group
homotopies stay in their respective disks and hence cannot collide
with other groups. Running these homotopies at the same time gives
a relative homotopy for the full configuration. Their endpoint collars
allow the changed paths to extend unchanged outside $U$.

The preceding local operation must preserve earlier improvements.
We give the selection procedure explicitly. Choose increasing compact
exhaustions $E_p$ of $\operatorname{int}I_1\setminus Z$ and $K_{i,p}$
of $O_i$. They may be taken as finite unions of closed intervals,
with slightly larger compact neighborhoods still in the respective
open sets. For each $E_p$ take a finite cover by the intervals $U$
just constructed, together with nested open cores
$V\Subset W\Subset U$, with the $V$'s covering $E_p$.
In that operation ask for strong convergence on $\overline W$.
Enumerate the finite covers, and process their group operations one
at a time. At each step augment the controlled open sets, label by
label, by all previously obtained open strong-convergence regions.
The induction hypothesis preserves convergence on their compact
subsets. Every step acts on a sequence, possibly extracting a further
infinite subsequence of original indices; it never asserts small
curvature for an arbitrary individual source curve.

After a finite number $m$ of operations there is a sequence satisfying
those $m$ requirements. Choose an increasing $m(n)$ which has processed
the covers of $E_1,\ldots,E_n$. From that stage's nested subsequence
select an original index larger than the preceding selected index,
with $C^1$ error below $1/n$ and with $H^2$ errors on
$E_1,\ldots,E_n$ and $K_{i,1},\ldots,K_{i,n}$ below $1/n$.
These are finitely many demands. Each compact set has a finite cover
by controlled open intervals, so strong convergence of that stage's
sequence supplies such an index. Splitting the bound among the finite
cover estimates, if necessary, makes the stated total error available.
The resulting diagonal sequence converges strongly locally off $Z$
and preserves every $O_i$. Each selected member involved only finitely
many modifications, so it keeps its source's class and fixed outer
collars. All disk conditions hold exactly at every operation. A smaller
prescribed tolerance can replace the $C^1$ bound $1/n$. Rename this
preliminary sequence $h_j$.

It remains to improve it near the finitely many nodes meeting $I_0$.
Take disjoint closed node intervals inside $\operatorname{int}I_1$;
their endpoints are outside $Z$. We work on one, translating the node
and its common value to $(0,0)$. Write $a_i=f_i'(0)$ and group labels
by equality of $a_i$. There are at least two groups. Select a nonzero
linear functional $\ell:\R^2\to\R$ with different values on distinct
$a_i$; only finitely many forbidden lines in the space of functionals
must be avoided. Define
\[
 \sigma=\min_{a_i\ne a_l}|\ell(a_i-a_l)|>0,\qquad
 \delta=\sigma/2,\qquad
 \omega(r)=\max_i\sup_{|x|\le r}|f_i'(x)-a_i|.
\]
Fix $r_*>0$ so small that $[-2r_*,2r_*]$ is inside the node interval,
contains no other full node, and
$2\|\ell\|\omega(2r_*)\le\sigma/4$. On this interval every projected
intergroup derivative keeps the sign of $\ell(a_i-a_l)$ and has
magnitude at least $3\sigma/4$. Discard indices until
$\max_i\|h_{j,i}'-f_i'\|_\infty\le\sigma/(8\|\ell\|)$.
The corresponding source derivatives have magnitude at least $\delta$
with the same signs.

From now on the actual node replacement interval is
$N_*=[-r_*,r_*]$. Keep the preliminary paths unchanged outside $N_*$,
and fix the outer collars
$[-r_*,-3r_*/4]$ and $[3r_*/4,r_*]$. All axial changes below are the
identity on these collars. This restriction loses no controlled region:
the preliminary sequence already converges strongly locally off the
full-node set $Z$, including at partial contacts. The finitely many
smaller node intervals still cover all full nodes meeting $I_0$;
the compact complement of their interiors in $I_0$ is covered by
finitely many preliminary controlled regions.

For a finite configuration $z$ let $\operatorname{rad}(z)$ be the
smallest radius of a disk containing its points. It is continuous:
moving each point by at most $e$ changes this radius by at most $e$,
using the old enclosing disk enlarged by $e$, and then reversing the
comparison. On $[-r_*,r_*]$ choose $x_j$ minimizing
$\operatorname{rad}(h_j(x))$, denote the minimum by $R_j>0$, and choose
an enclosing center $y_j$. Positivity follows because $N\ge2$ and the
source path is collision-free. Evaluation at $0$ gives $R_j\to0$.
Any limit point of $x_j$ must be a full coincidence of $f$ in the
closed interval, and thus equals $0$. Consequently $x_j\to0$ and
$y_j\to0$.

The rescaled source graphs
\[
 H_{j,i}(z)=\frac{h_{j,i}(x_j+R_jz)-y_j}{R_j}
\]
are defined on every fixed compact $z$-interval for large $j$.
Their values at $0$ lie in the closed unit disk. Passing to a
subsequence gives $H_{j,i}(0)\to b_i$, $|b_i|\le1$. Their derivatives
are $h_{j,i}'(x_j+R_jz)$ and converge uniformly on compact intervals
to $a_i$. Integration therefore proves
\[
 H_j\longrightarrow A,\qquad A_i(z)=a_i z+b_i,
 \quad\hbox{in }C^1_{\rm loc}.
\]
For every fixed $z$, minimality of $R_j$ gives
$\operatorname{rad}(H_j(z))\ge1$ for large $j$. Continuity of the radius
implies $\operatorname{rad}(A(z))\ge1$. In particular $A$ never has a
full coincidence. This normalization uses no estimate on $h_j''$.

Fix a constant $C_0\ge1$ bounding the position basis in
\eqref{rc:hermite} and the derivative basis by $C_0$ times distance
from its inner endpoint. Choose once and for all
\[
 L>4+\frac{16\|\ell\|(C_0+1)}{\delta},\qquad M=L+2.
\]
This precedes the next replacements. For intergroup pairs,
$|\ell(H_{j,i}(0)-H_{j,l}(0))|\le2\|\ell\|$, whereas the projected
derivative has fixed sign and magnitude at least $\delta$.
Thus their projected zeros lie in $(-L,L)$; the two exterior intervals
$[L,M]$ and $[-M,-L]$ have strictly ordered groups with uniform margins.

Apply the proper-group construction above to $A$ on $[-M,M]$.
There are no full nodes, so a finite cover suffices to obtain strong
convergence on an open neighborhood of $[-L,L]$. At every operation
retain earlier open cores as controlled sets. We obtain collision-free
$Q_j$ in the class of $H_j$, equal to $H_j$ in fixed collars of
$\pm M$, such that
\[
 Q_j\to A\text{ in }C^1([-M,M]),\qquad
 d_j:=\|Q_j-A\|_{H^2([-L,L])}\longrightarrow0.
\]
The exterior portions of $Q_j$ still have their ordered groups for
large $j$, by the uniform margins just proved. Only induction at
strictly smaller multiplicities was used to construct $Q_j$.

We next identify exactly the topology that can remain in those tails.
For an ordered list of groups $G_1,\ldots,G_m$, the \emph{ordered
chamber} is the set of configurations for which every $\ell$-projection
in $G_g$ is smaller than every one in $G_{g+1}$. Write
$z_i=c_g+u_i^g$ for $i\in G_g$, where $c_g$ is that group's centroid
and $\sum_{i\in G_g}u_i^g=0$. Internal distinctness concerns only the
$u_i^g$. For fixed internal configurations the centroid restrictions are
\[
 \ell(c_{g+1}-c_g)>
 \max_{i\in G_g}\ell(u_i^g)-\min_{l\in G_{g+1}}\ell(u_l^{g+1}).
\]
They define a convex open set. Fix $e\in\R^2$ with $\ell(e)=1$.
A continuous choice of allowed centroids is $c_1=0$ and
\[
 c_{g+1}-c_g=
 \left(1+\max_{i\in G_g}\ell(u_i^g)
             -\min_{l\in G_{g+1}}\ell(u_l^{g+1})\right)e.
\]
Linear interpolation of the centroids to this choice leaves all
inequalities strict and all internal vectors unchanged. It is a
deformation of the chamber onto this section over the product of
centered internal configuration spaces.

For relative paths the endpoint motion of this deformation matters.
Denote the centroid deformation by $D_t$, $0\le t\le1$, and
parametrize a path $p$ from $U$ to $V$ on $[0,1]$. Define the
partial endpoint tracks
\[
 r_{U,t}(s)=D_{ts}(U),\qquad r_{V,t}(s)=D_{ts}(V),
 \qquad 0\le s\le1.
\]
For every $t$, the concatenation
\[
 r_{U,t}*(D_t\circ p)*\bar r_{V,t}
\]
runs from $U$ to $V$: its three pieces meet at $D_t(U)$ and $D_t(V)$.
At $t=0$ it is $p$ with constant endpoint pieces, removable by an
endpoint-fixing reparametrization. Varying $t$ is therefore an explicit
homotopy relative endpoints. At $t=1$, write $r_U=r_{U,1}$,
$r_V=r_{V,1}$ and $p_{\rm sec}=D_1\circ p$ to obtain
\[
 p\simeq r_U*p_{\rm sec}*\bar r_V.
\]
When comparing two paths with endpoints $U,V$, use the same $r_U,r_V$.
The common tracks cancel from the comparison, leaving exactly the
tuple of their internal relative classes. Centering a group's configurations removes a translation
factor $\R^2$, whose paths are homotopic relative endpoints by straight
interpolation. Thus the missing tail classes can be specified as
ordinary within-group configuration classes. Singleton groups have
only the trivial class. This proves the relative assertion, including
the endpoint transports, without discarding any group shapes.

Choose a new spatial scale
\[
 \varepsilon_j=\sqrt{d_j}+j^{-1}.
\]
It tends to zero and, since $\varepsilon_j\ge\sqrt{d_j}$,
\[
 \frac{d_j}{\sqrt{\varepsilon_j}}\le d_j^{3/4}\longrightarrow0
\]
when $d_j>0$; the numerator is zero when $d_j=0$. The original
$R_j$ located the topology and affine limit, whereas $\varepsilon_j$
is chosen after $Q_j$ to control the rescaled analytical error.
We can move the central path from its source location and scale to
this scale without losing its relative class. First replace $H_j$ by $Q_j$ on $[-M,M]$
using the established relative homotopy. Regard the portions between
$L$ and $M$, and between $-M$ and $-L$, as parts of the tails.
Move the retained central configuration by
\[
 (t,z)\longmapsto(1-t)y_j+
             ((1-t)R_j+t\varepsilon_j)Q_j(z),\qquad |z|\le L.
\]
The scaling factor stays positive. At $z=\pm L$ the configurations
stay in their respective ordered chambers, since translation and
positive scaling preserve projected order. Move the axial interval
$[x_j-LR_j,x_j+LR_j]$ to $[-L\varepsilon_j,L\varepsilon_j]$
by increasing changes of axial variable on $N_*$ which equal the
identity on its fixed outer collars. Both central intervals lie in
$(-r_*/2,r_*/2)$ for all sufficiently large $j$. A tail follows its
moving central endpoint by concatenating the endpoint's transport
path or its reversal, according to orientation, and reparametrizing
on the new tail interval. This explicitly gives the full path
homotopy with fixed outer endpoints. The discarded $Q_j$ portions
were in the chambers, so they contribute to those transported tails.

The analytical central path is now
\[
 q_{j,i}(x)=\varepsilon_jQ_{j,i}(x/\varepsilon_j),
                  \qquad |x|\le L\varepsilon_j.
\]
To see the scaling explicitly, put
$G_{j,i}(z)=Q_{j,i}(z)-a_i z-b_i$ on $[-L,L]$.
The error relative to $a_i x+\varepsilon_jb_i$ is
$\varepsilon_jG_{j,i}(x/\varepsilon_j)$. Two differentiations
and the substitution $z=x/\varepsilon_j$ give
\[
 \int_{-L\varepsilon_j}^{L\varepsilon_j}
 \left|\bigl(\varepsilon_jG_{j,i}(x/\varepsilon_j)\bigr)''\right|^2\dd x
 =\varepsilon_j^{-1}\int_{-L}^{L}|G_{j,i}''(z)|^2\dd z.
\]
Consequently
\[
 \|(q_{j,i}-a_i x-\varepsilon_jb_i)''\|_2
 \le\varepsilon_j^{-1/2}d_j\longrightarrow0.
\]
The corresponding first derivative norm is at most
$\varepsilon_j^{1/2}d_j$ and the function norm at most
$\varepsilon_j^{3/2}d_j$. On this shrinking interval the constant
$\varepsilon_jb_i$ has $L^2$ norm at most $\sqrt{2L}\varepsilon_j^{3/2}$.
Thus the central $H^2$ error relative to $a_i x$ tends to zero.
The fixed-interval Sobolev estimate for $Q_j-A$ also gives endpoint
position errors at most $\varepsilon_j(1+C_Ld_j)$ and derivative
errors at most $C_Ld_j$. Here $C_L$ depends on the fixed $L$, not on $j$.

For the tails first fix a small $\rho>0$ with $4\rho<r_*$.
The replacement tails are exactly $[-r_*,-L\varepsilon_j]$ and
$[L\varepsilon_j,r_*]$. All tail comparisons and relative homotopies
below concern these intervals. Define
\[
 f_i^\rho(x)=f_i(x)-\chi(x/\rho)(f_i(x)-a_i x),
\]
where this cutoff equals one for $|x|\le\rho$ and zero for
$|x|\ge2\rho$. Estimate \eqref{rc:pointcutoff}, with the harmless
change of cutoff radius, proves $f^\rho\to f$ in $H^2$ as $\rho\downarrow0$.
Moreover
$|(f_i^\rho)'(x)-a_i|\le C_{\rm cut}\omega(2\rho)$ on $|x|\le2\rho$.
Choose $\rho$ so that
$C_{\rm cut}\|\ell\|\omega(2\rho)\le\delta/8$.
The signed projected intergroup derivative is then at least $\delta$;
since its position difference vanishes at $0$, its projected gap on
either side is at least $\delta|x|$. On $2\rho\le|x|\le r_*$
the same lower bound follows from the projected derivative estimate
on $[-2r_*,2r_*]$.

Assume $L\varepsilon_j<\rho/10$ and $C_Ld_j\le1$.
On $[L\varepsilon_j,\rho/2]$ add to $a_i x$ the clamped correction
from \eqref{rc:hermite} with the actual central first jet at the inner
endpoint and zero correction jet at the outer one. Reflect this
construction on the negative side. The interval lengths are between
$2\rho/5$ and $\rho/2$. Its global $H^2$ correction is bounded by
\[
 C_L\bigl(\varepsilon_j\rho^{-3/2}+d_j\rho^{-1/2}\bigr),
\]
where the lower derivatives are absorbed by a fixed constant since
$\rho\le1$. The correction vanishes near its outer endpoint. The
intervals $[\rho/2,\rho]$ and $[-\rho,-\rho/2]$ consequently provide
a plateau where every member of a derivative group is its same affine
line.

We must check intergroup separation before repairing the internal
collisions. At a tail point with $|x|\ge L\varepsilon_j$ the size of
one correction is at most
\[
 2C_0\varepsilon_j+C_0C_Ld_j|x|.
\]
For the derivative basis this uses its vanishing value at the inner
endpoint and $|x-L\varepsilon_j|\le|x|$ on the positive side;
the negative side is identical after reflection. A projected difference
of two corrections therefore has size at most
$4\|\ell\|C_0\varepsilon_j+2\|\ell\|C_0C_Ld_j|x|$.
Our fixed choice of $L$ makes the first term smaller than
$\delta|x|/4$. Discard indices until
$2\|\ell\|C_0C_Ld_j<\delta/4$. The reference gap
$\delta|x|$ then leaves at least $\delta|x|/2$ throughout each joining
interval. On the plateau and the rest of $f^\rho$ it is already positive.

Farther out choose a fixed smooth cutoff, for this $\rho$, equal to
one up to $2\rho$ and zero for $|x|\ge3\rho$, and blend $f^\rho$
to the preliminary $h_j$. On the compact transition annuli strong
convergence to $f$ was obtained in the preliminary diagonal. The
product rule makes the blending error tend to zero in $H^2$ for fixed
$\rho$. The reference and $h_j$ both lie in the same ordered chambers
there for large $j$: each projected gap of $f$ has a positive lower
bound on the closed annulus, and $h_j\to f$ uniformly. Convex
blending of those projected inequalities preserves their strictness.
Outside the transition keep $h_j$, including its outer node collars.
We have constructed provisional tails with no intergroup collisions.

Within a group these tails may have collisions. Nevertheless each
has collision-free endpoint collars. At the outer endpoint it equals
$h_j$ on the fixed collar $[3r_*/4,r_*]$, or its reflected counterpart,
because $3\rho<3r_*/4$. At the inner endpoint its value is the actual,
distinct $Q_j$ configuration; continuity gives a collision-free collar
for this fixed $j$. Only this inner collar may depend on $j$.
Lemma~\ref{rc:flat} keeps the fixed outer collar unchanged. Matching its first derivative with that of the
central path gives an $H^2$ join. On the affine plateau the whole group
has a flat contact. The chamber calculation therefore tells us which
relative internal class is missing from each provisional tail, when
it is compared with the transported original tail having the same
endpoints. Prescribe those classes and apply Lemma~\ref{rc:flat} to
each group separately. For a singleton the class is already unique,
and no correction is needed.

For fixed $j,\rho$, let $s_{j,\rho}>0$ be the smallest projected gap
between adjacent groups on the closed tails
$[-r_*,-L\varepsilon_j]$ and $[L\varepsilon_j,r_*]$.
These minima are positive because both entire tails now lie in their
respective ordered chambers. Let
$C_{j,\rho}$ be an $H^2$-to-$C^0$ embedding constant for their intervals.
Choose the sum of all flat-contact correction errors smaller than
\[
 \min\left\{j^{-1},\frac{s_{j,\rho}}
                            {4\|\ell\|C_{j,\rho}}\right\}.
\]
Every projected gap changes by less than $s_{j,\rho}/2$. The corrected
tails therefore remain in their chambers, become internally
collision-free, and have the required full relative classes. Combined
with the retained central path, they recover the original relative
class. Their endpoint collars remain those of the provisional tails.
This completes the construction for fixed $\rho$ and sufficiently
large $j$.

For completeness, we now choose the two limits in their required order.
Take $\rho_n\downarrow0$ so that each radius satisfies the preceding
derivative condition and
$\|f^{\rho_n}-f\|_{H^2}<1/n$. Require also that the sum of flattening
$C^1$ errors over all nodes be less than $\widehat\tau_n/3$. Choose the radii for the
finitely many nodes first. For those fixed radii choose a source index
$j_n>j_{n-1}$ for which all of the following finite demands hold:
\[
 L\varepsilon_{j_n}<\rho_n/10,\quad C_Ld_{j_n}\le1,\quad
 2\|\ell\|C_0C_Ld_{j_n}<\delta/4,
\]
\[
 C_L\left(\varepsilon_{j_n}^{3/2}
       +\frac{d_{j_n}}{\sqrt{\varepsilon_{j_n}}}
       +\frac{\varepsilon_{j_n}}{\rho_n^{3/2}}
       +\frac{d_{j_n}}{\rho_n^{1/2}}\right)<\frac1n.
\]
Include the errors on the entire fixed stage-$n$ exterior annuli
$\{2\rho_n\le |x|\le r_*\}$, including their transition regions, and the first
$n$ compact controlled-set errors in the demands, with their finite
product constants absorbed by subdividing $1/n$. Include the global
$C^1$ error of the preliminary source $h_{j_n}$, and require its sum
with the remaining central and blending $C^1$ errors to be below
$\widehat\tau_n/3$.
Every demand tends to zero with the radii fixed, so the least eligible
index in the current subsequence exists. Make the final flat-contact
errors smaller than $1/n$, their projected-gap bounds, and
$\widehat\tau_n/(3C_n)$, where $C_n$ bounds the finitely many relevant Sobolev
embedding constants. This last choice is made after the actual gaps
and constants are known. For several nodes impose all these finite
conditions simultaneously, dividing the budgets by their number.

The resulting paths converge strongly to $f$ on every full node
neighborhood: the reference error, central error, joining error,
annulus error, and flat-contact error have each been made small.
Away from the nodes the preliminary estimates persist. A compact
subset of an original $O_i$ is covered by these node neighborhoods
and finitely many compact exterior intervals. Including their
exhaustion errors in the diagonal proves the required componentwise
strong convergence. The same argument gives strong convergence on a
neighborhood of $J$. All modifications have fixed outer collars,
and the displayed bounds prove global $C^1$ convergence with the
prescribed tolerances.

Finally, if a transverse disk was prescribed, let $m>0$ be the
reference boundary margin. Add the finite requirements that the old
and new paths be within $m/4$ of $f$ in $C^0$; the convergence just
proved permits them. Both paths now lie in a smaller concentric disk.
The radial compression argument in Lemma~\ref{rc:flat} confines their
already proved relative homotopy to $D_*$. This retains the disk
invariant even though the intermediate central rescaling was described
in the full plane. Every induction invariant has now been verified,
and the induction is complete.
\end{proof}

\subsection{From graph homotopies to ambient isotopies}
We verify the topological step before using the local construction on
a closed curve. Suppose two $C^1$ configuration paths in an open disk
$D_*$ agree in endpoint collars and have a continuous relative homotopy
$F^0(x,t)$ contained in that disk. Compactness of $I\times[0,1]$ gives
a positive minimum pairwise separation $s$ and a positive minimum
disk-boundary distance $m$ for that homotopy. With one label omit $s$.
Choose $e>0$ smaller than $s/32$ and $m/16$.

First make the homotopy fixed on smaller endpoint collars. Its values
near an endpoint are uniformly close to the fixed endpoint configuration,
by uniform continuity in $x,t$. The common endpoint path is also close
to that configuration. Choose an axial collar width such that both
errors are below $e$, and interpolate with a smooth cutoff between
the common path and $F^0$ there. Every intermediate configuration is
within $e$ of the endpoint configuration; hence it stays distinct and
inside the disk. At $t=0,1$ this changes nothing, because those paths
already agree on that collar. It gives a homotopy fixed on smaller
collars. Recompute its positive separation and boundary margin.

Subdivide the time interval so finely that consecutive configurations
of this homotopy differ uniformly by less than $e$. Approximate each
interior time slice in $C^0$ by a $C^1$ path with error below $e$,
retaining the common collars; keep the first and last paths exactly.
Such relative approximation follows by ordinary convolution on the
interior and a cutoff to the fixed $C^1$ collar path. Interpolate
successive approximating slices linearly, using a smooth time function
constant near either end of each time piece. This interpolation is
within $2e$ of one original slice. Its pairwise separation is therefore
at least $s-4e$ and its boundary margin at least $m-2e$.
The resulting $F_i(x,t)$ is $C^1$ in $x$, smooth in $t$ on the pieces,
stationary near their joins, and fixed in axial endpoint collars.
Both $\partial_tF_i$ and its first $x$ derivative are continuous;
the same statements for further time derivatives hold because the
time interpolation is stationary near joins.

For this final homotopy take its actual margins $s_F,m_F>0$.
Choose
\[
 0<r<\min\{s_F/3,m_F/3\},
\]
omitting $s_F$ for one label. Let a smooth
$\zeta:[0,\infty)\to[0,1]$ equal one near $0$ and vanish on $[1,\infty)$.
Let $\xi$ be a smooth axial cutoff supported in the chart and equal
to one outside the fixed collars where any motion occurs. Define
\[
 V_t(x,y)=\xi(x)\sum_{i=1}^N
 \zeta\left(\frac{|y-F_i(x,t)|^2}{r^2}\right)
                     (0,\partial_tF_i(x,t)).
\]
The transverse supports about different strands are disjoint and lie
inside $D_*$. The field extends by zero to a compactly supported
$C^1$ field in ambient space; it is also $C^1$ in time because of the
stationary joins. On the graph of strand $i$ all other cutoffs vanish,
and its own cutoff equals one. Where it moves, $\xi=1$. Thus
\[
 V_t(x,F_i(x,t))=(0,\partial_tF_i(x,t)).
\]
The indicated graph trajectories solve the ordinary differential
equation for $V_t$.

The needed flow theorem states that a continuous time-dependent field
with a global integrable spatial Lipschitz bound has unique solutions
on the time interval; for a $C^1$ field the solution depends $C^1$ on
its initial point. This is the version of
\cite[Theorem~2.5, Corollary~2.6, and Theorem~2.10, pp.~40--41 and~46]{Teschl}
used here. Compact support and the displayed finite collection of
$C^1$ cutoffs bound the field and its spatial derivative uniformly.
They check its global existence and differentiability hypotheses.
Backward evolution gives the inverse maps. Uniqueness identifies the
flow with the prescribed graph motion on each strand. We have obtained
an ambient $C^1$ isotopy, equal to the identity outside the chart and
in its endpoint collars. A configuration homotopy has thus been
converted into the ambient isotopy needed to preserve knot type.

\subsection{Patching all branches and choosing arclength last}
\begin{proof}[Proof of Theorem~\ref{rc:theorem}]
Take $v_j\in\CC(K)$ with $v_j\to u$ in $C^1$. We first construct
regular $H^2$ embeddings $\widetilde v_j$ of the same knot type which
converge in $C^1$ and have strong convergence in every local graph
coordinate. They need not have controlled second derivatives in their
original parameter.

Every spatial fiber of a regular $C^1$ closed curve is finite, with a
uniform bound. To check this, let $\mu=\min|u'|>0$ and choose
$0<r_0<1/8$ so that
\[
 |u'(t)-u'(s)|\le\mu/2
       \quad\hbox{whenever }\dist(s,t)\le2r_0.
\]
With $e_s=u'(s)/|u'(s)|$ we have $e_s\cdot u'(t)\ge\mu/2$ on the
lifted interval of radius $2r_0$. Its axial projection is strictly
increasing, so no two distinct parameters there can have the same
image. Thus distinct members of one fiber have circular distance at
least $2r_0$. Their disjoint arcs of radius $r_0$ give a uniform finite
bound on the number of members.

At $p\in u(\T)$ choose an axis with nonzero scalar product with every
tangent belonging to its finite fiber. Choose disjoint parameter arcs
about the fiber points on which those axial derivatives retain their
signs and have magnitude at least $c_p>0$. Each branch is a graph
$(x,f_i(x))$ over a common axial interval about $p$. If the old curve
coordinates are $(x(s),y(s))$, then
\[
 f_i'=(y'/x')\circ x^{-1},\qquad
 f_i''=\left(\frac{y''}{(x')^2}-\frac{y'x''}{(x')^3}\right)
                                                   \circ x^{-1}.
\]
The denominators are bounded away from zero and the inverse change
of variables has bounded Jacobian, so these graphs are $H^2$.
The complement of the chosen parameter arcs is compact and its image
does not contain $p$; its distance $d_p$ from $p$ is positive.
Choose a sufficiently small cylinder about $p$ inside the ball of
radius $d_p/4$, with its axial closure inside all these graph domains.
Its transverse disk can be chosen so that every captured graph stays
strictly inside the disk throughout the axial interval. This follows
by continuity of finitely many branches through $p$, choosing the
axial interval after a transverse radius. No other limiting branch
enters the cylinder. A curve whose $C^0$ distance from $u$ is below
$d_p/4$ has no uncaptured parameter arc there either.

Inside each cylinder choose nested open spatial regions
\[
 C_\alpha\Subset V_\alpha\Subset W_\alpha\Subset U_\alpha,
\]
where $U_\alpha$ is the full modification cylinder, the axial projection
of $\overline{W_\alpha}$ is strictly inside its interval, and
$C_\alpha$ contains its selected point $p$. Compactness of $u(\T)$
allows finitely many $C_\alpha$ to cover the image. Keep these charts,
regions, axial derivative bounds, and disk margins fixed. All bounds
persist with half their size for sufficiently $C^1$-close curves.
The graph paths of the source embeddings are collision-free.

Process these finitely many charts in turn. In chart $U_\alpha$ apply
Lemma~\ref{rc:local} to the compact axial projection of
$\overline{W_\alpha}$. It supplies strong graph convergence on an
open neighborhood of that projection, with fixed endpoint collars
and a homotopy in the transverse disk. Apply the preceding ambient
isotopy construction. The curve remains an embedding of type $K$,
because all captured branches move by that isotopy and the remaining
curve lies outside its support.

At a later chart the controlled open sets $O_i$ include, for each
branch, its limiting portions in every previously processed
$W_\alpha$. Strong convergence on those portions transfers to the
new axial coordinate by the third analytic tool. To check its domain
condition, take a compact portion of this overlap. Its inverse image
in the old coordinate lies in some $K\Subset V\Subset W\Subset O$
where the old strong estimate holds. Uniform convergence of the
coordinate inverses puts all moving inverse images inside $V$ for
large indices. Thus the chain rules use only the old controlled set,
and the local lemma preserves the transferred estimates.

In particular, strong convergence persists on every previously
processed compact region $\overline{V_\alpha}\Subset W_\alpha$.
A compact limiting branch portion there has a finite cover by compact
coordinate intervals inside the controlled open region $W_\alpha$.
On portions changed by the later chart, coordinate transfer and
Lemma~\ref{rc:local} preserve these estimates. Outside the modification
support the graphs are unchanged, and endpoint collars identify the
descriptions near the joins. The finite cover gives convergence on
$\overline{V_\alpha}$. The same argument on each compact subset of
$W_\alpha$ preserves local strong convergence there. Only finitely many
charts and extractions occur. These estimates suffice for the energy
assembly below, whose weights are supported in the $V_\alpha$.
The
$C^1$ tolerance clause may be used to make its total $C^1$ error tend
to zero throughout the construction.

For each individual source index the patched curve remains $H^2$
in its current original parameter. Indeed, if $x_j(s)$ is its axial
coordinate and $k_j$ its replacement graph, then
\[
 (k_j\circ x_j)''=(k_j''\circ x_j)(x_j')^2
                             +(k_j'\circ x_j)x_j''.
\]
The first term is $L^2$ by the change-of-variable bound and the positive
axial derivative bound. The second is $L^2$ because $k_j'$ is bounded
and $x_j''\in L^2$. The endpoint collars give matching first jets.
This proves membership in $H^2$ for each index, without claiming a
uniform bound for $x_j''$. The first chain rule and uniform convergence
of graph functions and their first derivatives prove $C^1$ convergence
in this parameter. Regularity persists because the derivatives remain
uniformly close to the nowhere-zero $u'$.

Denote the final curves by $\widetilde v_j$. We next deduce convergence
of geometric length and bending energy from their strong graph
convergence. For one graph $(x,f(x))$, the arbitrary-parameter formula
\eqref{eq:nl-7} gives the densities, with respect to increasing $dx$,
\[
 \sqrt{1+|f'|^2},\qquad
 \frac{|f''|^2}{(1+|f'|^2)^{3/2}}
 -\frac{(f'\cdot f'')^2}{(1+|f'|^2)^{5/2}}.
\]
For the second expression use
$P_{(1,f')}^\perp(0,f'')=(0,f'')-
 ((f'\cdot f'')/(1+|f'|^2))(1,f')$ and square it.
The first derivative coefficients converge uniformly. Every product
of two second derivative components converges in $L^1$: for example
\[
 \|g_j^ag_j^b-g^ag^b\|_1
 \le\|g_j^a-g^a\|_2\|g_j^b\|_2
       +\|g^a\|_2\|g_j^b-g^b\|_2\longrightarrow0.
\]
Here $g_j=f_j''$ and the superscripts denote vector components. This
establishes $L^1$ convergence of the graph bending density.

To combine charts without double counting, choose finitely many smooth
nonnegative functions supported in the processed $V_\alpha$ and
positive on a cover of the compact image. Their sum is positive on
an open neighborhood of that image. Dividing by the sum produces
smooth weights whose sum is one there; all source images are in that
neighborhood for large $j$. In each chart the weights evaluated on
the graphs converge uniformly. Integrate the weighted densities,
sum over all its branches, and then over charts. Their supports are
compactly inside the controlled graph domains, so the just-proved
convergence applies to every summand. Branches are counted with their
parameter multiplicity even at contacts. Reversing the orientation of
a branch changes neither geometric integral. We conclude
\[
 L_j:=\LL(\widetilde v_j)\longrightarrow1,\qquad
 \B(\widetilde v_j)\longrightarrow\B(u).
\]

Only now change scale and parameter. On lifts from $[0,1]$ define
\[
 A_j(s)=L_j^{-1}\int_0^s|\widetilde v_j'(r)|\dd r,
 \qquad
 w_j(t)=L_j^{-1}\bigl(
 \widetilde v_j(A_j^{-1}(t))-\widetilde v_j(0)\bigr).
\]
The $A_j$ are increasing circle diffeomorphisms with $A_j(0)=0$ and
$A_j(1)=1$. Since the speeds converge uniformly to one,
$A_j\to\Id$ and $A_j^{-1}\to\Id$ in $C^1$.
Each inverse is $H^2$ by the formulas already proved, for its individual
positive derivative bound. The chain rule then gives $w_j\in H^2$ and
\[
 w_j'=
 \left(\frac{\widetilde v_j'}{|\widetilde v_j'|}\right)
                                  \circ A_j^{-1},\qquad |w_j'|=1.
\]
It also gives $w_j\to u$ in $C^1$, since $u(0)=0$. Translation and
positive dilation are ambient isotopies, and increasing
reparametrization does not alter the image. Hence $w_j\in\CC(K)$.
The scale and parameter invariances from \eqref{eq:nl-7} yield
\[
 \|w_j''\|_2^2=\B(w_j)
       =L_j\B(\widetilde v_j)\longrightarrow\B(u)=\|u''\|_2^2.
\]
Thus the normalized sequence is bounded in $H^2$. Any weakly convergent
subsequence must have weak limit $u$, because the $C^1$ limit is $u$.
Weak compactness then gives weak convergence of the whole sequence:
a subsequence violating a weak neighborhood of $u$ would have a
further weakly convergent subsequence with limit $u$, a contradiction.
In $L^2$, expand
\[
 \|w_j''-u''\|_2^2=\|w_j''\|_2^2+\|u''\|_2^2
                     -2\int_\T w_j''\cdot u''\dd s\longrightarrow0.
\]
The lower derivatives converge uniformly. This proves strong $H^2$
recovery.

Strong $H^2$ convergence implies $C^1$ convergence, so $S_K\subset D_K$.
Every strong sequence is weakly convergent. Conversely a weakly
convergent $H^2$ sequence is bounded, hence precompact in $C^1$;
all its $C^1$ subsequential limits equal its weak limit. It therefore
converges in $C^1$ to that limit. This proves
$S_K\subset W_K^{\rm seq}\subset D_K$, and recovery proves equality.
For $u\in S_K$, a strong approximating sequence has
$\B(u)=\lim\B(w_j)\ge e_K$. Since $\CC(K)\subset S_K$, the reverse
infimum inequality is immediate. This proves the stated infima as well.
\end{proof}

\begin{remark}[A second proof of no loss]
Theorem~\ref{rc:theorem} also implies Theorem~\ref{thm:no-loss}.
A weak $H^2$ limit $u$ of a bending-minimizing sequence belongs to
$D_K$ by compact embedding. Recovery gives $\B(u)\ge e_K$, while
weak lower semicontinuity gives $\B(u)\le e_K$. Equality of the
second-derivative norms and weak convergence give strong $H^2$
convergence; boundedness supplies weakly convergent subsequences.
The earlier ambient-variation proof is independent of this recovery
construction.
\end{remark}

\begin{corollary}[Bridge inequality in the $H^2$ closure]\label{rc:closure-bridge}
Every $u\in D_K$ satisfies
$\TC(u)\ge2\pi\operatorname{bridge}(K)$.
\end{corollary}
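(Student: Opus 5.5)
The plan is to combine the recovery theorem with the total-curvature bound for embedded $H^2$ curves, already recorded in the proof of Proposition~\ref{sa:prop:saturation}, and then pass to the limit using continuity of total curvature under strong $H^2$ convergence.

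\emph{Step 1: recovery.} Given $u\in D_K$, Theorem~\ref{rc:theorem} supplies $w_j\in\CC(K)$ with $w_j\to u$ strongly in $H^2$.

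\emph{Step 2: the embedded bound.} Each $w_j$ is a regular $H^2$ embedding of type $K$. The argument in the proof of Proposition~\ref{sa:prop:saturation} gives $\TC(w_j)\ge2\pi\operatorname{bridge}(K)$. That argument combines three ingredients:
\begin{itemize}
\item the Sard argument, showing that almost every height direction is Morse;
\item the definition of the bridge index, giving at least $\operatorname{bridge}(K)$ maxima for almost every direction;
\item Milnor's identity \eqref{sa:eq:milnor}, applied first to smooth representatives and then extended to regular $H^2$ embeddings by smooth approximation and $C^1$ isotopy stability.
\end{itemize}
This step does not use the saturation hypothesis \eqref{sa:eq:sharp}, so it applies to every class $K$.

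\emph{Step 3: passage to the limit.} All $w_j$ and $u$ are unit speed, so $\TC(w_j)=\int_\T|w_j''|$ and $\TC(u)=\int_\T|u''|$. Since $\T$ has measure one, Cauchy--Schwarz gives
\[
 |\TC(w_j)-\TC(u)|\le\|w_j''-u''\|_1\le\|w_j''-u''\|_2\to0 .
\]
Hence $\TC(u)=\lim_j\TC(w_j)\ge2\pi\operatorname{bridge}(K)$.

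The only nontrivial input is the recovery theorem, which is already available. The one point needing care is confirming that the embedded bridge bound in Step~2 holds for every regular $H^2$ embedding of type $K$, without any saturation assumption. Strong $H^2$ recovery is essential here: mere $C^1$ convergence would only give lower semicontinuity in the wrong direction, or nothing at all, for total curvature.
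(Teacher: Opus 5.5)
Your proposal is correct and follows essentially the paper's own proof. You take the recovery sequence from Theorem~\ref{rc:theorem}, apply the embedded bridge bound (which needs no saturation hypothesis), and pass to the limit using $\|w_j''-u''\|_1\le\|w_j''-u''\|_2\to0$ for unit-speed curves.
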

\begin{proof}
For the unit-speed recovery sequence $w_j\in\CC(K)$,
\[
 |\TC(w_j)-\TC(u)|\le\|w_j''-u''\|_1
                         \le\|w_j''-u''\|_2\longrightarrow0.
\]
The embedded bridge inequality therefore passes to the limit.
This recovers the $H^2$ case of \cite[Theorem 1.3]{Miura2026};
that theorem also treats regular $C^1$ curves of finite total curvature.
\end{proof}

\subsection{The additional consequence for \texorpdfstring{$C^1$}{C1} ambient traces}
\begin{corollary}\label{rc:c1ambient}
Let $u\in D_K$ satisfy $\B(u)=e_K$, and let
$X\in C_c^1(\R^3;\R^3)$ have trace $h=X\circ u\in H^2(\T;\R^3)$.
Then
\[
 \E(u+th)\ge e_K\quad\hbox{if }|t|\|DX\|_\infty<1,
 \qquad D\E(u)[h]=0.
\]
\end{corollary}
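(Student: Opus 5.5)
The plan is to show that the curve $u+th$ lies in $D_K$, or is a reparametrization and rescaling of such a curve, and then invoke Theorem~\ref{rc:theorem}. That theorem gives $\inf_{D_K}\B=e_K$, and together with the invariance of $\E$ under normalization this yields the inequality. The differential statement then follows from the two-sided inequality at $t=0$, where $\E(u)=\LL(u)\B(u)=e_K$.

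The key steps, in order, are as follows.

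First, for $|t|\|DX\|_\infty<1$ the map $\Phi_t=\Id+tX$ is a $C^1$ diffeomorphism. The lower Lipschitz bound $|\Phi_t(x)-\Phi_t(y)|\ge(1-|t|\|DX\|_\infty)|x-y|$ gives injectivity. Surjectivity follows from the contraction $x\mapsto y-tX(x)$, and the inverse function theorem gives a $C^1$ inverse. The family $\Phi_{\lambda t}$, $0\le\lambda\le1$, is an ambient isotopy, exactly as in the proof of Proposition~\ref{ct:prop:ambient}. Here the condition is the sharp one $|t|\|DX\|_\infty<1$ rather than half of it, but the same estimates hold for every $\lambda t$.

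Second, I check that the trace $u+th=\Phi_t\circ u$ is regular and $H^2$. It is $H^2$ because $h\in H^2$ by hypothesis. Its derivative is $(\Id+tDX(u))T$, and its length is at least $(1-|t|\|DX\|_\infty)>0$, so the curve is regular.

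Third, I transfer membership in the closure. Take $v_n\in\CC(K)$ with $v_n\to u$ in $C^1$. Then $\Phi_t\circ v_n$ are $C^1$ embeddings of type $K$ converging in $C^1$ to $\Phi_t\circ u$, since $D\Phi_t$ is uniformly continuous on a compact set. These compositions are only $C^1$, so I smooth each of them inside its $C^1$ isotopy neighbourhood \cite{Reiter2005,Blatt2009}, keeping the knot type. Normalizing by \eqref{eq:nl-8}, which is continuous in $C^1$ at regular curves, places the normalized version of $\Phi_t\circ u$ in $D_K$. This is the argument of Proposition~\ref{ct:prop:C1ambient}.

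Fourth, Theorem~\ref{rc:theorem} gives $\B\ge e_K$ on $D_K$. Since normalization preserves $\E$, we obtain $\E(u+th)\ge e_K=\E(u)$.

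Finally, for stationarity, $t\mapsto\E(u+th)$ is differentiable at $0$ with derivative $D\E(u)[h]$ by \eqref{eq:nl-9}--\eqref{eq:nl-10}, since $h\in H^2$. It has an interior minimum at $t=0$ over an open interval of $t$, so $D\E(u)[h]=0$.

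I expect the main obstacle to be the third step, where a merely $C^1$ diffeomorphism must carry the approximating embeddings into the $C^1$ closure while preserving the knot type. I would handle it by combining isotopy stability with the continuity of normalization in $C^1$, both already used in Proposition~\ref{ct:prop:C1ambient}. The hypothesis $h\in H^2$ is essential only to land in $H^2$, that is, in $D_K$ rather than merely in the $C^1$ closure.
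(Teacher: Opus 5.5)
Your proof is correct. Its skeleton is the paper's: the contraction-mapping diffeomorphism $\Phi_t=\Id+tX$, regularity of $u+th$, membership of the normalized target in $D_K$, Theorem~\ref{rc:theorem}, and two-sided minimality at $t=0$. The one genuine difference is your third step.

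\emph{Your route.} You keep the field $X$ fixed and compose the approximants $v_n$ with the merely $C^1$ map $\Phi_t$. You then restore $H^2$ regularity by smoothing each $\Phi_t\circ v_n$ inside its $C^1$ isotopy-stability neighbourhood. This is exactly the argument of Proposition~\ref{ct:prop:C1ambient}.

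\emph{The paper's route.} The proof of this corollary instead mollifies the field: it takes smooth compactly supported $X_n\to X$ in $C^1$. Then $(\Id+tX_n)\circ v_m$ are already regular $H^2$ embeddings, by the chain rule with $D^2X_n$ bounded. They are of type $K$ through the explicit isotopy $\Id+\theta tX_n$. The paper passes to the target with a diagonal index $m_n$ and the error bound $|t|(1+\|u'\|_\infty)\|X_n-X\|_{C^1}$.

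\emph{Comparison.} The paper's version is self-contained: the knot type is carried only by explicit contraction-mapping isotopies, with no appeal to stability of $C^1$ embeddings, at the price of approximating in two indices. Yours avoids approximating the field but imports the $C^1$ isotopy-stability theorem. The paper uses that theorem elsewhere anyway, for instance in Proposition~\ref{prop:elastic-minimizing}. In neither route is $X_n\circ u\to h$ in $H^2$ needed. The hypothesis $h\in H^2$ enters only to make the target curve itself lie in $D_K$.
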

\begin{proof}
For such a fixed $t$ put $\Phi_t=\Id+tX$. The inequality
\[
 |\Phi_t(x)-\Phi_t(y)|\ge
                (1-|t|\|DX\|_\infty)|x-y|
\]
proves injectivity. For any $y$ the map $x\mapsto y-tX(x)$ is a
contraction of the complete space $\R^3$, so has a unique fixed point;
this proves surjectivity. The derivative $\Id+tDX$ is invertible, and
the inverse function theorem gives a global $C^1$ inverse. The same
bounds for $\Id+\theta tX$, $0\le\theta\le1$, give an ambient isotopy.
The target curve is regular and $H^2$, since
\[
 \Phi_t\circ u=u+th,\qquad
 |(\Phi_t\circ u)'|\ge1-|t|\|DX\|_\infty>0.
\]

We must place this target in the relevant $C^1$ closure. Take smooth
compactly supported $X_n\to X$ in $C^1$, for example by convolution.
For large $n$ the same strict derivative bound holds. Choose
$v_m\in\CC(K)$ converging to $u$ in $C^1$. For fixed $n$ the curves
$(\Id+tX_n)\circ v_m$ are regular $H^2$ embeddings of type $K$ and
converge in $C^1$ to $(\Id+tX_n)\circ u$. Their $H^2$ membership
follows from
\[
 ((\Id+tX_n)\circ v_m)''
  =tD^2X_n(v_m)[v_m',v_m']+(\Id+tDX_n(v_m))v_m''.
\]
Both coefficients are bounded on the compact image, the first term is
bounded, and the second is $L^2$. Choose an increasing $m_n$ so that
this fixed-$n$ $C^1$ approximation error is below $1/n$. The other
error is at most
$|t|(1+\|u'\|_\infty)\|X_n-X\|_{C^1}$ and tends to zero.
Thus these embedded curves converge in $C^1$ to $u+th$.

Normalize every curve by translation, length scaling, and arclength
as above. This operation is continuous in $C^1$ on regular curves:
uniform convergence of their positive speeds gives convergence of
lengths, cumulative arclength maps and their inverses, and the formula
for the normalized tangent gives convergence of first derivatives.
It preserves $H^2$ for each curve by the inverse and chain rules.
The normalized target therefore belongs to $D_K$.
Theorem~\ref{rc:theorem} gives its bending energy at least $e_K$.
That energy equals $\E(u+th)$ by normalization, proving the inequality.
The regular $H^2$ energy is differentiable along $u+th$ by the variation
calculation in Section~\ref{sec:variation}. Both positive and negative
small $t$ are permitted, and $\E(u)=e_K$, so its derivative at zero
vanishes.

The mollified traces were used only for $C^1$ approximation. No step
requires $X_n\circ u\to X\circ u$ strongly in $H^2$, which does not
follow from ordinary ambient $C^1$ mollification. The explicit trace
assumption and the full closure theorem provide the missing analytic
control at the target.
\end{proof}

\begingroup
\renewcommand{\addcontentsline}[3]{}
\section*{Use of language models}
ChatGPT assisted with drafting, mathematical
proofreading and reference checks. Mathematical validation, proof decisions
and the final text remain the responsibility of the authors.

\section*{Acknowledgement}
The second author's work is partially funded by the Excellence Initiative
of the German federal and state governments.
\endgroup

\end{document}